\tikzset{
  font=\small}
\declaretheorem[style=definition,qed=$\dashv$,numberwithin=section]
{definition}
\declaretheorem[style=plain,sibling=definition]{theorem}
\declaretheorem[style=plain,sibling=definition]{lemma}
\declaretheorem[style=plain,sibling=definition]{proposition}
\declaretheorem[style=plain,sibling=definition]{corollary}
\declaretheorem[style=definition,qed=$\dashv$, sibling=definition]{remark}
\declaretheorem[style=plain,sibling=definition]{claim}
\declaretheorem[style=plain,sibling=definition]{subclaim}
\declaretheorem[style=plain,sibling=definition]{claim*}
\declaretheorem[style=definition,qed=$\dashv$, sibling=definition]{remarks}
\titleformat{\section}{\normalsize\centering}{\thesection.}{1em}{}
\titleformat{\subsection}{\normalsize\centering}{\thesubsection.}{1em}{}
\titleformat{\subsubsection}{\normalsize}{\thesubsubsection.}{1em}{}
\numberwithin{equation}{section}
\newcommand{\df}{\dot{F}}
\newcommand{\vep}{\varepsilon}
\newcommand{\la}{\langle}
\newcommand{\ra}{\rangle}
\newcommand{\dom}{{\rm dom}}
\newcommand{\cf}{{\rm cf}}
\newcommand{\lh}{{\rm lh}}
\newcommand{\crt}{{\rm crt}}
\newcommand{\hc}{{\mathrm HC}}
\newcommand{\Ult}{{\rm Ult}}
\newcommand{\ran}{{\rm ran}}
\newcommand{\rng}{{\rm ran}}
\newcommand{\within}{{\restriction}}
\newcommand{\ohat}{\hat{o}}
\def\k{\kappa}
\renewcommand{\models}{\vDash}
\newcommand{\powerset}{{\wp }}
\def\P{{\mathcal{P} }}
\def\Q{{\mathcal{ Q}}}
\def\R{{\mathcal R}}
\def\M{{\mathcal{M}}}
\def\N{{\mathcal{N}}}
\def\T {{\mathcal{T}}}
\def\U{{\mathcal{U}}}
\def\itW{{\mathcal{W} }}
\def\itM{{\mathcal{M}}}
\def\itF{{\mathcal{F}}}
\def\itT {{\mathcal{T}}}
\def\itU{{\mathcal{U}}}
\def\itS{{\mathcal{S}}}
\newcommand{\Hull}{\mathrm{Hull}}
\newcommand{\crit}{\mathrm{crt}}
\newcommand{\OR}{\text{OR}}
\newcommand{\J}{\mathcal J}
\newcommand{\lex}{\mathrm{lex}}
\renewcommand{\OR}{\textit{o}}
\newcommand{\cof}{\mathrm{cof}}
\newcommand{\adp}{\mbox{{\sf AD}}^+}
\newcommand{\db}{\dot{B}}
\newcommand{\degr}{\textrm{deg}}
\newcommand{\hM}{\hat{M}}
\newcommand{\bfc}{\bar{\mathfrak{C}}}
\newcommand{\mfc}{\mathfrak{C}}
\newcommand{\rh}{\hat{\rho}}
\newcommand{\zfc}{\textsf{ZFC}}
\newcommand{\Res}{\textrm{Res}}
\newcommand{\lift}{\textrm{lift}}
\newcommand{ \tpred}{\textrm{-pred}}
\newcommand{ \cHull}{\textrm{cHull}}
\newcommand*{\TitleFont}{%
      \usefont{\encodingdefault}{\rmdefault}{b}{n}%
      \fontsize{12}{16}%
      \selectfont}
\begin{document}
\title{\TitleFont CONDENSATION FOR MOUSE PAIRS}
\renewcommand{\thefootnote}{\fnsymbol{footnote}} 
\footnotetext{\emph{Key words}: Least-branch hod mice, square, HOD, large cardinals, determinacy}
\footnotetext{\emph{2010 MSC}: 03E15, 03E45, 03E60}
\renewcommand{\thefootnote}{\arabic{footnote}}
\author{ \selectfont 
JOHN STEEL\footnote
{Department of Mathematics, University of California, Berkeley, CA, USA. Email: coremodel@berkeley.edu}
\\
NAM TRANG\footnote{Department of Mathematics, University North Texas, Denton, TX USA. Email: Nam.Trang@math.unt.edu}}

\date{}
\maketitle
\begin{abstract}
  This is the first of two papers on the fine structure of HOD in models of the Axiom of Determinacy ($\sf{AD}$). 
 Let $M\vDash\sf{AD}^+ + V=L(\powerset(\mathbb{R}))$. \cite{normalization_comparison} shows that under
 a natural hypothesis on the existence of iteration strategies, the basic fine structure
 theory for pure extender models goes over to $\mbox{HOD}^M$. In this paper, we prove
 a fine condensation theorem, quite similar to Theorem 9.3.2 of Zeman's book
 \cite{Zeman}, except that condensation for iteration strategies has been added to the mix.
 In the second paper, we shall use this theorem to show that in $\mbox{HOD}^M$,
 $\square_\kappa$ holds iff $\kappa$ is not subcompact. 

  \end{abstract}

\section{INTRODUCTION}\label{sec:intro}

 One goal of descriptive inner model theory is to elucidate the structure of HOD
 (the universe of hereditarily ordinal definable sets) in models $M$ of the Axiom of
 Determinacy. $\mbox{HOD}^M$ is close to $M$ in various ways; for example, if
 $M\vDash \sf{AD}^+ + V=L(\powerset(\mathbb{R}))$\footnote{$\sf{AD}^+$ is a technical strengthening of $\sf{AD}$.
  It      
is not known whether $\sf{AD} \Rightarrow \sf{AD}^+$, though in every model of $\sf{AD}$ constructed so far,
   $\sf{AD}^+$ also holds. The models of $\sf{AD}$ that we deal with in this paper satisfy
    $\sf{AD}^+$.}, then 
$M$ can be realized as a symmetric forcing extension of $\mbox{HOD}^M$, so that the first order
theory of $M$ is part of the first order theory of its HOD.
\footnote{This is a theorem of Woodin
from the early 1980s. Cf. \cite{trang2014hod}.} For this and many other reasons,
 the study of HOD in models of {\sf AD} has a long history. We refer the reader to
 \cite{cabalsurvey} for a survey of this history.  

The study of HOD involves ideas from descriptive set theory (for example, games and definable scales)
and ideas from inner model theory (mice, comparison, fine structure). One early result
showing that inner model theory is relevant is due to the first author, who showed in 1994 (\cite{bulletinpaper})
that if there are $\omega$ Woodin cardinals with a measurable above them all,
then in $L(\mathbb{R})$, HOD up to $\theta$ is a pure extender mouse. Shortly afterward, this result
was improved by W. Hugh Woodin, who reduced its hypothesis to ${\sf{AD}}^{L(\mathbb{R})}$, and identified
the full $\mbox{HOD}^{L(\mathbb{R})}$ as a model of the form $L[M,\Sigma]$, where
$M$ is a pure extender premouse, and $\Sigma$ is a partial iteration strategy for $M$.
$\mbox{HOD}^{L(\mathbb{R})}$ is thus a new type of mouse, sometimes called a
{\em strategy mouse}, sometimes called a {\em hod mouse}.
See \cite{steelwoodincabal} for an account of this work.

Since the mid-1990s, there has been a great deal of work devoted to extending
these results to models of determinacy beyond $L(\mathbb{R})$. Woodin analyzed
HOD in models of $\sf{AD}^+$ below the minimal model of $\sf{AD}_{\mathbb{R}}$ 
fine structurally, and Sargsyan pushed the analysis further, first to determinacy
models below $\sf{AD}_{\mathbb{R}} + ``\theta$ is regular" (see \cite{hod_mice}),
and more recently, to determinacy models below the
 minimal model of the theory 
 ``$\sf{AD}^+ + \Theta = \theta_{\alpha+1} + \theta_\alpha$ is the largest Suslin cardinal" (commonly known as $\sf{LSA}$).
 (See \cite{hod_mice_LSA}.) 
 The hod mice used in this work have the
 form $M=L[\vec{E},\Sigma]$, where $\vec{E}$ is a coherent sequence of extenders, and
 $\Sigma$ is an iteration strategy for $M$. The strategy information is fed into
 the model $M$ slowly, in a way that is dictated in part by the determinacy model
 whose HOD is being analyzed. One says that the hierarchy of $M$ is
 {\em rigidly layered}, or {\em extender biased}. The object $(M,\Sigma)$ is
 called a rigidly layered (extender biased) hod pair.
 
  Putting the strategy information in this way
 makes comparison easier, but it has serious costs. The definition of
 ``premouse" becomes very complicated, and indeed it is not clear how to
 extend the definition of rigidly layered hod pairs much past that given in
 \cite{hod_mice_LSA}. The definition of ``extender biased hod premouse" is not uniform,
 in that the extent of extender bias depends on the determinacy model whose
 HOD is being analyzed. Fine structure, and in particular condensation, become 
 more awkward. For example, it is not true in general that the pointwise definable 
 hull of a level of $M$ is a level of $M$. (The problem is that
 the hull will not generally be sufficiently extender biased.) Because of this, it is open whether
 the hod mice of \cite{hod_mice_LSA} satisfy $\forall \kappa \square_\kappa$.
 (The second author did show that $\forall \kappa \square_{\kappa,2}$ holds in
 these hod mice; cf. \cite{hod_mice_LSA}.)
 
 The more naive notion of hod premouse would abandon extender bias, and simply
 add the least missing piece of strategy information at essentially every stage.
 This was originally suggested by Woodin. The first author has recently proved
 a general comparison theorem that makes it possible to use this approach, at least
 in the realm of short extenders. The resulting premice are called
 {\em least branch premice} (lpm's), and the pairs $(M,\Sigma)$ are called
 {\em least branch hod pairs} (lbr hod pairs).\footnote{The (pure extender or least branch hod) premice in the paper are called pfs (projectum-free space) premice in \cite{normalization_comparison}. We will occasionally omit the ``pfs" for brevity. All premice used in this paper are pfs premice (and their strong cores), see Section \ref{sec:lbh} for more discussion.} Combining results of 
 \cite{normalization_comparison} and \cite{localhodcomputation}, one has
 
 \begin{theorem}[\cite{normalization_comparison},\cite{localhodcomputation}] \label{hodingeneral} Assume
 $\sf{AD}^+ + $ ``there is an $(\omega_1,\omega_1)$ iteration strategy for a pure extender premouse
 with a long extender on its sequence". Let $\Gamma \subseteq P(\mathbb{R})$ be such that
 $L(\Gamma,\mathbb{R}) \models \sf{AD}_{\mathbb{R}} +$  ``there is no $(\omega_1,\omega_1)$ iteration strategy 
 for a pure extender premouse
 with a long extender on its sequence"; then $\mbox{HOD}^{L(\Gamma,\mathbb{R})}$ is
 a least branch premouse.
 \end{theorem}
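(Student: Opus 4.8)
The plan is to assemble the statement from the two cited papers rather than prove it from scratch. From \cite{normalization_comparison} I would draw the general theory of least branch hod pairs in the short-extender realm: the existence and uniqueness of well-normalizing iteration strategies, the comparison theorem for lbr hod pairs, and the fine-structural facts (solidity, universality, condensation) for least branch premice. From \cite{localhodcomputation} I would draw the local HOD computation that identifies $\mathrm{HOD}$ of an appropriate determinacy model with a direct limit of hod pairs.

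The first step is a core model induction inside $L(\Gamma,\mathbb{R})$ establishing least branch hod pair capturing: every set of reals in $L(\Gamma,\mathbb{R})$ is captured by some lbr hod pair $(\mathcal{P},\Sigma)$ with $\Sigma$ fullness-preserving and $\Sigma\restriction\HC \in L(\Gamma,\mathbb{R})$. Here $\mathsf{AD}_\mathbb{R}$ in $L(\Gamma,\mathbb{R})$ guarantees that every set of reals is Suslin, which drives the capturing argument; the hypothesis that $L(\Gamma,\mathbb{R})$ has no $(\omega_1,\omega_1)$-strategy for a pure extender premouse carrying a long extender confines the induction to the short-extender world, where the comparison theorem of \cite{normalization_comparison} is available; and the ambient long-extender hypothesis in $V$ supplies the external iterability needed to run the induction and to certify that the hod pairs it produces are genuinely iterable.

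Given capturing, I would form the direct limit $\mathcal{M}_\infty$ of all fullness-preserving lbr hod pairs under their iteration maps — comparison shows this system is directed with commuting maps — and then invoke the local HOD computation of \cite{localhodcomputation} to conclude that $\mathcal{M}_\infty$ computes $\mathrm{HOD}^{L(\Gamma,\mathbb{R})}$ below $\Theta$ and, more generally, that $\mathrm{HOD}^{L(\Gamma,\mathbb{R})}$ is of the form $L[\mathcal{M}_\infty,\Sigma_\infty]$ for the induced tail strategy $\Sigma_\infty$. It remains to see that this structure, equipped with its natural hierarchy, is a least branch premouse: its extender sequence is that of $\mathcal{M}_\infty$, the predicate added at the stages where no extender is indexed is the branch chosen by $\Sigma_\infty$ through the least iteration tree not yet provided with one, and the fine structure goes through by the solidity and condensation theorems of \cite{normalization_comparison}.

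The hard part is the core model induction for capturing. One must propagate the capturing hypothesis through the scale analysis of $L(\Gamma,\mathbb{R})$: the successor steps, where a new Suslin cardinal forces construction of the next hod pair and one must verify fullness-preservation of its strategy, and the limit and projective-like steps, where one closes off under the relevant operators — all while never leaving the short-extender realm, so that the \cite{normalization_comparison} comparison theorem continues to apply. A secondary delicate point is the behavior of $\mathrm{HOD}$ above $\Theta$, where one argues that the remainder is built over $\mathcal{M}_\infty$ by the least-branch hierarchy using a fullness-preserving tail strategy; this piece is carried out in \cite{localhodcomputation}.
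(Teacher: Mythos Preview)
The paper does not prove this theorem; it is stated in the introduction as a result imported from \cite{normalization_comparison} and \cite{localhodcomputation}, with the sentence ``Combining results of \cite{normalization_comparison} and \cite{localhodcomputation}, one has'' immediately preceding it and no argument following. So there is no in-paper proof to compare against.

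Your outline is a reasonable reconstruction of how the cited works fit together: comparison theory and fine structure for lbr hod pairs from \cite{normalization_comparison}, the direct-limit HOD analysis from \cite{localhodcomputation}, and the identification of the resulting structure as a least branch premouse. You have the roles of the hypotheses right --- $\mathsf{AD}_\mathbb{R}$ in $L(\Gamma,\mathbb{R})$ gives Suslin representations, the ``no long extender strategy'' hypothesis keeps everything in the short-extender regime where the comparison theorem applies, and the ambient long-extender hypothesis in $V$ supplies external iterability.

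One remark on emphasis: framing the capturing step as a ``core model induction'' somewhat overstates what is needed here. The strong ambient iterability hypothesis in $V$ lets one run full background constructions and certify the resulting hod pairs directly; the hod pair capturing in $L(\Gamma,\mathbb{R})$ then comes from comparing with levels of such constructions, rather than from an inductive anti-core-model argument through the Wadge hierarchy. The genuinely delicate content lives in the comparison theorem for strategies (which is the main achievement of \cite{normalization_comparison}) and in the derived-model and Vopenka-style arguments of \cite{localhodcomputation} that pin down all of HOD, not just HOD below $\Theta$.
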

 
 Of course, one would like to remove the iterability hypothesis of \ref{hodingeneral}, and prove its
 conclusion under $\sf{AD}^+$ alone. Finding a way to do this is one
 manifestation of the long standing
 iterability problem of inner model theory. Although we do not yet
 know how to do this, the theorem does make it highly likely that
 in models of $\sf{AD}_{\mathbb{R}}$ that have not reached an iteration strategy
 for a pure extender premouse with a long extender, HOD is an lpm.
 
 Least branch premice have a fine structure much closer to that of pure
 extender models than that of rigidly layered hod premice. The book
 \cite{normalization_comparison} develops the basics, the solidity and
 universality of standard parameters, and a coarse form of condensation.
 The main theorem of this paper, Theorem \ref{thm:cond_lem}, is
 a stronger condensation theorem. The statement of
 \ref{thm:cond_lem} is parallel to that of Theorem 9.3.2 of
 \cite{Zeman}, but it has a strategy-condensation feature
 that is new even in the pure extender model context.
 The proof of \ref{thm:cond_lem} follows the same outline as
 the proofs of solidity, universality, and condensation given in
 \cite{normalization_comparison}, but there are a number of additional
 difficulties to be overcome. These stem from the restricted
 elementarity we have for the ultrapowers of phalanxes that are taken
 in the course of the proof.

Theorem \ref{thm:cond_lem} is one of the main ingredients  in the proof of 
the main theorem of our third paper. We say that $(M,\Sigma)$ is a {\em mouse pair}
iff $M$ is either a pure extender pfs premouse or a least branch pfs premouse, and
$\Sigma$ is an iteration strategy for $M$ that has strong hull condensation, normalizes well,
is internally lift consistent, and in the least branch case, is pushforward consistent.
See \cite[Chapter 9]{normalization_comparison} and Section \ref{sec:lbh} below for a
full definition.\footnote{Theorem \ref{thm:cond_lem} is also used heavily in the proof in
\cite{siskindsteel} that the iteration strategy component of a mouse pair 
fully normalizes well, and is therefore positional.}

\begin{theorem}[$\sf{AD}^+$]\label{thm:main_theorem}
Let $(M,\Sigma)$ be a mouse pair. Let $\kappa$ be a cardinal of $M$ such that $M \models
``\kappa^+$ exists"; then in $M$, the following are equivalent.
\begin{enumerate}
\item $\square_\kappa$.
\item $\square_{\kappa,<\kappa}$.
\item $\kappa$ is not subcompact.
\item The set of $\nu<\kappa^+$ such that $M|\nu$ is extender-active is non-stationary in $\kappa^+$.
\end{enumerate}
\end{theorem}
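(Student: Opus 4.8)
The plan is to transport the Schimmerling--Zeman analysis of $\square$ in core models (and Theorem 9.3.2 of \cite{Zeman}) to the mouse-pair setting, using the strategy-condensation theorem \ref{thm:cond_lem} in place of ordinary condensation. All four clauses are internal assertions about $M$, which as a pfs premouse satisfies $\ZFC$ together with the fine-structure theory of \cite{normalization_comparison} and Theorem \ref{thm:cond_lem}; the hypothesis $\AD^+$ enters only to supply the iterability needed for \ref{thm:cond_lem}. I would prove the cycle $(1)\Rightarrow(2)\Rightarrow(3)\Rightarrow(4)\Rightarrow(1)$.

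$(1)\Rightarrow(2)$ is immediate from the definitions. $(2)\Rightarrow(3)$ is the contrapositive of the classical fact that a subcompact cardinal $\kappa$ refutes $\square_{\kappa,<\kappa}$ (more generally $\square_{\kappa,\lambda}$ for every $\lambda<\kappa$): code a putative sequence as $A\subseteq H_{\kappa^+}^M$, pull it back along a subcompactness embedding $j\colon (H_{\bar\kappa^+}^M,\bar A)\to(H_{\kappa^+}^M,A)$ with $j(\bar\kappa)=\kappa$, and thread the image system to a contradiction; this step uses no inner model theory. For $(3)\Rightarrow(4)$ I would argue contrapositively: if the set of extender-active $\nu<\kappa^+$ is stationary, then $\kappa$ is subcompact in $M$. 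Given $A\subseteq H_{\kappa^+}^M$, one uses Fodor together with a hull/condensation argument to find such a $\nu$ whose level $M|\nu$ reflects enough of $A$; the top extender $E_\nu$ of $M|\nu$, restricted to an initial segment of length $(\crit E_\nu)^{+}$ and composed with the collapse of the relevant hull, is then the desired embedding, after \ref{thm:cond_lem} is invoked to recognize its domain as an initial segment of $M$. The structural point is that in a mouse pair, stationarily many extender-active levels below $\kappa^+$ necessarily include, for each such $A$, one whose top extender furnishes the required reflection --- i.e. subcompactness of $\kappa$ is always witnessed on the sequence.

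$(4)\Rightarrow(1)$ is the main construction. Assuming the extender-active levels below $\kappa^+$ are nonstationary, I would build a $\square_\kappa$-sequence $\langle C_\nu : \nu<\kappa^+\ \text{limit},\ \nu>\kappa\rangle$. For $\nu$ singular in $M$, let $\gamma(\nu)$ be least such that the collapsing level $M|\gamma(\nu)$, with its relevant projectum and standard parameter $p$, admits a cofinal map $\nu'\to\nu$ with $\nu'<\kappa$, and let $C_\nu$ be the canonical club read off from the transitive collapse of the $\Sigma_1$-hull of $\nu\cup p$ inside $M|\gamma(\nu)$ --- e.g.\ the set of $M|\gamma(\nu)$-cardinals in that hull below $\nu$. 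Coherence, $\bar\nu\in\lim(C_\nu)\Rightarrow C_{\bar\nu}=C_\nu\cap\bar\nu$, is where \ref{thm:cond_lem} does the work: the hull collapsing $\nu$, cut at $\bar\nu$, condenses, and \ref{thm:cond_lem} guarantees its transitive collapse is again an initial segment of $M$ \emph{as a mouse pair}, so in particular $\gamma(\bar\nu)$ is computed correctly and the canonical clubs agree. The requirement that the order type of $C_\nu$ be at most $\kappa$ is exactly where nonstationarity of the extender-active levels is used: a top extender at a collapsing level would allow the order type to be pushed up, and that configuration, run in reverse, is precisely what yields subcompactness of $\kappa$. One also treats $\nu$ with $\cf^M(\nu)=\kappa$ separately, taking $C_\nu$ to be a club of order type $\kappa$.

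The main obstacle is the one flagged in the introduction: the restricted elementarity available for ultrapowers of the phalanxes that arise when one condenses hulls of a mouse \emph{pair}. In the coherence step one must verify not merely that the collapse of the $\Sigma_1$-hull is a level of the premouse $M$, but that it, together with the induced strategy, is a mouse pair whose strategy is the appropriate tail of $\Sigma$; this is exactly the content of the strategy-condensation clause of \ref{thm:cond_lem}, but arranging the hulls and the auxiliary phalanx comparisons so that its hypotheses are literally met, and checking that the canonical square-clubs are invariant under this condensation, is delicate. A secondary obstacle is the case analysis distinguishing extenders that overlap $\kappa$ from those with critical point $\kappa$, which is where the precise meaning of ``extender-active'' in clause (4) and its tie to subcompactness must be pinned down for lpm's, following Schimmerling--Zeman but now in the presence of strategy predicates.
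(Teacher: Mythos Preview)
Your overall plan is correct and matches the paper's stated approach: the cycle $(1)\Rightarrow(2)\Rightarrow(3)\Rightarrow(4)\Rightarrow(1)$, with $(1)\Rightarrow(2)$ trivial, $(2)\Rightarrow(3)$ the classical fact that subcompactness refutes weak square, $(3)\Rightarrow(4)$ by a hull-plus-extender argument, and $(4)\Rightarrow(1)$ by transporting the Schimmerling--Zeman construction using Theorem~\ref{thm:cond_lem} in place of ordinary condensation. Note that the paper itself does not carry out this proof; it is explicitly deferred to the companion paper, and the paper's contribution is Theorem~\ref{thm:cond_lem}. So the comparison is against the paper's remarks (following the statement of~\ref{thm:cond_lem}) and the Schimmerling--Zeman template it invokes.

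There is a genuine gap in your $(4)\Rightarrow(1)$ sketch. What you describe --- collapsing levels, $\Sigma_1$-hulls of $\nu\cup p$, clubs of cardinals --- is essentially Jensen's construction in $L$. It does not survive in extender models once the collapsing structure $\N_\tau$ is \emph{pluripotent}: extender-active with critical point below $\kappa$ and $\rho_1(\N_\tau)=\kappa$. In that case the interpolant $\tilde\M$ obtained from the hull is a \emph{protomouse} (its top extender predicate is not total), and Theorem~\ref{thm:cond_lem} cannot be applied to it directly. The Schimmerling--Zeman machinery of divisors and strong divisors, together with the associated protomice $\N_\tau(\mu,q)$, exists precisely to handle this: one interpolates at the protomouse level, recovers the premouse $\N_{\tilde\tau}$ as an ultrapower of a suitable $\M^*$ by the partial top extender, and applies condensation to a factor map into $\pi_F(\M^*)$ rather than to $\sigma'$. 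You omit this entirely, and without it the unboundedness argument for $B_\tau$ fails on the set $\S^1$ of $\tau$ whose collapsing structures admit strong divisors.

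Two smaller points. First, you overstate the role of strategy condensation in the square proof. The paper says explicitly, just after Theorem~\ref{thm:cond_lem}, that in the $\square$ application ``what matters then is just that $H\unlhd M$; the full external strategy agreement \ldots\ is not used.'' The mouse-pair hypothesis is needed so that the comparison-based proof of~\ref{thm:cond_lem} applies, but the conclusion consumed by the square construction is purely the premouse clause $H\lhd M$. Second, your account of where nonstationarity enters --- bounding order types --- is off: the order-type bound is obtained by a separate shrinking argument, while condition (4) is used to guarantee that the club of interpolants $\tilde\tau$ meets the complement of the extender-active levels, so that conclusion (a) rather than (b) of~\ref{thm:cond_lem} holds at the relevant steps.
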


The special case of this theorem in which $M$ is
a pure extender model is a landmark result of
Schimmerling and Zeman. (See
\cite{schimmerling2004characterization}.) Our proof follows
the Schimmerling-Zeman proof quite closely.

 Theorem \ref{thm:main_theorem} has applications to consistency strength lower bound questions
 that we discuss in the second paper.
  But our work was also motivated
 by the desire to put the fine structure theory of \cite{normalization_comparison}
 to the test, so to speak. Determining the pattern of $\square$ is a good way to
 go one level deeper into the world of projecta, standard parameters, 
 restricted elementarity, and condensation theorems. We found when we did so
 that the definition of {\em hod premouse} given in the first draft of \cite{normalization_comparison}
 had problems, in that strategy information was being added in a way that might not in
 general be preserved by $\Sigma_1$ hulls.\footnote{Remark {\bf 2.47} of \cite{scales_hybrid_mice}
shows that in fact it is preserved by $\Sigma_1$ hulls, but the proof involves
a phalanx comparison, and so a lot of theory just to prove a property of mice
one would like to have available at the beginning.} The better method for
 strategy insertion comes from \cite{scales_hybrid_mice}, and we describe it further
 below. \cite{normalization_comparison} has been revised so that it now uses this method.

\textbf{Acknowledgements.}  The work reported here began when
 the second author visited the first author in March and June of 2016 at UC Berkeley. 
 The second author thanks the NSF for its generous support through grants No DMS-1565808 and DMS-1945592.   

\section{LEAST-BRANCH HOD PREMICE}\label{sec:lbh}

All premice used in this paper are in the pfs hierarchy, defined in \cite{normalization_comparison}.
 We adopt for the most part the fine structure and notation from \cite[Chapter 9]{normalization_comparison}
 concerning least-branch hod premice (lpm's) and lbr hod pairs. A similar, albeit simpler, fine structure for pure extender premice is discussed in  \cite[Chapter 4]{normalization_comparison}. Fine structural condensation is the point of this
paper, so we cannot avoid going into the details covered in these chapters. (Our main result is a fine structural
refinement of \cite[Theorem 4.10.10]{normalization_comparison}.)  We summarize some main points below. 
 The reader can see \cite[Chapters 4,9]{normalization_comparison} for more details.

\subsection{Potential least branch premice} The language for lpm's is $\mathcal{L}_1$ with symbols
 $\in, \dot{E},\dot{F},\dot{\Sigma},\dot{B}, \dot{\gamma}$. $\mathcal{L}_0 = \mathcal{L}_1 - \{\dot{B},\dot{\Sigma}\}$ is the language of pure extender mice. An lpm $M$ is of the
  form $(N,k)$ where $N$ is an $\mathcal{L}_1$ amenable structure that is $k$-sound. We write
  $k = \textrm{deg}(M)$.  We often identify $M$ with $N$ and suppress $k$. $o(M)$ denotes the 
   ordinal height of $M$, and $\hat{o}(M)$ denotes the $\alpha$ such that $o(M)=\omega\alpha$.
    $l(M)=(\hat{o}(M),\textrm{deg}(M))$ is the index of $M$. For $(\nu,l)\leq_{\rm{lex}}$$l(M)$, $M|(\nu,l)$
     is the initial segment of $M$ with index $(\nu,l)$. We write $N \unlhd M$ iff
     $N = M|(\nu,l)$ for some  $(\nu,l)\leq_{\rm{lex}}$$l(M)$. If $\nu\leq \hat{o}(M)$, write $M|\nu$ for $M|(\nu,0)$.\footnote{
$M|\nu$ can be active. We write $M||\nu$, or $M|\la \nu,-1\ra$, for $M|\nu$ cut a second time by removing
its last extender if it has one.}

$\dot{E}^M$ codes the sequence of extenders that go into constructing $M$.
 $\dot{F}^M$ if non-empty is the amenable code for a new extender being added; 
 in this case, we say that $M$ is \textit{extender-active} (or just $E$-active).
  If $\dot{F}^M =F$ is nonempty, then $M\vDash \rm{crt}$$(F)^+$ exists and $o(M)=i^M_F(\mu)$,
 where $\mu=\rm{crt}$$(F)^+$. Also $F$ must satisfy the Jensen initial segment condition (ISC),
  that is, whole initial segments of $F$ must be in $\dot{E}^M$ (see \cite{Zeman}
   for a detailed discussion of ISC). $\dot{\gamma}$ is the index of
    the largest whole initial segment of $F$ if exists; otherwise, $\dot{\gamma}=0$.
 We also demand $M$ is \textit{coherent}, that is $i^M_F(\dot{E}^M)\restriction o(M)+1 =
  (\dot{E}^{M})^\smallfrown \langle \emptyset \rangle$. 

$\dot{\Sigma}^M$ and $\dot{B}^M$ are used to record information about an iteration strategy
 $\Omega$ of $M$. $\dot{\Sigma}^M$ codes the strategy information added at
  earlier stages; $\dot{\Sigma}^M$ acts on $\lambda$-separated trees.\footnote{See \cite[Chapter 4]{normalization_comparison} for detailed discussions on $\lambda$-separated trees.} $\dot{\Sigma}^M(s,b)$ implies 
  that $s=\langle \nu,k,\mathcal{T}  \rangle$,
   where $(\nu,k)\leq l(M)$ and $\mathcal{T}$ is a $\lambda$-separated tree on
    $M|(\nu,k)$ in $M$ of limit length and $\mathcal{T}^\smallfrown b$ is according to the strategy.
 We say that $s$ is an $M$-tree, and write $s = \langle \nu(s),k(s),\mathcal{T}(s)\rangle$. We 
  write $\dot{\Sigma}^M_{\nu,k}$ for the partial iteration strategy for $M|(\nu,k)$
   determined by $\dot{\Sigma}$. We write $\Sigma^M(s)=b$ when $\dot{\Sigma}^M(s,b)$, and
    we say that $s$ is according to  $\Sigma^M$ if $\T(s)$ is according to $\dot{\Sigma}^M_{\nu(s),k(s)}$.

Now we discuss how to code branch information for a tree $\T(s)$ such that $\Sigma^M(s)$
has not yet been
 defined into the $\dot{B}^M$ predicate. Here we use
the $\mathfrak{B}$-operator in \cite{scales_hybrid_mice}. 
We are correcting some errors in the original version of
\cite{normalization_comparison}. These corrections have been incorporated
in its latest version.

$M$ is \textit{branch-active} (or just $B$-active) iff 
\begin{itemize}
\item[(a)] there is a largest $\eta < o(M)  $ such that $M|\eta \models {\sf KP}$,
and letting $N=M|\eta$,
\item[(b)] there is a
 $<_N$-least $N$-tree $s$ such that $s$ is by $\Sigma^N$,
		$\T(s)$ has limit length, and $\Sigma^N(s)$ is undefined.
		\item[(c)] for $N$ and $s$ as above, $o(M) \le
			o(N) + \lh(\T(s))$.
\end{itemize}

Note that being branch-active can be expressed by a $\Sigma_2$ sentence in
$\mathcal{L}_1 -\lbrace \dot{B} \rbrace$. This contrasts with being extender-active,
which is not a property of the premouse with its top extender removed. In contrast
with extenders, we know when branches must be added before we do so.

\begin{definition}\label{branchactivelpm}
Suppose that $M$ is branch-active. We set
\begin{align*}
	\eta^M &= \text{the largest $\eta$ such that $M|\eta \models {\sf KP}$,}\\
	\nu^M &= \text{unique $\nu$ such that $\eta^M + \nu = o(M)$},\\
	s^M &= \text{least $M|\eta^M$-tree such that $\dot{\Sigma}^{M|\eta^M}$ is undefined, and}\\ 
	b^M &= \text{$\lbrace \alpha \mid \eta + \alpha \in \dot{B}^M \rbrace.$}	
\end{align*}
	Moreover,
	\begin{itemize}
	\item[(1)]
$M$ is a {\em potential lpm} iff $b^M$ is a cofinal
	branch of $\T(s) \within \nu^M$.
	\item[(2)] $M$ is {\em honest}
	iff $\nu^M = \lh(\T(s))$, or $\nu^M < \lh(\T(s))$ and
	$b^M = [0,\nu^M)_{T(s)}$.
	\item[(3)] $M$ is an lpm iff $M$ is an honest potential lpm.
	\item[(4)] $M$ is {\em strategy active} iff $\nu^M = \textrm{lh}(\T(s))$.
	\end{itemize}
	\end{definition}

Note that $\eta^M$ is a
$\Sigma_0^M$ singleton, because it is the least ordinal in $\dot{B}^M$ (because
0 is in every branch of every iteration tree), and thus $s^M$ is also a
$\Sigma_0^M$ singleton. We have separated honesty from the other conditions
because it is not expressible by a $Q$-sentence, whereas the rest is.
Honesty is expressible by a Boolean combination of $\Sigma_2$ sentences.
See \ref{lem:B_active_case} below.

The original version of \cite{normalization_comparison}
required that when $o(M) < \eta^M + \lh(\T(s))$, $\dot{B}^M$ is empty,
 whereas here we require that it code 
$[0,o(M))_{T(s)}$, in the same way that $\dot{B}^M$ will have to code a new
branch when $o(M) = \eta^M + \lh(\T(s))$. Of course, $[0,\nu^M)_{T(s)} \in M$ 
when $o(M) < \eta^M + \lh(\T(s))$ and $M$ is honest, so the current
$\dot{B}^M$ seems equivalent to the original $\dot{B}^M = \emptyset$.
However, $\dot{B}^M=\emptyset$ leads to $\Sigma_1^M$ being too weak,
with the consequence that a $\Sigma_1$ hull of $M$ might collapse to
something that is not an lpm.\footnote{The hull could satisfy
$o(H) = \eta^H + \lh(\T(s^H))$, even though
$o(M) < \eta^M + \lh(\T(s^M))$. But then being an lpm
requires $\dot{B}^H \neq \emptyset$. See Remark 2.47 in \cite{scales_hybrid_mice} for a more detailed discussion. Basically, one can show that the $L^\Sigma[\vec{E}]$ constructions doesn't break down because the the models constructed are not $\Sigma$-premice; \cite[Remark 2.47]{scales_hybrid_mice} outlines an argument that cores of $\Sigma$-premice that are constructed in the $L^\Sigma[\vec{E}]$-constructions are $\Sigma$-premice.} Our current choice for
$\dot{B}^M$ solves that problem.

\begin{remark} Suppose $N$ is an lpm, and $N \models {\sf KP}$.
It is very easy to see that $\dot{\Sigma}^N$ is defined on all $N$-trees $s$
 that are by
$\dot{\Sigma}^N$ iff there are arbitrarily large
$\xi < o(N)$ such that $N|\xi \models {\sf KP}$.
Thus if $M$ is branch-active, then $\eta^M$ is a successor
admissible; moreover, we do add branch information, related to
exactly one tree, at each successor
admissible. Waiting until the next admissible
to add branch information is just a convenient way to make 
sure we are done coding in the branch information for a
given tree before we move on to the next one. One could go faster.
\end{remark}

We say that an lpm $M$ is (fully) passive if $\dot{F}^M=\emptyset$ and 
$\dot{B}^M=\emptyset$. It cannot be the case that $M$ is both $E$-active and $B$-active.
 In the case that $M$ is $E$-active, using the terminology of \cite{schimmerling2004characterization},
  the extender $\dot{F}^M$ can be of type $A$, $B$, or $C$.

\subsection{Solidity and soundness}
      
      We adopt the projectum-free space (pfs) fine structure in \cite[Chapter 4]{normalization_comparison}. We write $\rho_n(M)$ for the $n$-th projectum of $M$ and $p_n(M)$ for the $n$-th standard parameter of $M$. 
    We set $\rho(M) = \rho_{\textrm{deg}(M)+1}(M)$ and $p(M) = p_{\textrm{deg}(M)+1}(M)$, and call them {\em the} projectum
    and parameter of $M$. We say $M$ is {\em sound} iff it is $\textrm{deg}(M)+1$-sound. An lpm  $M$ must be
    $\textrm{deg}(M)$-sound, but it need not
    be $\textrm{deg}(M)+1$-sound. There are two types of premice, type 1 and type 2, with the distinction being
based on the soundness pattern of the premice. Type 1 are the most important.
All proper initial segments of an lpm must be sound type 1 lpms. Type 2 premice can be produced by taking
a $k$-ultrapower that is discontinuous at $\rho_k$.
    
    If $M$ is type 1 and $k$-sound for $k \geq 1$, then it is coded by its reduct $M^k$, where
\begin{align*}
 M^k &= (M||\rho_k(M), A^k_M), \\
\intertext{ and}
   A^k_M &= \{\la \varphi, b\ra \ | \  \varphi \textrm{ is } \Sigma_1 \wedge b \in M||\rho_k \wedge M^{k-1} \models \varphi[b, w_k]  \},
    \end{align*} 
    where $\rho_k = \rho_k(M)$, $w_k = w_k(M) = \la \rho_k(M), \eta_k(M), p_k(M) \ra$ and $\eta_k(M)$ is the $\Sigma_1$-cofinality of $\rho_k(M)$ over $M^{k-1}$. We also have the decoding function $d^k: M^k \rightarrow M$ and canonical $\Sigma_1$-Skolem function $h^1_{M^k}$ over $M^k$ defined as in \cite[Chapter 4]{normalization_comparison}. We have the $k+1$-st projectum, parameter, strong core $\bar{\mathfrak{C}}_{k+1}$, and core $\mathfrak{C}_{k+1}$ defined by (we will omit the $M$ from the notation)
\begin{align*}
	\rho_{k+1} &= \rho_1(M^k),\\
	p_{k+1} &= p_1(M^k),\\
	\bar{\mathfrak{C}}_{k+1} &= \text{transitive collapse of $d^k\circ h^1_{M^k}[(\rho_{k+1}\cup \{p_{k+1}, w_k\})]$,}\\
	\bar{p}_{k+1} &= \sigma^{-1}(p_{k+1}),\\
	\mathfrak{C}_{k+1} &= \text{transitive collapse of $d^k\circ h^1_{M^{k}}[(\rho_{k+1}\cup \{p_{k+1}, \rho_{k+1},w_k\})].$}	
\end{align*}
Here, we let $\sigma: \bar{\mathfrak{C}}_{k+1} \rightarrow M$ and $\pi: \mathfrak{C}_{k+1}\rightarrow M$ be the uncollapse maps.

    For $M$ of type 1, $M$ is $k+1$-solid iff 
$M^k$ is parameter solid, projectum solid, stable (see Definition \ref{type2core}), and $M$ is weakly ms-solid.\footnote{$M$ is weakly 
ms-solid iff either $M$ is passive or the last extender of $\mathfrak{C}_1$ and $\bar{\mathfrak{C}_1}$ 
satisfies the weak $\sf{ms}$-$\sf{ISC}$. $M$ satisfies the weak $\sf{ms}$-$\sf{ISC}$ if letting 
$E$ be the top extender of $M$ and $\kappa$ be the critical point of $E$, then the Jensen 
completion of $E_{\kappa}$ is on the sequence of $M|\lh(E)$. For the other components of solidity,
see Definition \ref{type2core} below.} $M$ is $k+1$-sound iff $M$ 
is $k+1$-solid and $M = \mathfrak{C}_{k+1}(M)$. $M$ is $k+1$-strongly sound if 
$M$ is $k+1$-sound and $M = \bar{\mathfrak{C}}_{k+1}(M)^-$\footnote{For a pfs 
premouse $N$, $N^-$ is just $N$ but with soundness degree $\textrm{deg}(N)-1$.}. 
    

    Let $M$ be a pfs premouse of type 1 and $\degr(M) = k < \omega$. We set
\begin{align*}
\rh_{k+1}(M) =\rh_1(M^k)  &= \begin{cases} 0 & \text{ if $M^k$ is strongly sound,}\\
                               \text{ least $\kappa$ s.t. $\kappa \notin 
\textrm{Hull}_1^{M^k}(\rho_1(M^k) \cup p_1(M^k))$} & \text{ otherwise.}\\
                            \end{cases}\\
\hat{\eta}_{k+1}^M = \hat{\eta}_1(M^k) &=
\textrm{cof}_1^{M^{k}}(\hat{\rho}_{k+1}(M)). 
\end{align*}
 We say that $M$ is \textit{almost sound} iff
    \begin{enumerate}[(a)]
    \item $M$ is solid;
    \item $M^k = \textrm{Hull}_1^{M^k}(\rho_1(M^k) \cup \{p_1(M^k)\cup \hat{\rho_1}(M^k)\})$;
    \item if $\rho_1(M^k) \leq \hat{\rho_1}(M^k)$, then letting 
    \begin{center}
    $(H,B) = \textrm{cHull}_1^{M^k}(\rho_1(M^k)\cup p_1(M^k))$,
    \end{center}
    with anticollapse map $\pi: (H,B)\rightarrow M^k$, we have $$M^k = \textrm{Ult}((H,B),D),$$ where $D$ is the order zero measure of $H$ on $\hat{\rho_1}(M^k)$ and $\pi = i_D$, and
    \item if $\rho_1(M^k) < \hat{\rho_1}(M^k)$ then $\hat{\eta_1}(M^k) < \rho_1(M^k)$.
    \end{enumerate}

  Now suppose $M$ is an acceptable $J$ structure, and for $k = \textrm{deg}(M)$, either
$k=0$ or $M^{k-1}$ is a pfs premouse of type 1. We say that $M$ is a pfs premouse of type $1A$ iff 
$\rho_k(M) = \rho_{k-1}(M)$ or $\rho_k(M)\in \text{Hull}_1^{M^{k-1}}(\rho_k(M)\cup p_k(M))$, 
equivalently iff $\textrm{deg}(M) = 0$ or $\textrm{deg}(M)>0$ and 
$M^-$ is strongly sound. $M$ has type $1B$ iff $\textrm{deg}(M)> 0$ and $M^-$ is
 sound, but not strongly sound. $M$ has type $2$ iff $M^-$ is almost sound,
 but not sound.\footnote{If $M$ is a pfs premouse of type 1B,
$k = \textrm{deg}(M)>0$, and $N=\Ult_k(M,E)$ where $\eta_k^M = \crit(E)$, then
 $\sup i_E``\rho_k(M) = \rho_k(N) < \hat{\rho}_k(N) =
i_E(\rho_k(M))$, so $N$ is a type 2 pfs premouse of degree $k$. Fine structural hulls of
type 1 premice can have type 2 as well.} So for $M$ a pfs premouse of degree $k$,
\begin{itemize}
\item
$M$ has type 1A iff $\rh_k(M)=0$,\\ 
\item $M$ has type 1B iff $\rh_k(M)=\rho_k(M)$, and\\
\item $M$ has type 2 iff $\rh_k(M)>\rho_k(M)$.
\end{itemize}
  
    Let $M$ be a pfs premouse of either type, with $k = \textrm{deg}(M) > 0$. We set 
\[
\hat{w}_k(M) = \la \hat{\eta}_k(M), \hat{\rho}_k(M), p_k(M)\ra,
\]
 where $\hat{\rho}_k(M)$ is the least $\rho$ that is not in Hull$_1^{M^{k-1}}(\rho_k(M)\cup p_k(M))$
 and $\hat{\eta}_k(M) = \textrm{cof}_1^{M^{k-1}}(\hat{\rho}_k(M))$. When $M$ is clear from the context,
 we write $\hat{w}_k$ for $\hat{w}_k(M)$ etc. Let 

 \begin{center}
    $\hat{A}^k_M = \{\la \varphi, b\ra \ | \  \varphi \textrm{ is } \Sigma_1 \wedge b 
\in M||\rho_k \wedge M^{k-1} \models \varphi[b, \hat{w}_k]  \}$,
    \end{center} 
and
\begin{center}
$\hat{M}^k = (M||\rho_k, \hat{A}^k_M)$.
\end{center}
We also have the decoding function $\hat{d}^k: M^k \rightarrow M$ and canonical $\Sigma_1$-Skolem 
function $h^1_{\hat{M}^k}$ over $\hat{M}^k$ defined as in \cite[Chapter 4]{normalization_comparison}.
 We write $\rho^-(M)$ for $\rho_{\textrm{deg}(M)-1}(M)$ etc.\footnote{ It is sometimes useful to have also a coding structure for
the strong core. Let $M$ be a pfs premouse and $k = \textrm{deg}(M)>0$; then 
\begin{center}
$B^k = \{\langle \varphi, b \rangle \ | \ \varphi \textrm{ is } \Sigma_1 \wedge b\in M||\rho_k \wedge M^{k-1} \models \varphi[b, p_k]\}$,
\end{center}
and the reduct of $\bar{\mathfrak{C}}_k(M)$ is defined as
\begin{center}
$M^k_0 = (M || \rho_k, B^k)$.
\end{center}}

We define solidity and soundness for type 2 premice exactly as we did
for type 1 premice, but with $\hM^k$ replacing $M^k$. Since $\hM^k = M^k$
when $M$ has type 1, the type 1 and type 2 definitions can be unified.
For the record:

\begin{definition}\label{type2core} Let $M$ be a premouse
of degree $k$; then
\begin{align*}
 \rho_{k+1}(M) &= \rho_1(\hM^k),\\
  p_{k+1}(M) &= p_1(\hM^k),\\
 \bar{\mfc}_{k+1}(M)
 &= \text{transitive collapse of $\hat{d}^k \circ h^1_{\hM^k}``(\rho_{k+1} \cup \lbrace p_{k+1} \rbrace)$, and}\\
 \mfc_{k+1}(M) & = \text{transitive collapse of
        $\hat{d}^k \circ h^1_{\hat{M}^k}``(\rho_{k+1} \cup \lbrace p_{k+1}, \rho_{k+1}\rbrace)$.}
\end{align*}
Let $\sigma \colon \bar{\mfc}_{k+1} \to M$ and $\pi \colon \mfc_{k+1} \to M$
be the anticollapse maps, and
$\bar{p}_{k+1} = \sigma^{-1}(p_{k+1})$; then
\begin{itemize}
        \item[(a)]$\hM^k$ is {\em parameter solid} iff
$p_{k+1}$ is solid and universal
over $\hM^k$ and $\bar{p}_{k+1}$ is
solid and universal over the reduct
$(\bar{\mfc}_{k+1})^k$ of $\bar{\mfc}_{k+1}$.
\item[(b)] $\hM^k$ is {\em projectum solid}
iff $\rho_{k+1}$ is not measurable by the $M$-sequence, and either $\bar{\mfc}_{k+1} = \mfc_{k+1}$,
or $\mfc_{k+1} = \Ult(\bar{\mfc}_{k+1},D)$ and $\sigma = \pi \circ i_D$,
for $i_D$ the order zero measure of $\bar{\mfc}_{k+1}$ on $\rho_{k+1}$.
\item[(c)]
$\hM^k$ is {\em stable} iff either
$\hat{\eta}_k^M < \rho_{k+1}$, or $\hat{\eta}_k^M$ is not measurable
by the $M$-sequence.
\item[(d)]  We say that $M$ is \textit{projectum stable} if $\eta_k(M)$ is not measurable by the $M$-sequence. 
\item[(e)] We say that $M$ has {\em stable type 1} iff $M$ has type 1A, or $M$ has type 1B
and is projectum stable.
    \end{itemize}
We say that $M$ is {\em solid} (or {\em k+1-solid}) iff $\hM^k$ is parameter solid,
projectum solid, and stable.
We say that $M$ is {\em sound} (or $k+1$-sound)
iff $M$ is solid and $M = \mfc_{k+1}(M)$.\footnote{ Similarly, if $k=\degr(M)$,
then $\rho(M) = \rho_{k+1}(M)$, $\rh(M) = \rh_{k+1}(M)$, $p(M) = p_{k+1}(M)$,
and so on for the other $k$-free notations.}
\end{definition}

      It is easy to see that $M$ has stable type 1 iff $M$ has type 1,
and for any $E$ on the $M$-sequence, letting $P \unlhd M$ be longest
such that $\Ult(P,E)$ is defined, $\Ult(P,E)$ has type 1.

\subsection{Elementarity of maps}

Suppose that $M$ is an lpm, and $\pi \colon H \to M$. What sort of
elementarity  for $\pi$ do we need to conclude that $H$ is an lpm?
In the proof of square for ordinary mice, we have to deal with embeddings that
are only weakly elementary.\footnote{See section 1.4 of \cite{normalization_comparison}
for a discussion of the degrees of elementarity. If $\textrm{deg}(H)=\textrm{deg}(M)=0$, then
$\pi$ is weakly elementary iff it is $\Sigma_0$ elementary and cardinal-preserving.} In the context of proving square in pfs premice, we need a slight strengthening of weak elementarity, called {\em near elementarity} and 
defined in \cite[Chapters 2,4]{normalization_comparison}. Nearly elementary maps are produced by
 lifting constructions, and they will occur in the square construction.

Roughly a map $\pi: H\rightarrow M$ with $k = \textrm{deg}(M) = \textrm{deg}(H)$ is
 nearly elementary if it is weakly elementary and maps 
$\hat{\eta}_k(H)$ to $\hat{\eta}_k(M)$. More precisely, 
we say that $\pi$ is \textrm{nearly elementary} if $\pi$ is the 
completion of $\pi\restriction \hat{H}^k$ and $\pi\restriction \hat{H}^k$ is 
a $\Sigma_0$-preserving and cardinal preserving map from $\hat{H}^k$ to
  $\hat{M}^k$. $\pi$ is {\em elementary} if it is nearly elementary and $\pi\restriction \hat{H}^k$ is $\Sigma_1$-elementary. 

We note in the above that 
\begin{enumerate}[(a)]
\item If $H$ is of type 1A, then $H^-$ is strongly sound and $\hat{w}_k(H)
 = \langle 0,0,p_k(H) \rangle$. In this case, $M$ is of type 1A, 
but $\pi$ may or may not preserve $\rho_k(H)$.
\item If $H$ is of type 1B, then $\rho_k(H) = \hat{\rho}_k(H)$ and hence
 $\hat{w}_k(H) = w_k(H)$. $M$ may have type 1B or type 2. $M$ is of type 1B 
if and only if $\hat{w}_k(M)=w_k(M)$ if and only if $\pi(w_k(H)) = w_k(M)$.
\item If $H$ is of type 2, then $M$ is of type 1B or 2.
\end{enumerate}

The existence of a nearly elementary $\pi \colon H \to M$ does not imply 
that $H$ is a premouse when $\textrm{deg}(H)=\textrm{deg}(M)=0$. 
If $M$ is a passive lpm, then so is $H$, and there is no problem.
If $M$ is extender-active, then it could be that $H$ is only a protomouse,
in that its last extender predicate is not total. The problem here is
solved by the parts of the Schimmerling-Zeman
proof related to protomice, which work in our context. Finally, we must consider
the case that $M$ is branch-active.

   \begin{definition}\label{qformula} A {\em $rQ$-formula} of $\mathcal{L}_1$ is a
           conjunction of formulae\index{$rQ$-formula}
           of the form
   \begin{itemize}
                   \item[(a)]$\forall u \exists v ( u \subseteq v \wedge \varphi)$, where $\varphi$
           is a $\Sigma_1$ formula of $\mathcal{L}_1$ such that $u$ does not occur
           free in $\varphi$,
   \end{itemize}
 or of the form
           \begin{itemize}
                   \item[(b)]``$\df \neq \emptyset$, and for
           $\mu = \crit(\df)^+$, there are cofinally many $\xi < \mu$ such that
           $\psi$", where $\psi$ is $\Sigma_1$.
           \end{itemize}
   \end{definition}

Formulae of type (a) are usually called $Q$-formulae. Being a passive
lpm can be expressed by a $Q$-sentence, but in order to express being
an extender-active lpm, we need type (b) clauses, in order to say
that the last extender is total. $rQ$ formulae are $\Pi_2$, and hence
preserved downward under $\Sigma_1$-elementary maps. They are preserved
upward under $\Sigma_0$ maps that are {\em strongly cofinal}.

\begin{definition}\label{stronglycofinal}
        Let $M$ and $N$ be $\mathcal{L}_0$-structures
        and $\pi \colon M \to N$ be $\Sigma_0$ and cofinal. We say
        that $\pi$ is {\em strongly cofinal} iff $M$ and $N$ are not extender
        active, or $M$ and $N$ are extender active, and
        $\pi `` (\crit(\df)^+)^M$ is cofinal in $(\crit(\df)^+)^N$.
\end{definition}

It is easy to see that

\begin{lemma}\label{rqpreserved} $rQ$ formulae are preserved downward
        under $\Sigma_1$-elementary maps, and upward under strongly
        cofinal $\Sigma_0$-elementary maps.
\end{lemma}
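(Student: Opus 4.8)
The plan is to treat the two directions separately and, within each, to handle a single $rQ$-conjunct of type (a) or (b) at a time: a conjunction of formulae is preserved (downward or upward, under either kind of map) iff each conjunct is, so nothing is lost.

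\emph{Downward under $\Sigma_1$-elementary $\pi\colon H\to M$.} Here I would simply note that each $rQ$-conjunct is (equivalent to) a $\Pi_2$ formula $\forall\vec u\,\sigma(\vec u)$ with $\sigma$ a $\Sigma_1$ formula of $\mathcal{L}_1$, and then invoke the routine preservation fact. For a type (a) conjunct this is immediate, since $\exists v(u\subseteq v\wedge\varphi)$ is $\Sigma_1$ ($u\subseteq v$ being $\Sigma_0$, $\varphi$ being $\Sigma_1$, with $u$ not free in $\varphi$). For a type (b) conjunct, ``$\dot F\neq\emptyset$'' is $\Sigma_1$, ``$y=\crit(\dot F)$'' is $\Sigma_0$, and ``cofinally many $\xi<\crit(\dot F)^+$ satisfy $\psi(\xi)$'' can be rewritten as
\[
\forall\eta\,\forall f\bigl(\,f\colon\eta\to\crit(\dot F)\text{ injective}\ \rightarrow\ \exists\xi\,\exists g\,(\eta\le\xi\wedge g\colon\xi\to\crit(\dot F)\text{ injective}\wedge\psi(\xi))\,\bigr),
\]
whose matrix is $\Sigma_1$ since the hypothesis of the implication is $\Sigma_0$; conjoining this $\Pi_2$ formula with the $\Sigma_1$ sentence ``$\dot F\neq\emptyset$'' and absorbing the latter keeps the whole thing $\Pi_2$. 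Finally, if $M\models\forall\vec u\,\sigma(\vec u)$ and $\vec b\in H$, then $M\models\sigma(\pi(\vec b))$, so $H\models\sigma(\vec b)$ by $\Sigma_1$-elementarity; as $\vec b$ is arbitrary, $H\models\forall\vec u\,\sigma(\vec u)$.

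\emph{Upward under strongly cofinal $\Sigma_0$-elementary $\pi\colon M\to N$.} For a type (a) conjunct $\forall u\exists v(u\subseteq v\wedge\varphi)$: given $u_0\in N$, cofinality gives $a\in M$ with $u_0\in\pi(a)$, hence $u_0\subseteq\bigcup^N\pi(a)=\pi(\bigcup^M a)$ by $\Sigma_0$-elementarity; applying the $M$-statement to $\bigcup^M a$ yields $v\in M$ with $\bigcup^M a\subseteq v$ and $M\models\varphi(v)$, so $u_0\subseteq\pi(v)$ and $N\models\varphi(\pi(v))$ by upward preservation of $\Sigma_1$ under $\Sigma_0$-elementary maps. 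Thus $N\models\exists v(u_0\subseteq v\wedge\varphi)$, and $u_0$ was arbitrary. Note only cofinality, not strong cofinality, is needed for type (a).

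The one step that needs genuine care is the type (b) conjunct under $\pi$, and it is exactly where the ``strongly cofinal'' hypothesis is designed to be used. Suppose $M\models\dot F\neq\emptyset$, with $\kappa=\crit(\dot F^M)$, $\mu^M=(\kappa^+)^M$, and $\{\xi<\mu^M:M\models\psi(\xi)\}$ cofinal in $\mu^M$. Since ``$\dot F\neq\emptyset$'' is $\Sigma_1$, $N$ is extender-active; since ``$y=\crit(\dot F)$'' is $\Sigma_0$, $\pi(\kappa)=\crit(\dot F^N)=:\kappa'$. I would first check $\pi``\mu^M\subseteq\mu^N:=(\kappa'^+)^N$: for $\xi<\mu^M$ there is in $M$ an injection $\xi\to\kappa$, and ``being an injection from $\pi(\xi)$ into $\kappa'$'' is $\Sigma_0$, so $N$ sees $\pi(\xi)$ injected into $\kappa'$, whence $\pi(\xi)<\mu^N$. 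By Definition \ref{stronglycofinal}, $\pi``\mu^M$ is moreover cofinal in $\mu^N$. Now, given $\eta_0<\mu^N$, pick $\xi_1<\mu^M$ with $\eta_0<\pi(\xi_1)$, then pick $\xi_2\in[\xi_1,\mu^M)$ with $M\models\psi(\xi_2)$ (possible since $\mu^M$ is a limit ordinal and the $\psi$-set is cofinal in it); then $\pi(\xi_2)<\mu^N$, $\pi(\xi_2)\ge\pi(\xi_1)>\eta_0$, and $N\models\psi(\pi(\xi_2))$ by upward $\Sigma_1$-preservation. So $\{\eta<\mu^N:N\models\psi(\eta)\}$ is cofinal in $\mu^N$, i.e.\ $N$ satisfies the type (b) conjunct. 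I expect this matching-up of the two successor-of-critical-point ordinals $\mu^M$ and $\mu^N$ to be the only delicate point; the rest is bookkeeping with formula complexity.
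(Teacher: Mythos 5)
Your proof is correct and follows exactly the route the paper intends: the lemma is left unproved there (it is introduced by ``It is easy to see that''), the downward direction being just the observation that $rQ$-formulae are $\Pi_2$, and the upward direction being precisely the cofinality argument for type (a) clauses together with the strong-cofinality argument matching $(\crit(\dot F)^+)^M$ with $(\crit(\dot F)^+)^N$ for type (b) clauses, as you give. The only caveat is your claim that ``$y=\crit(\dot F)$'' is literally $\Sigma_0$, which depends on the details of the amenable coding of $\dot F$; this is harmless, since a strongly cofinal map is $\Sigma_0$-elementary and cofinal, hence $\Sigma_1$-elementary, which already suffices both for $\pi(\crit(\dot F^M))=\crit(\dot F^N)$ and for the upward preservation of $\psi$ that you use.
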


\begin{lemma} \label{lem:B_active_case}
        \begin{itemize}
\item[(a)] There is a $Q$-sentence $\varphi$ of $\mathcal{L}_1$
such that for all transitive $\mathcal{L}_0$ structures $M$,
$M \models \varphi$ iff $M$ is a passive lpm.
\item[(b)] There is a $rQ$-sentence $\varphi$ of $\mathcal{L}_1$
such that for all transitive $\mathcal{L}_0$ structures $M$,
$M \models \varphi$ iff $M$ is an extender-active lpm.
\item[(c)] There is a $Q$-sentence $\varphi$ of $\mathcal{L}_1$
such that for all transitive $\mathcal{L}_0$ structures $M$,
$M \models \varphi$ iff $M$ is a potential branch-active lpm.
        \end{itemize}
\end{lemma}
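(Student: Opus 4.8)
The plan is to exhibit, for each of (a), (b), (c), an explicit sentence of $\mathcal{L}_1$ of the required syntactic complexity whose transitive models are exactly the structures described, and then to verify the two directions (model $\Rightarrow$ property, property $\Rightarrow$ model) by unwinding the definitions of "passive lpm", "extender-active lpm", and "potential branch-active lpm" given in the preceding subsections. In each case the real content is bookkeeping: one must check that each clause in the relevant definition is either $\Sigma_0$ over a transitive $\mathcal{L}_0$ structure (hence expressible outright), or can be massaged into the $\forall u\,\exists v\,(u\subseteq v\wedge\varphi)$ shape of a $Q$-formula, or — only in case (b) — requires a type (b) $rQ$-clause to say that $\dot F$ is total.

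For (a), I would note that "$M$ is a passive lpm" asks that $\dot F^M=\emptyset$, $\dot B^M=\emptyset$, and that the $\mathcal{L}_0$-hierarchy below $o(M)$ is a legitimate pure-extender premouse hierarchy: acceptability, amenability, and coherence/ISC for each $N\unlhd M$ with $\dot F^N\neq\emptyset$. The acceptability and amenability conditions are $Q$ (they say "for every level, there exists a larger level witnessing \ldots" together with $\Sigma_1$ facts), and $\dot F^M=\dot B^M=\emptyset$ is $\Sigma_0$; conjoining gives a $Q$-sentence. This is essentially the pure-extender case already implicit in Zeman's book, adapted to the presence of the (empty) strategy predicates. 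For (c), "potential branch-active lpm" is, by Definition \ref{branchactivelpm}(1) together with clauses (a)--(c) of the definition of $B$-active, the assertion that there is a largest $\eta<o(M)$ with $M|\eta\models\mathsf{KP}$, that the $<_N$-least undefined $N$-tree $s$ exists and is by $\Sigma^N$, that $o(M)\le o(N)+\lh(\T(s))$, that the remaining structure below $\eta$ is a passive lpm hierarchy, and that $b^M$ is a cofinal branch of $\T(s)\within\nu^M$. As noted in the text right after Definition \ref{branchactivelpm}, $\eta^M$ is a $\Sigma_0^M$ singleton (it is the least element of $\dot B^M$), hence so is $s^M$; once these parameters are $\Sigma_0$-definable, "$b^M$ is a cofinal branch" and "$o(M)\le o(N)+\lh(\T(s))$" and "$s$ is $<_N$-least undefined and by $\Sigma^N$" all become $\Sigma_0$ or $Q$ over $M$. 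Conjoining with the $Q$-sentence from part (a) applied to $M|\eta^M$ (relativized: "for the largest admissible $\eta$, $M|\eta$ satisfies \ldots") yields a $Q$-sentence. Note crucially that we use \emph{potential} branch-active, so honesty is \emph{not} required here — honesty is the one clause that is not $Q$ (it is a Boolean combination of $\Sigma_2$ sentences, per the remark after Definition \ref{branchactivelpm}), and leaving it out is exactly why (c) can be $Q$ rather than merely $rQ$.

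For (b), "$M$ is an extender-active lpm" additionally requires $\dot F^M=F\neq\emptyset$ with $F$ a genuine extender: $M\models$ "$\crit(F)^+$ exists", $o(M)=i^M_F(\mu)$ for $\mu=\crit(F)^+$, coherence $i^M_F(\dot E^M)\within o(M)+1=(\dot E^M)^\frown\langle\emptyset\rangle$, the Jensen ISC with $\dot\gamma$ naming the largest whole initial segment, and — the point — that $F$ is \emph{total}, i.e.\ for $\mu=\crit(F)^+$ every initial segment indexed below $\mu$ lies in the domain. Totality is precisely what a type (b) clause of Definition \ref{qformula} expresses: "$\df\neq\emptyset$, and for $\mu=\crit(\df)^+$ there are cofinally many $\xi<\mu$ such that $\psi$" with $\psi$ a $\Sigma_1$ statement saying the relevant fragment of $F$ is present. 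The remaining conditions on $F$ (amenable coding, ISC, coherence expressed via $i^M_F$) are $\Sigma_1$ or $Q$ — here one uses that amenability lets us compute $i^M_F$ on the relevant sets inside $M$ — and conjoining with the passive-below-$o(M)$ part (again a $Q$-sentence from (a)) gives an $rQ$-sentence. The remark in the text that being extender-active "is not a property of the premouse with its top extender removed" is the intuition for why totality genuinely needs clause (b): it is a statement about $\crit(F)^+$ computed \emph{with} $F$ present, not downward-absolute.

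The main obstacle I anticipate is not any single clause but the uniform massaging of the coherence and ISC requirements into $Q$/$rQ$ form: these are naturally phrased in terms of the ultrapower map $i^M_F$ and of the ordering $<_N$ on $N$-trees, and one must check that, over a transitive \emph{amenable} $\mathcal{L}_0$ or $\mathcal{L}_1$ structure, the needed instances of these are $\Sigma_1$ (so that quantifying "for all smaller levels, there is a larger level witnessing a $\Sigma_1$ fact" stays within the $Q$-template) rather than requiring genuine $\Sigma_2$ quantifier alternation. Equivalently, the work is in verifying that each defining clause "localizes": its truth at $M$ is witnessed by (existence of) a bounded-rank object together with a $\Sigma_1$ fact about it. Once that localization is checked clause-by-clause — and it is, because amenability is built into the definition of lpm and because $\eta^M,s^M$ are $\Sigma_0^M$ singletons — assembling the three sentences and their equivalences with the stated properties is routine. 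I would organize the proof as three short paragraphs, one per clause, each writing down the sentence and then giving the two-line argument for each direction, citing \ref{rqpreserved} only implicitly (it is used later, not here).
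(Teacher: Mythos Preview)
Your proposal is correct and takes essentially the same approach as the paper's own (sketch) proof: identify $\eta^M$ and $s^M$ as $\Sigma_0^M$ singletons via the least element of $\dot B$, check that each remaining clause localizes to $\Sigma_1$, $\Pi_1$, or $Q$ form, and note that honesty is precisely the clause omitted from ``potential'' (hence (c) stays $Q$), while totality of $\dot F$ is the unique clause forcing a type (b) $rQ$-conjunct in (b). Your treatment is in fact more detailed than the paper's, which omits (a) and (b) entirely and gives only a few lines for (c); the one minor difference is that the paper observes the ``rest of $M$ is an extender-passive lpm'' condition is already $\Pi_1$ (hence $Q$) rather than invoking part (a) relativized, but this is cosmetic.
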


\begin{proof} (Sketch.) We omit the proofs of (a) and (b).
For (c), note that ``$\dot{B} \neq \emptyset$" is
$\Sigma_1$. One can go on then to say with a $\Sigma_1$ sentence
that if $\eta$ is least in $\dot{B}$,
then $M|\eta$ is admissible, and $s^M$ exists. One can say with a $\Pi_1$
sentence that $\lbrace \alpha \mid \dot{B}(\eta + \alpha) \rbrace$ is a branch
of $\T(s)$, perhaps of successor order type. One can say that
$\dot{B}$ is cofinal in the ordinals with a $Q$-sentence. Collectively,
these sentences express the conditions on potential lpm-hood related to $\dot{B}$.
That the rest of $M$ constitutes an extender-passive lpm can be expressed
by a $\Pi_1$ sentence.
\end{proof}

\begin{corollary} \label{preservation}
        \begin{itemize}
\item[(a)]If $M$ is a passive ( resp. extender-active, potential branch-active )
lpm, and $\Ult_0(M,E)$ is wellfounded,
 then $\Ult_0(M,E)$ is a passive (resp.extender-active, potential branch-active ) lpm.
\item[(b)]
Suppose that $M$ is a passive (resp. extender-active, potential branch-active) lpm,
 and $\pi \colon H \to M$ is $\Sigma_1$-elementary; then $H$ is a
 passive (resp. potential branch-active)  lpm.
\item[(c)] Let $\textrm{deg}(M)=\textrm{deg}(H) = 0$, and $\pi \colon H \to M$ be $\Sigma_2$ elementary;
                          then $H$ is a branch-active lpm iff $M$ is a branch-active lpm.
        \end{itemize}
\end{corollary}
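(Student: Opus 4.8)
We obtain each part by matching the logical form of the relevant defining sentence (Lemma~\ref{lem:B_active_case}) against the preservation properties of $rQ$-formulae (Lemma~\ref{rqpreserved}); there is essentially no other idea involved.

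For (a), I would use that the ultrapower map $i = i^M_E \colon M \to \Ult_0(M,E)$ is $\Sigma_0$-elementary and cofinal. If $M$ is passive or potential branch-active then $M$ is not extender-active, and neither is $\Ult_0(M,E)$ (its top extender predicate is empty since $\dot{F}^M = \emptyset$), so $i$ is strongly cofinal by Definition~\ref{stronglycofinal}; since the defining sentence is a $Q$-sentence by Lemma~\ref{lem:B_active_case}(a),(c), hence an $rQ$-sentence, Lemma~\ref{rqpreserved} transfers it to $\Ult_0(M,E)$. If $M$ is extender-active, the defining sentence is an $rQ$-sentence (Lemma~\ref{lem:B_active_case}(b)), and to apply Lemma~\ref{rqpreserved} I must check that $i$ is strongly cofinal, i.e.\ that $i``\bigl((\crit(\dot{F}^M)^+)^M\bigr)$ is cofinal in $(\crit(\dot{F}^{\Ult_0(M,E)})^+)^{\Ult_0(M,E)}$. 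Using $o(M) = i^M_{\dot{F}^M}\bigl((\crit(\dot{F}^M)^+)^M\bigr)$ together with a case split on $\crit(E)$ versus $\crit(\dot{F}^M)$ (which pins down $\crit(i(\dot{F}^M))$), this should reduce to the continuity of $i$ at $(\crit(\dot{F}^M)^+)^M$, just as in the pure extender setting (cf.\ \cite{Zeman}); granted that, Lemma~\ref{rqpreserved} again gives the conclusion.

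For (b), I would note that $rQ$-formulae are $\Pi_2$ and hence preserved downward under $\Sigma_1$-elementary maps (Lemma~\ref{rqpreserved}); applying this to the $Q$- or $rQ$-sentences of Lemma~\ref{lem:B_active_case}(a),(b),(c) shows that $H$ is passive (resp.\ extender-active, potential branch-active) whenever $M$ is. For (c), I would write ``$X$ is a branch-active lpm'' (for a degree-$0$ structure $X$) as the conjunction of ``$X$ is a potential branch-active lpm'' --- a $Q$-sentence by Lemma~\ref{lem:B_active_case}(c), hence $\Pi_2$ --- with ``$X$ is honest'', which by the remark following Definition~\ref{branchactivelpm} is a Boolean combination of $\Sigma_2$ sentences; this decomposition is legitimate by Definition~\ref{branchactivelpm}(1)--(3). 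Since $\textrm{deg}(M) = \textrm{deg}(H) = 0$ and a $\Sigma_2$-elementary map preserves $\Sigma_2$ sentences, $\Pi_2$ sentences, and Boolean combinations thereof in both directions, $H$ is a branch-active lpm iff $M$ is.

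I expect the only step with genuine content to be the strong-cofinality verification in the extender-active case of (a); this is a routine adaptation of the fact, already present for pure extender premice, that a $\Sigma_0$ ultrapower preserves being extender-active. Everything else is just bookkeeping of syntactic complexity against the two cited lemmas.
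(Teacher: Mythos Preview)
Your proposal is correct and follows essentially the same approach as the paper: invoke Lemma~\ref{rqpreserved} (upward under strongly cofinal $\Sigma_0$ maps, downward under $\Sigma_1$ maps) against the syntactic classification of Lemma~\ref{lem:B_active_case}, and for (c) use that honesty is a Boolean combination of $\Sigma_2$ sentences. The paper's proof is in fact even terser than yours---it simply asserts strong cofinality of the ultrapower map without comment---so your explicit flagging of that verification in the extender-active case of (a) is, if anything, more careful than the original.
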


        \begin{proof}
$rQ$-sentences are preserved upward by strongly cofinal $\Sigma_0$
 embeddings, so we have (a). They are $\Pi_2$, hence preserved downward by $\Sigma_1$-
 elementary embeddings, so we have (b).

It is easy to see that honesty is expressible by a Boolean combination
of $\Sigma_2$ sentences, so we get (c).

        \end{proof}

\begin{remark}{\rm It could happen that $M$ is a branch-active lpm,
$\pi \colon H \to M$ is cofinal and elementary (with $\textrm{deg}(M)=\textrm{deg}(H)=0$), and $b^M$
is not cofinal in $\T(s^M)$, but $b^H$ is cofinal in $\T(s^H)$. If
we were using the branch coding in the original version of
 \cite{normalization_comparison}, then $\db^M = \emptyset$, so $\db^H =\emptyset$, so
$H$ is not an lpm.}
\end{remark}

Part (c) of Lemma \ref{lem:B_active_case} is not particularly useful. In general,
our embeddings will preserve honesty of a potential branch active lpm  $M$ 
because $\dot{\Sigma}^M$ and $\dot{B}^M$ are determined by a complete iteration
strategy for $M$ that has strong hull condensation. So the more useful preservation
theorem in the branch-active case applies to {\em hod pairs}, rather than to hod premice.

\subsection{Plus trees}

The iteration trees we use below are $\lambda$-separated plus trees. 
These notions are defined in detail in \cite[Section 4.4]{normalization_comparison} and we briefly 
summarize the relevant concepts here. Suppose $M$ is a pfs premouse and $E$ is an extender on the $M$-sequence, 
then
\begin{itemize}
\item $E^+$ is the extender with generators $\lambda_E\cup \{\lambda_E\}$ that represents 
$i_F^{\textrm{Ult}(M,E)}\circ i^M_E$, where $F$ is the order zero total measure on $\lambda_E$ in Ult$(M,E)$.
\item $\hat{\lambda}(E^+) = \lambda_E$.
\item lh$(E^+) = \textrm{lh}(E)$.
\item $o(E^+) = (\textrm{lh}(E)^+)^{\textrm{Ult}(M,E^+)}$.
\end{itemize}
We say that an extender $G$ is \textit{of plus type} if $G = E^+$ for some extender $E$ 
on the sequence of a pfs premouse $M$; we let $G^- = E$. In general, if $E$ is an extender
 (of plus type or not)
\begin{itemize}
\item we let $\varepsilon(E) = \textrm{lh}(E)$ if $E$ is of plus type; otherwise,
 $\varepsilon(E) = \lambda(E)$.
\item if $E$ is on the sequence of some premouse, then 
\begin{enumerate}[(i)]
\item $\hat{\lambda}(E)= \lambda(E) = \hat{\lambda}(E^+)$,
\item $E^- = E$.
\end{enumerate}
\end{itemize}
The {\em extended $M$-sequence} consists of all $E$ such that
$E^-$ is on the $M$-sequence.

A \textit{plus tree} $\itT$ on a pfs premouse is like an ordinary normal tree, except that
\begin{itemize}
\item[(i)] We only require that $E_\alpha^\itT$ be on the extended
$\itM_\alpha^\itT$ sequence,
\item[(ii)] $E_\alpha^\itT$ is applied to the longest possible initial
segment of $\itM_\beta^\itT$, where $\beta$ is least such that
$\crit(E_\alpha^\itT) < \hat{\lambda}(E_\beta^\itT)$, and
\item[(iii)] the length-increasing condition is weakened slightly.\footnote{The length-increasing condition
is enough to guarantee that $T\tpred(\alpha+1)$ is the least $\beta$ such that
$\crit(E_\alpha^\itT) < \vep(E_\beta^\itT)$. Thus none of the generators of a plus extender $E$,
including the generator $\hat{\lambda}(E)$, are moved later on a branch in which $E$ has been used.}
\end{itemize}
See \cite[Definition 4.4.3]{normalization_comparison} for the complete definition.

 A \textit{$\lambda$-separated} tree is a plus tree in which every extender 
used along the tree is of plus type. The weakening in (iii) above does not
affect $\lambda$-separated trees; that is,
the lengths of the extenders used in a $\lambda$-separated tree are strictly increasing. Moreover, quasi-normalization
coincides with embedding normalization on stacks of $\lambda$-separated plus trees. \cite[Section 8.1]{normalization_comparison} 
shows that $\lambda$-separated trees are enough for comparisons. For these and other reasons it is convenient to restrict 
one's attention to the way an iteration strategy $\Sigma$ acts on stacks of $\lambda$-separated trees.
By Lemma 9.3.2 of \cite{normalization_comparison}, if $(P,\Sigma)$ is a mouse pair, then $\Sigma$ is determined
by its action on countable $\lambda$-separated trees. 

\subsection{Mouse pairs and Dodd-Jensen}

Two of the main definitions from \cite{normalization_comparison} are 

\begin{definition} $(M,\Omega)$ is a 
\textit{pure extender pair} with scope $H_\delta$ iff
\begin{itemize}
\item[(a)] $M$ is a pure extender pfs premouse.
\item[(b)] $\Omega$ is a complete $(\omega,\delta)$ iteration strategy for $M$\footnote{See
\cite[4.6.3]{normalization_comparison}.}, and
\item $\Omega$ is internally lift-consistent, quasi-normalizes well, and has strong
	hull condensation.\footnote{See \cite[5.4.4, 7.1.1, 7.1.9]{normalization_comparison}.}
\end{itemize}
\end{definition}

\begin{definition} \label{lbrhodpairdef}$(M,\Omega)$ is a 
\textit{least branch hod pair (lbr hod pair)} with scope $H_\delta$ iff
\begin{itemize}
\item[(a)] $M$ is an lpm.
\item[(b)] $\Omega$ is a complete $(\omega,\delta)$ iteration strategy for $M$,
\item[(c)] $\Omega$ is internally lift-consistent, quasi-normalizes well, and has strong
	hull condensation, and
\item[(d)] $\Omega$ is pushforward consistent, that is if $s$ is by $\Omega$ with last model $N$, then
 $\dot{\Sigma}^N\subseteq \Omega_s$, where $\Omega_s(t) = \Omega(s^\smallfrown t)$.\footnote{See
\cite[9.2.1]{normalization_comparison}.}
\end{itemize}
\end{definition}

\begin{definition} $(M,\Omega)$ is a {\em mouse pair} iff it is either a
pure extender pair or an lbr hod pair.
\end{definition}

\begin{remark}{\rm \cite{normalization_comparison} required that  $M$ have
type 1 in order for $(M,\Omega)$ to qualify as a mouse pair, because that
was sufficient generality for its purposes. Here we are allowing $M$ to have
type 2.}
\end{remark}

Included in \ref{lbrhodpairdef}(b) is the requirement that all $\Omega$-iterates of 
$M$ be least branch premice. Because of our honesty requirement
in the branch-active case, this no longer follows automatically from
the elementarity of the iteration maps. That the iterates of $M$ are honest comes out of the
construction of $\Omega$, as a consequence of pushforward consistency.

If  $(M,\Omega)$
is an lbr hod pair  and $\pi \colon H \to M$ is 
nearly elementary, then $\Omega^\pi$ is the {\em pullback strategy},
given by 
\begin{center}
$\Omega^\pi(s) = \Omega(\pi s).$
\end{center} 
We show now that, except in the protomouse case,
$(H,\Omega^\pi)$ is an lbr hod pair.\footnote{This  is
Lemma 9.2.3 of \cite{normalization_comparison}.}

\begin{lemma}\label{preservelbrhodpairs1} Let $(M,\Omega)$ be an lbr
hod pair with scope $H_\delta$, and let $\pi \colon H \to M$ be nearly
elementary. Suppose that one of the following holds:
\begin{itemize}
\item[(a)] $M$ is passive or branch-active, or
\item[(b)] $H$ is an lpm.
\end{itemize}
Then $(H,\Omega^\pi)$ is an lbr hod pair with scope $H_\delta$.
\end{lemma}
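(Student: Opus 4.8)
The plan is to verify the four defining clauses of ``lbr hod pair'' (Definition \ref{lbrhodpairdef}) for $(H,\Omega^\pi)$, reducing each to a known fact about $(M,\Omega)$ together with the near elementarity of $\pi$. First I would dispense with clause (a), that $H$ is an lpm. Under hypothesis (b) this is assumed. Under hypothesis (a), if $M$ is passive then by Corollary \ref{preservation}(b) (applied via the fact that nearly elementary maps with $\textrm{deg}(H)=\textrm{deg}(M)$ restrict to $\Sigma_0$-elementary, indeed sufficiently elementary, maps on the reducts) $H$ is a passive lpm; if $M$ is branch-active, then $M$ is a potential branch-active lpm, so by Corollary \ref{preservation}(b) $H$ is a potential branch-active lpm, and one must still argue honesty of $H$. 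Honesty is where hypothesis (a) does real work: since $\Omega$ is an lbr hod pair strategy it is pushforward consistent, so $\dot\Sigma^M$ and $\dot B^M$ are read off a strategy with strong hull condensation, and the pullback of that data under $\pi$ is exactly $\dot\Sigma^H,\dot B^H$; since $\Omega^\pi(s)=\Omega(\pi s)$ agrees with the branch $\pi$ pulls back, honesty of $H$ follows — this is the ``more useful preservation theorem in the branch-active case'' alluded to in the text just before Section 2.4. (In the passive case honesty is vacuous.)

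Next I would handle the iteration-strategy clauses. Clause (b), that $\Omega^\pi$ is a complete $(\omega,\delta)$ iteration strategy for $H$ (with all iterates lpm's), is essentially the definition of pullback strategy: given a stack $s$ on $H$ that is by $\Omega^\pi$, the copied stack $\pi s$ on $M$ is by $\Omega$, hence has a well-defined branch/continuation, which we pull back. That all $\Omega^\pi$-iterates of $H$ are lpm's needs the honesty argument again, but now propagated along iteration maps: since copying commutes with the iteration maps and the $M$-side iterates are honest lpm's (built into $(M,\Omega)$ being an lbr hod pair), the $H$-side iterates are honest by pulling back the branch data, exactly as in the base step. For clause (c) — internal lift-consistency, quasi-normalizing well, strong hull condensation — each of these is a property known to be inherited by pullback strategies; strong hull condensation of $\Omega^\pi$ from strong hull condensation of $\Omega$ is standard (a hull of a tree by $\Omega^\pi$ copies to a hull of a tree by $\Omega$), and the analogous inheritance for internal lift-consistency and quasi-normalizing well is recorded in \cite{normalization_comparison} (this is the content of its Lemma 9.2.3). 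Clause (d), pushforward consistency of $\Omega^\pi$, says that for $t$ by $\Omega^\pi$ with last model $P$, $\dot\Sigma^P\subseteq (\Omega^\pi)_t$; this again follows by copying $t$ to $\pi t$ by $\Omega$ with last model $\pi$-image, applying pushforward consistency of $\Omega$ there, and pulling back, using that the $\dot\Sigma$-predicate on the last model is the pullback of the $\dot\Sigma$-predicate on the copied last model.

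Finally the scope: $\Omega^\pi$ has scope $H_\delta$ because $\Omega$ does and pullback does not change the scope. The main obstacle, and the only place the split into hypotheses (a)/(b) matters, is the honesty of $H$ and of the $\Omega^\pi$-iterates of $H$ in the branch-active case: near elementarity alone does not preserve honesty (as the Remark following Corollary \ref{preservation} shows), so one genuinely needs to use that $\dot B^M$ is governed by a strategy with strong hull condensation and that $\Omega^\pi$ is defined so as to reproduce that governance on the $H$-side. Everything else is routine pullback bookkeeping, and I would present it at that level of detail, citing \cite[Lemma 9.2.3]{normalization_comparison} for the inheritance of the strategy properties in clause (c).
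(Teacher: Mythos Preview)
Your proposal is correct and follows essentially the same approach as the paper: verify $H$ is an lpm (using Corollary~\ref{preservation} plus an honesty argument via strong hull condensation in the branch-active case), defer clause (c) to \cite{normalization_comparison}, and prove pushforward consistency by copying up, applying it for $\Omega$, and pulling back. The paper's honesty argument is slightly more explicit than yours---it uses an induction to know $(H|\eta^H,\Omega^\pi_{H|\eta^H})$ is already an lbr hod pair (so $\T(s^H)$ is by $\Omega^\pi$), and then shows $\pi(\T(s^H)\!\restriction\!\nu^H\,{}^\frown b^H)$ is a pseudo-hull of $\pi(\T(s^H)\!\restriction\!\nu^H)\,{}^\frown b^M$, invoking strong hull condensation of $\Omega$ directly---but your sketch identifies the same mechanism.
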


\begin{proof} We show first that $H$ is an lpm. If (b) holds,
this is rather easy. If $M$ is passive, we can apply
(a) of \ref{lem:B_active_case}, noting that $Q$ sentences go down
under nearly elementary embeddings. So let us assume that
$M$ is branch-active.

By (b) of \ref{lem:B_active_case}, $H$ is a potential branch active
lpm. So we just need to see that $H$ is honest.
Let $\nu = \nu^H$, $b = b^H$, and $\T = \T(s^H)$. If $\nu = lh(\T)$,
there is nothing to show,
so assume $\nu < lh(\T)$. We must show
that $b = [0,\nu)_{T}$. We have by induction that for $N=H|\eta^H$,
$(N,\Omega^\pi_N )$ is an lbr hod pair. Thus $\T$ is by $\Omega^\pi$, and so we just need to see
that for $\U = \T \within \nu$ and $\mathcal{W} =\U ^\frown b$,
$\mathcal{W}$ is by $\Omega^\pi$, or equivalently,
that $\pi\mathcal{W}$ is by $\Omega$. But it is easy to see that
$\pi\mathcal{W}$ is a psuedo-hull of $\pi(\U)^\frown b^M$, and $\Omega$
has strong hull condensation, so we are done.

For the proof that $(H,\Omega^\pi)$ is internally lift-consistent, normalizes well,
and has strong hull condensation, the reader should see
\cite{normalization_comparison}. We give here the proof that 
$(H,\Omega^\pi)$ is pushforward consistent, because it extends the honesty
proof given above.
 
Let $P$ be an $\Omega^\pi$ iterate of $H$ via the stack of trees
$s$. Let $Q$ be the corresponding $\Omega$ iterate of $M$ via 
$\pi s$, and let $\tau \colon P \to Q$ be the nearly
elementary copy map. Then for $\U \in P$,
\begin{align*}
\U \text{ is by } \dot{\Sigma}^P & \Rightarrow \tau(\U) \text{ is by } \dot{\Sigma}^Q\\
& \Rightarrow \tau\U \text{ is by } \Omega_{\pi s,Q}\\
& \Rightarrow \U \text{ is by } (\Omega^\pi)_{s,P},\\
\end{align*}
as desired.
 
\end{proof}

The basic results of inner model theory, such as the Comparison Lemma and the
Dodd-Jensen Lemma, are better stated and proved as results about mouse pairs
than as results about mice, with the notions of elementary submodel and
iterate adjusted so that this is possible. For example,
if $(H,\Psi)$ and $(M,\Sigma)$ are mouse pairs,
then $\pi \colon (H,\Psi) \to (M,\Sigma)$ is elementary
(resp. nearly elementary) iff $\pi$ is elementary (nearly elementary)
as a map from $H$ to  $M$, and $\Psi = \Sigma^\pi$. We say that
$(M,\Sigma)$ is an {\em iterate of $(H,\Psi)$} iff there is a stack
$s$ on $H$ such that $s$ is by $\Psi$,
and $\Sigma = \Psi_s$. It is a {\em non-dropping iterate} iff the
branch $H$-to-$M$ does not drop. Assuming $\adp$ and that
our pairs have scope $\hc$, \cite{normalization_comparison}
proves the following:
\begin{itemize}
\item[(1)] If $(M,\Sigma)$ is a mouse pair, $H$ is a premouse, and $\pi \colon H \to M$
is nearly elementary, then $(H,\Sigma^\pi)$ is a mouse pair.
\item[(2)] If $(H,\Psi)$ is a mouse pair, and $(M,\Sigma)$ is a
non-dropping iterate of $(H,\Psi)$, then
the iteration map $i_s \colon (H,\Psi)
\to (M,\Sigma)$ is elementary in the category of pairs.
\item[(3)] (Dodd-Jensen) If $(H,\Psi)$ is a mouse pair,
$(M,\Sigma)$ is an iterate of $(H,\Psi)$ via the stack $s$,
and $\pi \colon (H,\Psi) \to (M,\Sigma)$ is nearly elementary,
then
\begin{itemize}
\item[(i)] the branch $H$-to-$M$ of $s$ does not drop, and
\item[(ii)] for all $\eta < o(H)$, $i_s(\eta) \le \pi(\eta)$,
where $i_s$ is the iteration map.
\end{itemize}
\item[(4)](Mouse order) Let $(H,\Psi) \le^* (M,\Sigma)$ iff
there is a nearly elementary embedding of $(H,\Psi)$ into
some iterate of $(M,\Sigma)$; then $\le^*$ is a prewellorder
of the mouse pairs with scope $\hc$ in each of the two types.
\end{itemize}

The prelinearity of the mouse pair order is the content of the Comparison Lemma
for mouse pairs. For pure extender pairs, it is proved
in Theorem 8.4.5 of \cite{normalization_comparison}. The proof
for lbr hod pairs is basically the same; it is
Theorem 9.5.10 of \cite{normalization_comparison}.

\section{COMPARING STRATEGIES FOR UNSOUND MICE}\label{sec:type2comparison}

Let us assume $\adp$ throughout this section.

The comparison theorems of \cite{normalization_comparison}
are stated and proved for mouse pairs of stable type 1. (This includes
all mouse pairs of degree 0.) Mouse pairs that are not of stable type 1
(that is, those of type 2, or of type 1B and not projectum stable)
can
arise in our fine condensation results, so we need a comparison process that applies to them as well.
In this section we generalize the process of \cite{normalization_comparison}
to such pairs. 

The problem is that all levels of a background construction
are type 1 pairs, so a type 2 pair $(P,\Sigma)$ cannot literally iterate into a
level of a background construction.\footnote{We shall see that a projectum stable pair of type 1B
cannot do so either.} In the end, our solution to this problem
amounts to making small adjustments to the formulation
and proof of $(*)(P,\Sigma)$ in \cite{normalization_comparison}. Since there is nothing very new
involved, we shall omit some details.\footnote{\cite[4.6.12]{normalization_comparison} states
a comparison theorem for pure extender pfs mice that are are not of stable type 1. There is
no discussion of strategy
comparison in this case.}

A premouse $M$ has type 2 iff $M^-$ is almost sound, but not sound.
The paradigm example is $M=(\Ult_{k+1}(P,E),k+1)$, where $P$ has type 1B and
degree $k+1$, and 
the canonical embedding
$i_E^P$ is discontinuous at $\rho_{k+1}(P)$. The point of setting $\degr(M)=k+1$ here,
rather than $k$, is that we want to take $k+1$-ultrapowers of $M$,
and $\degr(M) = k+1$ fits better with our definitions of
elementarity and plus trees in that context. 

   However, almost soundness records a
number of properties of $M^-$ that are peculiar to this particular
way of producing unsound mice. If we are going to iterate almost
sound type 1 premice $N$ at the $\degr(N)+1$ level, and compare their iteration
strategies, then it is more natural to go all the way, and look
at $\degr(N)+1$-strategies for arbitrary solid $N$.\footnote{For one thing, it can happen
that $(P,\Sigma)$ and $(Q,\Lambda)$ are type 2 pairs, but the result of comparing them
is some $(R,\Omega)$ that is not type 2, because $\rh^-(P)$ and $\rh^-(Q)$ are
mapped to different points in $R$. In this case $R$ is two order zero ultrapowers away from 
its type 1 core, not just one.}

     So we shall consider arbitrary solid type 1 premice $N$,
and look at $\degr(N)+1$-level iterations of them.\footnote{One could look at
$\degr(N)+2$-level iterations, etc. This is done in a rudimentary way in
\cite{fsit}, and fully and systematically in Jensen's $\Sigma^*$ theory.
\cite{finetame} proves the basic results about the $\degr(N)+1$ case.}
The solidity hypothesis is useful at several points, and we are
building on the theory of \cite{normalization_comparison} rather than
trying to re-do it, so there is no value in dropping solidity.

\subsection{+1-iteration trees}

\begin{definition}\label{ult1} Let $N$ be a solid premouse
and $\degr(N)=k$, and let $E$ be an extender over $N$
such that $\crit(E) < \rho_{k+1}(N)$; then
\begin{itemize}
\item[(a)]  $\Ult_{k+1}(N,E)$
is the decoding of $\Ult_1(N^k,E)$, where the latter ultrapower
is formed using all boldface $\Sigma_1^{N^k}$ functions.
\item[(b)]
We set $\degr(\Ult_{k+1}(N,E)) = k$.
\item[C)] 
$i_E^{N^k} \colon N^k \to \Ult_1(N^k,E)$ is the canonical embedding,
and $i_E^N \colon N \to \Ult_{k+1}(N,E)$ is its completion.
\end{itemize}
\end{definition}

Familiar calculations show

\begin{lemma}\label{ult1elementarity}
Let $N$ be a solid premouse
and $\degr(N)=k$, and let $P=\Ult_{k+1}(N,E)$,
where $E$ is an extender over $N$
such that $\crit(E) < \rho_{k+1}(N)$. Let $i =  i_E^N$ and
$\hat{i} = i \restriction N^k = i_E^{N^k}$. Suppose that $P$ is wellfounded; then
\begin{itemize}
\item[(1)] $\hat{i}$ is $\Sigma_2$-elementary as a map
from $N^k$ to $P^k$.
\item[(2)] $P$ is a type 1 premouse of degree $k$, and $P^k = \Ult_1(N^k,E)$.
\item[(3)] If $\textrm{Th}_1^{N^k}(\alpha \cup \lbrace q \rbrace) \in N^k$,
then $i(\textrm{Th}_1^{N^k}(\alpha \cup \lbrace q \rbrace)) =
\textrm{Th}_1^{P^k}(i(\alpha) \cup \lbrace i(q) \rbrace).$
\item[(4)] $i(\rho_k(N)) = \rho_k(P)$ and $i(\eta_k^N) = \eta_k)$.
\item[(5)] $P$ is solid, $i(p_1(N^k)) = p_1(P^k)$, and
$\rho_1(P^k) = \sup i`` \rho_1(N^k)$.
\end{itemize}
\end{lemma}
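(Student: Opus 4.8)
The strategy is to reduce everything to the standard fine-structural theory of $k$-ultrapowers (i.e.\ ordinary $\Ult_1$ of the reduct $N^k$), which is exactly what Definition \ref{ult1} is engineered to allow, and then to track how the various fine-structural invariants are computed from $N^k$. I would proceed in the order the numbered claims are listed, since each builds on the previous. For (1), I would invoke the familiar fact (going back to the $\Sigma^*$/interpolation analysis; see \cite{fsit}, \cite{finetame}) that the canonical embedding $i_E^{N^k} \colon N^k \to \Ult_1(N^k,E)$ formed with \emph{all} boldface $\Sigma_1^{N^k}$ functions is $\Sigma_2$-elementary — the point being that $\Sigma_1$ truth is preserved by the \L o\'s argument using $\Sigma_1$ functions, and $\Pi_1$ truth (hence $\Sigma_2$) is then preserved because the image of $N^k$ is cofinal and $\Sigma_1$-elementarily embedded in a way that reflects $\Pi_1$ statements downward. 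Here the solidity hypothesis on $N$ is what guarantees $N^k$ is a legitimate coding structure to which the $\Ult_1$ machinery applies.

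For (2), I would argue that $P^k = \Ult_1(N^k, E)$ by definition of the decoding in \ref{ult1}(a), and that $P$ has type $1$ of degree $k$ by checking the soundness pattern: since $\crit(E) < \rho_{k+1}(N)$, the ultrapower does not disturb the $\le k$ projecta in a way that would break type $1$-ness, and $\hat{o}(P)$, $\degr(P)=k$ are as prescribed. Claim (3) is a routine \L o\'s-style computation: $\Sigma_1$ theories (when they are elements, i.e.\ when the relevant $\rho$ is large enough) are $\Sigma_1$-definable objects over $N^k$, so they commute with $i$ by $\Sigma_1$-elementarity of $\hat i$ on $N^k$, together with the observation that a set of the form $\Th_1^{N^k}(\alpha\cup\{q\})$ is mapped to the corresponding theory computed in $P^k$ because membership in it is uniformly $\Sigma_1$. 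Claim (4) follows because $\rho_k(N)$ and $\eta_k^N = \hat\eta_k^N$ are $\Sigma_1$-definable (as a projectum and a $\Sigma_1$-cofinality) over $N^{k-1}$ in a way that is preserved, using that $\hat i$ restricted to lower reducts is at least $\Sigma_1$-elementary and cardinal-preserving; continuity of $i$ at these points is not needed, only that their images are the corresponding invariants of $P$.

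The main obstacle is Claim (5): that $P$ is solid, that $i(p_1(N^k)) = p_1(P^k)$, and that $\rho_1(P^k) = \sup i`` \rho_1(N^k)$. The equation $\rho_1(P^k) = \sup i``\rho_1(N^k)$ is the assertion that the ultrapower is \emph{discontinuous} (or at least that the new projectum is the sup of the image) at $\rho_1(N^k) = \rho_{k+1}(N)$ — this is exactly the phenomenon that produces type $2$ premice downstream, and one verifies it by the standard argument that every element of $P^k$ below $\sup i``\rho_1(N^k)$ is $\Sigma_1^{P^k}$-definable from $i(p_1(N^k))$ and a point in the range, while $\sup i``\rho_1(N^k)$ itself is not collapsed. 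Preservation of the standard parameter, $i(p_1(N^k)) = p_1(P^k)$, together with solidity of $P$, is the genuinely delicate part: one must show the solidity witnesses for $p_1(N^k)$ push forward under $i$ to solidity witnesses for $i(p_1(N^k))$ over $P^k$, and that no smaller parameter works. I would handle this exactly as in the classical proof of preservation of solidity under ultrapowers (cf.\ the treatment in \cite{normalization_comparison} and \cite{Zeman}), using $\Sigma_2$-elementarity of $\hat i$ from (1) to transfer the relevant $\Pi_1$/$\Sigma_2$ statements asserting that each $(\textbf{p}_1)_i$ is not in the appropriate hull — that is, "familiar calculations," which is why the paper states the lemma without proof.
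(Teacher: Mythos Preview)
Your outline for (1)--(3) matches the paper. For (4) your complexity claim is too low: ``$z = \rho_k$'' is a Boolean combination of $\Pi_3$ formulae over $N^{k-1}$, not $\Sigma_1$, and the paper uses that $i$ is $\Sigma_3$-elementary as a map from $N^{k-1}$ to $P^{k-1}$ (which follows from $\Sigma_2$-elementarity on $N^k$ via decoding).

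The substantive gap is in (5). In this paper ``solid'' means parameter solid \emph{and} projectum solid \emph{and} stable (Definition~\ref{type2core}); your plan covers only parameter solidity. Projectum solidity of $P$ --- that $\rho_{k+1}(P)$ is not measurable on the $P$-sequence --- needs its own argument, and $\Sigma_2$-elementarity of $\hat{i}$ does not give it directly. The paper splits on whether $i$ is continuous at $\rho = \rho_{k+1}(N)$: if so, $\rho_{k+1}(P) = i(\rho)$ and non-measurability transfers from $N$; if not, one finds a boldface $\Sigma_1^{N^k}$ cofinal map $f \colon \kappa_E \to \rho$, observes each $f \restriction \alpha \in N$ so that $N \models ``f(\alpha)$ is singular'', and concludes that $\sup i``\rho = [\{\kappa_E\}, f]$ is singular in $P$, hence not measurable there. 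Stability of $P$ is likewise a separate check: from (4) one has $\eta_k^P = i(\eta_k^N)$, so measurability of $\eta_k^P$ in $P$ pulls back to $N$, forcing $\eta_k^N < \rho_{k+1}(N)$ by stability of $N$ and hence $\eta_k^P < \rho_{k+1}(P)$. These pfs-specific components are absent from the classical references you invoke. A smaller point: your argument that $\rho_1(P^k) \le \sup i``\rho$ is vague; the paper uses solidity of $N$ to produce a $\Sigma_1^{N^k}(p_1(N^k))$ surjection of $\rho$ onto $\rho^{+,N}$, which pushes forward to witness that $\textrm{Th}_1^{P^k}(\sup i``\rho \cup \{i(p_1(N^k))\}) \notin P^k$.
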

\begin{proof}
Of course, Los's theorem holds for $\Ult_1(N^k,E)$
for all $\Sigma_1$ formulae, and the canonical embedding $\hat{i}$
is $\Sigma_2$ elementary. So we have (1). Since
``$z = \textrm{Th}_1(\alpha \cup \lbrace q \rbrace)$" is $\Pi_2$, we also
get (3).

(2) follows from the Upward Extension of Embeddings Lemma.\footnote{ See \cite[4.3.7(b)]{normalization_comparison}.}

For (4): ``$z= \rho_k(N)$" is equivalent to $N^{k-1} \models \theta[z]$,
where $\theta$ is a Boolean combination
of $\Pi_3$ formulae.\footnote{See \cite[2.4.5--2.4.7]{normalization_comparison}.}
Since $i$ is $\Sigma_3$ elementary as a map from
$N^{k-1}$ to $P^{k-1}$, $i(\rho_k(N)) = \rho_k(P)$. By Lemma 4.3.4 of
\cite{normalization_comparison} we get that $i(\eta_k^N)=\eta_k^P$ as well.

For (5): We claim first that $\sup i`` \rho_1(N^k) \le \rho_1^{P^k}$. For if $\crit(E) \le \alpha < \rho_1(N^k)$
and $r = [a,f]^{N^k}_E$ where $f$ is $\Sigma_1^{N^k}$ in $q$, then $\textrm{Th}_1^{P^k}(i(\alpha)
\cup \lbrace r \rbrace)$ can be easily computed from 
 $\textrm{Th}_1^{P^k}(i(\alpha)
\cup \lbrace i(q) \rbrace)$, and the latter is in $P^k$ by (3). We claim next that
$\rho_1(P^k) \le \sup i`` \rho_1(N^k)$. For let $q= p_1(N^k)$ and $\rho = \rho_1(N^k)$; then
the fact that $N$ is solid implies there is a $\Sigma_1^{N^k}(q)$ map
of $\rho$ onto $\rho^{+,N}$, and since $\vep(E) < \sup i``\rho$, this yields
a $\Sigma_1^{P^k}(i(q))$ map of $\sup i``\rho$ onto $(\sup i``\rho)^{+,P}$,
so that
$\textrm{Th}_1^{P^k}(\sup i``\rho
\cup \lbrace i(q) \rbrace) \notin P^k$.

The calculations just done show that $p_1(P^k) \le_{\lex}  i(p_1(N^k))$,
and $i(p_1(N^k)) \le_{\lex} p_1(P^k)$ follows from the preservation of
solidity witnesses given by (3).

Let us show that $P$ is solid. We have already shown that $P$ is parameter solid.
Let $\rho=\rho_1(N^k)$; then since $N$ is projectum solid, $\rho$ is not measurable in $N$,
so $i(\rho)$ is not measurable in $P$, so if $i$ is continuous at $\rho$,
then $P$ is projectum solid. So assume $i$ is discontinuous at $\rho$; then we have
a boldface $\Sigma_1^{N^k}$ function $f \colon \kappa_E \to \rho$ such that
$f$ is order preserving and continuous at limits. This implies $\rho$ is a limit
cardinal in $N$, and $f \within \alpha \in N$ for all $\alpha < \kappa_E$, so that
$N \models ``f(\alpha)$ is singular" for all $\alpha$. But then
$\sup i``\rho = [\lbrace \kappa_E \rbrace,f]^{N^k}_E$ is singular is $P$, and hence
not measurable in $P$.

Finally, we must show that $P$ is stable. But $\eta_k^P = i(\eta_k^N)$,
so if $\eta_k^P$ is measurable in $P$,
then $\eta_k^N$ is measurable in $N$, so $\eta_k^N < \rho_{k+1}(N)$ by stability of
$N$, so $\eta_k^P < \rho_{k+1}(P)$ by (4), as desired.
\end{proof}

Let us turn the lemma into a definition.

\begin{definition}\label{plusoneelementary} Let $i \colon N \to P$,
where $\degr(N) = \degr(P)=k$; then we say $i$ is {\em +1-elementary}
iff $N$ and $P$ are solid premice of type 1, and conclusions (1), (3),
(4), and (5) of Lemma \ref{ult1elementarity} are satisfied by $i$, $N$, and $P$.
\end{definition}

Now we look at iteration trees $\itT$ in which such ``soundness degree +1" ultrapowers
are taken. We record the degrees of the ultrapowers that can be 
taken in the degrees $\degr^\itT(\alpha)$
of the nodes, so now $\degr^\itT(\alpha) = \degr(\itM_\alpha^\itT) +1$ is possible. 

\begin{definition}\label{+1tree} Let $M$ be a solid premouse of degree $k$;
then a {\em 1-bounded plus tree on $M$} is a system
$\la \itT,\la E_\alpha^\itT \mid \alpha+1 < \lh(\itT) \ra, \degr^\itT \ra$
such that $\dom(\degr^\itT) = \lh(\itT)$, and there are
$M_\alpha$, $i_{\alpha,\beta}$, $D$ satisfying
\begin{itemize}
\item[(1)] If $\degr^{\itT}(0) = \degr(M)$, then $\itT$ is a plus tree on $M$
in the sense of \cite[4.4.3]{normalization_comparison}, with models, embeddings,
and dropset $M_\alpha$, $i_{\alpha,\beta}$, $D$. In this case, we also put
$0 \in D$, and say that $\itT$ has a {\em small drop} at 0.
\item[(2)] If $\degr^{\itT}(0) = \degr(M)+1 = k+1$, then
$M_\alpha$, $i_{\alpha,\beta}$, $D$ satisfy
all the properties  listed in \cite[4.4.3]{normalization_comparison}, except
that clause (3)(b) is modified so that if $T \tpred(\alpha+1)=\beta$,
$[0,\beta]_T \cap D = \emptyset$, and $\crit(E_\alpha^\itT) < \rho_k(M_\beta)$, then
\begin{itemize}
\item[(a)] if $\crit(E_\alpha^\itT) < \rho_{k+1}(M_\beta)$, then
$M_{\alpha+1} = \Ult_{k+1}(M_\beta,E_\alpha^\itT)$, and
\item[(b)] if $\rho_{k+1}(M_\beta) \le \crit(E_\alpha^\itT)$, then
$M_{\alpha+1} = \Ult_{k}(M_\beta,E_\alpha^\itT)$. 
\item[(c)] In  case (b), 
we put $\alpha+1 \in D^\itT$ and say that $\itT$ has a {\em small drop}
at $\alpha+1$.
\end{itemize}
\item[(3)] $\beta$ is a {\em +1-node} of $\itT$ iff $[0,\beta]_T \cap D^\itT
=\emptyset$. If $\beta$ is a +1-node, then we set
$\degr^\itT(\beta) = \degr(M_\beta)+1$. Otherwise, we set
$\degr^{\itT}(\beta)=\degr(M_\beta)$.
\item[(4)] $\itT$ is {\em 1-maximal} iff 0 is a +1-node of $\itT$.
If $\itT$ is not 1-maximal, then we say it is {\em 0-bounded}.
\end{itemize}
\end{definition}

If $\itT$ is 1-bounded, there is at most one small drop along any branch,
and if there is a small drop, it must be the first drop along that branch.
We call a drop $\alpha+1 \in D^\itT$ {\em large} iff it is not small.

\begin{remark}\label{largedropunsound}{\rm It is easy to see that
$\itM_\alpha^\itT$ is unsound (i.e. not $\degr(\itM_\alpha^\itT)$ sound)
iff $[0,\alpha]_T$ has a large drop.}
\end{remark}

In a 1-bounded tree, all the drops are forced, including the small ones,
except possibly a small drop
at 0. If there is such a drop, then $\itT$ is 0-bounded, and therefore it
is just a plus tree in
the sense of \cite{normalization_comparison}. In general, a 1-maximal
plus tree may not be a plus tree in the sense of \cite{normalization_comparison}, but it seems
better to expand the meaning of ``plus tree" than to invent a new term. In a 1-bounded tree,
after  a branch has dropped in
any way, the later
ultrapowers on the branch are always $n$-ultrapowers of $n$-sound premice.\footnote{We could relax
clause (2) by allowing $M_{\alpha+1} = \Ult_{k}(M_\beta,E_\alpha^\itT)$ when
$\crit(E_\alpha^\itT) < \rho_{k+1}(M_\beta)$. Only a few things become more complicated.}
If $M$ is not just solid, but sound, then a 1-maximal plus tree on $M$ is essentially
the same thing as a plus tree on $M^+$ in the sense of \cite{normalization_comparison}.

From Lemma \ref{ult1elementarity} we get
\begin{lemma}\label{+1tree.branchelem} Let $\itT$ be a 1-bounded plus tree on
the solid premouse $M$,
$\alpha <_T \beta$, and $D^\itT \cap [\alpha,\beta]_T = \emptyset$;
then $i_{\alpha,\beta}^\itT$ is elementary, and if $\itT$ is 1-maximal, then
it is exact.
If in addition  $\beta$ is a +1-node, then
$i_{\alpha,\beta}^\itT$ is +1-elementary.
\end{lemma}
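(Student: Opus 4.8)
The plan is to argue by induction on the tree order along $[\alpha,\beta]_T$, reducing the statement to a claim about single ultrapower steps and then composing. Recall that a branch of a $1$-bounded plus tree has at most one small drop, and that small drop (if any and not at $0$) forces us into the $0$-bounded case, where $\itT$ is literally a plus tree in the sense of \cite{normalization_comparison}; so once we are on a drop-free interval $[\alpha,\beta]_T$ with $\beta$ a $+1$-node, every node of that interval is a $+1$-node, every step is a $\degr(\itM^\itT_\xi)+1$-ultrapower of the form $\Ult_{k+1}(\itM^\itT_\xi, E^\itT_\xi)$ with $\crit(E^\itT_\xi) < \rho_{k+1}(\itM^\itT_\xi)$, and the intermediate models are all solid of type $1$ by Lemma \ref{ult1elementarity}(2),(5). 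At a limit ordinal in the interval, the direct limit model is solid (direct limits of solid premice under the relevant maps are solid, using that the maps preserve the $\Pi_2$-over-reduct description of solidity witnesses given in \ref{ult1elementarity}(3)) and the direct limit map is $+1$-elementary provided each map into the system is; this is the routine ``preserved under direct limits'' half of the argument.

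First I would handle the single successor step: if $\xi <_T \xi'$ with $\xi' = T\tpred(\xi'){+}1$, no drop in $[\xi,\xi']_T$, and $\xi'$ a $+1$-node, then $i^\itT_{\xi,\xi'}$ is exactly the ultrapower map $i_E^{\itM^\itT_\xi}$ of Definition \ref{ult1}, and Lemma \ref{ult1elementarity}(1),(3),(4),(5) says precisely that conclusions (1),(3),(4),(5) of \ref{ult1elementarity} hold of it — i.e. $i^\itT_{\xi,\xi'}$ is $+1$-elementary by Definition \ref{plusoneelementary}. In particular it is $\Sigma_2$-elementary on the reducts, hence (after completion) elementary in the sense of Section 2.3, and it is exact because $\rho_1(P^k) = \sup i`` \rho_1(N^k)$ by \ref{ult1elementarity}(5), which is the defining clause of exactness for a $1$-maximal tree. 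Then I would compose: a composition of $+1$-elementary maps is $+1$-elementary — conclusions (1),(4) are preserved by composition trivially, and (3),(5) are preserved because they are statements about how the maps act on canonical theories and standard parameters, which chain correctly (e.g. $i_{\xi,\xi''}(\rho_1(\itM^k_\xi)) = i_{\xi',\xi''}(\rho_1(\itM^k_{\xi'})) = \rho_1(\itM^k_{\xi''})$ using (5) twice). This gives $+1$-elementarity of $i^\itT_{\alpha,\beta}$ when $\beta$ is a $+1$-node.

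For the weaker first conclusion — that $i^\itT_{\alpha,\beta}$ is elementary whenever $[\alpha,\beta]_T$ is drop-free, even if $\beta$ is not a $+1$-node — I would note that if $\beta$ is not a $+1$-node then $[0,\beta]_T$ has a (necessarily small, since drop-free past $\alpha$ and small drops come first) drop at some $\gamma+1 \le \alpha$; but then by Definition \ref{+1tree}(2)(c) the step into $\gamma+1$ is an ordinary $k$-ultrapower of a sound model and all later models on the branch, including $\itM^\itT_\alpha$ and $\itM^\itT_\beta$, are genuine $n$-sound premice with $\itT$ restricted to $[\gamma+1, \cdot)$ behaving as an ordinary plus tree; so $i^\itT_{\alpha,\beta}$ is elementary by the standard theory of \cite{normalization_comparison}. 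The exactness clause in the $1$-maximal case for a drop-free $[\alpha,\beta]_T$ follows the same way: in $1$-maximal trees with no drop on the interval, each step is the $\Ult_{k+1}$ of \ref{ult1elementarity}, which by (5) has $\rho_1$ of the target equal to the sup of the image, i.e. is exact, and exactness composes.

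The main obstacle I expect is purely bookkeeping rather than mathematical: carefully tracking which clause of \ref{ult1elementarity} is needed at limit stages (solidity of the direct limit, and $+1$-elementarity of the tail maps into it) and making sure the ``small drop at $0$'' case is correctly quarantined so that the $+1$-node hypothesis on $\beta$ really does propagate backward to force every node of $[\alpha,\beta]_T$ to be a $+1$-node — this uses that in a $1$-bounded tree a drop, once it occurs, occurs first among the drops on that branch, so a drop-free $[\alpha,\beta]_T$ together with $\beta$ being a $+1$-node (no drop in $[0,\beta]_T$ at all) is only consistent with no drop in $[0,\alpha]_T$ either.
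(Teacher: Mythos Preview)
Your overall plan---reduce to single ultrapower steps via Lemma~\ref{ult1elementarity}, then compose and pass to direct limits---is exactly the paper's approach, and your treatment of the $+1$-elementarity clause is fine. But your handling of \emph{exactness} has a genuine confusion.

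Exactness in the pfs setting concerns the $k$-th projectum, not the $(k{+}1)$-st: a map $i \colon N \to P$ of degree-$k$ premice is exact when $i(\rho_k(N)) = \rho_k(P)$. This is what clause~(4) of Lemma~\ref{ult1elementarity} delivers. You instead cite clause~(5), which says $\rho_1(P^k) = \sup i``\rho_1(N^k)$; but $\rho_1(P^k) = \rho_{k+1}(P)$, one level too deep. Your composition argument for exactness carries the same confusion. The correct chain is $i_{\xi,\xi''}(\rho_k(\itM^\itT_\xi)) = i_{\xi',\xi''}(\rho_k(\itM^\itT_{\xi'})) = \rho_k(\itM^\itT_{\xi''})$, using~(4) twice.

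There is also a gap in your exactness argument for the general $1$-maximal case. You write that ``in $1$-maximal trees with no drop on the interval, each step is the $\Ult_{k+1}$ of \ref{ult1elementarity}'', but this is only true when $\beta$ is a $+1$-node. If $\beta$ is not a $+1$-node then (as you correctly observe earlier) there was a drop in $[0,\alpha)_T$, and the steps along $[\alpha,\beta]_T$ are ordinary $n$-ultrapowers of $n$-sound premice, not $(k{+}1)$-ultrapowers. Exactness for those steps does not come from Lemma~\ref{ult1elementarity}; it comes from the argument in Lemma~\ref{ult0elementarity} (solidity and stability of the intermediate models force $\crit(E) \neq \eta_k$, hence continuity at $\rho_k$, hence exactness), or more generally from the standard plus-tree theory you already invoke for elementarity.

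The paper's own remark after the lemma isolates precisely the reason $1$-maximality is needed for exactness: in a $0$-bounded tree the very first step is a $\degr(M)$-ultrapower of $M$, and nothing prevents $\crit(E_0) = \eta_k^M$ when $\eta_k^M < \rho_{k+1}(M)$ but is measurable (stability allows this), producing an inexact embedding. In a $1$-maximal tree that step would instead be a $(k{+}1)$-ultrapower, and clause~(4) applies. You do not make this point, and it is really the content of the exactness clause.
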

Exactness uses that $\itT$ is 1-maximal and $M$ is stable, for otherwise
we might take a $\degr(M)$-ultrapower with critical point
$\eta_k^M$, producing thereby an inexact canonical embedding.

Along branches that have only a small drop, the embeddings are slightly
less elementary.

\begin{definition}\label{nearplusone.elementary} Let $M$ and $N$ be solid premice, $k = \degr(M)=\degr(N)$,
and $\pi \colon M \to N$. We say that $\pi$ is {\em nearly +1-elementary}
iff 
\begin{itemize}
\item[(i)] $\pi$ is elementary and exact,
\item[(ii)] $\pi``\rho_1^{M^k} \subseteq \rho_1^{N^k}$, $\pi(p_1(M^k)) = p_1(N^k)$, and
\item[(iii)] $\pi$ is {\em $k+1$-theory preserving}, in that
for all $\alpha < \rho_1^{M^k}$ and all $q \in M^k$,
$\pi(\textrm{Th}_1^{M^k}(\alpha \cup \lbrace q \rbrace)) =
\textrm{Th}_1^{N^k}(\pi(\alpha) \cup \lbrace \pi(q) \rbrace)$.
\end{itemize}
\end{definition}

Nearly +1-elementary $\pi \colon M \to N$ are called {\em $\deg(M)+1$-apt}
in \cite{finetame}. That paper proves a copying lemma and a Dodd-Jensen lemma
for such maps.\footnote{ See \cite[2.11, 2.12]{finetame}.}
 We shall use those results below.

Notice that if $\pi$ is nearly elementary as a map from $M$ to $N$, then it
is nearly (+1)-elementary as a map from $M^-$ to $N^-$. $M$ and $N$ may have either type,
but $M^-$ and $N^-$ have type 1.

Clause (i) of \ref{nearplusone.elementary} implies that
$\pi(\eta_k^M) = \eta_k^N$.\footnote{See \cite[4.3.4]{normalization_comparison}.}

\begin{lemma}\label{ult0elementarity}
Let $N$ be a solid premouse
and $\degr(N)=k$, and let $P=\Ult_{k}(N,E)$,
where $E$ is an extender over $N$ that is close to $N$
such that $\rho_{k+1}(N) \le \crit(E) < \rho_k(N)$. Suppose
$P$ is wellfounded; then $P$ is solid, and the canonical
embedding $i_E^N$ is nearly +1-elementary. Moreover,
$\rho_{k+1}(N) = \rho_{k+1}(P)$.
\end{lemma}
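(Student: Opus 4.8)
\textbf{Proof proposal for Lemma \ref{ult0elementarity}.}

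The plan is to mirror the proof of Lemma \ref{ult1elementarity}, but to track only the weaker elementarity that survives a $k$-ultrapower with critical point at or above $\rho_{k+1}(N)$. First I would observe that since $\rho_{k+1}(N) \le \crit(E) < \rho_k(N)$ and $E$ is close to $N$, the ultrapower $\Ult_k(N,E)$ is formed using functions that are (boldface) $r\Sigma_{k}$ over $N$ — equivalently $\Sigma_1$ over $N^{k-1}$ — so by the usual Łos argument the completion $i = i_E^N$ is $\Sigma_0$-preserving on $\hat N^k$ and, because $E$ is close, cardinal preserving and (by $\crit(E) \ge \rho_{k+1}(N)$) does not disturb the $r\Sigma_k$ theory in the standard parameter: this gives that $i$ is elementary in the sense of Section 2.3. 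Exactness should follow as in Lemma \ref{+1tree.branchelem}: since $\crit(E) \ge \rho_{k+1}(N)$ and $N$ is stable, we are not taking a $k$-ultrapower with critical point $\eta_k^N$, so the canonical embedding is continuous at $\rho_k(N)$ and hence exact; this also yields clause (i) of Definition \ref{nearplusone.elementary} and, via \cite[4.3.4]{normalization_comparison}, that $i(\eta_k^N) = \eta_k^N$ appropriately, i.e. $\pi(\eta_k^M)=\eta_k^N$ in the abstract formulation.

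The two substantive points are clauses (ii) and (iii) of near $+1$-elementarity, namely that $i$ fixes the $k+1$-st projectum and maps $p_1(N^k)$ to $p_1(P^k)$, together with $k+1$-theory preservation. Here the key is that $\crit(E) \ge \rho_{k+1}(N)$: the ultrapower does not move any ordinal below $\rho_{k+1}(N)$, so $i\restriction \rho_{k+1}(N) = \id$, and I would argue that $i$ restricted to the coding structure $N^k$ acts as the identity on $\rho_{k+1}(N) = \rho_1(N^k)$. Then for $\alpha < \rho_1(N^k)$ and $q \in N^k$, the $\Sigma_1^{N^k}$-theory $\Th_1^{N^k}(\alpha \cup \{q\})$ is a subset of $N||\rho_{k+1}(N)$ which lies in $N^k$, and since $i$ is $\Sigma_2$-elementary as a map of the appropriate reducts (the "$z = \Th_1(\ldots)$" statement is $\Pi_2$, exactly as in the proof of \ref{ult1elementarity}(3)), we get $i(\Th_1^{N^k}(\alpha \cup \{q\})) = \Th_1^{P^k}(\alpha \cup \{i(q)\})$, which because $i(\alpha) = \alpha$ is precisely clause (iii). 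The argument that $i(p_1(N^k)) = p_1(P^k)$ and $\rho_1(P^k) = \rho_1(N^k)$ then runs exactly as in Lemma \ref{ult1elementarity}(5): one direction uses that the $k+1$-theories transfer up by the computation just made, the other uses solidity of $N$ to produce a $\Sigma_1^{N^k}(p_1(N^k))$ surjection of $\rho_{k+1}(N)$ onto $(\rho_{k+1}(N))^{+,N}$, which $i$ carries to a corresponding surjection over $P^k$ witnessing $\Th_1^{P^k}(\rho_{k+1}(N) \cup \{i(p_1(N^k))\}) \notin P^k$; the $\le_{\lex}$ comparison of parameters then pins down $p_1(P^k)$ and the equality $\rho_{k+1}(P) = \rho_{k+1}(N)$ is immediate.

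Finally I would verify that $P$ is solid. Parameter solidity follows from the preservation of solidity witnesses under $i$, exactly as in \ref{ult1elementarity}. For projectum solidity one must check $\rho_{k+1}(P) = \rho_{k+1}(N)$ is not measurable by the $P$-sequence and that the cores line up; since $\rho_{k+1}(N) \le \crit(E)$, the value $\rho_{k+1}$ is unmoved and $i$ is continuous there, so the core structure is essentially carried over verbatim and no order-zero-measure correction is needed — this is where the hypothesis $\crit(E) \ge \rho_{k+1}(N)$ makes the argument strictly easier than in \ref{ult1elementarity}, where discontinuity at $\rho$ had to be handled. Stability of $P$ follows as in \ref{ult1elementarity}: $\eta_k^P = i(\eta_k^N) = \eta_k^N$ and if it were measurable in $P$ it would be measurable in $N$, so by stability of $N$ we'd have $\eta_k^N < \rho_{k+1}(N) = \rho_{k+1}(P)$, as required. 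The main obstacle I anticipate is the bookkeeping around clauses (ii)–(iii): one must be careful that "$E$ close to $N$" really does give $\Sigma_1$-over-$N^{k-1}$ definability of the ultrapower functions and hence the right degree of Łos for the reducts $N^k$ and $\hat N^k$, and that the reduct $P^k$ genuinely equals $\Ult$ of $N^k$ by those functions — this is the analogue of part (2) of \ref{ult1elementarity} and relies on the Upward Extension of Embeddings lemma applied in the $k$-ultrapower (rather than $k+1$-ultrapower) setting.
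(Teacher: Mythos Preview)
Your overall plan is reasonable, but there is a genuine gap at exactly the point where the paper's proof does its work. You argue exactness by saying ``since $\crit(E) \ge \rho_{k+1}(N)$ and $N$ is stable, we are not taking a $k$-ultrapower with critical point $\eta_k^N$'', and you argue projectum solidity by saying ``since $\rho_{k+1}(N) \le \crit(E)$, the value $\rho_{k+1}$ is unmoved''. But stability of $N$ only says that \emph{either} $\eta_k^N < \rho_{k+1}(N)$ \emph{or} $\eta_k^N$ is not measurable by the $N$-sequence; in the second case you have given no reason why $\crit(E) \neq \eta_k^N$. Likewise the hypothesis allows $\crit(E) = \rho_{k+1}(N)$, and you never rule this out; without strict inequality you cannot conclude that $\rho_{k+1}(P)$ is non-measurable in $P$. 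The paper's proof consists almost entirely of filling this gap: closeness of $E$ to $N$ implies $\crit(E)$ is measurable by the $N$-sequence, whereas projectum solidity and stability of $N$ say that $\rho_{k+1}(N)$ and (in the relevant case) $\eta_k^N$ are \emph{not} measurable. Hence $\crit(E) \notin \{\rho_{k+1}(N), \eta_k^N\}$, from which strict inequality $\rho_{k+1}(N) < \crit(E)$, continuity of $i_E^N$ at $\rho_k(N)$, exactness, and then projectum solidity and stability of $P$ all follow quickly.

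A second, smaller issue: your argument for clause (iii), the $k{+}1$-theory preservation, invokes ``$\Sigma_2$-elementarity of the appropriate reducts, exactly as in \ref{ult1elementarity}(3)''. But Lemma~\ref{ult1elementarity} is about $\Ult_{k+1}$, where $\hat{i}$ is $\Sigma_2$ as a map $N^k \to P^k$; here you only have $\Ult_k$, so $i$ is $\Sigma_2$ from $N^{k-1}$ to $P^{k-1}$, not from $N^k$ to $P^k$, and the ``$z = \Th_1(\ldots)$'' trick at level $k$ is not directly available. The preservation of $\rho_{k+1}$, $p_{k+1}$, and the $\Th_1^{N^k}$-theories genuinely uses closeness (each $E_a$ is $\Sigma_1^{N^k}$-definable), which is why the paper simply cites \cite[2.4.12]{normalization_comparison} for this part rather than rerunning the \ref{ult1elementarity} argument. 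You flag this concern at the end, but your main text treats it as if the \ref{ult1elementarity} proof transfers verbatim, which it does not.
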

\begin{proof} This is proved in \cite[Lemma 2.4.12]{normalization_comparison},
except for the assertions that $P$ is projectum solid and stable, and that
$i_E^N$ is exact.

Note that $\crit(E) \notin \lbrace \rho_{k+1}(N), \eta_k^N \rbrace$
because $\crit(E)$ is measurable in $N$ by closeness, while
$N$ is projectum solid and stable. Since $\rho_{k+1}(N) < \crit(E)$,
$P$ is projectum solid, and since $\eta_k^N \neq \crit(E)$,
$i_E^N$ is continuous at $\rho_k(N)$, and therefore exact. Since
$i_E^N$ is continuous at $\rho_k(N)$, $i_E^N(\eta_k^N) = \eta_k^P$.
But $N$ is stable, so $P$ is stable.
\end{proof}

\begin{remark}{\rm With more work, one can weaken the hypotheses on $E$
by dropping closeness, and requiring of $\crit(E)$ only that
$\crit(E)<\rho_k(N)$ and $\crit(E) \notin \lbrace \rho_{k+1}(N),
\eta_k^N \rbrace$. In the case that $\crit(E) < \rho_{k+1}(N)$,
one must weaken the conclusion to $\rho_{k+1}(P) = \sup i``\rho_{k+1}(N)$,
for $i = i_E^N$, but otherwise the conclusions remain 
the same.\footnote{ That $\rho_{k+1}(P) = \sup i``\rho_{k+1}(N)$ under these
hypotheses is a result of Schlutzenberg; see \cite[9.6.1]{normalization_comparison}.}}
\end{remark}

Note also

\begin{lemma}\label{anticore.elem} Let $M$ be solid, and let
$\pi \colon \mfc(M)^- \to M$ be the anticore map; then $\pi$
is nearly +1-elementary.
\end{lemma}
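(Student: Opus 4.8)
The plan is to unwind the definitions of $\mfc(M)$ and $\bar{\mfc}(M)$ and check the four clauses (i)--(iii) of Definition \ref{nearplusone.elementary} directly, using the fine-structural facts about cores recorded in Definition \ref{type2core} and the solidity hypothesis on $M$. Write $k = \degr(M)$, $\rho_{k+1} = \rho_{k+1}(M)$, $p_{k+1} = p_{k+1}(M)$, and let $C = \mfc_{k+1}(M)$, so that $\mfc(M)^- = C$ with soundness degree $\degr(C) = k$ (recall $C^- $ drops the soundness degree by one). By Definition \ref{type2core}, $C$ is the transitive collapse of $\hat d^k \circ h^1_{\hat M^k}``(\rho_{k+1} \cup \{p_{k+1}, \rho_{k+1}\})$, and $\pi \colon C \to M$ is the anticollapse map; equivalently, on the level of reducts, $C^k$ is the transitive collapse of $h^1_{\hat M^k}``(\rho_{k+1} \cup \{p_{k+1},\rho_{k+1}\})$ and $\hat\pi = \pi \restriction C^k \colon C^k \to \hat M^k$ is the $\Sigma_1$-elementary anticollapse. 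Since the hull includes $\rho_{k+1}$ itself as a parameter (this is exactly what distinguishes $\mfc$ from $\bar{\mfc}$), $\hat\pi$ is in fact the identity on $\rho_{k+1} = \rho_1(C^k)$ and $\hat\pi(p_1(C^k)) = p_{k+1}$, giving clause (ii) of near +1-elementarity with the inclusion $\hat\pi``\rho_1^{C^k} \subseteq \rho_1^{\hat M^k}$ being literally the identity map on an initial segment.

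For clause (iii), the $k+1$-theory preservation, the point is that $\hat\pi$ is the uncollapse of a $\Sigma_1$-Skolem hull, hence $\Sigma_1$-elementary, and the predicate ``$z = \Th_1(\alpha \cup \{q\})$'' is $\Pi_2$ over the relevant reduct; so for $\alpha < \rho_1^{C^k}$ and $q \in C^k$ with $\Th_1^{C^k}(\alpha \cup \{q\}) \in C^k$, $\Sigma_1$-elementarity of $\hat\pi$ upward together with the $\Pi_2$ characterization downward pins down $\hat\pi(\Th_1^{C^k}(\alpha \cup \{q\})) = \Th_1^{\hat M^k}(\hat\pi(\alpha)\cup\{\hat\pi(q)\})$. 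This is the standard ``theory-preservation is automatic for uncollapse maps of $\Sigma_1$-hulls'' argument, parallel to part (3) of Lemma \ref{ult1elementarity}. For clause (i), elementarity of $\pi$ (as a map of premice of degree $k$) is exactly what it means for $\pi$ to be the completion of the $\Sigma_1$-elementary $\hat\pi$; this is already built into the definition of the core and its anticore map in \cite[Chapter 4]{normalization_comparison}. The one genuine content is \emph{exactness}, i.e.\ that $\pi$ is continuous at $\rho_k(C)$, equivalently that $\pi$ does not move $\eta_k^M$ discontinuously. Here we invoke that $M$ is solid, hence in particular $\hat M^k$ is stable (clause (c) of Definition \ref{type2core}): either $\hat\eta_k^M < \rho_{k+1}$, in which case $\hat\eta_k^M$ lies in the base of the hull and is fixed, or $\hat\eta_k^M$ is not measurable by the $M$-sequence, which rules out the core map being the critical embedding of an order-zero measure at $\eta_k$; in either case the anticore map is continuous at $\rho_k$ and hence exact.

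The main obstacle I anticipate is the exactness verification — specifically, disentangling which of $\rho_{k+1}$ and $\eta_k^M$ the core map is or is not continuous at, and checking that the stability clause of solidity is precisely the hypothesis that excludes the bad (discontinuous) case. This is the same phenomenon that appears in the proof of clause (5) of Lemma \ref{ult1elementarity} (``$i$ discontinuous at $\rho$ forces $\rho$ to be a limit cardinal carrying a measure'') and in the footnote to Lemma \ref{+1tree.branchelem} about inexact canonical embeddings with critical point $\eta_k^M$; so the argument is not new, but it must be run carefully in the ``collapse'' direction rather than the ``ultrapower'' direction. Everything else — (ii) and (iii) — reduces to the observation that $\mfc_{k+1}$, unlike $\bar{\mfc}_{k+1}$, includes $\rho_{k+1}$ as a generator, so its anticore map fixes $\rho_{k+1}$ pointwise below and is a $\Sigma_1$-uncollapse, from which theory preservation and the parameter/projectum clauses are immediate.
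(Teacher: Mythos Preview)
Your verification of clauses (ii) and (iii) is fine and tracks the paper's ``the rest is easy to verify.'' The issue is your exactness argument.

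The paper's argument for exactness is a single line and does not use stability at all: because we are in pfs fine structure, $\rho_k(M) \in \ran(\pi)$, and that is exactness. The point is that the decoding function $\hat d^k$ is computed relative to $\hat w_k$, and the pfs setup guarantees that $\rho_k(M)$ is recoverable from the parameters already present in the hull; once $\rho_k(M)$ is in the range, $\pi^{-1}(\rho_k(M)) = \rho_k(C)$ by elementarity at level $k-1$, and exactness follows. The paper also notes that $\pi \restriction \mfc(M)^k$ is cofinal into $M^k$, which is what upgrades $\Sigma_0$ to $\Sigma_1$ on the reducts; you use this implicitly when you say the uncollapse of a $\Sigma_1$-Skolem hull is $\Sigma_1$-elementary.

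Your case analysis on stability does not do the job. In your second case you say that $\hat\eta_k^M$ not being measurable ``rules out the core map being the critical embedding of an order-zero measure at $\eta_k$.'' But the anticore map is the uncollapse of a Skolem hull, not an ultrapower map; whether $\hat\eta_k^M$ carries a measure on the $M$-sequence tells you nothing about whether the hull is cofinal at $\rho_k$ or whether $\rho_k(M)$ lands in its range. The stability clause is there to control exactness of \emph{ultrapower} maps along iteration trees (cf.\ the remark after Lemma \ref{+1tree.branchelem} and the proof of Lemma \ref{ult0elementarity}), not to control the anticore map. Even in your first case, knowing that $\hat\eta_k^M$ is fixed by $\pi$ does not by itself force $\pi$ to be continuous at $\rho_k(C)$ without further argument about the cofinal function being in the hull. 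So replace the stability detour with the direct pfs observation that $\rho_k(M) \in \ran(\pi)$.
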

\begin{proof} Let $k=\degr(M)$; then $\pi$ is cofinal and
$\Sigma_0$ elementary, and hence $\Sigma_1$ elementary, as a map from $\mfc(M)^k$ to $M^k$.
Because we are using pfs fine structure
$\rho_k(M) \in \ran(\pi)$, so $\pi$ is exact. The rest is easy to verify.
\end{proof}

From Lemma \ref{ult0elementarity} we get
\begin{lemma}\label{+1tree.branchelem} Let $\itT$ be a 1-maximal plus tree on
the solid premouse $M$,
$\alpha <_T \beta$, and suppose that if
$\gamma \in D^\itT \cap [0,\beta]_T$, then $\gamma$ is a small drop; then
$i_{\alpha,\beta}^\itT$ is nearly (+1)-elementary.
\end{lemma}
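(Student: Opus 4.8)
The plan is to decompose $i_{\alpha,\beta}^\itT$ along the branch $[\alpha,\beta]_T$ according to the position of the (at most one) drop on $[0,\beta]_T$, and to reduce each resulting piece to a single-step lemma already in hand. Since $i_{\alpha,\beta}^\itT$ is presumed to exist, $(\alpha,\beta]_T$ is drop-free, so by the hypothesis the only drop that can lie on $[0,\beta]_T$ lies on $[0,\alpha]_T$ and is small. Two bookkeeping facts will be used throughout. First, the class of nearly $(+1)$-elementary maps is closed under the operations (composition and direct limit) by which branch embeddings of iteration trees are built: elementarity, exactness, the inclusion $\pi``\rho_1\subseteq\rho_1$, and the preservation of $p_1$ all pass through these operations in the usual way, and $k+1$-theory preservation composes because $\pi_1``\rho_1^{M^k}\subseteq\rho_1^{N^k}$ lets the $\pi_1$-image of a theory be fed into the preservation clause for $\pi_2$; this kind of preservation is of the sort established in \cite{finetame}. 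Second, an exact $+1$-elementary map is nearly $(+1)$-elementary: conclusion (1) of Lemma \ref{ult1elementarity} gives elementarity, conclusion (5) gives $\pi``\rho_1^{M^k}\subseteq\rho_1^{N^k}$ together with $\pi(p_1(M^k))=p_1(N^k)$, and conclusion (3) gives the $k+1$-theory preservation required by Definition \ref{nearplusone.elementary}.

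\textbf{Case 1: $[0,\beta]_T$ is drop-free.} Then $\alpha$ and $\beta$ are $+1$-nodes. Since $\itT$ is $1$-maximal and $M$ is solid, hence stable, the branch elementarity lemma for $1$-bounded plus trees proved above yields that $i_{\alpha,\beta}^\itT$ is $+1$-elementary and exact; by the second bookkeeping fact it is therefore nearly $(+1)$-elementary.

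\textbf{Case 2: $[0,\beta]_T$ has a small drop at some $\gamma+1\le_T\alpha$.} Then $\alpha$ and $\beta$ are non-$+1$-nodes with $\degr(M_\alpha)=\degr(M_\beta)=k$. The embedding $i_{\alpha,\beta}^\itT$ is the composition, resp.\ direct limit, of the single extender steps $i_{E_\delta^\itT}^{M_\xi}\colon M_\xi\to M_{\delta+1}$ taken along $(\alpha,\beta]_T$, where $\xi=T\tpred(\delta+1)$. Each such step is a $k$-ultrapower $\Ult_k(M_\xi,E_\delta^\itT)$ of a solid premouse (these are $k$-ultrapowers, since the modified clause of Definition \ref{+1tree} applies only at $+1$-nodes), and I claim it falls under Lemma \ref{ult0elementarity}. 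That $E_\delta^\itT$ is close to $M_\xi$ is the standard closeness fact for iteration trees. For the critical-point hypothesis: $M_\gamma$ is solid (it is a $+1$-elementary image of $M$, or $M$ itself), so $M_{\gamma+1}=\Ult_k(M_\gamma,E_\gamma^\itT)$ with $\rho_{k+1}(M_\gamma)\le\crit(E_\gamma^\itT)<\rho_k(M_\gamma)$ is solid with $\rho_{k+1}$ unchanged, by Lemma \ref{ult0elementarity}. Running this by a simultaneous induction up the branch past $\gamma+1$, $\rho_{k+1}$ stays constant with value $\rho_{k+1}(M_\gamma)$, while every extender used past $\gamma+1$ satisfies $\crit(E_\delta^\itT)\ge\vep(E_\gamma^\itT)>\crit(E_\gamma^\itT)\ge\rho_{k+1}(M_\gamma)$ (otherwise $T\tpred(\delta+1)\le\gamma$, contradicting $\gamma+1\le_T\xi$) and, of course, $\crit(E_\delta^\itT)<\rho_k(M_\xi)$ since $E_\delta^\itT$ is applied to $M_\xi$. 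Thus each step is nearly $(+1)$-elementary by Lemma \ref{ult0elementarity}, and the first bookkeeping fact finishes the case.

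The only real work is the bookkeeping of Case 2: one must verify step by step that the post-drop $k$-ultrapowers meet the hypotheses of Lemma \ref{ult0elementarity} — in particular that $\rho_{k+1}$ is never moved along that part of the branch, so that the lower bound $\rho_{k+1}(M_\xi)\le\crit(E_\delta^\itT)$ keeps holding — together with the routine checks of the two bookkeeping facts in the first paragraph. I expect no essential difficulty beyond this.
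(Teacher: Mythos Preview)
Your overall plan is correct and matches the paper's approach (the paper simply says ``From Lemma~\ref{ult0elementarity} we get\ldots''), but there is a small organizational gap in your case split.

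You assert that because $i_{\alpha,\beta}^\itT$ exists, $(\alpha,\beta]_T$ must be drop-free. That inference is wrong here: at a \emph{small} drop $\gamma+1$ the ultrapower is $\Ult_k(M_\xi,E_\gamma^\itT)$ applied to the \emph{full} model $M_\xi$ (only the ultrapower degree changes, not the domain), so the branch map remains total across a small drop. Hence the single small drop on $[0,\beta]_T$ may well lie in $(\alpha,\beta]_T$, with $\alpha$ a $+1$-node and $\beta$ not. Your two cases do not cover this.

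The fix is immediate with your own ingredients: if the small drop occurs at $\gamma+1$ with $\alpha\le_T\xi=T\tpred(\gamma+1)<_T\gamma+1\le_T\beta$, then $i_{\alpha,\xi}$ is $+1$-elementary and exact by your Case~1 argument, the drop step $M_\xi\to\Ult_k(M_\xi,E_\gamma^\itT)$ is nearly $(+1)$-elementary by Lemma~\ref{ult0elementarity} (the hypotheses $\rho_{k+1}(M_\xi)\le\crit(E_\gamma^\itT)<\rho_k(M_\xi)$ and closeness hold exactly as in your Case~2), and $i_{\gamma+1,\beta}$ is handled by your Case~2 induction. Compose using your first bookkeeping fact.
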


Nearly +1-elementary embeddings suffice for copying
1-maximal trees. 

\begin{definition}\label{shiftnotation} Let $M$ and $N$ be solid premice with degree $k$, and $E$ and $F$
be extenders over $M$ and $N$ respectively. We say
\[
\la \pi, \varphi\ra \colon (M,E) \stackrel{*,1}{\longrightarrow} (N,F)
\]
iff $\pi$ is nearly +1-elementary, and $\la \pi,\varphi \ra
\colon (M^k,E) \stackrel{*}{\longrightarrow}(N^k,F)$ in the sense of
\cite[2.5.17]{normalization_comparison}.
\end{definition}

\begin{lemma}\label{shiftlemma}[Shift Lemma] Suppose that
$\la \pi, \varphi\ra \colon (M,E) \stackrel{*,1}{\longrightarrow} (N,F)$, where
$M$ and $N$ are solid premice of degree $k$. Let
$m = k+1$ if $\crit(E) < \rho_{k+1}(M)$, and $m=k$ otherwise, and let
$n=k+1$ if $\crit(F)<\rho_{k+1}(N)$ and $n=k$ otherwise. Let
\[
\sigma \colon Ult_m(M,E) \to \Ult_n(N,F)
\]
be the completion of the map $\sigma_0([a,f]^{M^k}_E) =
[\varphi(a),\pi(f)]^{N^k}_F$. Then
\begin{itemize}
\item[(i)] $\sigma$ is nearly +1-elementary,
\item[(ii)] $\sigma \within \lh(E) = \varphi \within \lh(E)$, and
\item[(iii)] $\sigma \circ i_E^M = i_F^N \circ \pi$.
\end{itemize}
\end{lemma}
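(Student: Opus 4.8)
The plan is to reduce to the ordinary Shift Lemma applied to the reducts $M^k$ and $N^k$, and then decode. By Definition~\ref{shiftnotation}, the hypothesis gives that $\pi$ is nearly $+1$-elementary and that $\langle\pi,\varphi\rangle\colon(M^k,E)\stackrel{*}{\longrightarrow}(N^k,F)$ in the sense of \cite[2.5.17]{normalization_comparison}. Applying the ordinary Shift Lemma of \cite[Section 2.5]{normalization_comparison} to the latter yields a map
\[
\hat{\sigma}\colon\Ult_{m'}(M^k,E)\longrightarrow\Ult_{n'}(N^k,F),
\]
where $m'=1$ if $\crit(E)<\rho_1(M^k)=\rho_{k+1}(M)$ and $m'=0$ otherwise, and likewise for $n'$, such that $\hat{\sigma}$ is $\Sigma_1$-elementary, $\hat{\sigma}\restriction\lh(E)=\varphi\restriction\lh(E)$, and $\hat{\sigma}\circ i_E^{M^k}=i_F^{N^k}\circ(\pi\restriction M^k)$; here the $\Sigma_1$-elementarity of $\hat{\sigma}$ is inherited from that of $\pi\restriction M^k$, which is built into $\pi$ being nearly $+1$-elementary (clause (i) of Definition~\ref{nearplusone.elementary}). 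On each side $\Ult_m(M,E)$ is recovered from $\Ult_{m'}(M^k,E)$ by the decoding procedure of \cite[Chapter 4]{normalization_comparison} --- this is Definition~\ref{ult1} when $m=k+1$, and the analogous routine computation when $m=k$ --- and $i_E^M$ is the completion of $i_E^{M^k}$; so we take $\sigma$ to be the decoding of $\hat{\sigma}$. Since $\hat{\sigma}$ acts on seeds by $[a,f]_E^{M^k}\mapsto[\varphi(a),\pi(f)]_F^{N^k}$, this $\sigma$ is exactly the completion of the map $\sigma_0$ in the statement. Conclusion (ii) is then immediate, since $\lh(E)$ lies in the part of the ultrapower left untouched by decoding; and conclusion (iii) follows by passing to completions in $\hat{\sigma}\circ i_E^{M^k}=i_F^{N^k}\circ(\pi\restriction M^k)$, using that $i_E^M$, $i_F^N$, $\pi$, $\sigma$ are the completions of $i_E^{M^k}$, $i_F^{N^k}$, $\pi\restriction M^k$, $\hat{\sigma}$ respectively.

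There remains conclusion (i), that $\sigma$ is nearly $+1$-elementary, i.e.\ that it meets clauses (i)--(iii) of Definition~\ref{nearplusone.elementary} with $\Ult_m(M,E)$ and $\Ult_n(N,F)$ in the roles of $M$ and $N$. That $\sigma$ is elementary and cardinal-preserving is clear, since $\sigma\restriction\Ult_m(M,E)^k=\hat{\sigma}$ is $\Sigma_1$-elementary and $\sigma$ is its completion. Exactness of $\sigma$ follows from exactness of $\pi$ together with the identities $i_E^M(\rho_k(M))=\rho_k(\Ult_m(M,E))$ and $i_F^N(\rho_k(N))=\rho_k(\Ult_n(N,F))$ (Lemma~\ref{ult1elementarity}(4), or Lemma~\ref{ult0elementarity} when the relevant degree is $k$): these give $\sigma(i_E^M(\rho_k(M)))=i_F^N(\pi(\rho_k(M)))=i_F^N(\rho_k(N))=\rho_k(\Ult_n(N,F))$, and continuity at $\rho_k$ transfers the same way. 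For clause (ii), Lemma~\ref{ult1elementarity}(5) (or Lemma~\ref{ult0elementarity}) gives $p_1(\Ult_m(M,E)^k)=i_E^M(p_1(M^k))$ and $\rho_1(\Ult_m(M,E)^k)=\sup i_E^M``\rho_1(M^k)$, hence $\sigma(p_1(\Ult_m(M,E)^k))=i_F^N(\pi(p_1(M^k)))=i_F^N(p_1(N^k))=p_1(\Ult_n(N,F)^k)$ using $\pi(p_1(M^k))=p_1(N^k)$, while $\sigma``\rho_1(\Ult_m(M,E)^k)\subseteq\rho_1(\Ult_n(N,F)^k)$ follows from $\pi``\rho_1(M^k)\subseteq\rho_1(N^k)$ by the same seed-wise estimate. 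For clause (iii), one argues as in the proof of Lemma~\ref{ult1elementarity}(3): ``$z=\textrm{Th}_1(\alpha\cup\{q\})$'' is $\Pi_2$, and $\pi$ is $(k+1)$-theory preserving, so a direct seed-wise computation --- comparing $\textrm{Th}_1$ over $M^k$ of the values of the representing functions with $\textrm{Th}_1$ over $N^k$ of their $\pi$-images, for $E$-almost every seed --- shows $\hat{\sigma}$ is $(k+1)$-theory preserving, and this property is unaffected by decoding.

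The genuinely new content beyond the ordinary Shift Lemma is this last, $(k+1)$-theory-preservation step, together with the bookkeeping required in the ``mixed'' case $m\neq n$. Here one first checks that $m=k+1$ forces $n=k+1$: if $\crit(E)<\rho_1(M^k)$ then $\crit(F)=\pi(\crit(E))<\rho_1(N^k)$, since $\pi``\rho_1(M^k)\subseteq\rho_1(N^k)$. So the only mixed possibility is $m=k$, $n=k+1$, in which on the left we form $\Ult_0(M^k,E)$ using all functions of $M^k$ while on the right we form $\Ult_1(N^k,F)$ using boldface $\Sigma_1^{N^k}$ functions; this causes no trouble, because for any $f\in M^k$ the image $\pi(f)$ is trivially boldface $\Sigma_1^{N^k}$, so the seed-wise definition of $\sigma_0$ still makes sense and the usual ultrapower argument still applies. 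I expect the bulk of the work in a full write-up to be precisely the $(k+1)$-theory-preservation computation, and checking that it interacts correctly with the $\Ult_1$-versus-$\Ult_0$ distinction on the two sides; the rest is a routine transcription of the proof of the ordinary Shift Lemma. Finally, we note that none of this is really new: a nearly $+1$-elementary map is a $(\degr(M)+1)$-apt map in the terminology of \cite{finetame}, and Lemma~\ref{shiftlemma} is, up to a translation of notation, the copying lemma proved there; the point of the present formulation is only to have the statement phrased in terms of the ultrapowers $\Ult_m$ of Definition~\ref{ult1}.
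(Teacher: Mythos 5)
Your proposal is correct and follows essentially the same route as the paper's (very terse) proof: the key points in both are that $m=k+1$ forces $n=k+1$ because $\sup\pi``\rho_{k+1}(M)\le\rho_{k+1}(N)$, that $\pi(f)$ is a legitimate function for the $N$-side ultrapower even in the mixed case $m=k$, $n=k+1$, and that exactness of $\sigma$ follows from commutativity together with exactness of $i_E^M$, $i_F^N$, and $\pi$; the paper dismisses the remaining verifications (including $(k+1)$-theory preservation) as "the usual calculations," which you have simply carried out. Nothing further is needed.
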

\begin{proof}(Sketch.) Since $\sup \pi``\rho_{k+1}(M) \le
\rho_{k+1}(N)$, $m=k+1$ implies $n=k+1$.\footnote{ It is possible that
$m=k$ and $n=k+1$. We could have copied with $n=k$ in this case,
but we are not going to do that, because we want to stay in the realm of
1-maximal trees.} Thus the definition
of $\sigma_0$ makes sense, in that $\pi$ can be applied to $f$
(even if $f$ is only $\Sigma_1^{M^k}$), and $\pi(f)$ is a function
that is used in the $N$-ultrapower. Note that both
$i_E^M$ and $i_F^N$ are exact, so $\sigma$ is exact by commutativity.
The remaining calculations are the usual ones.
\end{proof}

\begin{definition} Let $\pi \colon M \to N$ be nearly +1-elementary,
and let $\itT$ be a 1-bounded plus tree on $M$; then 
$\pi\itT$ is the 1-bounded copied tree on $N$.
\end{definition}
Of course we stop the construction of $\pi\itT$ if we reach an illfounded
model. Letting $\pi_\alpha \colon M_\alpha \to N_\alpha$ be the copy map, we prove
by induction that if $\beta = T\tpred(\alpha+1)$, then
\[
\la \pi_\beta \within M_{\alpha+1}^*,\pi_\alpha \ra \colon (M_{\alpha+1}^*,E_\alpha^\itT)
\stackrel{*}{\longrightarrow}(N_{\alpha+1}^*,E_\alpha^{\pi\itT}),
\]
and if $D^\itT \cap [0,\beta]_T = \emptyset$, then $\pi_\beta$ is
nearly +1-elementary. This can be done.\footnote{ See \cite[Section 4.5]{normalization_comparison}.}

\begin{definition}\label{1boundedstack} A {\em 1-bounded stack}  on $M$ is a stack $s$ of trees such that
\begin{itemize}
\item[(1)] each tree $\itT_i(s)$ in $s$ is 1-bounded, and
\item[(2)] for $i>0$, letting $N$ be the base model of $ \itT_i(s)$,
$\itT_i(s)$ is 1-maximal iff $M$-to-$N$ has no drops of any kind.
$s$ is {\em 1-maximal} iff $\itT_0(s)$ is 1-maximal; otherwise, $s$ is 0-bounded.
\end{itemize}
\end{definition}

Let $G^1(M,\lambda,\theta)$ be the variant
 of $G(M,\lambda,\theta)$ in which
the output is a 1-bounded  stack on $M$.

\begin{definition}\label{+1strategies} A {\em $(\lambda,\theta)^+$-iteration strategy} 
for $M$ is a winning
strategy for player \rm{II} in $G^1(M,\lambda,\theta)$. Let $\Sigma$ be such a strategy; then
\begin{itemize}
\item[(a)](Tail strategy.) For $s$ a 1-bounded stack by $\Sigma$ with last model $N$,
$\Sigma_{s,N}(t) = \Sigma(s^\frown t)$. If $M$-to-$N$ does not drop in $s$,
then $\Sigma_{s,N}$ is a $(\lambda-\lh(s), \theta)^+$-iteration strategy for $N$.
Otherwise, it is a $(\lambda-\lh(s),\theta)$-strategy.
\item[(b)](Pullback strategy.) If $\pi \colon N \to M$ is nearly +1-elementary,
then $\Sigma^\pi$ is the $(\lambda,\theta)^+$-strategy for $N$ given by:
$\Sigma^\pi(s) = \Sigma(\pi s)$.
\end{itemize}
\end{definition}

If $\Sigma$ is a $(\lambda,\theta)^+$-strategy for $M$, then we obtain
an ordinary  $(\lambda,\theta)$-strategy for $M$ by restriction $\Sigma$
to act on 0-bounded stacks $s$. We call this restricted strategy $\Sigma^-$.

\bigskip
\noindent
{\em Notation:} If $\itT$ is a tree on $(P,\Sigma)$ and $\alpha < \lh(\itT)$,
then we may write $\Sigma_\alpha^\itT$ for the tail strategy
$\Sigma_{\itT \within \alpha+1, \itM_\alpha^\itT}$.

\subsection{Background-induced +1-strategies}

We get $(\lambda,\theta)^+$-iteration strategies from the same
background constructions that gave us $(\lambda,\theta)$-strategies.
The constructions themselves do not change at all, because the strategy
predicate of an lpm still has only information about the action of
the strategy on 0-bounded, $\lambda$-separated trees.

Recall that a {\em conversion stage} is a tuple
$\la M,\psi,Q,\mathbb{C},R \ra$ such that $R$ is an appropriate
background universe, $\mathbb{C}$ is a maximal hod pair
construction in the sense of $R$, $Q$ is a level of $\mathbb{C}$,
and $\psi \colon M \to Q$ is nearly elementary.\footnote{See Section 4.8 and Section 9.4
of \cite{normalization_comparison}.}

\begin{definition}\label{plus1conversionstage} A {\em +1-conversion stage} is
a conversion stage $\la M,\psi, Q, \mathbb{D},R\ra$ such that $M$ and $Q$ are
solid, and $\psi$ is nearly +1-elementary. 
\end{definition}

Given a conversion stage $c = \la M, \psi, Q, \mathbb{C}, R\ra$
and a 0-bounded plus tree $\itT$ on $M$,  \cite[Section 4.8]{normalization_comparison} defines the
{\em conversion system}
\[
\lift(\itT,c) = \la \itT^*, \la c_\alpha \mid \alpha < \lh(\itT) \ra \ra.
\]
Here $c_0 = c$, and $c_\alpha = \la M_\alpha, \psi_\alpha, Q_\alpha, \mathbb{C}_\alpha, R_\alpha \ra$
is a conversion stage such that $M_\alpha = \itM_\alpha^\itT$ and
$R_\alpha = \itM_\alpha^{\itT^*}$. The same construction applies when
$\itT$ is 1-bounded and $c$ is a +1-conversion stage; we just
need to note that when $\alpha$ is a +1-node of $\itT$, then the
conversion stage $c_\alpha$ produced by our construction is
a +1-conversion stage. 

This comes down to adding a few lines to
the Shift Lemma for Conversions.\footnote{\cite[3.3.2, 4.8.2]{normalization_comparison}.} 
Suppose $\alpha+1$ is a +1-node of $\itT$, $T\tpred(\alpha+1) = \beta$,
and $E=E_\alpha^\itT$. Let 
\begin{align*}
\varphi &= \sigma_{Q_\alpha}^{\mathbb{C}_\alpha}[Q_\alpha|\lh(\psi_\alpha(E^-))] \circ \psi_\alpha,\\
E^* &= B^{\mathbb{C}_\alpha}(\varphi(E^-)),
\intertext{ and }
E_\alpha^{\itT^*} &=E^*
\end{align*}
So $E^*$ is the background extender for the resurrection of $\psi_\alpha(E^-)$.
There is enough agreement between $\varphi$ and $\psi_\beta$ that $E^*$
should be applied to $R_\beta = \itM_\beta^{\itT^*}$ in a normal
continuation of $\itT^*$.
Letting $k= \degr(M)$ and $i^* = i_{E^*}^{R_\beta}$, our next conversion
stage is then
\[
c_{\alpha+1} = \la \Ult_{k+1}(M_\beta,E), \pi, i^*(Q_\beta), i^*(\mathbb{C}_\beta),
\Ult(R_\beta,E^*)\ra.
\]
Since $\alpha+1$ is a +1 node, $M_{\alpha+1}^* = M_\beta$.
$\pi$ is (the completion of) the map $\pi \colon \Ult_1(M_\beta^k,E)
\to i^*(Q_\beta^k)$ given by
\[
\pi([a,f]^{M_\beta^k}_E) = [\varphi(a),\psi_\beta(f)]^{R_\beta}_{E^*},
\]
where $\psi_\beta$ moves $f$ by moving its $\Sigma_1^{M_\beta^k}$ definition.
Since $\psi_\beta$ is $\textrm{Th}_1$-preserving, $\pi$ is well defined. It is
easy to check that $\pi$ is $\Sigma_1$-elementary and $\text{Th}_1$-preserving
as a map from  $ \Ult_1(M_\beta^k,E)$ to $i^*(Q_\beta^k)$, using that
\[
\pi \circ i_E^{M_\beta} = i^* \circ \psi_\beta.
\]
Commutativity
and the fact that $i_E^{M_\beta}$, $i^* \within Q_\beta$, and $\psi_\beta$
are all exact also implies that $\pi$ is exact. Finally,
\begin{align*}
\sup \pi``\rho_{k+1}(M_{\alpha+1}) &= \sup \pi \circ i_E^{M_\beta} ``\rho_{k+1}(M_\beta)\\
                  &= \sup i^* \circ \psi_\beta `` \rho_{k+1}(M_\beta)\\
                  &\le i^*(\rho_{k+1}(Q_\beta)) = \rho_{k+1}(i^*(Q_\beta)).
\end{align*}
Collectively, these calculations show that $\pi$ is nearly +1-elementary.
It agrees with $\varphi$ on $\lh(E)$, which we need to keep the conversion
going.

We lift 1-bounded stacks in the natural way. For example,
$\lift(\la \itT,\itU\ra, c) = \la \lift(\itT,c), \lift(\itU,d)\ra$,
where $d$ is the last conversion stage in $\lift(\itT,c)$.
Letting $\itT^*=\lift(\itT,c)_0$ and $\itU^* = \lift(\itU,d)_0$,
$\la \itT^*,\itU^*\ra$ is the corresponding stack of nice trees on
the background universe in $c$. Abusing notation, we write
\[
\lift(\vec{\itT},c)_0 = \la \itT_i^* \mid i < \lh(\vec{\itT})\ra
\]
for the stack of lifted trees on the background universe in $c$.

\begin{definition}\label{induced.strategy} Let $((N^*,\in, w, \itF,\Psi),\Psi^*)$ be a 
coarse strategy pair,\footnote{ See  
\cite[9.4.14]{normalization_comparison}. Roughly, the requirements
are that
\begin{itemize} 
\item[(a)] $(N^*,\in,w,\itF,\Psi)$ is a {\em coarse strategy premouse},
i.e.  $N^*$ is countable and transitive, $N^* \models \zfc +
``(w,\itF)$ is a coherent pair" + ``$\Psi$ is a $(\delta^*,\delta^*,\itF)$-iteration
strategy for $V$ that quasi-normalizes well, has strong hull condensation, and
is pushforward consistent", and
\item[(b)] $\Psi^*$ is a complete $(\omega_1,\omega_1)$-iteration strategy
for $(N^*,\in,w,\itF,\Psi)$ that normalizes well and has strong hull
condensation, and
\item[(c)] if $i \colon N^* \to S$ is the iteration map associated to a stack
$t$ according to $\Psi^*$, then $i(\Psi) \subseteq \Psi^*_{t,S}$.
\end{itemize}
$(w,\itF)$ is a coherent pair iff $w$ is a wellorder
of $V_\delta$, where $\delta = \delta(w)$, and $\itF \subseteq V_\delta$ is
 a set of nice extenders such that for $E \in \itF$ and
$\nu = \lh(E)$, $i_E(w) \cap V_{\nu+1} = w \cap V_{\nu+1}$ and
$i_E(\itF) \cap V_{\nu+1} = \itF \cap V_{\nu+1}$. See \cite[2.9.6]{normalization_comparison}.} 
 and let $\mathbb{C}$ be
the maximal least branch $(w,\itF,\Psi)$-construction of  $(N^*,\in,w,\itF,\Psi)$\footnote{
Cf. \cite[9.4.10]{normalization_comparison}.},
 with models $M_{\nu,k}=M^{\mathbb{C}}_{\nu,k}$ and induced 
strategies $\Omega_{\nu,k}=\Omega^\mathbb{C}_{\nu,k}$.
\begin{itemize}
\item[(1)]  Let
$c$ be a +1-conversion stage of the form $\la M,\psi, Q,\mathbb{C},N^*\ra$;
then $\Omega^+(c,\Psi^*)$ is the $(\omega_1,\omega_1)^+$-iteration strategy for 
$M$ given by
\[
\vec{\itT} \text{ is by $\Omega^+(c,\Psi^*)$} \text{ iff } \lift(\vec{\itT}, c)_0 \text{ is by $\Psi^*$.}
\]
\item[(2)]$(\Omega^+_{\nu,k})^{\mathbb{C}} = \Omega^+(\la M_{\nu,k}$, \text{ id }, $M_{\nu,k}, \mathbb{C}, N^* \ra, \Psi^*)$.
\end{itemize}
\end{definition}

Clearly, $\Omega_{\nu,k} = (\Omega_{\nu,k}^+)^-$. As one would expect,
$\Omega_{\nu,k+1}$ is the pullback
of $\Omega_{\nu,k}^+$ under the anticore map.

\begin{lemma}\label{omeganulplus.pullback} Let $\mathbb{C}$ be the maximal least 
branch $(w,\itF,\Psi)$-construction of some coarse strategy pair, with associated
models $M_{\nu,k}$ and strategies $\Omega_{\nu,k}$ and $\Omega_{\nu,k}^+$.
Let $\pi \colon M_{\nu,k+1}^- \to M_{\nu,k}$ be the anticore map;
then $\pi$ is nearly +1-elementary, and
\[
\Omega_{\nu,k+1} = (\Omega_{\nu,k}^+)^\pi.
\]
\end{lemma}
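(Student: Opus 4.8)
The plan is to treat the two assertions separately, reducing the strategy identity to a literal equality of lifted trees on the background universe, and then to invoke the resurrection machinery of \cite[Sections 4.8, 9.4]{normalization_comparison}. The whole argument parallels the proof that $\Omega_{\nu,k+1} = (\Omega_{\nu,k})^\pi$ for ordinary (non-$+1$) maximal constructions; the only real content here is that the $+1$-apparatus threads through, which is essentially what the discussion preceding this lemma has arranged. For the first assertion, note that $M_{\nu,k}$ is solid, being a level of a maximal least branch construction, and that by definition of $\mathbb{C}$ the level $M_{\nu,k+1}$ is the $(k+1)$-core $\mfc(M_{\nu,k})$ of $M_{\nu,k}$, with $\pi$ the associated anticore map and $\degr(M_{\nu,k+1}^-) = \degr(M_{\nu,k}) = k$. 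Hence Lemma \ref{anticore.elem} applies directly and gives that $\pi$ is nearly $+1$-elementary.

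For the strategy identity I would first unwind both sides. Put $c = \langle M_{\nu,k}, \id, M_{\nu,k}, \mathbb{C}, N^*\rangle$ and $c' = \langle M_{\nu,k+1}, \id, M_{\nu,k+1}, \mathbb{C}, N^*\rangle$; both are $+1$-conversion stages ($c'$ since $\id$ is trivially nearly $+1$-elementary). Let $\vec{\itT}$ be a plus tree, or $0$-bounded stack, on $M_{\nu,k+1}$; regarded as a $1$-maximal $1$-bounded stack on $M_{\nu,k+1}^-$ — reinterpreting its degree-$(k+1)$ ultrapowers as the $(k+1)$-ultrapowers taken at $+1$-nodes — it becomes an object on which the nearly $+1$-elementary $\pi$ acts, producing the copied stack $\pi\vec{\itT}$ on $M_{\nu,k}$. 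By Definitions \ref{+1strategies} and \ref{induced.strategy}, $\vec{\itT}$ is by $(\Omega_{\nu,k}^+)^\pi$ iff $\pi\vec{\itT}$ is by $\Omega_{\nu,k}^+$ iff $\lift(\pi\vec{\itT},c)_0$ is by $\Psi^*$; and since $\Omega_{\nu,k+1} = (\Omega_{\nu,k+1}^+)^-$, $\vec{\itT}$ is by $\Omega_{\nu,k+1}$ iff $\lift(\vec{\itT},c')_0$ is by $\Psi^*$. So the lemma reduces to the equality of trees on $N^*$
\[
\lift(\pi\vec{\itT},c)_0 = \lift(\vec{\itT},c')_0 .
\]

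To prove this, I would run the two lifting processes side by side and show, by induction on $\alpha < \lh(\vec{\itT})$, that the $\alpha$-th conversion stages of $\lift(\vec{\itT},c')$ and of $\lift(\pi\vec{\itT},c)$ are \emph{$\pi$-linked}: they share a background universe and background construction, the copy map $\pi_\alpha$ witnesses that the former is a core of the latter in the sense of the resurrection formalism, and the two processes pick the same background extender at $\alpha$ and form the same next background model. The base case is immediate: since $M_{\nu,k+1} = \mfc(M_{\nu,k})$ is a proper core of the $\mathbb{C}$-level $M_{\nu,k}$, resurrecting (a level of) $M_{\nu,k+1}$ inside $\mathbb{C}$ begins with the anticore map $\pi$ back to $M_{\nu,k}$, after which one resurrects $M_{\nu,k}$ exactly as in $\lift(\cdot,c)$. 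The inductive step is the $+1$-form of the Shift Lemma for Conversions, set up in the discussion before this lemma, together with the fact noted there that $+1$-conversion stages are closed under the conversion-successor operation; given $\pi$-linkedness at $\alpha$ it yields $\pi$-linkedness at $\alpha+1$, and in particular $E_\alpha^{\lift(\vec{\itT},c')_0} = E_\alpha^{\lift(\pi\vec{\itT},c)_0}$ for every $\alpha$, which is the claimed equality.

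The hard part is exactly this inductive step: checking that the choice of background extender $E^*$ — the background of the resurrection of the $\varphi$-image of $E_\alpha^{\vec{\itT}}$ — and the formation of $c_{\alpha+1}$ commute with $\pi$. Here one genuinely uses that $\pi$ is the \emph{anticore} map (not merely some nearly $+1$-elementary map), since it is the agreement of $\pi$ with the $M_{\nu,k}$-side resurrection maps on the relevant initial segments that makes resurrection factor through $\pi$ in the first place; and one uses that, $\pi$ being nearly $+1$-elementary, all the copy maps $\pi_\alpha$ remain nearly $+1$-elementary and the attendant ultrapowers on both sides remain in the $(k+1)$-regime, so the two computations never diverge in degree. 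With $\pi$-linkedness in hand, the equality of lifted trees, and hence the lemma, follows; the remaining details are a routine transcription of \cite[Sections 4.8, 9.4]{normalization_comparison}.
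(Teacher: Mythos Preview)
Your proposal is correct and follows essentially the same approach as the paper: invoke Lemma~\ref{anticore.elem} for near $+1$-elementarity, then reduce the strategy identity to the equality $\lift(\pi\vec{\itT},c)_0 = \lift(\vec{\itT},c')_0$ of lifted trees on $N^*$, proved by induction on the conversion stages. The paper is actually terser than you are---it simply cites \cite[Lemma~5.4.2]{normalization_comparison} for the induction and omits all further detail---so your sketch of the $\pi$-linked invariant and the role of the Shift Lemma for Conversions is more explicit than what the paper records.
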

\begin{proof} $\pi$ is nearly +1-elementary by \ref{anticore.elem}.

Note first that the identity makes sense. $\Omega_{\nu,k+1}$ is a strategy
acting on 0-bounded stacks on $M_{\nu,k+1}$, or equivalently, on
1-maximal stacks on $M_{\nu,k+1}^-$. If $s$ is a 1-maximal stack on
$M_{\nu,k+1}^-$, then $\pi s$ is a 1-maximal stack on $M_{\nu,k}$,
and so $\Omega_{\nu,k}^+$ acts on it. Thus the identity makes sense.

The identity is true because, letting
\begin{align*}
c &= \la M_{\nu,k+1}, \text{ id }, M_{\nu,k+1},\mathbb{C}, N^* \ra \\
\intertext{ and }
d &= \la M_{\nu,k}, \text{ id }, M_{\nu,k},\mathbb{C}, N^* \ra, \\
\intertext{ and letting $s$ be a 0-bounded stack on $M_{\nu,k+1}$,}
\lift(s, c)_0 &= \lift(\pi s, d)_0.
\end{align*}
Thus letting $\Psi^*$ be the strategy for $N^*$,
 \begin{align*}
\text{$s$ is by $\Omega_{\nu,k}$} & \text{ iff $\lift(s, c)_0$ is by $\Psi^*$} \\
       & \text{ iff
$\lift(\pi s, d)_0$ is by $\Psi^*$} \\
   &\text{ iff $\pi s$ is by
$\Omega_{\nu,k}^+$.}
\end{align*}

The proof that $\lift(s, c)_0 = \lift(\pi s, d)_0$ is a routine
induction, essentially identical to the proof of \cite[Lemma 5.4.2]{normalization_comparison}.
We omit further detail.
\end{proof}

\subsection{Regularity properties and comparison}

     The definitions and results of \cite{normalization_comparison} go over to
+1-strategies with almost no change. 

      For example, if $\itT$ and $\itU$ are 1-maximal trees on $M$,
then $\Phi$ is a {\em tree embedding} from $\itT$ to $\itU$ iff letting
$\Phi = \la u, v, \la s_\alpha \mid \alpha < \lh(\itT)\ra,
\la t_\alpha \mid \alpha+1 < \lh(\itT) \ra \ra$, $\Phi$ has all
the properties enumerated in \cite[Definition 6.4.1]{normalization_comparison}, and
in addition
\[
\alpha \text{ is a +1-node of $\itT$ } \Rightarrow s_\alpha \text{ is +1-elementary}.
\]
A +1-strategy $\Sigma$ has {\em strong hull condensation} iff whenever $\Phi \colon \itT \to \itU$
is a tree embedding and $\itU$ is by $\Sigma$, then $\itT$ is by $\Sigma$; moreover, whenever
$\alpha < \lh(\itT)$, $v(\alpha) \le_U \beta$,  $\pi = \hat{i}_{v(\alpha),\beta}^\itU \circ s_\alpha$, and
$Q \unlhd \dom(\pi)$, then we have $\Sigma_{\itT,Q} = (\Sigma_{\itU \within \beta+1,\pi(Q)})^\pi$.

If $s$ is a 1-maximal stack, then the quasi-normalization $V(s)$
and embedding normalization $W(s)$ are defined just as before, but so that
the trees they produce are 1-maximal. If the trees in $s$ are $\lambda$-separated,
then $V(s) = W(s)$. This is the only case we care about. A 1-strategy $\Sigma$
{\em quasi-normalizes well} iff whenever $s$ is a 1-maximal stack by $\Sigma$,
then $V(s)$ is by $\Sigma$; moreover if $Q$ and $R$ are the last models of $s$ and $V(s)$,
and $\sigma \colon Q \to R$ is the quasi-normalization map, then
$\Sigma_{s,Q} = (\Sigma_{V(s),R})^\sigma$.\footnote{ If $M$-to-$Q$ does not drop in $s$,
then $M$-to-$R$ does not drop in $V(s)$ and $\sigma$ is nearly +1-elementary,
so $(\Sigma_{V(s),R})^\sigma$ is indeed a strategy for 1-maximal stacks on $Q$.}

The definition of internal lift consistency does not change at all.

If $P$ is an lpm and $\Sigma$ is a $(\lambda,\theta)^+$-strategy for $P$,
then we say $(P,\Sigma)$ is {\em pushforward consistent} iff whenever
$s$ is a stack by $\Sigma$ with last model $Q$, $\dot{\Sigma}^Q \subseteq \Sigma_{s,Q}$. 

\begin{definition}\label{+1hodpair}($\adp$) A {+1-hod pair} with scope $H_{\omega_1}$ 
is a pair $(P,\Sigma)$
such that $P$ is a countable  lpm, $\Sigma$ is an $(\omega,\omega_1)^+$-iteration strategy
for $P$, and $(P,\Sigma)$ has strong hull condensation, normalizes well, and is internally
lift consistent and pushforward consistent.
\end{definition}

\begin{remarks}{\rm 
\begin{itemize}
\item[(a)] If we are not assuming $\adp$ we may want to consider uncountable $P$,
and $\Sigma$ that are defined on uncountable trees.
\item[(b)]  If $(P,\Sigma)$ is
a +1-hod pair, then $(P,\Sigma^-)$ is an lbr hod pair in the sense of
 \cite{normalization_comparison}.
\item[(c)] If $(P,\Sigma)$ is a +1-hod pair and $P$ is sound, then
$(P^+,\Sigma)$ is an lbr hod pair in the sense of \cite{normalization_comparison}.
\end{itemize}}
\end{remarks}

Since nearly +1-elementary maps suffice to copy 1-maximal trees, we
get a Dodd-Jensen lemma:

\begin{lemma}\label{plusonedoddjensen}[Dodd-Jensen] Let $(M,\Sigma)$ be a
+1-mouse pair, $s$ a 1-maximal stack on $(M,\Sigma)$ with last pair
$(N,\Lambda)$, and $\pi \colon (M,\Sigma) \to (Q,\Omega) \unlhd (N,\Lambda)$
be nearly +1-elementary; then $(Q,\Omega) = (N,\Lambda)$, the branch
$M$-to-$N$ of $s$ does not have a large  drop, and $i_s(\eta) \le \pi(\eta)$ for all
$\eta \in \OR^M$.
\end{lemma}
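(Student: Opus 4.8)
The plan is to follow the classical Dodd-Jensen argument, as it is carried out for ordinary mouse pairs in \cite{normalization_comparison}, but working throughout in the category of nearly +1-elementary maps and 1-maximal stacks. First I would fix a copying apparatus: given the hypothesized $\pi \colon (M,\Sigma) \to (Q,\Omega) \unlhd (N,\Lambda)$, which is nearly +1-elementary, and a 1-maximal stack $s$ on $(M,\Sigma)$ whose last pair is $(N,\Lambda)$, I want to iterate $(M,\Sigma)$ ``above'' the image in a way that lets me compare $i_s$ with $\pi$. The standard move is to form the copied stack $\pi s$, which by the copying results cited after Definition \ref{shiftnotation} (and the copying lemma and Dodd-Jensen lemma of \cite{finetame} for $\deg(M)+1$-apt maps, referenced right after Definition \ref{nearplusone.elementary}) is a 1-maximal stack on $(Q,\Omega)$, with copy maps $\pi_\alpha$ that are nearly +1-elementary along branches with at most small drops. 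Since $(Q,\Omega) \unlhd (N,\Lambda)$ and $\Lambda$ is the tail strategy on $N$, one arranges $\pi s$ to sit as a stack by $\Lambda$ (or $\Omega$ extended by $\Lambda$), and one obtains a nearly +1-elementary ``last copy map'' $\tau \colon (N,\Lambda) \to (N^*, \Lambda^*)$ into the last pair of $\pi s$, together with the commutativity $\tau \circ i_s = i_{\pi s} \circ \pi$.

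Next I would run the usual minimality/Dodd-Jensen comparison: one compares the phalanx-like configuration obtained from $s$ against the one obtained from $\pi s$, using that $\Sigma$ (equivalently $\Omega$, $\Lambda$) has strong hull condensation, quasi-normalizes well, and is internally lift consistent in the +1-sense described in the ``Regularity properties and comparison'' subsection, so that the comparison lives entirely among 1-maximal trees by these strategies. The comparison terminates, and the Dodd-Jensen property of the strategies (in the guise that the relevant branches are non-dropping and the comparison is trivial on one side) forces the $M$-to-$N$ branch of $s$ to be non-dropping in the ``large'' sense --- i.e. it has no large drop, though a single small drop at the base is still permitted --- exactly as in Remark \ref{largedropunsound}, which identifies unsoundness of an iterate with the presence of a large drop. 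One then reads off that $(Q,\Omega) = (N,\Lambda)$, because a proper initial segment would be out-iterated. Finally, the inequality $i_s(\eta) \le \pi(\eta)$ for all $\eta \in \OR^M$ comes, as usual, from the commutativity $\tau \circ i_s = i_{\pi s} \circ \pi$ together with the fact that $i_{\pi s}$ and $\tau$ are nearly +1-elementary and hence $\le$-preserving on ordinals, plus the triviality of the $(Q,\Omega)$-side of the comparison; pointwise, if $i_s(\eta) > \pi(\eta)$ for some $\eta$ one derives a strictly descending situation contradicting well-foundedness of the comparison, exactly the Dodd-Jensen trick.

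The main obstacle, and the only place where genuine care beyond ``cite \cite{normalization_comparison}'' is needed, is the \emph{restricted elementarity} of the copy maps: nearly +1-elementary maps are only elementary and exact and $k{+}1$-theory preserving (Definition \ref{nearplusone.elementary}), not fully +1-elementary, so one must check that the Dodd-Jensen comparison still goes through with this weaker preservation --- in particular that $\rho_{k+1}$, $p_{k+1}$, $\eta_k$ and the solidity witnesses are tracked correctly along branches with a small drop at the base, which is what Lemmas \ref{ult1elementarity}, \ref{ult0elementarity}, and \ref{shiftlemma} were set up to guarantee. The bookkeeping of \emph{where} the unique small drop can occur (only at the base of $\itT_0(s)$, by Definitions \ref{+1tree} and \ref{1boundedstack}) is what keeps the argument from fragmenting into cases, and invoking the $\deg(M)+1$-apt Dodd-Jensen lemma of \cite{finetame} as a black box handles the branch on which a small drop has occurred. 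Beyond that, the proof is a routine transcription of the pure-extender / lbr-hod-pair Dodd-Jensen proof from \cite{normalization_comparison} into the 1-maximal setting, so I would present it at that level of detail and omit the standard calculations.
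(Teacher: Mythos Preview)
Your proposal has the right starting point (copying via $\pi$) and the right endpoint (citing \cite{finetame}), but the middle paragraph is confused. The Dodd-Jensen argument is \emph{not} a comparison argument, and it does not use strong hull condensation, quasi-normalization, or internal lift consistency. Those regularity properties are needed for the comparison theorem (Theorem \ref{starpsigmathm}) and for the phalanx arguments in the proof of the condensation theorem later in the paper; they play no role whatsoever in Dodd-Jensen. There is no ``phalanx-like configuration'' to compare here.

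The actual argument, which the paper compresses into the single sentence preceding the lemma (``Since nearly +1-elementary maps suffice to copy 1-maximal trees, we get a Dodd-Jensen lemma''), is the classical iterative copying trick. Assuming the conclusion fails, you view $\pi s$ as a stack on $(N,\Lambda)$ (since $(Q,\Omega) \unlhd (N,\Lambda)$), concatenate to get $s_1 = s^\frown \pi s$ by $\Sigma$, with last pair $(N_1,\Lambda_1)$ and copy map $\pi_1 \colon (N,\Lambda) \to (N_1,\Lambda_1)$. Then $\pi_1 \circ \pi \colon (M,\Sigma) \to (\pi_1(Q),\ldots) \unlhd (N_1,\Lambda_1)$ is again nearly +1-elementary and the same failure persists. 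Iterating $\omega$ times produces a single stack by $\Sigma$. If $(Q,\Omega) \lhd (N,\Lambda)$ or the branch has a large drop, the main branch of this $\omega$-stack drops infinitely often, contradicting that $\Sigma$ is an $(\omega,\omega_1)^+$-strategy. If instead $i_s(\eta) > \pi(\eta)$ for the least bad $\eta$, the images of $\eta$ under the successive copy maps form an infinite descending sequence of ordinals. This is exactly what \cite{finetame} does for $\deg(M)+1$-apt maps, and the paper is content to invoke it; but you should drop the talk of comparisons and phalanxes from your plan.
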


\begin{lemma}\label{omegaplus.regular} Let $((N^*,\in, w, \itF,\Psi),\Psi^*)$ be a 
coarse strategy pair,
and let $\mathbb{C}$ be
the maximal least branch $(w,\itF,\Psi)$-construction of  $(N^*,\in,w,\itF,\Psi)$
 with models $M_{\nu,l}$ and induced 
+1-strategies $\Omega_{\nu,l}^+$; then for all $\nu,l$,
$\Omega_{\nu,l}^+$ has strong hull condensation, quasi-normalizes well,
and is internally lift consistent and pushforward consistent.
\end{lemma}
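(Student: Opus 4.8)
The plan is to reduce each of the four regularity properties of $\Omega_{\nu,l}^+$ to the corresponding property of the coarse strategy $\Psi^*$, following step for step the proof of the analogous facts for the ordinary induced strategies $\Omega_{\nu,l}$ in \cite{normalization_comparison}, and checking only that the extra bookkeeping at $+1$-nodes goes through. The engine is the lifting map of the preceding subsection: with $c = \la M_{\nu,l}, \mathrm{id}, M_{\nu,l}, \mathbb{C}, N^*\ra$, by definition $\vec{\itT}$ is by $\Omega_{\nu,l}^+$ iff $\lift(\vec{\itT},c)_0$ is by $\Psi^*$, and — as was noted when the conversion system was extended to $1$-bounded stacks — whenever $\alpha$ is a $+1$-node of $\itT$ the conversion stage $c_\alpha$ produced by the construction is a $+1$-conversion stage, so the fine/coarse correspondence is maintained level by level, and the conversion maps attached to $+1$-nodes are nearly $+1$-elementary and $\mathrm{Th}_1$-preserving.

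For strong hull condensation, suppose $\Phi \colon \itT \to \itU$ is a tree embedding between $1$-maximal trees with $\itU$ by $\Omega_{\nu,l}^+$. First I would show that $\Phi$ lifts to a tree embedding $\Phi^* \colon \lift(\itT,c)_0 \to \lift(\itU,c)_0$ of the corresponding nice trees on $N^*$; this is the $1$-maximal analogue of the tree-embedding-lifting argument of \cite{normalization_comparison}, the only new point being that at a $+1$-node $\alpha$ of $\itT$ the requirement that $s_\alpha$ be $+1$-elementary matches the $+1$-elementarity recorded above for the associated conversion maps. Since $\lift(\itU,c)_0$ is by $\Psi^*$ and $\Psi^*$ has strong hull condensation, $\lift(\itT,c)_0$ is by $\Psi^*$, so $\itT$ is by $\Omega_{\nu,l}^+$. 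The additional tail-agreement clause in the definition of strong hull condensation is then read off from the corresponding clause for $\Psi^*$, together with the commutation of lifting with the formation of tail strategies and with $\pi$, exactly as in the $0$-bounded case; here one uses Lemma \ref{omeganulplus.pullback} to identify the relevant pullbacks.

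For quasi-normalization, let $s$ be a $1$-maximal stack by $\Omega_{\nu,l}^+$. Since we only care about $\lambda$-separated stacks, $V(s)=W(s)$, and the construction of $V$ on $1$-maximal stacks is the construction of $V$ in \cite{normalization_comparison} carried out so as to keep the trees $1$-maximal; hence $\lift(V(s),c)_0$ is the quasi-normalization of $\lift(s,c)_0$, a nice stack on $N^*$. Because $\Psi^*$ quasi-normalizes well, $\lift(V(s),c)_0$ is by $\Psi^*$, so $V(s)$ is by $\Omega_{\nu,l}^+$, and the pullback clause for the quasi-normalization map again descends from the corresponding clause for $\Psi^*$. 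Internal lift consistency does not change its meaning for $+1$-strategies and is inherited from the lift-consistency of background-induced strategies in \cite{normalization_comparison} without modification. Finally, for pushforward consistency, let $s$ be a $1$-bounded stack by $\Omega_{\nu,l}^+$ with last model $Q$; since $\dot{\Sigma}^Q$ records information only about $0$-bounded $\lambda$-separated trees, a tree $t$ by $\dot{\Sigma}^Q$ is $0$-bounded, and $t$ is by $(\Omega_{\nu,l}^+)_{s,Q}$ iff it is by $((\Omega_{\nu,l}^+)^-)_{s,Q} = (\Omega_{\nu,l})_{s,Q}$. As $(M_{\nu,l},\Omega_{\nu,l})$ is an lbr hod pair by \cite{normalization_comparison} (the levels of a maximal least branch construction are hod pairs), it is pushforward consistent, so $\dot{\Sigma}^Q \subseteq (\Omega_{\nu,l})_{s,Q}$, as required.

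I expect the main obstacle to be the first step of the strong hull condensation argument: verifying that a tree embedding of $1$-maximal trees genuinely lifts to a tree embedding of the background trees, with the $+1$-node clause respected. This is where the interaction between the $+1$-fine structure, the resurrection/conversion machinery, and the definition of tree embedding is most delicate. Everything else is bookkeeping once the lifting of $1$-bounded stacks, and its commutation with tails, with $V$, and with pullbacks, has been set up as in the preceding subsections; accordingly I would state most of those verifications as routine and refer to the parallel arguments in \cite{normalization_comparison}.
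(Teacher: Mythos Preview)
Your approach is the same as the paper's, which simply says ``The proof in \cite{normalization_comparison} that $\Omega_{\nu,l}$ has these properties works also for $\Omega_{\nu,l}^+$.'' You have supplied considerably more detail than the paper does, and for strong hull condensation, quasi-normalization, and internal lift consistency your outline is accurate: each property is pulled back from $\Psi^*$ through the conversion system, and the only new ingredient at $+1$-nodes is the near $+1$-elementarity of the conversion maps, which was established in the preceding subsection.

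There is, however, a small gap in your pushforward consistency argument. You write that for $0$-bounded $t$, ``$t$ is by $(\Omega_{\nu,l}^+)_{s,Q}$ iff it is by $((\Omega_{\nu,l}^+)^-)_{s,Q} = (\Omega_{\nu,l})_{s,Q}$'', and then appeal to pushforward consistency of the ordinary pair $(M_{\nu,l},\Omega_{\nu,l})$. But if $s$ is $1$-maximal, then $s$ is not in the domain of $(\Omega_{\nu,l}^+)^-$ or of $\Omega_{\nu,l}$, so neither tail strategy on the right is defined, and the known pushforward consistency of $\Omega_{\nu,l}$ says nothing about this $s$. The fix is to argue exactly as you did for the other three properties: lift $s^\frown t$ through the conversion system and use pushforward consistency of $\Psi^*$ (clause (c) of the coarse strategy pair axioms), together with the fact that the conversion map at $Q$ is nearly elementary and carries $\dot{\Sigma}^Q$ into $\dot{\Sigma}$ of the corresponding background level. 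This is the argument in \cite{normalization_comparison} for $\Omega_{\nu,l}$, and it goes through unchanged for $1$-bounded $s$.
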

\begin{proof}(Sketch.) The proof in \cite{normalization_comparison}
that $\Omega_{\nu,l}$ has these properties works also for
$\Omega_{\nu,l}^+$.
\end{proof}

We turn now the comparison theorem for +1-hod pairs.

\begin{definition}\label{iteratespastdef} Let $(P,\Sigma)$ be a +1-hod pair; then
\begin{itemize}
\item[(1)] $(P,\Sigma)$ {\em iterates strictly past $(Q,\Lambda)$} iff 
there is a 1-maximal, $\lambda$-separated tree $\itT$ on $(P,\Sigma)$ with last pair
$(R,\Psi)$ such that either $(Q,\Lambda) \lhd (R,\Psi)$, or
$P$-to-$R$ has a large drop and $(Q,\Lambda) = (R,\Psi)$.
\item[(2)] $(P,\Sigma)$ {\em iterates to $(Q,\Lambda)$ }iff
there is a 1-maximal, $\lambda$-separated tree $\itT$ on $(P,\Sigma)$
with last pair $(Q,\Lambda)$ and such that $P$-to-$Q$ does not 
have a large drop.
\end{itemize}
\end{definition}
In case (1), $(Q,\Lambda)$ must be an ordinary lbr hod pair, and in case
(2) it must be a 1-hod pair. In both cases,
the 1-maximal tree $\itT$ is uniquely determined
by $(P,\Sigma)$ and $(Q,\Lambda)$.

\begin{definition}\label{capturesSigma}
$(P,\Sigma)$ be a  +1-hod pair.
We say that a coarse strategy pair
$((N^*,\in, w, \itF,\Psi),\Psi^*)$  {\em captures
$(P,\Sigma)$} iff there is an inductive-like pointclass $\Gamma$ with the scale property
such that $\text{Code}(\Sigma) \in \Delta_\Gamma$, and
for $\delta^* = \delta(w)$,
\begin{itemize}
\item[(i)] $N^* \models ``\delta^*$ is Woodin", and
\item[(ii)] $P \in \text{HC}^{N^*}$, and there is a Coll$(\omega,\delta^*)$-term $\tau$ and
 a universal $\Gamma$-set $U$
such that
 if $i:N^*\rightarrow S$ is via $\Sigma^*$ and $g\subseteq 
  \textrm{Col}(\omega,i(\delta^*))$ is $S$-generic, 
 then $i(\tau)_g = U\cap S[g]$.
\end{itemize}
\end{definition}

\begin{theorem}\label{starpsigmathm}($\adp$) Let $(P_0,\Sigma_0)$ be a +1-hod pair, 
let $((N^*,\in, w, \itF,\Psi),\Psi^*)$ be a coarse strategy pair
that captures $(P_0,\Sigma_0)$, and let $\mathbb{C}$ be the maximal 
least branch $(w,\itF,\Psi)$-construction of $(N^*,\in,w,\itF,\Psi)$;
then
there is a $\la \theta,n\ra <_{\lex} \la \delta^*,0\ra$
such that
\begin{itemize}
\item[(1)] for all $\la \nu,k\ra <_{\lex} \la \theta,n\ra$, $(P_0,\Sigma_0)$ iterates
strictly past $(M_{\nu,k},\Omega_{\nu,k})$, and
\item[(2)] $(P_0,\Sigma_0)$ iterates to $(M_{\theta,n}, \Omega_{\theta,n}^+)$.
\end{itemize}
\end{theorem}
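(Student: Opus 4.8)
The plan is to establish the appropriate $(*)$-property, namely the conjunction of clauses (1) and (2), by running the comparison of $(P_0,\Sigma_0)$ against the levels of $\mathbb{C}$ in the manner of the proof of $(*)(P,\Sigma)$ in \cite{normalization_comparison}, but with $1$-maximal, $\lambda$-separated plus trees in place of ordinary ones. First I would set up the coiteration: one builds a single $1$-maximal, $\lambda$-separated tree $\itT$ on $(P_0,\Sigma_0)$ by least disagreement against a moving pointer $\langle\nu,k\rangle$ into $\mathbb{C}$, continuing $\itT$ by $\Sigma_0$ and lifting it to a tree $\itT^*$ on $N^*$ through a run of $+1$-conversion stages, using the background extenders of resurrections; by Definition \ref{induced.strategy}, $\itT^*$ is by $\Psi^*$, and by the computation preceding that definition the conversion stage at each $+1$-node of $\itT$ is again a $+1$-conversion stage. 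Because $\text{Code}(\Sigma_0)\in\Delta_\Gamma$ and the coarse strategy pair captures $(P_0,\Sigma_0)$ via a term $\tau$ for a universal $\Gamma$-set, the branch choices made along $\itT$ by $\Sigma_0$ and by the lift of $\Psi^*$ coincide: this is where $N^*\models``\delta^*$ is Woodin'' is used, through a genericity iteration making a real coding $\itT$ generic over an iterate of $N^*$, after which both branch choices are read off the same universal set by generic absoluteness.

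Granting this, the comparison is run exactly as in \cite{normalization_comparison}. All the ingredients it requires in the $1$-maximal setting are in place: the lifting of $1$-bounded trees through $+1$-conversion stages; the Shift Lemma \ref{shiftlemma} and the branch-elementarity facts \ref{+1tree.branchelem}; the $+1$ Dodd--Jensen Lemma \ref{plusonedoddjensen}, which is what forces least-disagreement comparisons of $+1$-strategies to terminate; and the regularity of the induced strategies $\Omega_{\nu,l}^+$ (Lemma \ref{omegaplus.regular}), together with $\Omega_{\nu,k+1}=(\Omega_{\nu,k}^+)^\pi$ for $\pi$ the anticore map (Lemma \ref{omeganulplus.pullback}). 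The usual reflection argument then shows the comparison cannot run all the way to $\langle\delta^*,0\rangle$: if it did, we would have a tree on $P_0$ by $\Sigma_0$ iterating strictly past $(M_{\nu,k},\Omega_{\nu,k})$ for every $\langle\nu,k\rangle<_{\lex}\langle\delta^*,0\rangle$, and absorbing the resulting length-$\delta^*$ tree into a generic extension of $N^*$ contradicts the capturing of $\Sigma_0$ — by universality of $\mathbb{C}$, the $+1$ Dodd--Jensen Lemma, and regularity of the induced strategies, the relevant iterate of $P_0$ must agree with rather than strictly extend the corresponding level. So the process halts at a least $\langle\theta,n\rangle<_{\lex}\langle\delta^*,0\rangle$ at which the final model of $\itT$ coincides with $M_{\theta,n}$ with no large drop; this gives (2), and at every earlier stage the iterate of $P_0$ is a proper extension of $M_{\nu,k}$ (or equals it with a large drop), giving (1). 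The strategy agreement in (2), that the tail of $\Sigma_0$ along $\itT$ is $\Omega_{\theta,n}^+$, follows from strong hull condensation of both strategies (Lemma \ref{omegaplus.regular}) applied to the tree embeddings exhibiting each as the pullback of the other along the comparison map, exactly as in the type $1$ case.

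The main obstacle is tracking the ``$+1$'' decorations through an argument written for ordinary trees, and in particular confirming that the one new phenomenon — small drops at $+1$-nodes (clause (2)(c) of Definition \ref{+1tree}) — causes no harm. A small drop at $\alpha+1$ simply turns $\itT$ into an ordinary ($0$-bounded) plus tree above $\alpha+1$, so below any small drop the comparison is governed by the already established theory of \cite{normalization_comparison}; a large drop instead signals that $(P_0,\Sigma_0)$ has iterated strictly past, as in case (1) of Definition \ref{iteratespastdef}. One must also check that the least-disagreement recursion still makes sense at $+1$-nodes, i.e.\ that the fine-structural data used to locate disagreements ($\rho_{k+1}$, $p_{k+1}$, the coding structures $\hat{M}^k$) are correctly propagated by $+1$-ultrapowers and $+1$-copy maps. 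This is precisely what clauses (1), (3), (4), (5) of Lemma \ref{ult1elementarity} and the notion of nearly $+1$-elementary map (Definition \ref{nearplusone.elementary}) are designed to supply, but it must be invoked at each step where the original proof uses full elementarity of an ordinary ultrapower or copy map. Since Lemmas \ref{ult1elementarity}, \ref{shiftlemma}, \ref{plusonedoddjensen}, and \ref{omegaplus.regular} were tailored to provide exactly these substitutes, no genuinely new difficulty arises, and we omit the portions of the argument that are unchanged from \cite{normalization_comparison}.
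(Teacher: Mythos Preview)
Your proposal is correct and takes essentially the same approach as the paper: both run the comparison argument of \cite[Theorem 9.5.2]{normalization_comparison} with $1$-maximal $\lambda$-separated trees in place of ordinary ones, invoking the $+1$ shift lemma, the $+1$ Dodd--Jensen lemma, the regularity of $\Omega_{\nu,l}^+$, and the anticore pullback identity $\Omega_{\nu,k+1}=(\Omega_{\nu,k}^+)^\pi$ at exactly the points you indicate. The paper's sketch supplies a bit more scaffolding you leave implicit---an explicit induction hypothesis $(\dagger)_\alpha$ ruling out bad extender and strategy disagreements, the analogue of \cite[Sublemma 8.3.1.1]{normalization_comparison} realizing resurrection maps as branch embeddings of the $\itW^*_{\nu,k}$ (needed when $M_{\theta,k}$ is unsound), and a case split on $n>0$ versus $n=0$ with subclaims deferring to \cite[Theorem 8.4.3]{normalization_comparison}---but these are elaborations of your outline, not departures from it.
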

\begin{proof}(Sketch.) The proof is very close to that of
\cite[Theorem 9.5.2]{normalization_comparison}, so we shall just give an
outline.

Let $\la \theta,n \ra$ be lex least
such that $(P_0,\Sigma_0)$ does not iterate strictly past
$(M_{\theta,n},\Omega_{\theta,n})$.
For $\la \nu, k\ra <_{\lex} \la \theta, n\ra$, let
\[
\itW^*_{\nu,k} = \text{$\lambda$-separated tree whereby $(P_0,\Sigma_0)$ iterates strictly past $(M_{\nu,k},\Omega_{\nu,k})$.}
\]
Let
\[
(M,\Omega, \Omega^+)=(M_{\theta,n},\Omega_{\theta, n}, \Omega_{\theta,n}^+),
\]
and let $\itT$ be the tree on $(P_0,\Sigma_0)$ formed by iterating away least extender disagreements with $M$,
as follows. We assume by induction

\bigskip
\noindent
{\bf Induction hypothesis $(\dagger)_\alpha$.}

\medskip
\noindent
If the current last pair of $\itT$
is $(Q,\Lambda) = (\itM_\alpha^\itT,\Sigma_\alpha^\itT)$, then
\begin{itemize}
\item[(1)]($M$ is passive in extender disagreements) if
$E$ is on the $M$-sequence and $M||\lh(E) = Q||\lh(E)$, then
$M|\lh(E) = Q|\lh(E)$.
\item[(2)](No strategy disagreements) if 
$S \unlhd Q$ and $S \unlhd M$, then 
\begin{itemize}  
\item[(a)] if $S \lhd M$, then $S \lhd Q$ 
and $\Lambda_S = \Omega_S$, and
\item[(b)] if $S= M$, then $S=Q$, $\alpha$ is a +1-node of $\itT$, 
and $\Lambda = \Omega^+$.
\end{itemize}
\end{itemize}

\bigskip
\noindent
Note that (2)(a) implies that $Q$ is not a proper initial segment of $M$.

If $(\dagger)_\alpha$ is false, then we stop the construction of $\itT$,
and say that {\em $\itT$ fails at $\alpha$}.
Let us assume that $\itT$ never fails, and finish the proof. 

\begin{claim}\label{claim0}  If $(\dagger)_\alpha$ holds, then either
\begin{itemize}
\item[(i)] $\alpha$ is a +1-node of $\itT$, $Q=M$, 
and $\Lambda = \Omega^+$, or
\item[(ii)]
there is an extender $E$ on the $Q$-sequence
 such that $(Q, \Lambda)||\lh(E) = (M, \Omega)||\lh(E)$
but $M|\lh(E)$ is passive.
\end{itemize} 
\end{claim}
\begin{proof} Suppose that (ii) fails. 
Then by $(\dagger)_\alpha$(1), $Q \lhd M$ or $M \unlhd Q$. By (2)(a) then (with $S=Q$),  $M \unlhd Q$.
 But then (2)(b) with $S=M$
implies that (i) holds.
\end{proof}

If \ref{claim0}(i) holds, then $(P_0,\Sigma_0)$ has iterated to
$(M,\Omega^+)$, as desired. In this case we again stop the construction,
and say that {\em $\itT$ succeeds at $\alpha$}. Assume now that $\itT$ neither fails
nor succeeds at $\alpha$, and
let $E$ be the unique extender on the $Q$-sequence such that
$Q||\lh(E) = M||\lh(E)$ but $Q|\lh(E) \neq M|\lh(E)$. 
We set
\[
   E_\alpha^\itT = E^+
\]
and continue constructing $\itT$.  At limit steps we use $\Sigma_0$ to choose a branch, as
a tree on $(P_0,\Sigma_0)$ must do.

The lengths of the $E_\alpha^\itT$ are strictly increasing, so if $\itT$
never fails, then
eventually it must succeed, that is, we must reach $\alpha$ such that $(\dagger)_\alpha$ holds and
part (i)  of Claim \ref{claim0} holds. This means that $(P_0,\Sigma_0)$
has iterated to $(M,\Omega^+)$, as desired.


For $\la \nu, k\ra <_{\lex} \la \theta, n\ra$, let
\[
\itW^*_{\nu,k} = \text{1-maximal $\lambda$-separated tree whereby 
$(P_0,\Sigma_0)$ iterates strictly past $(M_{\nu,k},\Omega_{\nu,k})$.}
\] 
Let us check that the lemma of \cite[Section 8.3]{normalization_comparison} on realizing
resurrection embeddings as branch embeddings holds for the new $W^*_{\nu,k}$. 
Fix $\la \nu,k \ra <_{\lex} \la \theta,n\ra$
for a while, and suppose that $M_{\nu,k}$ is not sound. Let $\xi_1 = \lh(\itW^*_{\nu,k})$.

By definition, $\itM_{\xi_1}^{\itW^*_{\nu,k}} \unrhd M_{\nu,k}$,
so since $M_{\nu,k}$ is not sound, 
$\itM_{\xi_1}^{\itW^*_{\nu,k}} = M_{\nu,k}$. $(P_0,\Sigma_0)$ iterates strictly
past $(M_{\nu,k},\Omega_{\nu,k})$, so $[0,\xi_1]_{W^*_{\nu,k}}$ has a large drop.
Let
\begin{align*}
\eta_0 &= \text{ largest $\gamma$ in $[0,\xi_1]_{\itW^*_{\nu,k}} \cap D^{\itW^*_{\nu,k}}$},\\
\eta &= W^*_{\nu,k} \tpred(\eta_0),\\
i^* &= i_{\eta_0,\xi_1}^{\itW^*_{\nu,k}} \circ i_{\eta_0}^*,\\
\intertext{ and }
R &= \dom(i^*) = \itM_{\eta_0}^{*,\itW^*_{\nu,k}}.\\
\end{align*}
$R$ is
the proper initial segment of $\itM_\eta^{\itW^*_{\nu,k}}$ that lies on the branch
from $\eta$ to $\xi_1$. $R$ has degree $k$, and it is sound because the drop at $\eta_0$
is a large one.  Thus
\[
R^+ = \mfc_{k+1}(R) = \mfc_{k+1}(M_{\nu,k}) = M_{\nu,k+1},
\]
and $i^*$ is the anticore map.
Let $\rho = \rho(M_{\nu,k})$.

\begin{claim}\label{etaversusrho} $\rho < \lh(E_\eta^{\itW^*_{\nu,k}})$,
and for all $\tau < \eta$, $\lh(E_\tau^{\itW^*_{\nu,k}})\le \rho$.
\end{claim}
\begin{proof} We have $\rho \le \crit(i^*) < \lh(E_\eta^{\itW^*_{\nu,k}})$.
Suppose toward contradiction that $\tau < \eta$ and
$\rho < \lh(E_\tau^{\itW^*_{\nu,k}})$. We can then find such
$\tau$ with $\tau+1 <_{W^*_{\nu,k}} \eta$. But this implies
that whenever $S \lhd \itM_\eta^{\itW^*_{\nu,k}}$ with $o(S)\geq \lh(E_\tau^{\itW^*_{\nu,k}})$, then
$\lh(E_\tau^{\itW^*_{\nu,k}}) \le \rho(S)$. Since
$\rho(R) = \rho$, we have a contradiction.
\end{proof}

Sublemma 8.3.1.1 of \cite{normalization_comparison} goes over
verbatim when $\la \nu,k+1 \ra <_{\lex} \la \theta,n \ra$: 
 
\begin{lemma}\label{new8.3.1.1A}Let $\la \nu,k+1 \ra <_{\lex} \la \theta, n \ra$, 
and suppose that $M_{\nu,k}$
is not sound. Let $\pi \colon M_{\nu,k+1}^- \to M_{\nu,k}$ be the anticore map.
Let $\xi = \lh(\itW^*_{\nu,k})$, let $\eta, \eta_0, R$ be as above, 
and let $i^* \colon R \to \itM_{\xi}^{\itW^*_{\nu,k}} = M_{\nu,k}$ 
be the branch embedding of $\itW^*_{\nu,k}$ as above;
then
\begin{itemize}
\item[(a)] $R^+ = M_{\nu,k+1}$ and $i^* = \pi$.
\item[(b)] $\eta$ is the least $\gamma$ such that
$\lh(E_\gamma^{\itW^*_{\nu,k}}) > \rho(M_{\nu,k})$.
\item[(c)] $\itW^*_{\nu,k+1} = \itW^*_{\nu,k} \within \eta+1$,
\end{itemize}
\end{lemma}

\begin{proof} We have already shown (a) and (b), and (c) follows
at once.
\end{proof}

 As a result,
Lemma 8.3.1 goes over verbatim for resurrections from some $\la \nu,k\ra <_{\lex}
\la \theta,n\ra$:

\begin{corollary}\label{res.as.branch.1} Let $\la\nu,k\ra <_{\lex}
\la \theta,n\ra$, $P \unlhd M_{\nu,k}$,
\begin{align*}
\eta &= \text{ least $\xi$ such that $P \unlhd \itM_\xi^{\itW^*_{\nu,k}}$,}\\
\intertext{ and }
M_{\mu,j} &= \Res_{\nu,k}[P];\\
\intertext{ then}
\itW^*_{\nu,k} \within (\eta+1) &= \itW^*_{\mu,j} \within (\eta+1),\\
\intertext{ $\itW^*_{\mu,j}$ has last model $M_{\mu,j}$, and for
$\xi = \lh(\itW^*_{\mu,j})$, we have $\eta <_{W^*_{\mu,j}} \xi$, and }
\sigma_{\mu,j}[P] &= \hat{i}^{\itW^*_{\mu,j}}_{\eta,\xi}.\\
\end{align*}
\end{corollary}
\begin{proof} The proof of \cite[Lemma 8.3.1]{normalization_comparison} also
goes over verbatim.\footnote{ The fact that $\itW^*_{\nu,k}$ is $\lambda$-separated
plays a role in the proof.}
\end{proof}

We can now finish the proof of the theorem in the case $n>0$.

\begin{claim}\label{n>0case} If $n>0$, then $\itT$ succeeds at some $\alpha$.
\end{claim}
\begin{proof} Let $n=k+1$, and suppose toward contradiction that $\itT$ fails at $\alpha$.
Let 
\begin{align*}
(Q,\Lambda)&=(\itM_\alpha^\itT,\Sigma_{\alpha}^\itT),\\
\intertext{ and let }
(N,\Psi) &= (\itM_\xi^{W^*_{\theta,k}}, \Sigma_\xi^{\itW^*_{\theta,k}})\\
\end{align*} be the last
pair in $W^*_{\theta,k}$.

Suppose first that $M_{\theta,k}$ is sound. Since 
\[
M_{\theta,k+1} = M_{\theta,k}^+
\]
 (i.e. they are the same
as bare premice), $\itT \within \alpha+1 = \itW^*_{\nu,k} \within \alpha+1$. But $W^*_{\nu,k}$ never failed,
so clearly $\itT$ does not fail at $\alpha$ for the ``bad extender disagreement" reason; i.e. $(\dagger)_\alpha$(1)
holds. We verify $(\dagger)_\alpha$(2), i.e. that there is no strategy disagreement between $\Lambda$
and $\Omega_{\theta,k+1}$, and in fact $(Q,\Lambda)= (M_{\theta,k+1},\Omega_{\theta,k+1}^+)$.

  Let
$S \unlhd Q$ and $S \unlhd M_{\theta,k+1}$. 
If $S \lhd M_{\theta,k+1}$ then $S \unlhd M_{\theta,k}$ and
\[
(\Omega_{\theta,k})_S = (\Omega_{\theta,k+1})_S.
\]
Since $P_0$ iterated strictly past $M_{\theta,k}$,
either $S \lhd N$ or $S=N$ and $[0,\xi]_{W^*_{\theta,k}}$ has a large drop. In the latter case
$M_{\theta,k}$ is not sound, contradiction. So $S \lhd N$, and thus
\begin{align*}
     (\Omega_{\theta,k+1})_S &= (\Omega_{\theta,k})_S \\
                            &= \Psi_S \\
                            &= (\Sigma_{\alpha}^{W^*_{\theta,k}})_S\\
                            &=(\Sigma_{\alpha}^\itT)_S,
\end{align*}
which verifies $(\dagger)_\alpha$(2) when $S \lhd M_{\theta, k+1}$. Line 3 holds by strategy coherence.

Suppose next that $S=M_{\theta,k+1}$. 

\begin{subclaim}\label{subclaim.a} If $S= M_{\theta,k+1} \lhd Q$ or $[0,\alpha]_T$ has a large drop,
then $\Omega_{\theta,k+1} = \Lambda_S$.
\end{subclaim}
\begin{proof} The strategy comparison proof of \cite[Theorem 8.4.3]{normalization_comparison}
works here pretty much word-for-word.
\end{proof}

\begin{subclaim}\label{subclaim.b} If $S=M_{\theta,k+1}$, then
$S= Q$ and $[0,\alpha]_T$ has no large
drops.
\end{subclaim}
\begin{proof} Otherwise $(P_0,\Sigma_0)$ iterates strictly past $(M_{\theta,n},\Omega_{\theta,n})$
by Subclaim \ref{subclaim.a}.
\end{proof}

\begin{subclaim}\label{LambdaRclaim} If $S= M_{\theta, k+1}= Q$ and $[0,\alpha]_T$ has no large
drops, then $\Omega_{\theta,k+1}^+ = \Lambda$. 
\end{subclaim}
\begin{proof} The strategy comparison proof of \cite[Theorem 8.4.3]{normalization_comparison}
works pretty much word-for-word.
\end{proof}

Putting the subclaims together, we see that $(P_0,\Sigma_0)$ iterates to
$(M_{\theta,k+1},\Omega_{\theta,k+1}^+)$ in the case that $M_{\theta,k}$ is sound.

So we assume that $M_{\theta,k}$ is not sound.  We adopt our previous notation, with $\theta = \nu$. 
That is,
\begin{align*}
\eta_0 &= \text{ largest $\gamma$ in $[0,\xi_1]_{W^*_{\theta,k}} \cap D^{W^*_{\theta,k}}$},\\
\eta &= W^*_{\theta,k} \tpred(\eta_0),\\
i^* &= i_{\eta_0,\xi_1}^{\itW^*_{\theta,k}} \circ i_{\eta_0}^*,\\
\intertext{ and }
R &= \dom(i^*) = \itM_{\eta_0}^{*,W^*_{\theta,k}}.\\
\end{align*}
$R$ is
the proper initial segment of $\itM_\eta^{\itW^*_{\theta,k}}$ that lies on the branch
from $\eta$ to $\xi_1$. $R$ has degree $k$, and it is sound because the drop at $\eta_0$
is a large one.  Thus
\[
R^+ = \mfc_{k+1}(R) = \mfc_{k+1}(M_{\theta,k}) = M_{\theta,k+1},
\]
and $i^*$ is the anticore map.

Let $\rho = \rho_{k+1}(M)$. By \ref{etaversusrho}, $\eta$ is least such that
$\lh(E_\eta^{\itW^*_{\theta,k}})>\rho$, so
\[
\itT \within \eta+1 = \itW^*_{\theta,k} \within \eta+1.
\]
Note that 
\begin{align*}
(\Omega_{\theta,k+1})_R & = (\Omega_{\theta,k})^{i^*}\\
\intertext{ (by Lemma \ref{omeganulplus.pullback})}
&= ((\Sigma_{\xi_1}^{\itW^*_{\theta,k}})_{M_{\theta,k}})^{i^*}\\
\intertext{ ( because $\itW^*_{\theta,k}$ iterates $(P_0,\Sigma_0)$ strictly past
$(M_{\theta,k},\Omega_{\theta,k})$)}
&= (\Sigma_\eta^{\itW^*_{\theta,k}})_R\\
\intertext{ ( by pullback consistency of $\Sigma_0$)}
&= (\Sigma_\eta^\itT)_R.
\end{align*}

Now we proceed as we did when $M_{\theta,k}$ was sound.

\begin{subclaim}\label{subclaim.a1} If $ R^+ \lhd \itM_\eta^\itT$ or $[0,\eta]_T$ has a large drop,
then $\Omega_{\theta,k+1} = (\Sigma_\eta^\itT)_{R^+}$.
\end{subclaim}
\begin{proof} The strategy comparison proof of \cite[Theorem 8.4.3]{normalization_comparison}
works.
\end{proof}

\begin{subclaim}\label{subclaim.b1} $R^+ = \itM_\eta^\itT$ and
$[0,\eta]_T$ has no large
drops.
\end{subclaim}
\begin{proof} Otherwise $(P_0,\Sigma_0)$ iterates strictly past $(M_{\theta, k+1},\Omega_{\theta,k+1})$
by Subclaim \ref{subclaim.a1}.
\end{proof}

\begin{subclaim}\label{LambdaRclaim.1} If $R^+ = \itM_\eta^\itT$ and
$[0,\eta]_T$ has no large
drops, then $\Omega_{\theta,k+1}^+ = \Sigma_\eta^\itT$. 
\end{subclaim}
\begin{proof} The strategy comparison proof of \cite[Theorem 8.4.3]{normalization_comparison}
works.
\end{proof}

Putting these subclaims together, we see that $(P_0,\Sigma_0)$ iterates to
$(M_{\theta,k+1},\Omega_{\theta,k+1}^+)$ in the case that $M_{\theta,k}$ is unsound.

This completes the proof of Claim \ref{n>0case}.
\end{proof}

Now let us prove $(\dagger)$ for $\itT$ in the case $n=0$.

\begin{claim}\label{n=0case} If $n=0$, then $\itT$ succeeds at some $\alpha$.
\end{claim}
\begin{proof} 
We cannot have a strategy disagreement involving an
$S \lhd M_{\theta,-1}$. For if $\la \nu,k\ra <_{\lex} \la \theta,0\ra$ is sufficiently large,
then $S \lhd M_{\nu,k}$ and $(\Omega_{\theta,-1})_S = (\Omega_{\nu,k})_S$, so this would
mean that $(P_0,\Sigma_0)$ did not iterate strictly past $(M_{\nu,k},\Omega_{\nu,k})$.\footnote{
$M_{\theta,-1}$ is $M_{\theta,0}$ with its last extender, if nonempty, being removed. It is
the ``lim inf" of the $M_{\nu,k}$ for $\la \nu,k\ra <_{\lex} \la \theta,0\ra$. Its strategy
provided by $\mathbb{C}$ is $\Omega_{\theta,-1}$.}
Similarly, we cannot have a bad extender disagreement involving some $E$ on the $M_{\theta,0}$-sequence
other than its last extender.

Thus we can fix $\eta$ least such that $M_{\theta,-1} \unlhd \itM_\eta^\itT$, and we have
$(\Omega_{\theta,-1})_S = (\Sigma_\eta^\itT)_S$ for all $S \lhd M_{\theta,-1}$.

\begin{subclaim}\label{subclaim.a3} $\Omega_{\theta,-1} = (\Sigma_\eta^\itT)_{M_{\theta,-1}}$.
\end{subclaim}
\begin{proof} By the proof of \cite[8.4.3]{normalization_comparison}.
\end{proof}

\begin{subclaim}\label{subclaim.b3} If the last extender $E$ of $M_{\theta,0}$ is nonempty,
then $E$ is on the sequence of $\itM_\eta^\itT$.
\end{subclaim}
\begin{proof}
Let $E^*$ be the background
extender $B^{\mathbb{C}}(E)$. The usual proof\footnote{Cf. \cite[8.1.12]{normalization_comparison}.}
shows that $E$ is an initial segment of the branch extender of
$[\kappa_E,i_{E^*}(\kappa_E))$ in $i_{E^*}(\itT)$, so $E^+$ is used in $i_{E^*}(\itT)$,
so $E$ is on the $\itM_{\eta}^\itT$-sequence, a contradiction.\footnote{ This argument gives
a different proof that if $M_{\theta,0}$ is active, then $\Omega_{\theta,-1} = (\Sigma_\eta^\itT)_{M_{\theta,-1}}$.
We simply go to $i_{E^*}(V)$, use the strategy agreement we have there, and then pull it back to
$V$ by strategy coherence.}
\end{proof}

By \ref{subclaim.a3} and \ref{subclaim.b3}, $M_{\theta,0} \unlhd \itM_\eta^\itT$.

\begin{subclaim}\label{subclaim.c3} If $M_{\theta,0} \lhd \itM_\eta^\itT$ or $[0,\eta]_T$ has a large drop,
then $\Omega_{\theta,0} = (\Sigma_\eta^\itT)_{M_{\theta,0}}$.
\end{subclaim}
\begin{proof} The strategy comparison proof of \cite[Theorem 8.4.3]{normalization_comparison}
works.
\end{proof}

\begin{subclaim}\label{subclaim.d3} $M_{\theta,0} = \itM_\eta^\itT$ and
$[0,\eta]_T$ has no large
drops.
\end{subclaim}
\begin{proof} Otherwise $(P_0,\Sigma_0)$ iterates strictly past $(M_{\theta, 0},\Omega_{\theta,0})$
by Subclaim \ref{subclaim.c3}.
\end{proof}

\begin{subclaim}\label{subclaim.e3} If $M_{\theta,0} = \itM_\eta^\itT$ and
$[0,\eta]_T$ has no large
drops, then $\Omega_{\theta,0}^+ = \Sigma_\eta^\itT$. 
\end{subclaim}
\begin{proof} The strategy comparison proof of \cite[Theorem 8.4.3]{normalization_comparison}
works.
\end{proof}

These subclaims complete the proof of Claim \ref{n=0case}.
\end{proof}

This in turn completes the proof of Theorem \ref{starpsigmathm}.
\end{proof}

\subsection{The sound case}\label{stabletype1case}

      In the case that $\degr(P)>0$, Theorem \ref{starpsigmathm} implies the statement of $(*)(P,\Sigma)$ that was
proved in \cite{normalization_comparison}
in the case that $P$ has stable type 1.

For suppose $(P,\Sigma)$ has stable type 1 and $\degr(P)=k+1$.
Then $(P^-,\Sigma)$ is a +1-hod pair of degree $k$. Letting
$N^*,\mathbb{C},$ etc. be as in the statement of $(*)(P,\Sigma)$ in \cite{normalization_comparison},
Theorem \ref{starpsigmathm} gives us a pair $(M_{\nu,k},\Omega_{\nu,k})$
such that
\begin{itemize}
\item[(i)] $(P^-,\Sigma)$ iterates to $(M_{\nu,k},\Omega_{\nu,k}^+)$, and
\item[(ii)] $(P^-,\Sigma)$ iterates strictly past $(M_{\eta,j},\Omega_{\eta,j})$
whenever $\la \eta,j \ra <_{\lex} \la \nu,k\ra$.
\end{itemize}
The iterations here are by 1-maximal trees on $(P^-,\Sigma)$, which are the same
as ordinary (``0-maximal") trees on $(P,\Sigma)$. The iterations in (ii)
witness that $(P,\Sigma)$ iterates strictly past $(M_{\eta,j},\Omega_{\eta,j})$
whenever $\la \eta,j \ra <_{\lex} \la \nu,k\ra$.

Let $\itT$ be the 1-maximal tree that 
witnesses that  $(P^-,\Sigma)$ iterates to $(M_{\nu,k},\Omega_{\nu,k}^+)$,
$\xi+1$ be its length. If $M_{\nu,k}$ is sound, then
$(M_{\nu,k+1},\Omega_{\nu,k+1}) = (M_{\nu,k}^+, \Omega_{\nu,k}^+)$, so
$\itT$ (regarded as 0-maximal on $(P,\Sigma)$) witnesses that
$(P,\Sigma)$ iterates to $(M_{\nu,k+1},\Omega_{\nu,k+1})$.

So suppose that $M_{\nu,k}$ is not sound. Let $\beta <_T \xi$
be largest such that $\itM_\beta^\itT$ is $k+1$-sound. Since
$P$ has stable type 1, $\beta$ is the largest +1-node
on $[0,\xi]_T$.(This is where we use that fact.) Letting
$\beta_0 \le_T \xi$ be such that $T \tpred(\beta_0)=\beta$,
$\itT$ must have a small drop at $\beta_0$, and no further
drops in $(\beta_0,\xi]_T$. So  setting
\[
i^* = i_{\beta_0,\xi}^\itT \circ i_{0,\beta_0}^\itT
\]
and
\[
R=\dom(i^*)
\]
we get that $R^+ = M_{\nu,k+1}$ and $i^*$ is the anticore map.
Moreover 
\[
R^+=\itM_\beta^\itT
\]
because the drop at $\beta_0$ was small. We then have
\begin{align*}
\Omega_{\nu,k+1} &= (\Omega_{\nu,k}^+)^{i^*}\\
  & = (\Sigma_\xi^\itT)^{i^*}\\
  & = (\Sigma_\beta^\itT),
\end{align*} by Lemma \ref{omeganulplus.pullback} and pullback consistency.
This means that $(P,\Sigma)$ iterates to $(M_{\nu,k+1},\Omega_{\nu,k+1})$
in the sense required by $(*)(P,\Sigma)$.

\subsection{The almost sound case}\label{almostsoundcase}

   Now suppose $(P,\Sigma)$ is not of stable type 1, i.e., it is
either of type 2, or of type 1B and not projectum stable.
This implies $\degr(P)>0$; so say $\degr(P)=k+1$.
Again $(P^-,\Sigma)$ is a +1-hod pair.
Let $N^*, \mathbb{C},$ etc. be as in $(*)(P,\Sigma)$.
Let $(P^-,\Sigma)$ iterate to $(M_{\nu,k},\Omega_{\nu,k})$ via
the 1-maximal tree $\itT$ with length $\xi+1$, as above.

     If $P$ has type 2, then $P^-$ is not sound, so
$M_{\nu,k}$ is not sound. If $P$ has type 1B and
$\eta_{\k+1}^P$ is measurable in $P$, then some total
measure on the image of $\eta_{k+1}^P$ is used in
$[0,\xi)_T$, producing a type 2 ultrapower. So again
$M_{\nu,k}$ is unsound.  In both cases, $M_{\nu,k}$ has
a measurable cardinal $\kappa$ such that $\rho_{k+1}(M_{\nu,k}) < \kappa
< \rho_k(M_{\nu,k})$. Preimages of $\kappa$ were hit in $[0,\xi]_T$,
so $[0,\xi]_T$ has a small drop.

    Thus, letting $\beta$ be the largest +1-node of $\itT$ in
$[0,\xi]_T$, we have $\beta < \xi$. Let 
\begin{align*}
i &= i_{0,\beta}^\itT,\\
R&= \itM_\beta^\itT,
\intertext{ and }
i^* &= \hat{i}_{\beta,\xi}^\itT.
\end{align*}
Here $\degr(R) = k$.
Since $R = \cHull_1^{R^k}(\sup i``\rho_{1}(P^k) \cup \lbrace p_1(R), \rh_1(R) \rbrace)$
and $\sup i``\rho_1(P^k) = \rho_1(R^k)$, 
$R^+$ is of type 2.
$\Sigma_\beta^\itT$ is a +1-iteration strategy for $R$, or equivalently an
ordinary strategy for $R^+$, moreover
\[
\Sigma_\beta^\itT = (\Omega_{\nu,k}^+)^{i^*}.
\]
$(M_{\nu,k+1},\Omega_{\nu,k+1})$ is the {\em type 1 core} of the type 2 pair
$(R^+,\Sigma_\beta^\itT)$, in the following sense.

\begin{definition}\label{type1coredef} If $Q$ is a pfs premouse, then we set
\[
C(Q) = \begin{cases} Q & \text{ if $Q$ has type 1}\\
                     \mathfrak{C}_{\textrm{deg}(Q)}(Q) & \text{ if $Q$ has type 2.}
\end{cases}
\]
We call $C(Q)$ the {\em type 1 core of $Q$}.
$Q$ and $C(Q)$ have the same degree.
If $Q$ has type 2, then $C(Q)$ has type 1A, and we let
$D(Q)$ be the unique order zero measure on $\hat{\rho}(Q^-)$
such that
\[
Q^-= \textrm{Ult}(C(Q)^-,D(Q)).
\] 
The {\em type 1 core of $(Q,\Lambda)$} is $(C(Q),\Lambda^i)$,
where $i = \text{ id}$ if $C(Q)=Q$, and $i = i_{D(Q)}$ otherwise.
\end{definition}

\begin{remark}\label{type1corermk}{\rm 
Notice that if $(Q,\Lambda)$ is a type 2 lbr hod pair of degree $k$,
then for $i=i_{D(Q)}$, $(C(Q),\Lambda^i)$ is a type 1 pair of degree $k$.
$\Lambda^i$ is defined on ``level $k$" stacks because $i$
is nearly +1-elementary as a map from $C(Q)^-$ to $Q^-$.}
\end{remark}

\begin{claim}\label{type1coreclaim} $(M_{\nu,k+1},\Omega_{\nu,k+1})$ is the
type 1 core of $(R^+,\Sigma_\beta^\itT)$.
\end{claim}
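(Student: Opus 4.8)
The plan is to verify directly the two components of Definition \ref{type1coredef}. Since $R^+$ has type $2$, the type $1$ core of $(R^+,\Sigma_\beta^\itT)$ is the pair $(C(R^+),(\Sigma_\beta^\itT)^{i_{D(R^+)}})$, where $C(R^+)=\mfc_{k+1}(R^+)$ and $i_{D(R^+)}\colon C(R^+)^-\to (R^+)^-=R$ is the order zero ultrapower map attached to the measure $D(R^+)$ on $\hat\rho(R)$ furnished by \ref{type1coredef}. So I want (1) $M_{\nu,k+1}=C(R^+)$ and (2) $\Omega_{\nu,k+1}=(\Sigma_\beta^\itT)^{i_{D(R^+)}}$. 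The organizing fact is that, since $M_{\nu,k}$ is not sound, $M_{\nu,k+1}=\mfc_{k+1}(M_{\nu,k})$ in $\mathbb{C}$, so there is a canonical anticore map $\pi\colon M_{\nu,k+1}^-\to M_{\nu,k}$, nearly $+1$-elementary by \ref{anticore.elem}; both (1) and (2) will come out of the factorization $\pi = i^*\circ i_{D(R^+)}$, where $i^*\colon R\to M_{\nu,k}$ is the branch embedding $\hat{i}_{\beta,\xi}^\itT$.

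For (1), I would first observe that $i^*$ is nearly $+1$-elementary: on $[\beta,\xi]_T$ the only drop is the small drop at $\beta_0$ with $\beta = T\tpred(\beta_0)$ (because $\beta$ is the largest $+1$-node of $[0,\xi]_T$), so \ref{+1tree.branchelem} applies. Near $+1$-elementarity is exactly the amount of elementarity that preserves the level-$(k{+}1)$ core (it is $k{+}1$-theory preserving, exact, sends $p_1$ of the reduct to $p_1$ of the target reduct, and is cofinal into $\rho_1$ of the target reduct), so $C(R^+)=\mfc_{k+1}(R^+)=\mfc_{k+1}(M_{\nu,k})=M_{\nu,k+1}$. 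Moreover, since $i^*$ carries the $(k{+}1)$-core hull of $R^+$ bijectively onto the $(k{+}1)$-core hull of $M_{\nu,k}$, a diagram chase on the three transitive collapses — of the hull in $R$, of the hull in $M_{\nu,k}$, and their common collapse $M_{\nu,k+1}^-$ — yields $i^*\circ i_{D(R^+)} = \pi$.

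For (2), I would chain the pullback identities already in hand. By Lemma \ref{omeganulplus.pullback}, $\Omega_{\nu,k+1}=(\Omega_{\nu,k}^+)^{\pi}$. By the factorization and the fact that pullback composes along a composition of nearly $+1$-elementary maps, $(\Omega_{\nu,k}^+)^{\pi}=(\Omega_{\nu,k}^+)^{\,i^*\circ i_{D(R^+)}}=\bigl((\Omega_{\nu,k}^+)^{i^*}\bigr)^{i_{D(R^+)}}$. Finally $(\Omega_{\nu,k}^+)^{i^*}=\Sigma_\beta^\itT$ was established just before the claim (via pullback consistency of $\Sigma_0$, which lifts $\Sigma_\beta^\itT$ to $\Omega_{\nu,k}^+$ along $i^*$), so $\Omega_{\nu,k+1}=(\Sigma_\beta^\itT)^{i_{D(R^+)}}$, as desired. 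The typing is coherent: $\beta$ is a $+1$-node, so $\Sigma_\beta^\itT$ is genuinely a $+1$-strategy for $R$ (equivalently an ordinary strategy for $R^+$) — which is why one pulls back $\Omega_{\nu,k}^+$ rather than $\Omega_{\nu,k}$ — and $i_{D(R^+)}$ is nearly $+1$-elementary by Remark \ref{type1corermk}, so every pullback in the chain is a legitimate strategy on $1$-maximal stacks.

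I expect the factorization $\pi = i^*\circ i_{D(R^+)}$ to be the one step requiring real care. It amounts to matching the fine-structural ``type $2$ data'' of $R^+$ with that of $M_{\nu,k}$ transported along $i^*$ — the $(k{+}1)$-cores, the parameters $p_{k+1}$, $\bar p_{k+1}$, $\hat\rho_{k+1}$, $\hat\eta_{k+1}$, and the order zero measure $D(R^+)$ — and then checking that the composite has precisely the defining properties of the unique anticore map of $M_{\nu,k}$. This is the same preservation-of-fine-structure argument used in \ref{new8.3.1.1A} and throughout \cite{normalization_comparison}; the only new wrinkle is that here $R^+$ sits one order zero ultrapower below its type $1$ core rather than being that core outright, so the single anticore map of \ref{new8.3.1.1A} is replaced by the two-step composite $i^*\circ i_{D(R^+)}$.
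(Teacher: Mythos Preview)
Your proposal is correct and follows essentially the same approach as the paper's own proof. The paper argues exactly as you do: $i^*$ is nearly $+1$-elementary, hence $\mfc_{k+1}(R)=\mfc_{k+1}(M_{\nu,k})=M_{\nu,k+1}$; the factorization $\pi=i^*\circ i_D$ is asserted (via a commutative triangle) without further justification; and then the strategy identity follows from the chain $\Omega_{\nu,k+1}=(\Omega_{\nu,k}^+)^\pi=(\Omega_{\nu,k}^+)^{i^*\circ i_D}=(\Sigma_\beta^\itT)^{i_D}$, invoking Lemma~\ref{omeganulplus.pullback} and the identity $\Sigma_\beta^\itT=(\Omega_{\nu,k}^+)^{i^*}$. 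Your treatment is simply more explicit about the factorization step, which the paper leaves to the reader.
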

\begin{proof} We have that $k=\degr(R)$, and $i^*$ is nearly +1-elementary
as a map from $R$ to $M_{\nu,k}$. It follows that 
\[
\mfc_{k+1}(R) =
\mfc_{k+1}(M_{\nu,k}) = M_{\nu,k+1}.
\]
Moreover, letting $D=D(R)$ and $\pi \colon M_{\nu,k+1} \to M_{\nu,k}$
be the anticore map,  we have the diagram
\begin{center}
\begin{tikzpicture}
    \matrix (m) [matrix of math nodes, row sep=2em,
    column sep=3.0em, text height=1.5ex, text depth=0.25ex]
{ R^+ & M_{\nu,k}\\
M_{\nu,k+1} &\\};
     \path[->,font=\scriptsize]
(m-1-1) edge node[above]{$i^*$}(m-1-2)
(m-2-1) edge node [left]{$i_D$}(m-1-1)
(m-2-1) edge node [below]{$\pi$}(m-1-2);
\end{tikzpicture}
\end{center}
But then
\begin{align*}
\Omega_{\nu,k+1} &= (\Omega_{\nu,k}^+)^\pi\\
                 &= (\Omega_{\nu,k}^+)^{i^* \circ i_D}\\
                 &= (\Sigma_\beta^\itT)^{i_D},
\end{align*}
as required.
\end{proof}

We extend the mouse order to type 2 pairs by regarding
$(Q,\Lambda)$ as being equivalent to its type 1 core.
That is,

\begin{definition}\label{mouseorder2} Let $(Q,\Lambda)$ and $(R,\Omega)$
be mouse pairs with type 1 cores $(Q_0,\Lambda_0)$ and $(R_0,\Omega_0)$
respectively; then $(Q,\Lambda) \le^* (R,\Omega)$ iff
$(Q_0,\Lambda_0) \le^* (R_0,\Lambda_0)$.
\end{definition}

\subsection{Generating type 2 pairs}\label{type2generation}

       We show that any type 2 pair $(Q,\Lambda)$ can be recovered from
its type 1 core $(C(Q),\Omega)$. Of course, $Q$ can be recovered from $C(Q)$ by taking an
ultrapower, the question is how to recover $\Lambda$ from $\Omega$. The tail strategy
$\Omega_{\la D(Q)\ra}$ is a strategy for $Q^-$, not $Q$. But we can apply
the strategy extension method\footnote{See \cite{siskindsteel} and \cite[7.3.11]{normalization_comparison}.}
to $\Omega$ to obtain a strategy for $Q$, and this strategy works out to be $\Lambda$.

        We aren't going to use the result in this subsection later, so we shall
omit some details.

\begin{definition}\label{type2inducedpair} 
Suppose that $(M,\Omega)$ is an lbr hod pair of type 1A
and degree $k+1$, and $D$ is the order zero measure of $M$
on $\kappa$, where $\rho_{k+1}(M) < \kappa < \rho_{k}(M)$.
Suppose also that $\cof_{k+1}^M(\kappa)<\rho_{k+1}(M)$. Let
\[
Q^+ = (\Ult(M^-,D),k+1),
\]
 and for $\itU$ a $\lambda$-separated tree
on $Q^+$ of limit length, let 
\[
\Omega_{\la D \ra}^+(\itU) = b \text{ iff $V(\la D \ra,\itU^\frown b)$ is by $\Omega$};
\]
then we call $(Q^+,\Omega_{\la D \ra}^+)$ the {\em type 2 pair generated by $(M,\Omega)$ and $D$.}
\end{definition}

\begin{lemma}\label{type1root} Let $(Q,\Lambda)$ be a type 2 pair,
and let $(M,\Omega)$ be its type 1 core; then $(Q, \Lambda)$ is the
type 2 pair generated by $(M,\Omega)$ and $D(Q)$.
\end{lemma}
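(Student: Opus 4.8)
The plan is to verify that the two pairs $(Q,\Lambda)$ and $(Q^+,\Omega_{\la D(Q)\ra}^+)$ — where $(M,\Omega)=(C(Q),\Omega)$ is the type 1 core and $D(Q)$ is the canonical order zero measure from Definition \ref{type1coredef} — agree on both the premouse coordinate and the strategy coordinate. The premouse coordinate is essentially immediate from the definitions: by Definition \ref{type1coredef} we have $Q^- = \Ult(C(Q)^-, D(Q))$, and since $(Q^+)^- = \Ult(M^-,D(Q))$ by Definition \ref{type2inducedpair}, these bare premice coincide; both are assigned degree $k+1 = \degr(Q)$, so $Q = Q^+$ as pfs premice. (One should check here that the side condition $\cof_{k+1}^M(\kappa)<\rho_{k+1}(M)$ in Definition \ref{type2inducedpair} is automatic: this is exactly the statement that $i_{D(Q)}$ is discontinuous at $\rho_{k+1}(M)$, which is what forces $Q^+$ to have type 2, and is part of what it means for $(Q,\Lambda)$ to be a type 2 pair rather than type 1.)

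The substance is in the strategy coordinate. First I would unwind what each strategy is. By the type 1 core analysis culminating in Claim \ref{type1coreclaim} (applied with $(R^+,\Sigma_\beta^\itT) = (Q,\Lambda)$, which is legitimate since every type 2 pair arises this way by the comparison results of the previous subsections), we have $\Omega = \Lambda^{i_{D(Q)}}$; equivalently $\Lambda = \Omega_{\la D(Q)\ra}$ \emph{as strategies for $Q^-$}, i.e. on $1$-maximal (``level $k$'') stacks on $Q^-$. What remains is to see that the two ways of extending this to a genuine level-$(k+1)$ strategy for $Q$ — on the one hand $\Lambda$ itself, on the other hand the strategy extension $\Omega_{\la D(Q)\ra}^+$ defined via $\Omega_{\la D\ra}^+(\itU)=b$ iff $V(\la D\ra,\itU^\frown b)$ is by $\Omega$ — coincide. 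Here I would invoke the strategy extension method as developed in \cite{siskindsteel} and \cite[7.3.11]{normalization_comparison}: the key point is that quasi-normalization $V(\la D(Q)\ra, \itU)$ identifies a tree on $Q$ with a tree on $M^-=C(Q)^-$ extended by $D(Q)$, and $\Omega$ quasi-normalizes well (Lemma \ref{omegaplus.regular}, or rather its hypothesis that $(M,\Omega)$ is an lbr hod pair), so that ``$\itU$ is by the extension'' is a well-defined condition depending only on $\Omega$. One then checks that $\Lambda$ satisfies this same condition, using that $\Lambda$ quasi-normalizes well together with $\Omega = \Lambda^{i_{D(Q)}}$ and the commutativity $i_{D(Q)} = $ (the copy map implicit in $V(\la D(Q)\ra,-)$).

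The main obstacle I anticipate is the bookkeeping around quasi-normalization: one must be careful that $V(\la D(Q)\ra, \itU^\frown b)$ is genuinely a $\lambda$-separated stack to which $\Omega$ applies, that the branch $b$ chosen by $\Lambda$ on $\itU$ pulls back correctly to the branch chosen by $\Omega$ on $V(\la D(Q)\ra,\itU)$, and that the tail-strategy condensation clauses in the definitions of ``quasi-normalizes well'' and ``strong hull condensation'' for $+1$-strategies line up on both sides. Since the paper explicitly says ``We aren't going to use the result in this subsection later, so we shall omit some details,'' the proof given is likely a sketch: establish $Q=Q^+$ by the definitions, recall $\Omega=\Lambda^{i_{D(Q)}}$ from Claim \ref{type1coreclaim}, and then cite the strategy-extension machinery of \cite{siskindsteel} and \cite[7.3.11]{normalization_comparison} to conclude $\Lambda = \Omega_{\la D(Q)\ra}^+$, with the routine quasi-normalization verification left to the reader.
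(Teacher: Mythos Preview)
Your outline has the right shape, but two points deserve correction or sharpening.

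First, the identity $\Omega = \Lambda^{i_{D(Q)}}$ is \emph{definitional}: Definition~\ref{type1coredef} declares the type 1 core of $(Q,\Lambda)$ to be $(C(Q),\Lambda^{i_{D(Q)}})$. You do not need to invoke Claim~\ref{type1coreclaim} or argue that every type 2 pair arises from comparison; that claim goes in the other direction (identifying a background-construction level as a type 1 core), and appealing to it here is circular in spirit.

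Second, your description of the strategy argument is too vague to constitute a proof, and in particular it omits the one concrete mechanism the paper actually uses. The paper does not cite abstract ``strategy extension machinery''; instead it introduces the copy map $\tau \colon Q \to \Ult(Q^-,\pi(D))$ (with $\pi = i_{D(Q)}$), and runs the chain
\[
\itU \text{ by } \Omega_{\la D\ra}^+ \iff V(\la D\ra,\itU) \text{ by } \Omega \iff \pi V(\la D\ra,\itU) \text{ by } \Lambda \iff V(\la \pi(D)\ra, \tau\itU) \text{ by } \Lambda \iff \la \pi(D),\tau\itU\ra \text{ by } \Lambda \iff \itU \text{ by } \Lambda.
\]
The third equivalence is ``quasi-normalization commutes with lifting''; the fourth is that $\Lambda$ quasi-normalizes well; and the last is \emph{pullback consistency} of $\Lambda$ (since $\tau$ is the canonical embedding into the $\pi(D)$-ultrapower). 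You mention quasi-normalizing well but not pullback consistency, and your phrase ``the commutativity $i_{D(Q)} = $ (the copy map implicit in $V(\la D(Q)\ra,-)$)'' is garbled: $i_{D(Q)}$ is the map $M \to Q$, whereas the relevant copy map is $\tau \colon Q \to R$, and it is the identity $\pi V(\la D\ra,\itU) = V(\la \pi(D)\ra,\tau\itU)$ that makes the argument go through.
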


\begin{proof} Let $k+1 = \degr(Q)$, and let
$Q^k = \Ult_0(M^k, D)$ where $D = D(Q)$
 is the order zero measure of
$M^k$ on $\rh_k(Q)$. Let
\[ 
\pi = i_D^{M^-}.
\]
In order to see that $(Q,\Lambda)$ is the type 2 pair generated by
$(M,\Omega)$ and $D$, we must show that $\Lambda = \Omega_{\la D \ra}^+$.
Letting $\itU$ be a plus tree on $Q$, the relevant diagram is

\begin{center}
\begin{tikzpicture}
    \matrix (m) [matrix of math nodes, row sep=3em,
    column sep=3.0em, text height=1.5ex, text depth=0.25ex]
{ Q & R & S\\
M & Q & N\\};
     \path[->,font=\scriptsize]
(m-1-2) edge node[above]{$\tau\itU$}(m-1-3)
(m-1-1) edge node[above]{$\tau$}(m-1-2)
(m-2-1) edge node [left]{$\pi$}(m-1-1)
(m-2-2) edge node [left]{$\tau$}(m-1-2)
(m-2-3) edge node [auto]{}(m-1-3)
(m-2-1) edge node [below]{$\pi$}(m-2-2)
(m-2-2) edge node [below]{$\itU$}(m-2-3);
\end{tikzpicture}
\end{center}
Here $\tau$ is the copy map, or what is the same,
the canonical embedding from $Q$ to $\Ult(Q^-,\pi(D))$.
Since $\tau$ is nearly +1-elementary
as a map on $Q$ it can be used to lift $\itU$.
We then calculate
\begin{align*}
 \itU \text{ is by }\Omega_{\la D \ra}^+(\itU) & \text{ iff } V(\la D \ra, \itU) \text{ is by } \Omega\\
             & \text{ iff } \pi V(\la D \ra,\itU) \text{ is by }\Lambda\\
              & \text{ iff } V(\la \pi(D) \ra , \tau\itU ) \text{ is by }\Lambda\\
          & \text{ iff } \la \pi(D)\ra,\tau\itU \ra \text{ is by } \Lambda\\
           & \text{ iff } \itU \text{ is by }\Lambda.
\end{align*}
Line 3 holds because quasi-normalizing commutes with lifting,
line 4 holds because $\Lambda$ quasi-normalizes well, and
line 5 holds because $\Lambda$ is pullback consistent.
\end{proof}

\section{CONDENSATION LEMMA}\label{sec:condensation}

The main theorem of this section is Theorem \ref{thm:cond_lem}. This theorem will be used 
in the $\square$-construction, but it is
more general than is necessary for that application. Its full generality
is used in \cite{siskindsteel}.

 Our theorem extends
Theorem 9.3.2 of  \cite{Zeman}, which deals with condensation
under $\pi \colon H \to M$ for pure extender mice $H$ and $M$.
That theorem breaks naturally into two cases: either (1)  $H \notin M$,
in which case $H$ is the $\crit(\pi)$-core of $M$, or (2) $H \in M$,
in which case $H$ is a proper initial segment of either $M$ or
an ultrapower of $M$. The proof in case (1) works for
least branch hod mice without much change, so we begin with 
that case.

\begin{definition} \label{alphacore} Let $M$ be an lpm or a pure extender premouse, and $n \le \textrm{deg}(M)$; then
\begin{itemize}
\item[(a)]  $\tilde{h}^{n+1}_M$ is the completion of $h^1_{\hat{M}^n}$, the $\Sigma_1$-Skolem function 
of $\hat{M}^n$. We write $\tilde{h}_M$ for $\tilde{h}^{\textrm{deg}(M)+1}_M$.
 So dom$(\tilde{h}_M) \subseteq M$.\footnote{See \cite[Definition 2.3.9]{normalization_comparison}.}
\item[(b)] Let $\rho(M) \le \alpha$ and $r = p(M)-\alpha$, and suppose that $r$ is solid.
Let $\pi \colon H \to M$ with $H$ transitive be such that
$\ran(\pi) = \tilde{h}_M `` (\alpha \cup r)$, and suppose that $\pi^{-1}(r)$
is solid over $H$. Then we call $H$ the {\em $\alpha$-core} of $M$, and write
$H = \mbox{core}_{\alpha}(M)$. In addition, if $(M,\Sigma)$ is a mouse pair, then the $\alpha$-core of $(M,\Sigma)$ is $(H,\Lambda)$, where $H =$ core$_\alpha(M)$ and $\Lambda = \Sigma^\pi$ and  $\pi$ is the anticore map.
\item[(c)] $M$ is {\em $\alpha$-sound} iff $M = \mbox{core}_{\alpha}(M)$.
\end{itemize}
\end{definition}

We note that core$(M) = \mathfrak{C}_{\textrm{deg}(M)+1}(M)$ is the $\rho(M)+1$-core of $M$. According
 to this definition, if $M$ is $\alpha$-sound, then
$\rho(M)\le \alpha$. So $M$ could be sound, but not $\alpha$-sound because $\alpha < \rho(M)$,
which might be confusing at first.

\begin{remark} {\rm Let $H$ be the $\alpha$-core of $M$, as witnessed by $\pi$.
We have $p(M) \subseteq \ran(\pi)$, so the new $\Sigma_{\textrm{deg}(M)+1}$
subset of $\rho(M)$ is $\Sigma_{\textrm{deg}(M)+1}$ over $H$. Thus $\rho(H) = \rho(M)$ and
$\pi(p(H)) = p(M)$, and $H \notin M$. 

One also gets that if both $H, M$ are of type 1, then $H$ is of type $1A$ iff $M$ is of type $1A$.

One might guess that
$P(\alpha)^M \subseteq H$, but this need not be the case, as the following example
shows. Let $N$ be sound, and let $M = \Ult(N,E)$, where $\rho(N) \le \kappa = \crt(E)$,
and $E$ has one additional generator $\alpha$. Let $H = \Ult(N,E \within \alpha)$,
and let $\pi \colon H \to M$ be the factor map. Clearly, $\pi$ witnesses that
$H$ is the $\alpha$-core of $M$. But $\alpha = (\kappa^{++})^H < (\kappa^{++})^M$,
so $H$ doesn't even have all the bounded subsets of $\alpha$ that are in $M$.} \qedhere
\end{remark}

\begin{theorem}[$\sf{AD}^+$]\label{alphacorecase}
Suppose $(M,\Sigma)$ is a lbr hod pair with scope HC.
Suppose $H$ and $M$ are both type 1 premice, $\pi:H \rightarrow M$ is nontrivial\footnote{$\pi$ is trivial iff $H=M$ and $\pi$
is the identity.}, and letting $n=\textrm{deg}(M) = \textrm{deg}(H)$ and
 $\alpha = crt(\pi)$,\footnote{Here we allow $\alpha$ to be $o(H)$ and $\pi$ to be the identity. }
$\alpha < \rho_n(M)$. Suppose also
\begin{itemize}
\item[(1)] $H$ is $\alpha$-sound, 
\item[(2)] $\pi$ is nearly elementary,
 and
\item[(3)] $H$ is an lpm of the same kind as $M$\footnote{\label{same kind}This means:
 $H$ is passive if and only if $M$ is passive; $H$ is $B$-active if and
  only if $M$ is $B$-active; and $H$ is $E$-active 
 if and only if $M$ is $E$-active; in the third case, $\dot{F}^{H}$ 
  is of type A (B, C) if and only if $\dot{F}^M$
  is of type A (B, C respectively). All but the last clause are implicit in (2).}, and
\item[(4)] $H \notin M$.

Then $H$ is the $\alpha$-core of $M$.
\end{itemize}
 \end{theorem}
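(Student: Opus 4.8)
The plan is to verify directly that the given map $\pi$ witnesses $H = \mathrm{core}_\alpha(M)$ in the sense of Definition \ref{alphacore}(b): that is, to check (i) $\rho(M) \le \alpha$, (ii) $\ran(\pi) = \tilde{h}_M `` (\alpha \cup (p(M) - \alpha))$, and (iii) $p(M) - \alpha$ is solid over $M$ while $\pi^{-1}(p(M) - \alpha)$ is solid over $H$. Once these hold, $H$ is by definition the $\alpha$-core of $M$; and the mouse-pair conclusion is then automatic, since hypotheses (2) and (3) let us invoke Lemma \ref{preservelbrhodpairs1} to see that $(H, \Sigma^\pi)$ is an lbr hod pair. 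So the argument is purely fine-structural, and it runs parallel to the proof of case (1) ($H \notin M$) of Theorem 9.3.2 of \cite{Zeman}, with the bookkeeping of the pfs hierarchy (reducts $\hat{M}^n$, the parameters $\hat{w}_n$, and the type-1A/1B distinction) inserted in place of the classical fine structure.

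\textbf{Step 1: $\rho(M) \le \alpha$.} This is the dichotomy ``either $H \in M$, or $\rho(M) = \rho_{n+1}(M) \le \alpha$,'' and hypothesis (4) forces the second alternative. For the dichotomy: suppose $\alpha < \rho_{n+1}(M) = \rho_1(\hat{M}^n)$. By $\alpha$-soundness, $\rho(H) \le \alpha$ and $H = \tilde{h}_H `` (\alpha \cup (p(H) - \alpha))$, so $H$ is recovered by decoding the transitive collapse of a hull determined by the bounded reduct theory $\Th_1^{\hat{H}^n}(\alpha \cup \{p(H) - \alpha, \hat{w}_n(H)\})$. Near elementarity of $\pi$ transfers this (using that $\pi \restriction \hat{H}^n$ is $\Sigma_0$-elementary, hence preserves $\Sigma_1$ facts upward, and that it sends $\hat{w}_n(H)$ to $\hat{w}_n(M)$), so $H$ is equally recovered from $\Th_1^{\hat{M}^n}(\alpha \cup \{\pi(p(H) - \alpha), \hat{w}_n(M)\})$. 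But that theory codes a $\Sigma_1^{\hat{M}^n}$-definable-from-parameters subset of $\alpha < \rho_1(\hat{M}^n)$, hence lies in $\hat{M}^n \subseteq M$; so $H \in M$, contradicting (4).

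\textbf{Step 2: parameter agreement and hull generation.} Given $\rho(M) \le \alpha = \crit(\pi)$, the fact that $\pi$ is the completion of $\pi \restriction \hat{H}^n$ means $\pi$ preserves the $\tilde{h}$-Skolem function; combined with $\alpha$-soundness of $H$ this gives $\ran(\pi) = \tilde{h}_M `` (\alpha \cup q)$ where $q = \pi(p(H) - \alpha)$, i.e.\ $\pi$ is the uncollapse of that hull. In particular the new $\Sigma_{n+1}^M$ subset of $\rho(M)$ lies in $\ran(\pi)$, so $\rho(H) = \rho(M)$. It remains to identify $q$ with $p(M) - \alpha$. The inequality $p(M) - \alpha \le_{\lex} q$ holds because $q$ is a good parameter for $M$ over $\rho(M)$: that $p(H)$ is good for $H$ is a $\Sigma_1$-reduct fact, preserved upward by $\pi \restriction \hat{H}^n$. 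The reverse inequality is the Dodd--Jensen-style minimality step: were $p(M) - \alpha <_{\lex} q$, one would pull the $\Sigma_{n+1}^M$-definition of $p(M)$ back through $\pi$ — using solidity of $p(M)$, available since $M$ is sound — to contradict minimality of $p(H)$ over $\alpha$. The same data also pins down the pfs type, so that $H$ is of type 1A iff $M$ is, as demanded by the remark following Definition \ref{alphacore}.

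\textbf{Step 3: solidity transfer, and conclusion.} The outstanding point — and the one I expect to be the main obstacle — is that $\pi^{-1}(p(M) - \alpha)$ is solid over $H$. Solidity and universality witnesses for the coordinates of $p(M) - \alpha$ exist because $M$ is sound, but each is a $\Sigma_1$-over-reduct object, and such objects are preserved downward only by fully $\Sigma_1$-elementary maps, whereas $\pi \restriction \hat{H}^n$ is merely $\Sigma_0$-elementary. As in the weakly-elementary case of Zeman's proof, one handles this by noting that each witness is a bounded structure whose $\pi$-preimage is well-defined, and then checking by hand that the preimage witnesses solidity of the corresponding coordinate of $\pi^{-1}(p(M)-\alpha)$ over $H$; the universality halves are dual. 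One must also see that $H$ is projectum solid and stable — immediate from $\pi$ being cardinal-preserving and from $\alpha < \rho_n(M)$ — and, when $M$ is $E$-active, weakly ms-solid, which is exactly where hypothesis (3) is used: the top extenders of $H$ and $M$ then have the same type and the Jensen-completion (ISC) clause passes downward under $\pi$. With (i)--(iii) established, $H = \mathrm{core}_\alpha(M)$, and $(H,\Sigma^\pi)$ is its $\alpha$-core as a pair, which completes the proof.
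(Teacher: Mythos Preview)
Your overall plan is right, but you miss the key first move of the paper's proof, and this omission is what makes your Steps 2 and 3 harder than they need to be.

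\textbf{The missing cofinality step.} Before anything else, the paper argues that $\pi \restriction H^n$ must be \emph{cofinal} in $M^n$. If not, then $T = \Th_1^{H^n}(\alpha \cup r)$ (with $r = p(H)-\alpha$) is $\Sigma_1$ over some proper level of $M^n$, hence $T \in M$; a short case analysis (on whether $n>0$, or $n=0$ with $H$ passive, extender-active, or branch-active) then decodes $H$ from $T$ inside $M$, giving $H \in M$, contrary to (4). Once $\pi \restriction H^n$ is cofinal and $\Sigma_0$, it is $\Sigma_1$-elementary. You work throughout under the assumption that $\pi \restriction \hat{H}^n$ is merely $\Sigma_0$; this is why your Step 2 hedges (``a $\Sigma_1$-reduct fact, preserved upward'') and why you flag Step 3 as ``the main obstacle.'' With $\Sigma_1$-elementarity in hand, neither is an obstacle.

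\textbf{Step 2 with cofinality.} Your lexicographic comparison of $q = \pi(p(H)-\alpha)$ with $p(M)-\alpha$ is on the right track but vague. The paper does this by a coordinate-by-coordinate induction: write $r = \langle \beta_0,\dots,\beta_l\rangle$ and $p(M)-\alpha = \langle \gamma_0,\dots,\gamma_m\rangle$. Assuming $\pi(\beta_i)=\gamma_i$ for $i\le k$, one uses $\Sigma_1$-elementarity to push the solidity witness $W^H_{r,\beta_{k+1}}$ forward to a generalized witness in $M$, which rules out $\gamma_{k+1} < \pi(\beta_{k+1})$; and one uses the solidity witness $W^M_{p(M),\gamma_{k+1}}$ to compute $T$ (hence $H$) inside $M$ if $\pi(\beta_{k+1}) < \gamma_{k+1}$, contradicting (4). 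So $\pi(r) = p(M)-\alpha$ and $l=m$.

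\textbf{Step 3 is unnecessary.} Once you have $\pi(p(H)-\alpha) = p(M)-\alpha$, the requirement that $\pi^{-1}(p(M)-\alpha)$ be solid over $H$ is just that $p(H)-\alpha$ be solid over $H$, and this is built into the hypothesis that $H$ is $\alpha$-sound (Definition \ref{alphacore}(b)--(c)). There is no need to pull solidity witnesses back from $M$.
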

\begin{proof} Let $r = p(H) - \alpha$.
\begin{center}
$T = \mbox{Th}^{H^n}_{1}(\alpha \cup r),$
\end{center}
so that $T$ codes $H$.  $T $ is sometimes denoted by Th$^H_{n+1}(\alpha\cup r)$.

Suppose first that $\pi\restriction H^n$ is not cofinal. We have that $T$ is $\Sigma_1$ over $H^n$, and hence $T$ is $\Sigma_1$
over some proper initial segment of $M^n$, so that $T \in M^n$.
If $n>0$, then $M|\rho_n(M) \models {\sf KP}$ and
$T \in M|\rho_n(M)$, so  $H \in M$. If $n =0$ and $H$ is
fully passive, then we have
$\pi \colon H \to M|\eta$ for some $\eta < o(M)$, and $\ran(\pi)$
in $M$. Any premouse is closed under transitive collapse, so we
again get $H \in M$. If $n=0$ and $H$ is extender-active, then letting
$H^- = H||o(H)$, we get $H^- \in M$ by the argument just given.
However, $\dot{F}^H$ can be computed from the fragment $\dot{F}^M \restriction
\sup \pi``o(H)$ and $\pi$ inside $M$, so $H \in M$. The case that $n=0$
and $H$ is branch-active can be handled similarly, noting that the
proper initial
segments of $b^M$ are in $M$.

So we may assume $\pi\restriction H^n$ is cofinal $\Sigma_0$, and hence $\Sigma_1$ elementary. We then get that $\pi(\eta^H_n) = \eta^M_n$.  
We claim that $\rho(M) \le \alpha$. For if not, $T$ is 
a bounded subset of $\rho(M)$ that is $\Sigma_1$ over $M^n$. Thus
$T \in M|\rho(M)$, so $H \in M$.
 
Suppose $r = \emptyset$. If $\gamma \in (p(M) - \alpha)$,
then $T$ can be computed easily from the solidity witness
$W_{r,\gamma}^M$,\footnote{Here recall that $W_{r,\gamma}$ is the transitive collapse of Hull$_1^{M^n}(\gamma\cup \{r-\{\gamma\}\})$.} so $T$ in $M$, and with a bit more work,
$H \in M$. So we have $p(M)-\alpha = \emptyset$, which
implies that $H$ is the $\alpha$-core of $M$,
as witnessed by $\pi$.

Suppose next that $r = \langle \beta_0,...,\beta_l \rangle$, and
$p(M) - \alpha = \langle \gamma_0,...,\gamma_m
\rangle$,
where $\beta_i > \beta_{i+1}$ and $\gamma_i > \gamma_{i+1}$
for all $i$. We show by induction on $i \le l$ that $i \le m$
and $\pi(\beta_i) =
\gamma_i$. Suppose we know it
for $i \le k <l$. Let $W=W^H_{r,\beta_{k+1}}$ be the solidity witness
for $\beta_{k+1}$ in $H$. Since $\pi\restriction H^n$ is $\Sigma_1$
elementary, $\pi(W)$ can be used to compute
$\mbox{Th}^M_{n+1}(\pi(\beta_{k+1}) \cup \lbrace \gamma_0,...,\gamma_k \rbrace)$
inside $M$. But $\rho(M) < \pi(\beta_{k+1})$, so we must have
$k < m$. Similarly, $\gamma_{k+1} < \pi(\beta_{k+1})$ is impossible, as otherwise
$\pi(W)$ could be used in $M$ to compute the $\Sigma_{n+1}$ theory of
$p(M) \cup \rho(M)$. On the other hand, if
$\pi(\beta_{k+1}) < \gamma_{k+1}$, then using the solidity witness
$W_{p(M),\gamma_{k+1}}^M$ for $\gamma_{k+1}$ in $M$, we get $H \in M$.

It follows that $\pi(r) = p(M)-\alpha$, and thus $H$ is the
$\alpha_0$-core of $M$.

\end{proof}

\begin{remark}
In the case $H$ is the core of $M$, we can also get agreement of $\Sigma$ and $\Sigma^\pi$ 
up to $(\rho^+)^H=(\rho^+)^M$. See \cite[Corollary 9.6.6]{normalization_comparison}. It may be possible
to prove strategy condensation in the other cases, but we have not tried to do that.

An analog of the above theorem can be stated and proved for type 2 premice. \qedhere
\end{remark}

Next we deal with condensation under $\pi \colon H \to M$ in the case
$H \in M$.\footnote{ If $\pi \colon H \to M$ is elementary, $\alpha =
\crit(\pi)$, $H$ is 
$\alpha$-sound, and $\alpha < \rho(M)$, then $H \in M$. This is the case
with the coarser condensation results of \cite[5.55]{normalization_comparison}
and \cite[8.2]{fsit}, where $\alpha = \rho(H)$ and $\pi(\alpha)=\rho(M)$.}
 We shall actually prove a stronger result, one that
includes condensation for iteration strategies as well as condensation
for the mice themselves.

The following is an easy case of condensation for pairs.

\begin{lemma}[$\adp$]\label{identityembedding} Let $(M,\Sigma)$
be a mouse pair with scope $\hc$, and let $\pi \colon (H,\Psi)
\to (M,\Sigma)$ be elementary, with $\pi =$ identity; then
either $(H,\Psi) \unlhd (M,\Sigma)$, or
$(H,\Psi) \lhd \Ult((M,\Sigma),E_\alpha^M)$, where $\alpha
= o(H)$.
\end{lemma}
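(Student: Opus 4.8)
The plan is to prove this as a fairly direct consequence of the comparison theorem for mouse pairs together with the Dodd-Jensen Lemma, exactly as in the classical pure extender case. First I would forget the strategy for a moment and recall the bare-mouse fact: if $H$ and $M$ are premice, $H \in M$, and the identity is a (nearly) elementary map $H \to M$, then by the usual argument $H \unlhd M$ or $H \lhd \Ult(M,E_\alpha^M)$ where $\alpha = o(H)$. Indeed, comparing $H$ and $M$ inside $M$ (using that $M$ has enough iterability-below, or rather using that we will recover the strategic fact from the pair comparison), the identity being elementary forces $H$ to agree with $M$ up to $o(H)$, hence $H$ is either an initial segment of $M$, or $o(H)$ indexes an extender $E_\alpha^M$ on the $M$-sequence and $H$ is an initial segment of the ultrapower. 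So the work is entirely in the ``moreover'' — lifting this to an equality of \emph{pairs}, i.e.\ $\Psi$ agrees with the induced tail strategy.

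Concretely, I would run the comparison of $(H,\Psi)$ against $(M,\Sigma)$ as mouse pairs, using the Comparison Lemma for mouse pairs (Theorem 8.4.5 / 9.5.10 of \cite{normalization_comparison}, available since both pairs have scope $\hc$ under $\adp$). Since the identity embeds $H$ into $M$ as an initial segment (in the two cases above), $H$ sits inside $M$ or inside $\Ult(M,E_\alpha^M)$ already without any iteration, and the comparison must be trivial on the $H$ side: there is no extender disagreement and no strategy disagreement below $o(H)$, because the identity map witnesses full agreement of the premice and, by strategy coherence, agreement of the strategies $\Psi$ and $\Sigma_{o(H)}$ (resp.\ the tail of $\Sigma$ along $\Ult(M,E_\alpha^M)$) on all proper initial segments. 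The key technical point, and I expect this to be the main obstacle, is to argue that $\Psi$ equals the \emph{full} induced strategy on $(H,\Psi)$ as a pair and not merely on proper initial segments of $H$: this is where one invokes the branch-uniqueness / hull condensation package. Here one uses that $\Sigma$ has strong hull condensation, so any tree on $H$ by $\Psi$ lifts (via the identity, which is elementary as a map of pairs by hypothesis) to a tree on $M$ (or on $\Ult(M,E_\alpha^M)$, using that $\Ult((M,\Sigma),E_\alpha^M)$ is again a mouse pair by clause (1) of the mouse-pair facts) by $\Sigma$, and conversely; the Dodd-Jensen Lemma for mouse pairs then rules out any nontrivial disagreement, forcing $(H,\Psi) \unlhd (M,\Sigma)$ or $(H,\Psi) \lhd \Ult((M,\Sigma),E_\alpha^M)$.

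So the step order is: (i) observe that elementarity of the identity on the bare premice gives $H \unlhd M$ or $o(H)$ indexes $E_\alpha^M$ with $H \lhd \Ult(M,E_\alpha^M)$; (ii) in the second case, note $\Ult((M,\Sigma),E_\alpha^M)$ is a mouse pair by the mouse-pair facts (1)–(2) quoted in the excerpt, and the identity is elementary as a map into it; (iii) use internal lift consistency of $\Sigma$ to see that the strategy $\Sigma$ induces on $H \unlhd M$ (resp.\ on $H \lhd \Ult(M,E_\alpha^M)$) as a proper initial segment is exactly the tail strategy, and that this tail strategy and $\Psi$ agree on all $N \lhd H$ by strategy coherence; (iv) close the gap at $H$ itself: since $\pi = \mathrm{id}$ is elementary \emph{in the category of pairs}, we have $\Psi = \Sigma^{\mathrm{id}}$ by definition of elementarity of pair-maps, and $\Sigma^{\mathrm{id}}$ restricted to trees on $H$ is precisely the tail of $\Sigma$ at $H$ (or its lift along $E_\alpha^M$); hence the pairs coincide. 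The one place to be careful is the protomouse / honesty subtlety in the branch-active case — but since $H$ is literally an initial segment of $M$ or of a genuine ultrapower of $M$, honesty and properness of the last-extender predicate are inherited automatically, so no extra argument of the kind in \rlem{preservelbrhodpairs1} is needed here.
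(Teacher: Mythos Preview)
Your proposal reaches the right conclusion, but takes a much heavier route than the paper and glosses over exactly the place where the real content lies.

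The paper does \emph{not} invoke comparison or Dodd--Jensen at all. It proceeds by direct case analysis on the kind of $H$. In the extender-active case, one observes that $\kappa^{+,H}=\kappa^{+,M}$ and that $i_{\dot F^H}``\kappa^{+,H}=i_{\dot F^M}``\kappa^{+,M}$, so $\ran(\pi)$ is cofinal in $o(M)$; since $\pi=\mathrm{id}$ this forces $o(H)=o(M)$, hence $(H,\Psi)=(M,\Sigma)$. In the branch-active case, $\eta^H=\eta^M$ and $s^H=s^M$, and preservation of $\dot B$ gives $b^H=b^M\cap\nu^H=b^{M|o(H)}$, so $H=M|o(H)$. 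In the passive case $H=M||\hat o(H)$, and the two alternatives arise according to whether $M|\hat o(H)$ is extender-active. The strategy part is then a one-line appeal to internal-lift consistency (and strategy coherence in the ultrapower alternative), which is exactly your step (iv).

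Your step (i), which you treat as ``the usual argument,'' is therefore the entire proof. You do not address the extender-active case (where one does not merely get $H\unlhd M$ but in fact $H=M$), and the branch-active case requires checking the $\dot B$ predicate as above; your final remark that ``honesty is inherited automatically because $H$ is an initial segment'' presupposes the conclusion. Conversely, your appeal to the full comparison theorem for mouse pairs is genuine overkill: this lemma is labelled in the paper as ``an easy case of condensation,'' and is used as input to the much harder phalanx comparison of Theorem~\ref{thm:cond_lem}, so one wants it proved by elementary means. Your steps (iii)--(iv) are correct and match the paper; the comparison/Dodd--Jensen layer in your second paragraph can simply be deleted.
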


\begin{proof} Suppose first $H$ is extender-active.
Let $F = \dot{F}^H$ and $G = \dot{F}^M$, and let
$\kappa = \crit(F)$. So $\kappa^{+,H} = \kappa^{+,M} < o(H)$,
and $i_F^H `` \kappa^{+,H} = i_G^M `` \kappa^{+,M}$.
Thus $\ran(\pi)$ is cofinal in $o(M)$, which implies
$(H,\Psi) = (M,\Sigma)$.

  Next, suppose that $H$ is branch active.\footnote{Of course,
this only applies when $M$ is an lpm. In general, our proofs
for pure extender pairs are special cases of the proofs
for lbr hod pairs, so it doesn’t hurt to assume
our mouse pair is an lbr hod pair.} Since $\pi$ is
the identity, $\eta = \eta^H = \eta^M$ and $s = s^H = s^M$.
Let $\T = \T(s)$, and let $\nu = \nu^H$,
so that $o(H) = \eta + \nu$. Because $\pi$ preserves
$\dot{B}^H$, $b^H = b^M \cap \nu$. But 
$b^M \cap \nu = b^{M|o(H)}$ because $M$ is an lpm,
so $H = M$. We get $\Sigma^\pi = \Sigma_{l(H)}$
from the internal-lift consistency of $(M,\Sigma)$, so
$(H,\Psi) \unlhd (M,\Sigma)$.

    Finally, suppose that $H$ is fully passive. 
Clearly, $M||\ohat(H)$ is branch-passive, and thus
$M||\ohat(H) = H$. Using internal-lift consistency for
$(M,\Sigma)$, we get 
$(H,\Psi) \unlhd (M,\Sigma)$, unless $M|\hat{o}(H)$ is extender-active.
In that case we get $(H,\Psi) \lhd 
Ult(M,E_\alpha^M)$, where
$\alpha = \hat{o}(H)$, using internal-lift consistency
and strategy coherence.
\end{proof}

 
Our main condensation theorem for mouse pairs is:

\begin{theorem}[$\sf{AD}^+$]\label{thm:cond_lem}
Suppose $(M,\Sigma)$ is a mouse pair with scope HC.
Suppose  $\pi:(H,\Psi) \to (M,\Sigma)$ is nearly elementary, and not
the identity. Let $\alpha = \crit(\pi)$, and suppose
\begin{itemize}
\item[(1)] $\rho(H) \le \alpha < \rho^-(H)$, and $H$ is $\alpha$-sound,  and
\item[(2)] $H$ is a premouse of the same kind as $M$, $\textrm{deg}(H)=\textrm{deg}(M)$, both $H$ and $M$ are solid pfs premice, and
\item[(3)] $H\in M$.
\end{itemize}
Let $(H_0,\Psi_0)$ be the type 1 core of $(H,\Psi)$; then
exactly one of the following holds.
\begin{itemize}
\item[(a)] $(H_0, \Psi_0) \lhd (M,\Sigma)$, 
\item[(b)] $(H_0, \Psi_0) \lhd \Ult_0((M,\Sigma), \dot{E}^M_\alpha)$.
\end{itemize}
\end{theorem}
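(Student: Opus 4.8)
The plan is to follow the Schimmerling–Zeman/Zeman 9.3.2 argument for the $H \in M$ case, but carried out in the category of pairs and with the type-2 bookkeeping of Section \ref{sec:type2comparison}. The first step is to reduce to the type 1 situation: since $H \in M$ and $\pi$ is nearly elementary, $H_0 = C(H)$ is obtained from $H$ by (if $H$ has type 2) an order-zero ultrapower $i_{D(H)}$, which is nearly $+1$-elementary as a map $C(H)^- \to H^-$ (Remark \ref{type1corermk}), so $(H_0,\Psi_0) = (C(H),\Psi^{i_{D(H)}})$ is a type 1 pair of degree $\deg(M)$, and $H_0 \in M$ as well. Replacing the hypotheses by the corresponding statements about $H_0$ (its reduct $H_0^n$, its projectum $\rho(H_0) \le \alpha$, its solid parameter), it suffices to prove the dichotomy with $H_0$ in place of $H$. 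The key point is that $H_0$ is $\alpha$-sound and $\alpha < \rho^-(H_0)$, and we must show $H_0$ is an initial segment either of $M$ or of $\Ult_0(M, \dot E^M_\alpha)$.

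The heart of the argument is a phalanx comparison. I would form the phalanx $\mathcal{P} = (\langle H_0, M\rangle, \alpha)$ — with $H_0$ as the "short" model, agreeing with $M$ below $\alpha$ — paired with the appropriate tail/branch strategy, and compare it against $M$ (equivalently against a level of the background construction, using Theorem \ref{starpsigmathm} to iterate $(M,\Sigma)$ or $(H_0,\Psi_0)$ into $\mathbb{C}$). Iterability of the phalanx comes from the pullback/realization machinery: the map $\pi$ realizes the phalanx into $M$, so a comparison tree on the $H_0$-side lifts via $\pi$ to a tree on $M$ by $\Sigma$, which gives the needed iteration strategy with strong hull condensation. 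Run the coiteration of $\mathcal P$ with $M$. Standard arguments show that on the $M$-side there is no drop and no extenders are used with critical point $\ge \alpha$ below the relevant stage, so the $M$-side is (an initial segment of) $M$; on the $H_0$-side, the root node $H_0$ cannot move (if an extender with critical point $< \alpha$ were applied to $H_0$ it would contradict $\alpha$-soundness, exactly as in Theorem \ref{alphacorecase}), so $H_0$ goes straight into the comparison. Then $\alpha$-soundness of $H_0$ and solidity/universality of its standard parameter force the $H_0$-side to win the comparison trivially: $H_0$ is already $\alpha$-sound with $\rho(H_0) \le \alpha$, so no extender can be applied to it. The usual "universality at $\rho$" computation then gives $H_0 \unlhd$ (the $M$-side), i.e. either $H_0 \unlhd M$, or — in the case the very first extender on the $M$-side is $\dot E^M_\alpha$ with critical point $\alpha$ — $H_0 \lhd \Ult_0(M, \dot E^M_\alpha)$. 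The requirement $\alpha < \rho^-(H_0)$ is what guarantees that $\Ult_0$ (rather than a higher-degree ultrapower) is the right object and that the ultrapower is of $M$ itself on the correct side.

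Finally one must upgrade the conclusion from premice to pairs: having located $H_0$ as $M|l(H_0)$ or as an initial segment of $\Ult_0(M,\dot E^M_\alpha)$, one shows $\Psi_0$ agrees with the strategy $M$ induces on that initial segment (namely $\Sigma_{l(H_0)}$, resp. the internal-lift-consistent pullback in the ultrapower case). This is where internal lift consistency, strategy coherence of $(M,\Sigma)$, and strong hull condensation of $\Sigma$ are used: the comparison identifies not just the models but, because both sides are by strategies with strong hull condensation that were obtained as pullbacks of $\Sigma$ along $\pi$ and along the iteration maps, the strategies on the common part as well — this is the same bookkeeping as in Lemma \ref{identityembedding} and Lemma \ref{preservelbrhodpairs1}. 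That the two cases (a) and (b) are mutually exclusive is immediate from $H \in M$ versus the critical-point-$\alpha$ ultrapower structure.

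\textbf{Main obstacle.} The delicate point is the restricted elementarity available for the copy maps on the $H_0$-side of the phalanx comparison: since $\pi$ is only nearly elementary (and we are working at degree $\deg(M)$ possibly with $H$, $M$ of type 2, so only $+1$-/nearly-$+1$-elementarity is available after passing to cores), one has to verify that the lifted trees are genuinely by $\Sigma$ and that the phalanx is $\deg(M)$-maximally iterable, rather than just iterable at some lower degree. This is exactly the kind of "restricted elementarity for ultrapowers of phalanxes" difficulty the introduction flags; I expect the resolution to mirror the treatment of nearly $+1$-elementary copying in Section \ref{sec:type2comparison} (the Shift Lemma \ref{shiftlemma} and the Dodd–Jensen lemma \ref{plusonedoddjensen} for such maps), applied to the phalanx rather than to a single model, together with the honesty-preservation argument of Lemma \ref{preservelbrhodpairs1} to keep the $H_0$-side an lbr hod pair throughout.
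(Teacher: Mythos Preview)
Your sketch follows the classical Zeman 9.3.2 phalanx argument, but it misses the essential new features of the paper's proof and contains a basic error about phalanx comparisons.

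First, the error: you assert that ``no extender can be applied to $H_0$'' because of $\alpha$-soundness. This is wrong. In the phalanx $(M,H,\alpha)$, extenders with critical point $\ge \alpha$ \emph{are} applied to the $H$-side; that is the whole point of the phalanx. What $\alpha$-soundness buys you at the end is that \emph{if} the main branch lives above $H$ without drop, then the last model's core is close to $H$ --- it does not prevent $H$ from being iterated.

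Second, and more seriously, the comparison cannot be run directly against $M$. Because the conclusion includes \emph{strategy} agreement ($\Psi_0 = \Sigma_{H_0}$ or the ultrapower analogue), both the phalanx and $M$ must be iterated into a common background construction $\mathbb{C}$, producing trees $\itS$ on the phalanx and $\itU$ on $M$. This creates a new problem with no analogue in \cite{Zeman}: at some stage the $\itS$-side may produce a pair $(\M_\theta^\itS,\Lambda_\theta)$ that equals a pair $(\M_\gamma^\itU,\Sigma_\gamma^\itU)$ on the $\itU$-side while still disagreeing with $\mathbb{C}$. At such a point the usual Dodd--Jensen argument (that the final branch cannot sit above $M$) breaks down. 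The paper's solution is to declare $\theta$ \emph{unstable} and to lift the entire problematic tuple $\Phi_0 = \la (M,\Sigma),(H,\Psi),\pi,\alpha\ra$ along $i_{0,\theta}^\itS$ to a new problematic tuple $\Phi_\theta$, then continue the comparison from the lifted phalanx. Showing that the lifted tuple is still problematic (Claim 5) is the technical core of the proof; it splits into the case where $i$ is continuous at $\rho_k(G)$ and the much harder case where it is not, the latter requiring a factoring through a long-extender ultrapower $J = \Ult(G, E_i \restriction \sup i``\alpha)$.

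Third, the argument is by contradiction with a \emph{minimality} hypothesis on $(M,\Sigma)$ in the mouse order among pairs admitting a problematic tuple. This is used repeatedly (Claim 3, and throughout Claim 5) to apply the theorem inductively to strictly smaller pairs such as $(i(P),(\Lambda_{\gamma+1})_{i(P)})$ or $(i(G),(\Lambda_{\gamma+1})_{i(G)})$. Your proposal has no inductive mechanism of this kind, so you have no way to propagate strategy condensation through the lifts.

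Your reduction ``replace $H$ by $H_0$ at the outset'' also does not match the paper: the proof carries the possibly-type-2 $H$ through the comparison and only passes to type 1 cores in the endgame analysis (Claims 11--12), because the phalanx-lifting must respect the original map $\pi$, and the type-1 core may change under iteration. The final obstacle you flag --- restricted elementarity of the copy maps --- is real, but it is handled not by a single Shift-Lemma adjustment; it is precisely what forces the elaborate Case 2 analysis of Claim 5 (and Subclaim 5a, with its several type-combination subcases) and the use of Lemma \ref{zerosoundnesslemma}.
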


See Footnote \ref{same kind} for the definition of ``same kind". It does not mean ``same type"; we are
leaving open the possibility that one of $H$ and $M$ has type 1, while the other has type
2. Note that if $\rho(H)<\alpha$, then conclusion (b) is impossible.

 When one applies Theorem \ref{thm:cond_lem} in the proof of $\square_\kappa$ in pfs mice, 
then $M$ has type 1, and one can arrange that $H$ has type 1 as well, so that $(H_0,\Psi_0) = (H,\Psi)$.
One also has
that $H \in M$. In that proof, $\rho(H) = \rho(M) = \kappa$,
and $\alpha =(\kappa^+)^H$, and both $H$ and $M$ are sound. Since $\rho(H) < \alpha$, (b) does not hold. So one gets conclusion (a), that $(H,\Psi) \lhd (M,\Sigma)$.
In the square proof, what matters then is just that $H \unlhd M$; the
full external strategy agreement given by $\Sigma^\pi = \Sigma_{H}$
is not used.

In the applications of \ref{thm:cond_lem} to full normalization
and positionality in \cite{siskindsteel}, one must consider the case that $H$ and $M$ have different types,
and both alternatives in the conclusion are needed.

\begin{remark}\label{rem:consequences}{\rm It follows from the theorem that the hypothesis
 $\alpha < \rho_{\textrm{deg}(H)}(H)$
can be dropped, if one omits condensation of the external
strategy from its conclusion. See \ref{condensationtheorem} below.}
\end{remark}

By condition (3) and Theorem \ref{alphacorecase} (and the remark after), $H$ is not an $\alpha$-core of $M$. If $H$ is $\alpha$-sound and 
is not the $\alpha$-core of $M$, then by \ref{alphacorecase}, $H \in M$. 

Notice that one of the alternatives in the conclusion of 
 \cite[Theorem 9.3.2]{Zeman} does not occur here. The alternative that $(H,\Psi) \lhd \Ult_0((M|\xi,\Sigma), E)$ where $\rho(M|\xi)=\mu$ is the predecessor of $\alpha$ in $M$ and $E$ is an extender on the sequence of $M$ with critical point $\mu$ cannot occur here because
$M$ is projectum solid. 

A relatively coarse special case of Theorem \ref{thm:cond_lem} is sketched in
 \cite[Theorem 5.55]{normalization_comparison}. In that case, $\pi$ 
 is assumed to be fully elementary and crt$(\pi) = \rho(H)$.

\begin{proof}[Proof of Theorem \ref{thm:cond_lem}]

    Let $\pi \colon (H,\Psi) \to (M,\Sigma)$ be nearly elementary,
and let $\alpha_0 = \alpha = \crit(\pi)$ and $\textrm{deg}(H) = \textrm{deg}(M) = k_0$. For definiteness, 
let us assume that $H$ and $M$ are
least branch premice. The proof in the case that they are pure extender mice
 is similar.\footnote{Even
in the pure extender case, one cannot
simply quote 9.3.2 of \cite{Zeman}, because
we are demanding strategy condensation.}\footnote{Under $\adp$, every countable
$\omega_1$-iterable pure extender mouse $M$ has an complete iteration strategy $\Sigma$
such $(M,\Sigma)$ is a pure extender pair. Thus our theorems
\ref{alphacorecase},\ref{identityembedding}, and \ref{thm:cond_lem}
together imply the version of 9.3.2 of \cite{Zeman} for pfs mice, modulo some details about
where the strategies live, and how elementary $\pi$ is. See also Remark \ref{rem:consequences}.}

\begin{definition}\label{dfn:problematic_tuple} A tuple $\la (N,\Phi), (G,\Lambda), \sigma, \nu \ra$ is {\em problematic}
iff  
\begin{itemize}
\item[(1)] $(N,\Phi)$ and $(G,\Lambda)$ are of the same kind, 
with scope $\hc$, and
$G \in N$, 
\item[(2)] 
$\sigma \colon (G,\Lambda) \to (N,\Phi)$ is nearly elementary, with
$\crit(\sigma) = \nu$,
\item[(3)] $\rho(G)\leq \nu < \rho_{\textrm{deg}(G)}(G)$ and $G$ is $\nu$-sound, $\textrm{deg}(N)=\textrm{deg}(G)$, $G, N$ are both solid pfs premice,
and
\item[(4)] letting $(G_0, \Lambda_0)$ be the type 1 core of $(G,\Lambda)$, both conclusions (a) 
and (b) of \ref{thm:cond_lem}
 fail for the pair $(G_0, \Lambda_0), (N,\Phi)$; that is,
it is not the case that $(G_0,\Lambda_0) \lhd (N,\Phi)$, and it is not the case that 
$(G_0,\Lambda_0) \lhd \Ult_0((N,\Phi), \dot{E}^N_\nu)$.
\end{itemize}
\end{definition}


\medskip
\noindent
{\em Claim 1.} 
Let $\la (N,\Phi), (G,\Lambda), \sigma, \nu \ra$ be a 
problematic tuple, and $k = \textrm{deg}(G)$; then
$\sigma(\nu)$ is a cardinal of $N$, $\sigma(\nu) < \rho_k(N)$,
and $G \in N||\sigma(\nu)$.

\medskip
\noindent
{\em Proof.}
By (3), $\rho_{k+1}(G) \le \nu < \rho_k(G)$, so
$\sigma(\nu) < \rho_{k}(N)$. Since $\nu = \crit(\sigma)$, it is a cardinal
of $G$, so $\sigma(\nu)$ is a cardinal of $N$. But $G$ is $\nu$-sound, so it is
coded by a subset of $\nu$ in $N$, so $G \in N||\sigma(\nu)$.

$\hfill    \square$

We must show that $\la (M,\Sigma),(H,\Psi),\pi,\alpha_0\ra$ is not problematic.
Assume toward contradiction that it is, and that $(M,\Sigma)$ is minimal in the
mouse order such that $(M,\Sigma)$ is the first term in some problematic tuple,
and let $k_0 = \degr(M)$.  


We obtain a contradiction by comparing
the phalanx $(M, H, \alpha_0)$ with $M$, as usual. However, since we are
comparing strategies,
this must be done indirectly, by iterating both
into some sufficiently strong background construction $\mathbb{C}$.
 It can happen that at some point, the two sides
  agree with each other (but not with $\mathbb{C}$). This leads to a problem 
  in the argument that the end model on the phalanx side 
  can't be above $M$. The solution employed in \cite{normalization_comparison} is
   to modify how the phalanx is iterated, moving the whole phalanx 
   (including its exchange ordinal) up at certain stages. Our main
new problem here is that because of the restricted elementarity of our
maps, if we move up naively, the new phalanx and associated embedding
 may not be problematic. This forces us to drop to a new
problematic phalanx on occasion.

 \bigskip
 \noindent
 {\em Claim 2.} Let $\la (N,\Phi), (G,\Lambda),\sigma,\nu \ra$ be a problematic tuple,
 and $k = \textrm{deg}(G)$; then we can decompose $\sigma \restriction \hat{G}^k$ as
 \[
 \sigma \restriction \hat{G}^k  = \bigcup_{\eta < \rho_k(G)} \sigma^\eta,
 \]
 where each $\sigma^\eta$ belongs to $\hat{N}^k$.
 
 \medskip
 \noindent
 {\em Proof.}
     Assume first $k=0$ (so $G,N$ are of type 1), and that $\ohat{(G)}$ is a limit ordinal.
 For $\eta < \hat{o}(G)$, let
$G^\eta$ be $G||\eta$, expanded by $I^\eta$, where
$I^\eta$ is the appropriate 
fragment of $\dot{F}^G$ if $G$ is extender active,
the appropriate initial segment of $\dot{B}^G$ if $G$ is
branch active, and $I^\eta = \emptyset$ otherwise	. Let $N^\eta$ be $N||\sigma(\eta)$,
expanded by $\sigma(I^\eta)$. Let $s = p_1(G)\backslash \nu$ and $\sigma^\eta$ be the fragment of
$\sigma$ given by
\[
\dom(\sigma^\eta) = h^1_{G^\eta}``(\nu \cup s),
\]
and
\[
\sigma^\eta(h^1_{G^\eta}(\delta, s))= h^1_{N^{\sigma(\eta)}}(\delta,\sigma(s)),
\]
for $\delta < \nu$. We have that $\sigma^\eta \in N$, and
$\sigma = \bigcup_{\eta < \hat{o}(G)} \sigma^\eta$. If
$\ohat{(G)}$ is a successor ordinal, we can ramify using the $S$-hierarchy.

       The case $k>0$ is similar.
We have $\hat{G}^k = (G||\rho_k(G),A)$ where $A = \hat{A}^k_G$. For $\eta \le \rho_k(G)$, let 
       
\[
G^{\eta} = (G||\eta, A \cap G||\eta) = \hat{G}^k||\eta.
 \]
 Let $s = p_1({\hat{G}^k}) \setminus \nu$, and let
 $h^1_{\hat{G}^k}$ be the $\Sigma_1$ Skolem function, so that
 $\hat{G}^k = h^1_{\hat{G}^k}``(\nu \cup s)$. For $\eta <
 \rho_k(G)$, 
 \[
 \dom(\sigma^\eta) = h^1_{G^{\eta}} `` (\nu \cup s),
 \]
 and for $\gamma < \nu$ in $\dom(\sigma^\eta)$,
 \[
 \sigma^\eta(h^1_{G^{\eta}}(\gamma, s))= h^1_{\hat{N}^k||\sigma(\eta)}(\gamma,\sigma(s)).
 \]
 It is easy to see that this works.
 $\hfill \square$
 
 We call $\la (\sigma^\eta,G^\eta) \mid \nu \le \eta < \rho_k(G) \ra$ 
 as above the
 {\em natural decomposition} of $\sigma \restriction \hat{G}^k$.

     Using claim 2, we can move a problematic
tuple $\la (N,\Phi), (G,\Lambda), \sigma, \nu \ra$ up
via an iteration map
that is continuous at $\rho_k(G)$. When the iteration map
is discontinuous at $\rho_k(G)$, we may have to drop.

\begin{definition} Let $\Phi = \la (N,\Phi), (G,\Lambda), \sigma, \nu \ra$ be a
problematic tuple; then
$\Phi$ is {\em extender-active} iff
$E_\nu^N \neq \emptyset$.
\end{definition}

When we move up extender-active tuples, 
the new exchange ordinal
is always the image of the old one, so the new tuple is
still extender-active.

 \bigskip
 \noindent
 {\em Claim 3.} Let $\la (N,\Phi),(G,\Lambda),\sigma,\nu \ra$ be problematic,
and suppose that $(N,\Phi) \le^* (M,\Sigma)$;
 then there is no proper initial segment $(Q,\Omega)$ of $(G,\Lambda)$
 such that $\nu = \rho(Q)$ and either
 \begin{itemize}
 \item[(i)] $E_\nu^N = \emptyset$, and $(Q,\Omega)$ is not an initial
 segment of $(N,\Phi)$, or
 \item[(ii)] $E_\nu^N \neq \emptyset$, and $(Q,\Omega)$ is not a
 proper initial segment of $\Ult((N,\Phi),E_\nu^N)$.
 \end{itemize}

 \medskip
 \noindent
 {\em Proof.} This follows from the minimality of $(M,\Sigma)$ in the
mouse order. For if $(Q,\Omega)$ is a counterexample,
 then letting $(R,\Gamma) \lhd (N,\Phi)$ be such that
$R = \sigma(Q)$, we have that $(R,\Gamma) <^* (M,\Sigma)$, and
$\la (R,\Gamma), (Q,\Lambda_Q), \sigma \restriction Q, \nu \ra$
is problematic. We note that since $Q\lhd G$, $Q$ is of type 1 and hence $C(Q) = Q$.
 
 $\hfill   \square$

So under the hypotheses of claim 3, $(N,\Phi)$ agrees with $(G,\Lambda)$
strictly below $\nu^{+,G}$.

 We are ready now to enter
 the phalanx comparison argument of \cite{normalization_comparison}.
Fix a coarse strategy pair $((N^*,\in, w, \itF,\Psi),\Psi^*)$
that captures
$\Sigma$, and let $\mathbb{C}$ be the maximal $(w,\itF)$
construction, 
 with models $M_{\nu,l}$ and induced 
strategies $\Omega_{\nu,l}$. Let $\delta^* = \delta(w)$.
By Theorem \ref{starpsigmathm}, $(*)(M,\Sigma)$ holds,
so we can fix $\la \eta_0,k_0\ra$ lex least such that
either
\begin{itemize}
\item[(i)] 
$(M,\Sigma)$ iterates to $(M_{\eta_0,k_0},\Omega_{\eta_0,k_0})$, or
\item[(ii)] $(M,\Sigma)$ iterates to some type 2
pair generated by $(M_{\eta_0,k_0}, \Omega_{\eta_0,k_0})$.
\end{itemize}
As we showed in \S \ref{sec:type2comparison}, alternative (i) occurs
if $M$ has stable type 1, and alternative (ii) occurs otherwise.
Let $\itU_{\eta_0,k_0}$ be the $\lambda$-separated tree on $(P_0,\Sigma_0)$ that witnesses this.
For $\la \nu,l\ra <_{\lex} \la \nu_0,k_0\ra$ 
let $\mathcal{U}_{\nu,l}$ be the unique $\lambda$-separated tree on $M$ witnessing 
that $(M,\Sigma)$ 
  iterates strictly past $(M_{\mu,l},\Omega_{\nu,l})$. \footnote{ We can work in $N^*$ from now on, and interpret 
  these statements there. But in fact, the strategies $\Omega_{\nu,l}$ are induced
  by $\Sigma^*$ in a way that guarantees they extend to $\Sigma^*$-induced
  strategies $\Omega_{\nu,l}^{*}$ defined on all of HC. $\mathcal{U}_{\nu,l}$
  iterates $(M,\Sigma)$ past $(M_{\nu,l},\Omega_{\nu,l}^{*})$ in $V$.} 

We define $\lambda$-separated trees $\itS_{\nu,l}$ on $(M,H,\alpha_0)$ for certain $(\nu,l)\leq (\eta_0,k_0)$. 
Fix $(\nu,l)\leq(\eta_0,k_0)$
 for now, and assume $\itS_{\nu',l'}$ is defined whenever $(\nu',l')<(\nu,l)$. 
 Let $\U = \U_{\nu,l}$, and for $\tau < \lh(\U)$, let
\[
\Sigma_\tau^\U = \Sigma_{\U \restriction (\tau+1)}
\]
be the tail strategy for $\M_\tau^\U$ induced by $\Sigma$. 
 We proceed to define $ \mathcal{S} = \itS_{\nu,l}$,
by comparing the phalanx $(M, H,\alpha_0)$ with $M_{\nu,l}$. As we define
 $\mathcal{S}$, we 
  lift $\mathcal{S}$ to a padded tree $\mathcal{T}$ on $\M$, by copying. Let us
write
\[
\Sigma_\theta^\T =\Sigma_{\T \restriction (\theta+1)}
\]
for the tail strategy for $\M_\theta^\T$ induced by $\Sigma$.
For $\theta < \lh(\mathcal{S})$, we will have copy map
\begin{center}
$\pi_\theta: \M^\itS_\theta \rightarrow \M^\T_\theta$.
\end{center} 
The map $\pi_\theta$ is nearly elementary. We attach the complete strategy
\[
 \Lambda_\theta = (\Sigma_\theta^\T)^{\pi_\theta}
\]
  to $\M^\itS_\theta$. We also
  define a non-decreasing sequence of ordinals $\lambda_\theta = \lambda^\itS_\theta$ 
  that measure
   agreement between models of $\itS$, and tell us which 
   model we should apply the next
    extender to.

The following claim will be useful in pushing up problematic tuples.

\medskip
\noindent
{\em Claim 4.} Suppose $\xi <_S \theta$ and $(\xi,\theta]_S$ does not drop;
then $\Lambda_\xi = \Lambda_\theta^{i_{\xi,\theta}^\itS}$.

\medskip
\noindent
{\em Proof.} Because $\Sigma$ is pullback consistent, we have
$\Sigma_\xi^\T = (\Sigma_\theta^\T)^{i_{\xi,\theta}^\T}$. 
But then
\begin{align*}
\Lambda_\xi & = (\Sigma_\xi^\T)^{\pi_\xi} \\
            & = (\Sigma_\theta^\T)^{i_{\xi,\theta}^\T \circ \pi_\xi}\\
            & = (\Sigma_\theta^\T)^{\pi_\theta \circ i_{\xi,\theta}^\itS}\\
            & = \Lambda_\theta^{i_{\xi,\theta}^\itS},\\
\end{align*}
as desired.
$\hfill    \square$

We start with 
\begin{center}
$\M^\itS_0 = M, \M_1^\itS = H, \lambda_0 = \alpha_0,$
\end{center}
and
\begin{center}
$\M^\T_0 = \M^\T_1 = M,  \pi_0 = id,  \pi_1 = \pi$,
\end{center}
and
\begin{center}
$\Lambda_0 = \Sigma, \mbox{ } \Lambda_1 = \Sigma^{\pi_1}.$
\end{center}

We say that $0, 1$ are distinct roots of $\itS$. We say that $0$ is
unstable, and $1$ is stable\footnote{This is different from the notion of ``stable pfs premice". We shall
 let context dictate the meaning of the term.}. As we proceed, we shall declare
additional nodes $\theta$ of $\itS$ to be unstable.
We do so because $(\M_\theta^\itS,\Lambda_\theta) = 
(\M_\gamma^\U, \Sigma_\gamma^\U)$ for some
$\gamma$\footnote{ In the first version of
\cite{normalization_comparison} the external
strategy agreement was not required for $\theta$ to be declared
unstable, but it is important to do so here.}, and when we do so, we shall
immediately define $\M_{\theta+1}^\itS$, as well as 
$\sigma_\theta$ and $\alpha_\theta$
such that 
\begin{center}
$ \Phi_\theta =_{\mbox{df}} \la (\M_\theta^\itS, \Lambda_\theta),
(\M_{\theta +1}^\itS,\Lambda_{\theta+1}),\sigma_\theta,
 \alpha_\theta \ra$
\end{center}
is a problematic tuple. Here $\Lambda_{\theta+1} =
\Lambda_\theta^{\sigma_\theta}$.
In this case, $[0,\theta]_S$ does not drop, and
all $\xi \le_S \theta$ are also unstable. We regard
$\theta+1$ as a new root of $\itS$. This is the only way new roots are
constructed.

Let us also write
\begin{center}
$ \Phi_\theta^- =_{\mbox{df}} \la \M_\theta^\itS,\M_{\theta +1}^\itS,
\sigma_\theta,
 \alpha_\theta \ra$
\end{center}
for the part of $\Phi_\theta$ that is definable over $\M_\theta^\itS$.
We say {\em $\Phi_\theta^-$ is problematic} iff $\Phi_\theta$ is problematic
for reason that do not involve the external strategies; that is, iff
both statements ``$C(\M^\itS_{\theta+1})\lhd \M^\itS_\theta$" and ``$C(\M^\itS_{\theta+1}) \lhd
 \textrm{Ult}_0(\M^\itS_\theta, E^{\M^\itS_\theta}_{\alpha_\theta})$" are false.

If $\theta$ is unstable, then we define
\[
\beta_\theta = (\alpha_\theta^+)^{\M_{\theta+1}^{\itS}}.
\]
If $\xi <_S, \theta$, then we shall have
$ \beta_\theta \le i_{\xi,\theta}^{\itS}(\beta_\xi)$, and 
\[
\beta_\theta = i_{\xi,\theta}^{\itS}(\beta_\xi) \Rightarrow 
\Phi_\theta = i_{\xi,\theta}^{\itS}(\Phi_\xi),
\]
in the appropriate sense.
In this connection: it will turn out
that $i_{\xi,\theta}(\beta_\xi) = \beta_\theta$ implies
$i_{\xi,\theta}^\itS$ is continuous at
$\rho_k(\M_{\xi +1}^\itS)$, where $k = k_0 =\textrm{deg}(\M_{\xi +1}^\itS)$.
So we can set
\[
i_{\xi,\theta}^\itS(\sigma_\xi) = \text{ upward extension of }
\bigcup_{\eta < \rho_k(\M_{\xi+1}^\itS)} i_{\xi,\theta}^\itS(\sigma_\xi^\eta),
\]
 where $\la \sigma_\xi^\eta \mid \eta < \rho_k(\M_{\xi+1}^\itS)\ra$
is the natural decomposition of $\sigma_\xi$. This enables us to
make sense of $i_{\xi,\theta}^\itS(\Phi_\xi^-)$.

The construction of $\itS$ takes place in rounds in which we either
add one stable $\theta$, or one unstable $\theta$ and its
stable successor $\theta+1$. Thus the current last model is always
stable, and all extenders used in $\itS$ are plus extenders taken from stable models.
If $\gamma$ is stable, then  
\[
\lambda_\gamma =
 \hat{\lambda}(E_\gamma^{\itS}).
\]
In the case $\gamma$ is 
unstable and $\gamma+1 < \lh(\itS)$, we first define 
$\lambda_{\gamma+1}$ and set $$\lambda_\gamma = 
\textrm{inf}(\lambda_{\gamma+1},\alpha_\gamma).$$ See below for more detail.

In sum, we are maintaining by induction that the last node $\gamma$
of our current $\S$  is stable, and

\bigskip
\noindent \textbf{Induction hypotheses $(\dagger)_\gamma$.} If $\theta < \gamma$ 
and $\theta$ is unstable, then 
\begin{itemize}
\item[(1)] $0\leq_S\theta$ and $[0,\theta]_S$ does not drop (in model or degree),
and every $\xi \leq_S \theta$ is unstable,
\item[(2)] there is a $\gamma$ such that $(\M_\theta^\itS, \Lambda_\theta) = 
(\M_\gamma^\U, \Sigma_\gamma^\U)$,
\item[(3)] $\Phi_\theta = \la (\M_\theta^\itS,\Lambda_\theta), (M_{\theta+1}^\itS, \Lambda_{\theta+1}), 
\sigma_\theta, \alpha_\theta \ra$
 is a  problematic tuple,
 \item[(4)] $\Phi_\theta$ is extender-active iff $\Phi_0$ is extender-active,
 and if $\Phi_\theta$ is extender-active, then $i_{0,\theta}^\itS(\alpha_0)
 = \alpha_\theta$,
\item[(5)] if $\xi <_S \theta$, then $\alpha_\theta \le i^\itS_{\xi,\theta}(\alpha_\xi)$ and
$\beta_\theta \le i^{\itS}_{\xi,\theta}(\beta_\xi)$,
\item[(6)]  if
$\la \alpha_\theta, \beta_\theta \ra = i_{\xi,\theta}^{\itS}(\la \alpha_\xi,
 \beta_\xi \ra)$,
then 
\begin{itemize}
\item[(a)] $\Phi_\theta^- = i^{\itS}_{\xi,\theta}(\Phi_\xi^-)$, and
\item[(b)]$i_{\xi,\theta}^\itS$ is continuous at $\rho_{k_0}(\M_{\xi +1}^\itS)$, where
$k_0 = \textrm{deg}(\M_{\xi+1}^\itS)$,
\end{itemize}
 \item[(7)] $\M_{\theta +1}^\T = \M_\theta^\T$, and 
$\pi_{\theta+1} = \pi_\theta \circ \sigma_\theta$.
\end{itemize}

\medskip

Setting $\sigma_0 = \bar{\pi}$, we have $(\dagger)_1$.

For a node $\gamma$ of $\itS$, we write $\itS$-pred$(\gamma)$ for the immediate 
$\leq_S$-predecessor of $\gamma$. For $\gamma$ a node in $\itS$, we set 
\begin{center}
st$(\gamma) = $ the least stable $\theta$ such that $\theta\leq_S\gamma$,
\end{center}
and
\begin{center}
rt$(\gamma)=\begin{cases}
 \text{$\itS$-pred(st$(\gamma)$)} &  \ \text{if $\itS$-pred(st$(\gamma)$) exists} \\ 
\text{st$(\gamma)$} &  \ \text{otherwise}.
\end{cases}$
\end{center}

The construction of $\itS$ ends when we reach a stable $\theta$ 
such that

\begin{enumerate}[(I)]
\item $(M_{\nu,l},\Omega_{\nu,l}) \lhd (\M^\itS_\theta, \Lambda_\theta)$, or $(M_{\nu,l},\Omega_{\nu,l}) = (\M^\itS_\theta, \Lambda_\theta)$ and $[$rt$(\theta),\theta]_S$ drops, or
\item $[$rt$(\theta),\theta]_S$ does not drop in model
 or degree and either $(\M^\itS_\theta, \Lambda_\theta) \unlhd (M_{\nu,l}, \Omega_{\nu,l})$, 
 or $(M_{\nu,l},\Omega_{\nu,l})$ is the type 1 core
of $(\M^\itS_\theta, \Lambda_\theta)$. 
 
\end{enumerate}
We need the second alternative in (II) because models on the phalanx side might
have type 2. The comparison
arguments of \S 3 and \cite{normalization_comparison} show
that if $\la \nu,l\ra$ is
least such that (II) holds, and
$(\M^\itS_\theta, \Lambda_\theta) \unlhd (M_{\nu,l}, \Omega_{\nu,l})$, 
then 
$(\M^\itS_\theta, \Lambda_\theta) = (M_{\nu,l}, \Omega_{\nu,l})$,

If case (I) occurs, then we go on to define $\itS_{\nu,l+1}$. If case (II) occurs, 
we stop the construction. We will sometimes write ``case (II)(a)" for the first case 
of (II) and ``case (II)(b)" for the second case. 

We now describe how to extend $\itS$ one more step. First we assume $\itS$ has successor
 length $\gamma+1$ and let $\M^\itS_\gamma$ be the current last model, so that $\gamma$ is
  stable. Suppose $(\dagger)_\gamma$ holds. Suppose (I) and (II) above do not hold 
  for $\gamma$, so that we have a least
   disagreement between $\M^\itS_\gamma$ and $M_{\nu,l}$.  By the results of \S 3, i.e. essentially the proof of 
   \cite[Lemma 9.6.5]{normalization_comparison}, the least
disagreement involves only an extender $E$ on the sequence of $\M^\itS_\gamma$ as 
long as $\crit(E) < \rho_l(M_{\nu,l})$.\footnote{The only extender that has critical
 point $\geq \rho_l(M_{\nu,l})$, if exists, is the order zero measure
 $F = D(\itM_\gamma^\itS)$  witnessing case (II)(b). This happens precisely 
when some model on the main branch $[$rt$(\theta),\theta]_S$ has type 2. In this 
case, $F$ is the last extender used in the comparison and is the only extender 
on the $M_{\nu,l}$-sequence used. } In this case, 
letting $\tau = \lh(E)$, we have
\begin{itemize}
\item $M_{\nu,l}|(\tau,0) = \M^\itS_\gamma|(\tau,-1)$,\footnote{Recall 
$\M^\itS_\gamma|(\tau,-1)$ is the structure obtained from $\M^\itS_\gamma|\tau$
 by removing $E$.} and
\item ${(\Omega_{\nu,l})}_{(\tau,0)}=(\Lambda_\gamma)_{(\tau,-1)}$.
\end{itemize}  

Set 
\[
\lambda^\itS_\gamma = \lambda_E,
\]
 and let $\xi$ be least such that 
$\crit(E)<\lambda^\itS_\xi$. We let $\itS$-pred$(\gamma+1)=\xi$. Let $(\beta,k)$ be 
lex least such that either $\rho(\M^\itS_\xi|(\beta,k))\leq  \crit(E)$ or 
$(\beta,k)=(\hat{o}(\M^\itS_\xi),\textrm{deg}(\M^\itS_\xi))$. Set $R =
 \M^\itS_\xi|(\beta,k)$ and set 
\[
\M^\itS_{\gamma+1} = \textrm{Ult}(R,E^+),
\]
and let $\hat{i}^\itS_{\xi,\gamma+1}$ be the canonical embedding.  
Let 
\begin{center}
$\M^\T_{\gamma+1}=$Ult$(\M^\T_\xi|(\pi_\xi(\beta),k),\pi_\gamma(E^+))$,
\end{center}
and let $\pi_{\gamma+1}$ be given by the Shift Lemma . This determines
$\Lambda_{\gamma +1}$. We write $E^\itS_\gamma$ for $E^+$ and similarly we write $E^\T_\gamma$ for $\pi_\gamma(E^+)$. Similar notations apply 
to extenders $E^\U_\gamma$ on the tree $\U$.

If $\xi$ is stable or
 $(\beta,k)< (\hat{o}(\M^\itS_\xi),\textrm{deg}(\M^\itS_\xi))$, then we declare $\gamma+1$ to be stable.
 $(\dagger)_{\gamma+1}$ follows vacuously from $(\dagger)_\gamma$.\footnote{It is possible 
  that  $\xi$ is unstable, $\lambda_\xi = \alpha_\xi$, $\itS$-pred$(\gamma+1)=\xi$, and 
  crt$(E^\itS_\gamma)=\lambda_F$ where $F$ is the last extender of $\M^\itS_\xi|\alpha_\xi$. 
  In this case, $(\beta,k) = (\lh(F),0)$. The problem then is that
   $\M^\itS_{\gamma+1}$ is not an lpm, because its last extender
   $i_{\xi,\gamma +1}(F)$ has a missing whole initial segment,
   namely $F$. Schindler and Zeman found a way to deal with this anomaly
   in \cite{deconstructing}. Their method works in the hod mouse context as well.
   Here we shall not go into the details of this case. The anomaly cannot occur
   when $\xi$ is stable, because then $\lambda_\xi = \hat{\lambda}(E_\xi^{\itS})$ is inaccessible
   in $\M_\gamma^{\itS}$.}
   
If $\xi$ is unstable, $(\beta,k)= (\hat{o}(\M^\itS_\xi),\textrm{deg}(\M^\itS_\xi))$, and 
$(\M^\itS_{\gamma+1},\Lambda_{\gamma+1})$ is not a model of $\U$, then again we declare $\gamma+1$ stable. 
Again, $(\dagger)_{\gamma+1}$ follows vacuously from $(\dagger)_\gamma$.

 Finally, suppose 
 $\xi$ is unstable, $(\beta,k)= (\hat{o}(\M^\itS_\xi),\textrm{deg}(\M^\itS_\xi))$,
 and for some $\tau$,
\[
(\M^\itS_{\gamma+1},\Lambda_{\gamma+1}) = (\M_\tau^\U, \Sigma_\tau^\U).
\]
 We then we declare $\gamma+1$ to 
 be unstable
  and $\gamma+2$ stable. 
  We must define the problematic tuple needed for $(\dagger)_{\gamma+2}$.
  Let $i = i_{\xi,\gamma+1}^\itS$, and
  \begin{center}
  $\la (N,\Psi),(G,\Phi),\sigma,\alpha \ra = \la (\M_\xi^\itS,\Lambda_\xi),
(\M_{\xi+1}^\itS,\Lambda_{\xi+1}), \sigma_\xi,\alpha_\xi \ra.$
  \end{center}
  We have that $\la (N,\Psi),(G,\Phi),\sigma,\alpha \ra$ is problematic. We must define $\Phi_{\gamma+1}$ and verify $(\dag)_{\gamma+2}$. We break into 2 cases. Let $k =
  \textrm{deg}(G)$. (So $k = k_0 = \textrm{deg}(N) = \textrm{deg}(M)$.)
  
  \bigskip
  \noindent
  {\textbf{ Case 1.} } $i$ is continuous at $\rho_k(G)$.
  
  \medskip

      In this case, we simply let
      \begin{center}
$\la \M_{\gamma+2}^\itS, \alpha_{\gamma+1} \ra = 
\la i(G), i(\alpha) \ra.$
\end{center}
We must define $\sigma_{\gamma+1}$. Note that by our case hypothesis,
\[
i(\hat{G}^k) = \hat{i(G)}^k.
\]
 Let
$\la \sigma^\eta \mid
\eta < \rho_k(G) \ra$ be the natural decomposition of
$\sigma \restriction \hat{G}^k$, and set
\[
i(\sigma \restriction \hat{G}^k) = \bigcup_{\eta < \rho_k(G)}i(\sigma^\eta).
\]
Using the continuity of $i$ at $\rho_k(G)$,
it is easy to see that $i(\sigma \restriction \hat{G}^k)$ is $\Sigma_0$-elementary and cardinal preserving
from $\hat{i(G)}^k$ to $\hat{i(N)}^k$. We set
 \[
\sigma_{\gamma +1} = \text{ completion
of $i(\sigma \restriction \hat{G}^k)$ via upward extension of embeddings,}
\]
and
\[
 \Lambda_{\gamma+2}  = \Lambda_{\gamma+1}^{\sigma_{\gamma+1}}.
 \]
 This defines $\Phi_{\gamma+1}$, that is 
 \begin{center}
 $\Phi_{\gamma+1} =  \la (i(N),\Lambda_{\gamma+1}),(i(G),\Lambda_{\gamma+2},i(\sigma),i(\alpha) \ra$. 
 \end{center}
 Abusing notation a bit, let us write
 \[
 \Phi_{\gamma+1}^- = \la i(N),i(G),i(\sigma),i(\alpha) \ra.
 \]
     
       We must see that $\Phi_{\gamma+1}$ is problematic.  First, it satisfies
the hypotheses of the Condensation Theorem \ref{thm:cond_lem}. For $G$ is
$\alpha$-sound, so $i(G)$ is $i(\alpha)$-sound. (Note $G \in N$, so $i \within G$
is $\Sigma_\omega$-elementary.) 
By downward extension of embeddings
(cf. \cite[Lemma 3.3]{schindler2010fine}), $i(\sigma \restriction \hat{G}^k)$
extends to a unique embedding from some $K$ into $i(N)$, and it is easy
to see that $K = i(G)$, because $i(G)$ is $k$-sound, and that the
embedding in question is what we have called $i(\sigma)$.  $i(\sigma)$ is nearly
elementary: it maps parameters correctly, and $i(\sigma)\restriction \hat{i(G)}^k$ is $\Sigma_0$-elementary 
and cardinal preserving by
construction.

    Finally, $\crit(i(\sigma)) = i(\alpha)$, because
for all sufficiently large $\eta < \rho_k(G)$,
$\alpha+1 \subseteq \dom(\sigma^\eta)$ and $\crit(\sigma^\eta)=\alpha$, so
$\crit(i(\sigma^\eta))=i(\alpha)$.

So we must see that one of the conclusions of \ref{thm:cond_lem} fails. We show that the
conclusion that failed for $\Phi_\xi$ fails for $\Phi_{\gamma+1}$.

       Suppose
first that $\Phi_\xi^-$ is problematic. We break into cases.
If $E_\alpha^N = \emptyset$ and $G$ is of type 1, then $\neg G \lhd N$.
But then $E_{i(\alpha)}^{i(N)} = \emptyset$, $i(G)$ is of type 1, and
$\neg i(G) \lhd  i(N)$, so $\Phi_{\gamma+1}^-$ is problematic.
If $G$ is of type 1 and $E_\alpha^N \neq \emptyset$, then
$\neg G \lhd \Ult(N,E_\alpha^N)$. But then $i(G)$ is of type 1,
$E_{i(\alpha)}^{i(N)} \neq \emptyset$, and $\neg i(G) \lhd
\Ult(i(N),E_{i(\alpha)}^{i(N)})$, so $\Phi_{\gamma+1}^-$
is problematic.\footnote{In both cases, $i$ has enough elementarity to preserve these facts. 
For instance, if $\neg (G\lhd \textrm{Ult}(N,E^N_\alpha))$, then this is equivalent 
to $\neg (G\lhd  \textrm{Ult}(N|(\alpha^+)^N, E^N_\alpha))$, and $i$ preserves this fact.} 
If $G$ is of type 2, since $\Phi_\xi^-$ is problematic, no $L\lhd N$ (or if 
$E^N_\alpha\neq \emptyset$, no $L\lhd \textrm{Ult}_0(N, E^N_\alpha)$) is such
 that $L = C(G)$.
 By elementarity of $i$ the same fact holds for $i(G)$ and therefore, $\Phi_{\gamma+1}^-$
is problematic.

     So we may assume $\Phi_\xi^-$ is not problematic, and hence
also $\Phi_{\gamma+1}^-$ is not problematic. Suppose first
$G \lhd N$, so $i(G) \lhd i(N)$. We must show
$\Lambda_{\gamma+1}^{i(\sigma)} \neq (\Lambda_{\gamma+1})_{i(G)}$, so suppose
otherwise. Using Claim 4 we get $\Lambda_\xi = \Lambda_{\gamma+1}^i$, so

\begin{align*}
\Lambda_\xi^\sigma & = (\Lambda_{\gamma+1})^{i \circ \sigma}\\
                 & = (\Lambda_{\gamma+1})^{i(\sigma) \circ i}\\
                 & =(\Lambda_{\gamma+1}^{i(\sigma)})^i \\
                & = ((\Lambda_{\gamma+1})_{i(G)})^i\\
                & = (\Lambda_\xi)_G,\\
\end{align*}
a contradiction. Equation 2 holds because $i \circ \sigma =
i(\sigma) \circ i$, and equation 5 comes from equation 4 using
claim 4 again. Thus $\Phi_\xi$ is not problematic, contradiction.

Suppose next $G\lhd \textrm{Ult}(N,E_\alpha)$. We want to show 
$(\Lambda_\xi)_{\langle E_\alpha,G \rangle} = \Lambda_\xi^\sigma$. This 
comes from the following calculations (see Figure \ref{fig:type1_ultrapower}).

\begin{align*}
\Lambda_\xi^\sigma & = (\Lambda_{\gamma+1})^{i \circ \sigma}\\
                  &= (\Lambda_{\gamma+1})^{i(\sigma) \circ i}\\
                 & = (({\Lambda_{\gamma+1}})_{\langle i(E_\alpha),i(G)\rangle})^{i}\\
                & = (\Lambda_\xi)_{\langle E_\alpha,G\rangle}.\\
\end{align*}
This first equality follows from Claim 4. The third equality follows from the 
assumption that $\Phi_{\gamma+1}$ is not problematic. The last equality 
follows from the fact that $\Lambda_\xi = (\Lambda_{\gamma+1})^i$, so the tail 
strategy $(\Lambda_{\gamma+1})_{\langle i(E_\alpha),i(N)\rangle}$ is pulled back
by the Shift lemma map from Ult$(N,E_\alpha)$ to Ult$(i(N),i(E_\alpha))$, which is just $i$.

\begin{figure}
\centering
\begin{tikzpicture}[node distance=2cm, auto]
 \node (A) {$G$};
\node (B) [left of=A, node distance=0.75cm] {$\rhd$};
\node (C) [left of=B, node distance=1.5cm] {Ult$(N,E_\alpha)$};
\node (D) [above of=A]{$i(G)$};
\node (E) [above of=B] {$\rhd$};
\node (F) [above of=C] {Ult$(i(N),i(E_\alpha))$};
\node (G) [right of=A]{$N$};
\node (H) [above of=G]{$i(N)$};
\draw[->] (A) to node {$i$} (D);
  
\draw[->] (C) to node {$i$} (F);
\draw[->] (G) to node {$i$} (H);
\draw[->] (A) to node {$\sigma$} (G);
\draw[->] (D) to node {$i(\sigma)$} (H);
\end{tikzpicture}
\caption{Diagram illustrating that $(\Lambda_\xi)_{\la E_\alpha,G\ra} = \Lambda_\xi^\sigma$.}
\label{fig:type1_ultrapower}
\end{figure}

Suppose next that $G$ is of type 2. Suppose $L\lhd N$ is such that $C(G)=L$. (Then case that $L$ is an ultrapower away
from $N$ is similar.) Let $j \colon L \to G$ be the ultrapower map,
and let $l = i(j)$, so that $l \colon i(L) \to i(G) = \Ult_0(i(L),i(D))$.
$i(G)$ has type 2, so since $\Phi_{\gamma+1}$ is not problematic, $i(L) \lhd i(N)$
and $(\Lambda_{\gamma+1})_{i(L)} = \Lambda_{\gamma+1}^{i(\sigma) \circ l}$.
We wish to show that $(\Lambda_\xi)_L = \Lambda_\xi^{\sigma \circ j}$.\footnote{ As we
remarked in \ref{type1corermk}, $\Lambda_{\gamma+1}^{i(\sigma) \circ l}$
and $\Lambda_\xi^{\sigma \circ j}$ are indeed level $k$ strategies.}  But we can calculate
\begin{align*}
\Lambda_\xi^{\sigma \circ j} & = \Lambda_{\gamma+1}^{i \circ \sigma \circ j}\\
                             & = \Lambda_{\gamma+1}^{i(\sigma)\circ l \circ i} = (\Lambda_{\gamma+1}^{i(\sigma) \circ l})^i\\
                             & = ((\Lambda_{\gamma+1})_{i(L)})^i\\
                             & = (\Lambda_\xi)_L.
\end{align*}
Line 1 uses Claim 4, as does the step from line 3 to line 4. Line 2 uses the
commutativity of the diagram in Figure \ref{Pullback}. As we noted, line 3 holds because $\Phi_{\gamma+1}$
is not problematic.

\begin{figure}
\centering
\begin{tikzpicture}[node distance=2cm, auto]
 \node (A) {$G$};
\node (B) [left of=A, node distance=2cm] {$L$};
\node (C) [right of=A, node distance=2cm] {$N$};
\node (D) [above of=A]{$i(G)$};
\node (E) [above of=B] {$i(L)$};
\node (F) [above of=C] {$i(N)$};
\draw[->] (A) to node {$i$} (D);
  
\draw[->] (B) to node {$i$} (E);
\draw[->] (C) to node {$i$} (F);
\draw[->] (B) to node {$j$} (A);
\draw[->] (E) to node {$l$} (D);
\draw[->] (A) to node {$\sigma$} (C);
\draw[->] (D) to node {$i(\sigma)$} (F);
\end{tikzpicture}
\caption{Diagram illustrating that $\Lambda_{\xi}^{\sigma \circ j}= (\Lambda_{\xi})_L$.}
\label{Pullback}
\end{figure}

         Thus $\la \M_{\gamma +1}^\itS, \M_{\gamma+2}^\itS,
\sigma_{\gamma+1},\alpha_{\gamma+1} \ra$ is problematic. Setting 
\begin{center}
$ \M_{\gamma+2}^\T = \M_{\gamma+1}^\T \text{ and } \pi_{\gamma+2} = 
\pi_{\gamma +1} \circ \sigma_{\gamma+1},$
\end{center}
the rest of $(\dagger)_{\gamma+2}$ is clear.
         
         \bigskip
         \noindent
         {\textbf{ Case 2.} } $i$ is discontinuous at $\rho_k(G)$.
         
         \medskip
       Set $\kappa = \crt(E_\gamma^\itS)$. In case 2, $\rho_k(G)$ has cofinality
       $\kappa$ in $N$. Since $\rho(G) \le \alpha$ and $G$ is $\alpha$-sound, we have
       a $\Sigma_1$ over $\hat{G}^k$ map of $\alpha$ onto $(\alpha^+)^G$. Ramifying
       this map, we see that $(\alpha^+)^G$ also has cofinality $\kappa$ in $N$. Here recall that $\rho(G) = \rho_1(\hat{G}^k)$ and $p(G) = p_1(\hat{G}^k)$.
       
        Let
       $\la (\sigma^\eta,G^\eta) \mid \alpha \le \eta < \rho_k(G) \ra$
         be the 
natural decomposition of $\sigma \restriction \hat{G}^k$.\footnote{We encourage the
reader to focus on the case $k=0$, which has the main ideas.}
 Let $s = p(G) -\alpha$, so that
      \[
      \dom(\sigma^\eta) = h^1_{G^\eta}``(\alpha \cup s).
      \]
      Let
       \[
       \bar{\tau} = \bigcup_{\eta < \rho_k(G)} i(\sigma^\eta).
       \]
        The domain of $\bar{\tau}$ is no longer all of $i(\hat{G}^k)$, instead
       \[
       \dom(\bar{\tau}) = \bigcup_{\eta < \rho_k(G)} h^1_{i(G^\eta)}``(i(\alpha) \cup i(s)).
       \]
        But set
       \[
       J = \Ult(G,E_i \restriction \sup i``\alpha),
       \]
       and let $t \colon G \to J$ be the canonical embedding,
       and $v \colon J \to i(G)$ be the factor map. 
       This is a $\Sigma_k$ ultrapower, by what may
       be a long extender. That is, $J$ is the decoding of
       $\hat{J}^k = \Ult_0(\hat{G}^k,E_i \restriction \sup i``\alpha)$. Note
       that $t$ is continuous at $\alpha$, because $\alpha$ is regular
       in $G$ (because $\alpha = \crit(\sigma)$), and
       $\alpha < \rho_k(G)$.
       
             We claim that 
       $\ran(v \restriction \hat{J}^k) \subseteq \dom(\bar{\tau})$.
       For let $f \in \hat{G}^k$ and $b \subseteq \sup i``\alpha$ be finite, so that
       $t(f)(b)$ is a typical element of $\hat{J}^k$, and $v(t(f)(b)) =
       i(f)(b)$. We can find $\eta < \rho_k(G)$ such that $f \in
       \dom(\sigma^\eta)$ and $\eta > \alpha$, so that $i(f) \in 
       \dom(i(\sigma^\eta))$ and $b \subseteq i(\eta)$.  Since 
       $f ``(\alpha) 
       \subseteq \dom(\sigma^\eta)$,
       $i(f)``i(\alpha) \subseteq \dom(i(\sigma^\eta))$, so $i(f)(b)
       \in \dom(\bar{\tau})$, as desired.
       
       Let $\tau$ be the extension of $\bar{\tau}$ given by: for
       $a \subseteq \sup i``\rho_k(G)$ finite,
       \[
       \tau(\tilde{h}^{k+1}_{i(G)}(a,p(i(G))) = \tilde{h}^{k+1}_{i(N)}(\bar{\tau}(a), p(i(N))).
       \]
       It is easy to check that $\ran(v) \subseteq \dom(\tau)$.
       
       This gives us the diagram in Figure \ref{Lift_up}.
       
\begin{figure}
\centering
\begin{tikzpicture}[node distance=2cm, auto]
 \node (A) {$J$};
\node (B) [right of=A] {$G$};
\draw[->] (B) to node {$t$} (A);
  \node (C) [right of=B] {$N$};
  \draw[->] (B) to node {$\sigma$} (C);
 \node (E) [above of=B]{$i(G)$};
\node (F)[right of=E]{$i(N)$};
\draw[->] (E) to node {$\tau$} (F);
\draw[->] (A) to node {$v$} (E);
\draw[->] (C) to node {$i$} (F);
\draw[->] (B) to node {$i$} (E);
\end{tikzpicture}
\caption{Lift up of $(N,G,\sigma,\alpha)$ in the case $i$ is discontinuous at $\rho_k(G)$}
\label{Lift_up}
\end{figure}
        
        The map $\tau$ here is only partial on $i(G)$, but
        $\tau \circ v \colon J \to i(N)$ is total. Also,
        $i``G \subseteq \dom(\tau)$, so $\tau \circ i$ is total
        on $G$. For each
        $\eta < \rho_k(G)$, and $x \in \dom(\sigma^\eta)$,
        \[
        i \circ \sigma^\eta(x)  = i(\sigma^\eta)(i(x)),
        \]
        so $\tau \circ i$ agrees on $\hat{G}^k$ with $i \circ \sigma$. Since both
        map $p_k(G)$ to $p_k(i(N))$,
        \[
        \tau \circ i = i \circ \sigma.
        \]
        Clearly $i \restriction G = v \circ t$, so the diagram commutes.
       
             Since $E_i \restriction \sup i``\alpha$ is a
             long extender, we need some care to
             show that $J$ is a premouse. The worry is that it could be a protomouse,
              in the
       the case that $G$ is extender active and $k=0$. So suppose $k=0$ and
       $\mu = \crt(\dot{F}^G)$; it is enough to see that
       $t$ is continuous at $\mu^{+,G}$. If not, we have $f \in G$
       and $b \subseteq \sup i``\alpha$ finite such that 
       \[
       \sup i`` \mu^{+,G} < t(f)(b) < i(\mu^{+,G}).
       \]
       We may assume $\dom(f) = \gamma^{|b|}$, where $\gamma < \alpha$,
        and by Los,
       $\ran(f)$ is unbounded in $\mu^{+,G}$. It follows that
       $\mu^{+,G} < \alpha$, so $\mu^{+,G} = \sigma(\mu^{+,G}) = \mu^{+,N}$.
 But $\cof(\mu^{+,G}) = \cof(\hat{o}(G)) =
       \kappa$ in $N$, so $\mu^{+,G}$ is not a cardinal in $N$, contradiction.
       
       Thus $J$ is a premouse. We claim that $\tau \circ v$ is nearly
       elementary. First, $\tau$ is a partial $\Sigma_0$ map
       from $i(\hat{G}^k)$ to $i(\hat{N}^k)$, so $\tau \circ v \restriction \hat{J}^k$
       is $\Sigma_0$ from $\hat{J}^k$ to $i(\hat{N}^k)$. It is also easy to see that $\tau\circ v\restriction \hat{J}^k$ is cardinal preserving.


       
        For $\eta < \rho_k(G)$, we have that $\sigma^\eta$ is the identity
        on $\alpha \cap \eta$, so
        \[
        \sup i``\alpha = \sup t``\alpha \le \crit(\tau \circ v) .
        \]
        But $\alpha < \sigma(\alpha)$, so $i(\alpha)< i \circ \sigma(\alpha)
        = \tau \circ v \circ t (\alpha)$. Also, $t(\alpha) \le i(\alpha)$,
        so $t(\alpha)< \tau \circ v(t(\alpha))$. Thus
        $\crit(\tau \circ v) \le t(\alpha)$, and since $t(\alpha) = \sup t``\alpha$,
        we get
        \[
         \crit(\tau \circ v) = \sup i``\alpha = t(\alpha).
         \]

\begin{remark}{\rm It is possible that $i$ is continuous at $\alpha$, even though
it is discontinuous at $\rho_k(G)$ and hence $J \neq i(G)$. In this case
$\crt(v)>i(\alpha) = \sup i``\alpha$ and $\crt(\tau) = i(\alpha)= t(\alpha)$.}
\end{remark}
        
        We set
       \begin{align*}
       \M_{\gamma+1}^\itS &= i(N),\\
       \M_{\gamma+2}^\itS &= J,\\
        \sigma_{\gamma+1} &= \tau \circ v, \text{ and}\\
        \alpha_{\gamma +1} &= \crt(\tau \circ v).
        \end{align*}
        We define then
        \begin{center}
         \begin{center}
 $\Phi_{\gamma+1} =  \la (i(N),\Lambda_{\gamma+1}),(J,\Lambda^{\sigma_{\gamma+1}}_{\gamma+1}),\sigma_{\gamma+1}, \alpha_{\gamma+1} \ra$. 
 \end{center}
        \end{center}
        
%
%
\medskip
\noindent
{\em Claim 5.} \label{claim:problematic_propagates}
$\Phi_{\gamma+1}$ is problematic.

\medskip
\noindent
{\em Proof.} We begin with proving a subclaim.

\medskip
\noindent
{\em Subclaim 5a.}
\begin{itemize}
\item[(a)]If $\Phi_\xi^-$ is problematic, then $\Phi_{\gamma+1}^-$
is problematic.
\item[(b)] $\Phi_\xi$ is extender-active iff $\Phi_{\gamma+1}$ is extender-active;
moreover, if $\Phi_\xi$ is extender-active, then $i_{\xi,\gamma+1}^\itS(\alpha_\xi)
=\alpha_{\gamma+1}$.
\end{itemize}

\medskip
\noindent
{\em Proof.}
        Recall that we write
\[
 \Phi_{\gamma+1}^- = \la i(N),J,\tau \circ v, \crt(\tau \circ v) \ra.
\]
It is easy to see that the tuple obeys the hypotheses of
\ref{thm:cond_lem}: clearly $J$ is $\Sigma_{k+1}$ generated by
  $\sup i``\alpha \cup t(s)$, and as we showed above, $\crt(\tau \circ v) =
 \sup i``\alpha$.  $t$ preserves the solidity of $s$, so
 $t(s) = p(J) - \sup i``\alpha$. We have shown
 that  $\tau \circ v$ is nearly elementary. Since $i(G) \in i(N)$,
 we have $J \in i(N)$, so $J$ is not the $\crit(\tau \circ v)$-core
of $i(N)$.

    So what we need to see is that
neither of the conclusions (a)-(b) of \ref{thm:cond_lem}
hold for $\langle i(N),J,\tau \circ v, \crt(\tau \circ v) \rangle$,
and that $\Phi_{\gamma+1}^-$ is extender-active iff $\Phi_\xi^-$
is extender-active. 
We break into two cases.
       
    \bigskip
           \noindent
  
       {\em Case A.} $\Phi_\xi$ is not extender-active.
       \bigskip

Let us show that $\Phi_{\gamma+1}^-$ is not extender active.
For this, we must see that $\crit(\tau \circ v)$ is not an index in $i(N)$.
There are two cases. If $i$ is continuous at $\alpha$, then
$\crit(v) > i(\alpha)$ and $\crit(\tau) = i(\alpha)$, so
$\crit(\tau \circ v) =i(\alpha)$, which is not an index in $i(N)$.
Otherwise, $\crit(\tau \circ v) = \crit(v) = \sup i``\alpha$. But
then $\sup i``\alpha$ has cofinality $\kappa$ in $i(N)$, and since $\kappa$
is a limit cardinal in $i(N)$, it is not the cofinality of the index of a
total extender in $i(N)$.

Now we show that $\Phi_{\gamma+1}^-$ is problematic.
Suppose toward contradiction that it is not; that is, that
\[
C(J) \lhd i(N).
\]

We have $E_\alpha^N = \emptyset.$ Since $(N,\Psi) \le^* 
          (\M_\xi^\T,\Sigma_\xi^\T) \le^* (M,\Sigma)$, Claim 3 gives
\[
G|(\alpha^+)^G = N||(\alpha^+)^G.
\] 
 Since $G \in N$,
         there is a first level $P$ of $N$ such that $P||(\alpha^+)^G =
         G|(\alpha^+)^G$ and $\rho(P) \le \alpha$. Because $\Phi_\xi^-$
         is problematic, $P \neq G$ and if $G$ is of type 2, $G\neq \textrm{Ult}(P,D)$ where $D$ is 
the Mitchell order 0 measure on $\hat{\rho}^-(G)$. Letting $n = \textrm{deg}(P)$, we get by the argument
         above that in $N$, $\rho_n(P)$ has the same cofinality as
         $(\alpha^+)^P = (\alpha^+)^G$, namely $\kappa$. 
         
         We set
          \[
          Q = \Ult(P,E_i \restriction \sup i ``\alpha),
          \]
           and  let $t_0 \colon P \to Q$ be the canonical embedding,
           and $v_0 \colon Q \to i(P)$ be the factor map.\footnote{More precisely, $Q$ is decoded from $\textrm{Ult}_0(\hat{P}^n,E_i\restriction \textrm{sup} i``\alpha)$.} We wish to show that $C(Q) \lhd i(N)$.
Let us assume that $Q \neq i(P)$, as otherwise this is trivial. This implies that
$v_0 \neq \text{id}$ and $\crt(v_0) < \rho_n(Q)$.

 Again, we must see
           that $Q$ is not a protomouse, in the case $P$ is extender
           active with $\crt(\dot{F}^P) = \mu$, and $n=0$. The proof is the same
as it was for $J$. Namely, if 
 $\alpha \le \mu^{+,P}$ then $t_0$ is continuous at $\mu^{+,P}$ because any $f \in P$
such that $f \colon [\gamma]^{|b|} \to \mu^{+,P}$ for $\gamma < \alpha$ has bounded range.
If $\mu^{+,P} < \alpha$, then because $P|\alpha^{+,P} =
G|\alpha^{+,G}$, $\mu^{+,P}$ is a cardinal of $G$, and hence of $N$,
so $i$ is continuous at $\mu^{+,P}$, so $t_0$ is continuous at $\mu^{+,P}$.

           It is easy to check that the hypotheses of \ref{thm:cond_lem}
           hold for $\la (i(P),\Omega),(Q, \Omega^{v_0}),v_0,\crt(v_0)\ra$,
where $\Omega = (\Lambda_{\gamma+1})_{i(P)}$. Note here that
           \[
           \sup i``\alpha =t_0(\alpha) \le \crt(v_0).
           \]
$t_0$ is continuous at
$\alpha$ and $\alpha^{+,P}$, and $i$ is discontinuous at $\alpha^{+,P}$ because
$\cf^N(\alpha^{+,P}) = \kappa$. Thus
\[
\crt(v_0) = \begin{cases} t_0(\alpha) & \text{ if $i$ is discontinuous at $\alpha$,}\\
                       t_0(\alpha)^{+,Q} & \text{ otherwise.}\\
\end{cases}
\]
           Let $s_0 = p(P) \setminus \alpha$; then
           $P = h^{n+1}_P `` \alpha \cup s_0$ because $P$ is sound and
           $\rho(P) \le \alpha$. Thus 
           \[
           Q = h^{n+1}_Q ``(\sup i``\alpha \cup t_0(s_0)).
           \]
           Moreover, $t_0$ maps the solidity witnesses for $s_0$ to
           solidity witnesses for $t_0(s_0)$, so 
           \begin{center}
           $Q$ is $\crit(v_0)$-sound,
           \end{center}
           with parameter $t_0(s) \setminus \crit(v_0)$. Also,
           \[
           \rho(Q) \le t_0(\alpha) \le \crt(v_0) \le t_0(\alpha^{+,P}) \le \rho_n(Q),
           \]
           where the last inequality comes from $\rho_n(Q) = \sup t_0 `` \rho_n(P)$
           and the fact that $t_0$ is continuous at $\alpha^{+,P}$. It is easy to verify that $v_0$ is
           nearly elementary. Finally,
           $i(P)$ is sound, so $Q$ cannot be its $\crt(v_0)$-core.
           
           Thus the hypotheses of \ref{thm:cond_lem}
           hold for $\la (i(P),\Omega),(Q, \Omega^{v_0}),v_0,\crt(v_0)\ra$. But note
           \[
           (i(P),\Omega)) \lhd (i(N),\Lambda_{\gamma+1}) \le^* (\mathcal{M}_{\gamma+1}^\T,
           \Sigma_{\gamma+1}^\T) \le^* (M,\Sigma).
           \]
            So because $(M,\Sigma)$ is minimal, one of the conclusions of
            \ref{thm:cond_lem} holds for $Q,i(P)$. However, $t_0(\alpha)$ is not an index
in $i(N)$ because $\alpha$ is not an index in $N$, and $t_0(\alpha)^{+,Q}$ is not an index
in $i(N)$ because it has cofinality
$\kappa$ in $i(N)$, and $\kappa$ is a limit cardinal in $i(N)$. So
\[
C(Q) \lhd i(P) \lhd i(N).
\]

So $C(J)$ and $C(Q)$ are each the first level of $i(N)$ collapsing
$t(\alpha)^{+,J} = t_0(\alpha)^{+,Q}$ to $t(\alpha) = t_0(\alpha)$, so
\[
C(J) = C(Q).
\]
It follows that $n=k$. We get a contradiction by pulling back to
$G$ and $P$. 
           
           \medskip
           \noindent
           {\em Case (i).} $J, Q$ are both of type 1.
           
           Then $J=Q$, $t \within \alpha = t_0 \within \alpha$, and $t(s) = t_0(s_0) =
p(J)-t(\alpha)$. It follows that
$G = P$, so $G\lhd N$, contradicting our assumption that $\Phi_\xi^-$ is problematic.

\bigskip
\noindent
There are some bothersome details in the remaining cases.  Some arise
from the possibility
that $t$ and $t_0$ do not preserve $\rho_k$. In that connection, the following
lemma is useful. It is implicit (very nearly explicit) in the proof of
solidity and universality for pfs mice in \cite[Section 4.10]{normalization_comparison}.

\begin{lemma}\label{zerosoundnesslemma} ($\adp$) Let $(R,\Sigma)$ be a sound
lbr hod pair of degree $k$, and suppose that either $R$ has type 1A, or
$R$ has type 1B and is not projectum stable; then $R^k_0$ is sound, in the sense
that
\[
R^k_0 = \Hull_1^{R^k_0}(\rho_1(R^k_0)+1 \cup p_1(R^k_0)).
\]
\end{lemma}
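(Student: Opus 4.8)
\textbf{Proof proposal for Lemma \ref{zerosoundnesslemma}.}

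The plan is to trace through the construction of $\bar{\mfc}_{k+1}(R)$ and $\mfc_{k+1}(R)$ from Definition \ref{type2core}, using soundness of $R$ to pin down $R^k_0$. Recall that $R^k_0 = (R||\rho_{k+1}(R), B^k)$ is the coding structure for the strong core $\bar{\mfc}_{k+1}(R)$, where $B^k$ uses the parameter $p_{k+1}$ alone (rather than the fuller parameter $w_{k+1}$ used for $M^{k+1}$). Since $R$ is sound, $R = \mfc_{k+1}(R)$, so $R = d^k \circ h^1_{\hat{R}^k}``(\rho_{k+1}(R) \cup \{p_{k+1}(R), \rho_{k+1}(R)\})$ already, and in particular the transitive collapse maps are identities. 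First I would observe that when $R$ has type 1A, we have $\rh_{k+1}(R) = 0$, which by the definition of $\rh$ means $\hat{R}^k = R^k$ is \emph{strongly sound}, i.e. $R^k = \Hull_1^{R^k}(\rho_1(R^k) \cup p_1(R^k))$; the point is then to transfer this hull property from $R^k$ to the slightly different coding structure $R^k_0$. When $R$ has type 1B and is not projectum stable, the relevant fact is that $\hat{R}^k \neq R^k$ (because $\hat{w}_k(R) \neq w_k(R)$ in the non-stable 1B case), and one must argue directly that $R^k_0$ — which is built from the plain parameter $p_k$ rather than either $w_k$ or $\hat{w}_k$ — satisfies the displayed hull equation with $\rho_1(R^k_0)+1$ in the generating set.

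The key steps, in order: (1) unwind the definitions of $R^k_0$, $R^k$, $\hat{R}^k$ and the associated reducts, reducing the claim to a statement purely about $\Sigma_1$-hulls in these structures; (2) in the type 1A case, use $R^k$ strongly sound plus the fact that $\rho_k(R) \in \ran$ of the relevant anticollapse (pfs fine structure) to conclude $R^k_0 = \Hull_1^{R^k_0}(\rho_1(R^k_0) \cup p_1(R^k_0))$, and then note that throwing in $\rho_1(R^k_0)$ as a parameter is harmless, so the $+1$ version also holds; (3) in the non-projectum-stable type 1B case, use that $\rho_{k+1}(R) = \hat\rho_{k+1}(R)$ fails — i.e. $\rho_{k+1}(R) < \hat\rho_{k+1}(R)$ — together with soundness $R = \mfc_{k+1}(R)$ to show that the extra generator $\rho_1(R^k_0)$ is exactly what is needed to capture all of $R^k_0$, the content being essentially that $R = \mfc_{k+1}(R)$ but $R \neq \bar{\mfc}_{k+1}(R)$, and the gap is measured by $\hat\rho$/$\rho$; (4) assemble these into the single displayed equation. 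Throughout I would lean on the solidity and universality arguments of \cite[Section 4.10]{normalization_comparison} as the lemma is stated to be implicit there, quoting the relevant sublemmas rather than reproving them.

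The main obstacle I anticipate is step (3): disentangling which parameter ($p_k$, $w_k$, $\hat w_k$, $p_{k+1}$, $w_{k+1}$) governs which coding structure, and verifying that in the type 1B non-stable case the coding structure $R^k_0$ of the \emph{strong} core genuinely becomes sound once $\rho_1(R^k_0)$ is adjoined — as opposed to $R^k_0$ simply inheriting soundness from $R$. The subtlety is precisely the phenomenon flagged in the text, that $\hat\eta_k$ vs.\ $\eta_k$ and $\hat\rho$ vs.\ $\rho$ can differ, so one has to be careful that the "$+1$" in $\rho_1(R^k_0)+1$ is doing real work and that no further parameters are required. Once the bookkeeping of the pfs fine structure is set up correctly, the argument should be a direct unwinding, and I would present it at the level of detail of \cite{normalization_comparison} rather than grinding through the hull computations in full.
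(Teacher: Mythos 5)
There is a genuine gap: your outline rests on misreadings of the fine-structural definitions, and as a result the actual mechanism of the proof is missing. First, $R^k_0$ is the structure $(R||\rho_k(R),B^k)$ whose predicate is computed from the parameter $p_k(R)$ alone, to be contrasted with $R^k=(R||\rho_k(R),A^k)$ whose predicate uses $w_k=\la \rho_k(R),\eta_k(R),p_k(R)\ra$; it is not $(R||\rho_{k+1}(R),B^k)$ and the contrast is not with $w_{k+1}$. The whole difficulty of the lemma is that $R^k_0$ has no name for $\rho_k(R)$ (or $\eta_k(R)$) in its language, whereas $R^k$ does. Second, for a degree-$k$ premouse, "type 1A" means $\rh_k(R)=0$, i.e.\ $R^-$ is strongly sound, equivalently $\rho_k(R)\in\Hull_1^{R^{k-1}}(\rho_k(R)\cup p_k(R))$; it does \emph{not} say that $R^k$ is strongly sound. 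If it did, the lemma would be nearly trivial, but soundness of $R$ gives only $1$-soundness of $R^k$ (with $\rho_{k+1}$ allowed as a parameter), not strong soundness, so your step (2) proves the wrong thing from a premise you do not have. Third, in the type 1B case one has $\hat{w}_k(R)=w_k(R)$, so $\hat{R}^k=R^k$ — the opposite of what you assert — and "not projectum stable" means $\eta_k(R)$ is measurable by the $R$-sequence; it is not the statement $\rho_{k+1}(R)<\hat{\rho}_{k+1}(R)$.

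What the paper actually does (following the proof of solidity/universality in Section 4.10 of the book) is recover the missing name: it sets $\varepsilon$ to be the least $\gamma$ with $\rho_k(R)=h^1_{R^{k-1}}(\gamma,p_k(R))$ in the 1A case, and with $i(D)=h^1_{R^{k-1}}(\gamma,p_k(R))$ in the 1B non-projectum-stable case, where $i\colon\bfc_k(R)\to R=\Ult(\bfc_k(R),D)$ is the level-$k$ strong-core map. The hypotheses of the lemma are exactly what guarantee $\varepsilon$ exists, and one then checks $\varepsilon\in\Hull_1^{R^k_0}(\rho_{k+1}(R)\cup p_1(R^k_0))$, which yields $p_1(R^k)\subseteq p_1(R^k_0)$ and lets one translate $\Sigma_1^{R^k}$ definitions from parameters in $\rho_{k+1}(R)+1\cup p_1(R^k)$ into $\Sigma_1^{R^k_0}$ definitions from parameters in $\rho_{k+1}(R)+1\cup p_1(R^k_0)$; the hull equality $\Hull_1^{R^k_0}(\rho+1\cup r)=\Hull_1^{R^k}(\rho+1\cup p)$ together with $1$-soundness of $R^k$ (which is where the "$+1$" comes from, since $\mfc_{k+1}$ includes $\rho_{k+1}$ as a generator) finishes the proof. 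None of this definability-transfer argument appears in your proposal, and with the definitions corrected your steps (2) and (3) do not go through as written.
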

\begin{proof}(Sketch.) This is implicit in the proof of
Theorem 4.10.9 in \cite{normalization_comparison}, and in fact
it is pointed out in the last 3 lines of that proof. The problem
is that $R^k_0$ does not have a name for $\rho_k(R)$ in its language,
whereas $R^k$ does.

 Let $\rho_k = \rho_k(R)$,
$\rho = \rho_{k+1}(R)$, and if $R$ has type 1B, let $\bfc_k(R)$ be its
strong core and
$i \colon \bfc_k(R) \to R = \Ult(\bfc_k(R),D)$ be the canonical embedding.
Let 
\[
\varepsilon = \begin{cases} \text{least $\gamma$ s.t. } \rho_k= h^1_{R^{k-1}}(\gamma,p_k(R)) & \text{ if $R$ has type 1A},\\
        \text{least $\gamma$ s.t. } i(D) = h^1_{R^{k-1}}(\gamma,p_k(R)) & \text{ if $R$ has type 1B}.
\end{cases}
\]

Our hypotheses on $R$ imply that $\varepsilon$ exists\footnote{See \cite[4.10.8]{normalization_comparison}.}, and $\varepsilon \in \Hull_1(\rho \cup r)$,
where $r = p_1(R^k_0)$.\footnote{See \cite[4.10.9, Claim 1]{normalization_comparison}.}  Letting $p=p_1(R^k)$,
this enables us to show that $p \subseteq r$,
and to translate $\Sigma_1^{R^k}$ definitions using parameters in $\rho+1 \cup p$ into
$\Sigma_1^{R^k_0}$ definitions using parameters in $\rho+1 \cup r$.\footnote{See Claims 2 and 3
in the proof of \cite[4.10.9]{normalization_comparison}.} It follows that
\[
\Hull_1^{R^k_0}(\rho+1 \cup r) = \Hull_1^{R^k}(\rho+1 \cup p).
\]
Since we have assumed that $R^k$ is 1-sound, this is what we need.
\end{proof}

  Let us return to Subclaim 5a.         
           
\bigskip         
 \noindent {\em Case (ii).} $Q$ has type 1 and $J$ has type 2. 

\medskip
\noindent
In this case, $G$ must have type 1B or type 2. Suppose first it has type 1B.
The relevant diagram is
\begin{center}
\begin{tikzpicture}[node distance=2.0cm, auto]
 \node (A) {$J$};
\node (B) [right of=A] {$C(J)$};
\draw[->] (B) to node {$t_1(i_D)$} (A);
  \node (C) [right of=B, node distance=1cm] {$=$};
 \node (D) [right of=C,  node distance=1cm]{$Q$};
 
 \node (G) [below of=A]{$G$};
\node (H)[right of=G]{$\bfc_k(G)$};
\node (J)[right of=H]{$P$};
\draw[->] (G) to node {$t$} (A);
\draw[->] (H) to node {$t_1$} (B);
\draw[->] (H) to node {$i_D$} (G);
\draw[->] (J) to node {$t_0$} (D);

\end{tikzpicture}
\end{center}

Here $i_D \colon \bfc_k(G) \to G$ is the ultrapower by the order zero measure $D$,
and $t_1$ is the canonical embedding associated to $\Ult(\bfc_k(G),E_i \within \sup i``\alpha)$.
$t_1$ agrees with $t$ and $t_0$ on $\alpha^{+,P}$.

Since $C(J)$ is type 1A, $Q$ is type 1A, so $P$ is type 1A. Thus $P^k_0$ is sound by Lemma \ref{zerosoundnesslemma}.
Since $G$ is type 1B and $J$ is type 2, $G$ is not projectum stable, so
$G^k_0$ is $\alpha$-sound by the lemma. Letting $r = p_1(G^k_0)-\alpha$ and $w = p_1(P^k_0)-\alpha$, we get that $t_1(r)$ is the top
part of $p_1(C(J)^k_0)$, hence the top part of $p_1(Q^k_0) = t_0(w)$, so $t_1(r)=t_0(w)$. Since
$t_0$ and $t_1$ agree on $\alpha^{+,P}$, we get that $\bfc_k(G) =P$. This is a contradiction,
because $\rho_k(\bfc_k(G))$ is measurable in $\bfc_k(G)$ , so $\bfc_k(G)$ is not a pfs premouse.

Suppose next that $G$ has type 2. Now the relevant diagram is
\begin{center}
\begin{tikzpicture}[node distance=2cm, auto]
 \node (A) {$J$};
\node (B) [right of=A] {$C(J)$};
\draw[->] (B) to node {$t_1(i_D)$} (A);
  \node (C) [right of=B, node distance=1cm] {$=$};
 \node (D) [right of=C,  node distance=1cm]{$Q$};
 
 \node (G) [below of=A]{$G$};
\node (H)[right of=G]{$C(G)$};
\node (J)[right of=H]{$P$};
\draw[->] (G) to node {$t$} (A);
\draw[->] (H) to node {$t_1$} (B);
\draw[->] (H) to node {$i_D$} (G);
\draw[->] (J) to node {$t_0$} (D);

\end{tikzpicture}
\end{center}

The proof in the type 1B case now tells us that $C(G) = P$. That means
that $\Phi_\xi^-$ is not problematic, again a contradiction.

\bigskip
\noindent
{\em Case (iii).} $Q$ has type 2.

\medskip
\noindent
In this case, $P$ has type 1B and is not projectum stable.
By the Lemma, $P^k_0$ is sound. We take subcases on the type of $G$.

If $G$ has type 1A, the relevant diagram is
\begin{center}
\begin{tikzpicture}[node distance=2cm, auto]
 \node (A) {$J$};
\node (B) [right of=A,node distance=1cm] {$=$};
  \node (C) [right of=B, node distance=1cm] {$C(Q)$};
 \node (D) [right of=C]{$Q$};
 
 \draw[->] (C) to node {$t_0(i_D)$} (D);
 \node (G) [below of=A]{$G$};
\node (H)[right of=G]{$\bfc_k(P)$};
\node (J)[right of=H]{$P$};
\draw[->] (G) to node {$t$} (A);
\draw[->] (H) to node {$t_1$} (C);
\draw[->] (J) to node {$t_0$} (D);
\draw[->] (H) to node {$i_D$} (J);

\end{tikzpicture}
\end{center}

As above, the agreement between $t$ and $t_0$ and the fact
that $J=C(Q)$ implies that $G=\bfc_k(P)$. This is impossible because
$\bfc_k(P)$ is not a pfs premouse.

If $G$ has type 1B, then $J$ must have type 2, since otherwise
$J=C(J) =C(Q)$, so $J$ has type 1A, so $G$ has type 1A. So our
diagram is
\begin{center}
\begin{tikzpicture}[node distance=2cm, auto]
 \node (A) {$J$};
\node (B) [right of=A] {$C(J)$};
\draw[->] (B) to node {$t_1(i_U)$} (A);
  \node (C) [right of=B, node distance=1cm] {$=$};
 \node (D) [right of=C, node distance=1cm]{$C(Q)$};
 \node (E) [right of=D]{$Q$};
 
 \draw[->] (D) to node {$t_0(i_D)$} (E);

 \node (G) [below of=A]{$G$};
\node (H)[right of=G]{$\bfc_k(G)$};
\node (J)[right of=H]{$\bfc_k(P)$};
\node (K)[right of=J]{$P$};

\draw[->] (G) to node {$t$} (A);
\draw[->] (H) to node {$i_U$} (G);
\draw[->] (H) to node {$t_1$} (B);
\draw[->] (J) to node {$i_D$} (K);
\draw[->] (J) to node {$t_2$} (D);
\draw[->] (K) to node {$t_0$} (E);

\end{tikzpicture}
\end{center}

$G$ is not projectum stable, so $G^k_0$ is $\alpha$-sound.
The agreement of $t_1$ and $t_2$ on $\alpha^{+,P}$ then implies
that $G^k_0 = P^k_0$, hence $\bfc_k(G) = \bfc_k(P)$, and hence
$G=P$. So $\Phi_\xi^-$ is not problematic, contradiction.

If $G$ has type 2, the argument above shows that $C(G) = \bfc_k(P)$.
But this is impossible, because $C(G)$ is a pfs premouse and
$\bfc_k(P)$ is not.

Cases (i)-(iii) exhaust the possibilities, so we have proved Subclaim 5a
in the case that $\Phi_\xi$ is not extender active.

           \bigskip
           \noindent
           {\em Case B.} $\Phi_\xi$ is extender-active.
           
           \medskip
           \noindent
           In this case $\sup  i``\alpha = i(\alpha)$, because  $\alpha$
           has cofinality $\crt(E_\alpha^N)^{+,N}$ in $N$. So $i(\alpha)$
           is an index in $i(N)$, say of $F$. Moreover, $i(\alpha) =
           \crit(\tau \circ v)$, so we have (b) of the subclaim.
           
           Let $R = \Ult(N,E_\alpha^N)$.  $G|\alpha^{+,G}$ is
           an initial segment of $R$ by $(*)(N)$. If $\alpha^{+,R}
           = \alpha^{+,G}$, then $i(\alpha)^{+,J} =
           i(\alpha)^{+,\Ult(i(N),F)}$, where $F = i(E^N_\alpha)$, so $\la i(N),J,\tau \circ v, i(\alpha)\ra$
           is problematic. (Since $\crt(v) > i(\alpha)$,
        $i(\alpha) = \crt(\tau)$.) Otherwise,
           we have a first initial segment $P$ of $R$ past
           $\alpha^{+,G}$ that projects to or below $\alpha$.
           We can now use $P$ just as we did in Case A, thereby proving (a)
           of the subclaim. 
           
           This finishes the proof of Subclaim 5a.
$\hfill   \square$

      We are trying to show $\Phi_{\gamma+1}$ is problematic, so by Subclaim 5a, we may assume that $\Phi_\xi^-$ and $\Phi_{\gamma +1}^-$
are not problematic. Suppose for example that $G \lhd N$ and that
$J \lhd i(N)$ (so both $G,J$ are of type 1). 
The relevant diagram is

\begin{center}
\begin{tikzpicture}
    \matrix (m) [matrix of math nodes, row sep=3em,
    column sep=3.0em, text height=1.5ex, text depth=0.25ex]
{ J & i(G)& i(N)\\
& G & N \\};
     \path[->,font=\scriptsize]
(m-1-1) edge node[above]{$v$}(m-1-2)
(m-1-2) edge
             node [above] {$\tau$} (m-1-3)
(m-2-2) edge node[left]{$t$}(m-1-1)
(m-2-2) edge node[right]{$i$}(m-1-2)
(m-2-3) edge node[right]{$i$}(m-1-3)
(m-2-2) edge node[below]{$\sigma$}(m-2-3);
\end{tikzpicture}
\end{center}

Since $\Phi_\xi$ is problematic,
$\Lambda_\xi^\sigma \neq (\Lambda_\xi)_G$. If
$\Phi_{\gamma +1}$ is not problematic, then 
\begin{equation}\label{eqn:one}
(\Lambda_{\gamma+1})_J = 
\Lambda_{\gamma+1}^{\tau \circ v}.
\end{equation}
Because $(i(G), (\Lambda_{\gamma+1})_{i(G)}) <^* (M,\Sigma)$, we can apply our
induction hypothesis that Theorem \ref{thm:cond_lem} is true
to $v \colon J \to i(G)$. Let
\[
 \Psi = (\Lambda_{\gamma+1})_{i(G)}.
\]
 Since
$v$ is nearly elementary and $J$ is a premouse, Lemma 9.2.3 of
\cite{normalization_comparison} implies that $(J,\Psi^v)$ is an
lbr hod pair and $v$ is a nearly elementary map of it
into $(i(G),\Psi)$ with $\crit(v) \ge \rho(J)$.  The arguments above show
that the other hypotheses of \ref{thm:cond_lem} are satisfied, so we have
\[
\Psi_J = \Psi^v,
\]
in other words
\begin{equation}\label{eqn:two}
((\Lambda_{\gamma+1})_{i(G)})^v = ((\Lambda_{\gamma +1})_{i(G)})_J\\
                             = (\Lambda_{\gamma+1})_J.
\end{equation}
The second line here follows from the fact that $\Lambda_{\gamma+1}$
is mildly positional.\footnote{See \cite[Definition 3.6.1]{normalization_comparison} for the definition of mild positionality. \cite[Lemma 4.6.10]{normalization_comparison} shows that background-induced strategies are mildy positional.}

We now have:

\begin{align*}
\Lambda_\xi^\sigma & = (\Lambda_{\gamma+1})^{i \circ \sigma}\\
                 & = (\Lambda_{\gamma+1})^{\tau \circ v \circ t}\\
                 & =(\Lambda_{\gamma+1}^{\tau \circ v})^t \\
                & = ((\Lambda_{\gamma+1})_J)^t\\
                & = ((\Lambda_{\gamma+1})_{i(G)}^v)^t\\
                & = (\Lambda_{\gamma+1})_{i(G)}^i\\
                & = (\Lambda_\xi)_G.\\
\end{align*}
The first equality follows from Claim 4. The second equality follows from the fact that $i\circ \sigma = \tau\circ v \circ t$.
 The fourth equality follows from Equation \ref{eqn:one}. The fifth equality follows from Equation \ref{eqn:two}. 
We have shown $\Lambda_\xi^\sigma = (\Lambda_\xi)_G$. This contradicts our assumption that $\Phi_\xi$ is problematic,
and finishes the proof of Claim 5 in the case that $G \lhd N$ and $J \lhd i(N)$. 


The following subclaim will help deal with the type 2 case.

\medskip
\noindent
{\em Subclaim 5b.}  Suppose $\la (N,\Phi), (G,\Lambda), \sigma, \nu \ra$ is a tuple 
satisfying (1)--(3) of Definition \ref{dfn:problematic_tuple}. Suppose $G$ is of 
type 2. Let $n = \textrm{deg}(G)$, $D$ be the measure of Mitchell order 0 on 
$\hat{\rho}_n(G)$ such that $G = \textrm{Ult}(C(G),D)$, and let $i_D$ be the 
ultrapower map. Suppose $C(G)\lhd N$ and $\Phi_{C(G)} = \Phi^{\sigma\circ i_D}$. 
Then $$(\Phi_{C(G)})_{\langle D \rangle,G} = \Lambda.$$

\noindent
{\em Proof.} Consider the diagram in Figure \ref{fig:pullback_ult}, where we 
let $H$ be Ult$(G,i_D(D))$ and $j$ be the corresponding ultrapower map, and $Y$ 
be the image of $H$ under the copy construction. We let $k:N\rightarrow Y$ be the ultrapower map by $\sigma(i_D(D))$.

Let $\Sigma = \Phi_{\langle \sigma(i_D(D)) \rangle, Y }$ be the tail of $\Sigma$ on $Y$. We have
\begin{align*}
(\Phi_{C(G)})_{\langle D \rangle,G } & = (\Sigma)^{\sigma\circ j} \\
            & = \Sigma^{k\circ \sigma}\\
            & = \Lambda \\
\end{align*}

The first equality follows from the definition of pullback strategies on stacks. 
The second equality follows from commutativity, i.e. $\sigma\circ j = k\circ \sigma$. 
The third equality follows from our assumption that $\Lambda = \Phi^\sigma$ and the definition of $\Sigma$.

$\hfill \square$

\begin{figure}
\centering
\begin{tikzpicture}[node distance=2cm, auto]
 \node (A) {$C(G)$};
\node (B) [right of=A] {$G$};
\draw[->] (A) to node {$i_D$} (B);
  \node (C) [above of=A, node distance=1cm] {$G$};
  \draw[->] (A) to node {$i_D$}(C);
 \node (D) [right of=C]{$H$};
 \draw[->] (C) to node {$j$} (D);
 \draw[->] (B) to node {$j$}(D);
\node (E) [above of=C, node distance=1cm] {$N$};
\node (F) [right of=E]{$Y$};
\draw[->] (E) to node {$k$} (F);
\draw[->] (D) to node {$\sigma$} (F);
\draw[->] (C) to node {$\sigma$} (E);
\end{tikzpicture}

\caption{Diagram illustrating that $(\Phi_{C(G)})_{\langle D \rangle,G} = \Lambda.$}
\label{fig:pullback_ult}
\end{figure}

Returning to the proof of Claim 5, i.e. that $\Phi_{\gamma+1}$ is problematic,
let us consider the case that $G\lhd N$ and $J$ has type 2, so that
$J = \textrm{Ult}(\bar{J},D)$, 
where $\bar{J}= C(J)\lhd i(N)$ and $D$ is the measure of Mitchell order $0$ on $\hat{\rho}^k_J$.
The relevant diagram is

\begin{center}
\begin{tikzpicture}
    \matrix (m) [matrix of math nodes, row sep=3em,
    column sep=3.0em, text height=1.5ex, text depth=0.25ex]
{ \bar{J} & J & i(G)& i(N)\\
& & G & N \\};
     \path[->,font=\scriptsize]
(m-1-1) edge node[above]{$i_D$}(m-1-2)
(m-1-2) edge node [above]{$v$} (m-1-3)
(m-1-3) edge
             node [above] {$\tau$} (m-1-4)
(m-2-3) edge node[left]{$t$}(m-1-2)
(m-2-3) edge node[right]{$i$}(m-1-3)
(m-2-4) edge node[right]{$i$}(m-1-4)
(m-2-3) edge node[below]{$\sigma$}(m-2-4);
\end{tikzpicture}
\end{center}

Again we assume toward contradiction that $\Phi_{\gamma+1}$ is
not problematic, so that
$(\Lambda_{\gamma+1})_{\bar{J}} = \Lambda_{\gamma+1}^{\tau \circ v \circ i_D}$. Combined with Subclaim 5b,
applied with $\Phi = \Lambda_{\gamma+1}$ and $\sigma = \tau \circ v$, this yields

\begin{equation}\label{eqn:three}
((\Lambda_{\gamma+1})_{\bar{J}})_{\la D, J \ra} = 
\Lambda_{\gamma+1}^{\tau \circ v}.
\end{equation}

Since $(i(G), (\Lambda_{\gamma+1})_{i(G)}) <^* (M,\Sigma)$ 
and $v \colon (J, ((\Lambda_{\gamma+1})_{i(G)})^v) \to (i(G),
(\Lambda_{\gamma+1})_{i(G)})$ satisfies the hypotheses 
of Theorem \ref{thm:cond_lem}, we get from 5b and the mild positionality
of $\Lambda_{\gamma+1}$:

\begin{equation}\label{eqn:four}
((\Lambda_{\gamma+1})_{\bar{J}})_{\la D, J \ra} = 
(\Lambda_{\gamma+1})_{i(G)})^{v}.
\end{equation}

Now we calculate
\begin{align*}
\Lambda_\xi^\sigma &= (\Lambda_{\gamma+1})^{i \circ \sigma}\\
                   &= (\Lambda_{\gamma+1})^{\tau \circ v \circ t}\\
                   &= ((\Lambda_{\gamma+1})^{\tau \circ v})^t\\
                   &= (((\Lambda_{\gamma+1})_{\bar{J}})_{\la D, J \ra})^t\\
                   &= ((\Lambda_{\gamma+1})_{i(G)})^{v})^t\\
                   &= (\Lambda_{\gamma+1})_{i(G)})^{i}\\
                   &= (\Lambda_\xi)_G.
\end{align*}
Line 1 uses Claim 4. Line 2 uses commutativity. Line 4 uses
equation \ref{eqn:three} and line 5 uses equation
\ref{eqn:four}. Line 7 uses Claim 4 again.
Since $\Lambda_\xi^\sigma = (\Lambda_\xi)_G$,
$\Phi_\xi$ is not problematic, contradiction.

Let us consider the case that $G$ is of type 2. Let
$(\bar{G}, \Lambda_\xi^{j \circ \sigma})$ be the type 1 core of
$(G,\Lambda_\xi^\sigma)$, where $j = i_D^{\bar{G}}$ for $D$ the order zero
measure on $\rh_k(G)$. Since $\Phi_\xi^-$ is not problematic,
$\bar{G} \lhd N$ and $i(\bar{G}) \lhd i(N)$.  
$J$ is also of type 2, and setting
\[
H = \Ult(\bar{G}, E_i \within \sup i``\alpha)
\]
and letting $t^* \colon \bar{G} \to H$ be the canonical embedding, we have
\[
J = \textrm{Ult}(H, t^*(D)).
\]
So $J$ has type 2, and $H = C(J)$, and $t^*(D) = D(J)$.  Let 
$m:H\rightarrow J$ be the $t^*(D)$-ultrapower map, and $l: i(\bar{G}) \rightarrow i(G)$ be the $i(D)$-ultrapower map. Let $v^*$ 
be the factor map from $H$ to $i(\bar{G})$, so that $v^* \circ t^* = i \restriction \bar{G}$. See
Figure \ref{fig:type2case} for a diagram of the situation.

\begin{figure}
\centering
\begin{tikzpicture}[node distance=3cm, auto]
 \node (A) {$G$};
\node (B) [right of=A] {$N$};
\draw[->] (A) to node {$\sigma$} (B);
  \node (C) [left of=A, node distance = 1.5cm] {$\bar{G}$};
  \draw[->] (C) to node {$j$} (A);
 \node (E) [above of=A, node distance=1.5cm]{$J$};
 \node (L) [left of=E, node distance=1.5cm]{$H$};
\node (F)[above of=E, node distance = 1.5cm]{$i(G)$};
\node (G)[above of=B]{$i(N)$};
\node (H)[left of=F, node distance=1.5cm]{$i(\bar{G})$};
\draw[->] (A) to node {$t$} (E);
\draw[->] (E) to node {$v$} (F);
\draw[->] (F) to node {$\tau$} (G);
\draw[->] (B) to node {$i$} (G);
\draw[->] (H) to node {$l$} (F);
\draw[->] (L) to node {$m$} (E);
\draw[->] (C) to node {$t^*$} (L);
\draw[->] (L) to node {$v^*$} (H);
\end{tikzpicture}
\caption{Diagram for the case $G$ is of type 2.}
\label{fig:type2case}
\end{figure}

Assuming toward contradiction that $\Phi_{\gamma+1}$ is not problematic, we have
\begin{equation}\label{eqn:five}
(\Lambda_{\gamma+1})_{H} = 
(\Lambda_{\gamma+1})^{\tau \circ v \circ m}.
\end{equation}
As above, the fact that $(i(\bar{G}),(\Lambda_{\gamma+1})_{i(\bar{G})})$ is strictly below
$(M,\Sigma)$ in the mouse order gives
\begin{equation}\label{eqn:six}
(\Lambda_{\gamma+1})_{H} = ((\Lambda_{\gamma+1})_{i(\bar{G})})_{H} = 
((\Lambda_{\gamma+1})_{i(\bar{G})})^{v^*}.
\end{equation}
We then calculate
\begin{align*}
\Lambda_\xi^{\sigma \circ j} &=  \Lambda_{\gamma+1}^{\tau \circ l \circ v^* \circ t^*}\\
                             &= \Lambda_{\gamma+1}^{\tau \circ v \circ m \circ t^*}\\
                             &= (\Lambda_{\gamma+1})_H^{t^*}\\
\intertext{ (by equation \ref{eqn:five})}
                             &= (\Lambda_{\gamma+1})_{i(\bar{G})}^{v^* \circ t^*}\\
\intertext{ (by equation \ref{eqn:six})}
                              &= (\Lambda_{\gamma+1})_{i(\bar{G})}^i\\
                              &= (\Lambda_\xi)_{\bar{G}}.
\end{align*}
This shows that $\Phi_\xi$ is not problematic, contradiction.

The cases where $G\lhd \textrm{Ult}(N, E^N_\alpha)$ or 
$C(G)\lhd \textrm{Ult}(N, E^N_\alpha)$ are handled similarly. 
For example, let $E=E_\alpha^N$, and suppose
 $G\lhd \textrm{Ult}(N, E)$ and $J$ is
 of type 1. In this case, note that $i$ is continuous at $\alpha$, so
$\crit(\tau \circ v )= i(\alpha)$. The relevant diagram is

\begin{center}
\begin{tikzpicture}[node distance=2cm, auto]
 \node (A) {$J$};
\node (B) [right of=A] {$i(G)$};
\draw[->] (B) to node {$v$} (A);
  \node (C) [right of=B] {$i(N)$};
 \node (D) [right of=C]{$Q$};
 \node (E) [right of=D, node distance=1cm]{$\rhd$};
 \node (F) [right of=E, node distance=1cm]{$i(G)$};
 \draw[->] (B) to node {$\tau$} (C);
 \draw[->] (C) to node {$i(E)$} (D);
 \node (G) [below of=B]{$G$};
\node (H)[right of=G]{$N$};
\node (J)[right of=H]{$P$};
\node (K)[right of=J, node distance=1cm]{$\rhd$};
\node (L)[right of=K, node distance=1cm]{$G$};
\draw[->] (G) to node {$t$} (A);
\draw[->] (G) to node {$i$} (B);
\draw[->] (G) to node {$\sigma$} (H);
\draw[->] (H) to node {$E$} (J);
\draw[->] (H) to node {$i$} (C);
\draw[->] (J) to node {$i$} (D);
\draw[->] (L) to node {$i$} (F);
\end{tikzpicture}
\end{center}

Here $P=\Ult(N,E)$ and $Q=\Ult(i(N),i(E))$.  

We assume toward contradiction that $\Phi_{\gamma+1}$ is
not problematic. This implies
\begin{equation}\label{eqn:seven}
((\Lambda_{\gamma+1})_{\la i(E)\ra})_J = 
(\Lambda_{\gamma+1})^{\tau \circ v}.
\end{equation}
As above, the fact that $(i(G), ((\Lambda_{\gamma+1})_{\la i(E)\ra})_{i(G)})$ is strictly below
$(M,\Sigma)$ in the mouse order gives
\begin{equation}\label{eqn:eight}
((\Lambda_{\gamma+1})_{\la i(E) \ra})_J = (((\Lambda_{\gamma+1})_{\la i(E) \ra})_{i(G)})_{J} = 
((\Lambda_{\gamma+1})_{\la i(E)\ra})_{i(G)}^{v}.
\end{equation}
We then calculate

\begin{align*}
\Lambda_\xi^\sigma & =(\Lambda_{\gamma+1}^{\tau\circ v})^t\\
                   &= (((\Lambda_{\gamma+1})_{\la i(E) \ra})_J)^t\\
                   &= (((\Lambda_{\gamma+1})_{\la i(E) \ra})_{i(G)}^v)^t\\
                  &= ((\Lambda_{\gamma+1})_{\la i(E) \ra})_{i(G)}^i\\
                   &= ((\Lambda_\xi)_{\la E \ra})_G.
\end{align*}
Thus $\Phi_\xi$ is not problematic, contradiction.

So we conclude that $\Phi_{\gamma+1}$ is problematic
in all cases. This finishes the proof of Claim 5. $\hfill   \square$

           \medskip
           \noindent

           Claim 5 finishes the definition of $\Phi_{\gamma+1}$,
            and the proof that it is a problematic tuple. We have also
            proved (4) of $(\dagger)_{\gamma+2}$.
            We now verify the rest of $(\dagger)_{\gamma+2}$. For the first part of (5), note that
            if $i$ is discontinuous at $\alpha$, then $\sup i``\alpha =
            \crt(v) = \crt(\tau \circ v) < i(\alpha)$, and if
            $i$ is continuous at $\alpha$, then $\crt(\tau) = i(\alpha) =
            \crt(\tau \circ v)$. Thus in either case, $\alpha_{\gamma+1} \le
            i_{\xi,\gamma+1}^{\itS}(\alpha_\xi)$.
            
            For the rest of (5) and (6), it is enough to see that
            $\beta_{\gamma+1} < i(\beta)$, where  $ \beta = \beta_\xi = (\alpha^+)^G$.
(Recall that we are in the case that $i$ is discontinuous at $\rho_k(G)$.)
            If $i$ is discontinuous at $\alpha$, then $\alpha$ is a limit cardinal
            of $G$, and $\beta_{\gamma+1} =
            (\sup i``\alpha)^{+,J} < i(\alpha) < i(\beta)$, as desired. If $i$ is
            continuous at $\alpha$, then since 
            $(\alpha^+)^G$ has cofinality $\kappa$ in $N$, we get
            \[
              (\alpha_{\gamma+1})^{+,J} = i(\alpha)^{+,J} = \sup i``\beta < i(\beta),
              \]
              as desired.
              
              (7) of $(\dagger)_{\gamma+2}$ is obvious from our definitions. 
              
    \begin{remark}\label{rmk:case2}{\rm If Case 2 occurs in the passage from $\Phi^-_\xi =
    \la N,G,\sigma,\alpha \ra$ to 
    to $\Phi^-_{\gamma+1} = \la i(N),J, \tau \circ v, \crit(\tau \circ v) \ra$,
    then $\rho_k(J) = \sup t`` \rho_k(G)$ has cofinality $\kappa$ in $i(N)$,
    where $\kappa = \crit(E_\gamma^\itS)$. Along branches of $\itS$ containing
    $\gamma+1$, $\kappa$ can no longer be a critical point. It follows that
    along any given branch, Case 2 can occur at most once.} 
    \end{remark}
            
            If (I) or (II) hold at $\gamma+2$, then the construction of $\itS$
            is over. Otherwise, we let $E_{\gamma+2}^{\itS}$ be the least disagreement
            between $\M_{\gamma+2}^{\itS}$ and $M_{\nu,l}$, and we set 
            \[
            \lambda_{\gamma+1} = \inf(\alpha_{\gamma+1}, \hat{\lambda}(E_{\gamma+2}^{\itS})).
            \]
            This completes the successor step in the construction of $\itS$.

Now suppose we are given $\itS \restriction \theta$, where
$\theta$ is a limit ordinal. Let $b=\Sigma(\T\restriction\theta)$.
\\
\\
\noindent \textbf{Case 1.} There is a largest $\eta\in b$ such that $\eta$ is unstable.

\medskip

Fix $\eta$. There are two subcases.
\begin{enumerate}[(A)]
\item for all $\gamma\in b-(\eta+1)$, rt$(\gamma)=\eta+1$.
 In this case, $b-(\eta+1)$ is a branch of $\itS$. Let $\itS$ choose this branch,
\begin{center}
$[\eta+1,\theta)_S=b-(\eta+1)$,
\end{center}
and let $\M^\itS_\theta$ be the direct limit of the $\M^\itS_\gamma$ for sufficiently 
large $\gamma\in b-(\eta+1)$. We define 
the branch embedding $i^\itS_{\gamma,\theta}$ a usual and 
$\pi_\theta:\M^\itS_\theta\rightarrow \M^\T_\theta$ is given by 
the fact that the copy maps commute with the branch embeddings.
 We declare $\theta$ to be stable.

\item for all $\gamma\in b-(\eta+1)$, rt$(\gamma)=\eta$. Let $\itS$ choose
\begin{center}
$[0,\theta)_S = (b-\eta)\cup[0,\eta]_S$,
\end{center}
and let $\M^\itS_\theta$ be the direct limit of the $\M^\itS_\gamma$ for 
sufficiently large $\gamma\in b$.
 Branch embeddings $i^\itS_{\gamma,\theta}$ for $\gamma\geq \eta$ are defined as usual.
  $\pi_\theta:\M^\itS_\theta\rightarrow \M^\T_\theta$ is given by the fact that copy
   maps commute 
  with branch embeddings. We declare $\theta$ to be stable.
\end{enumerate}
Since $\theta$ is stable,  $(\dagger)_\theta$ follows at once from
$\forall \gamma < \theta (\dagger)_\gamma$.

\noindent \textbf{Case 2.} There are boundedly many unstable ordinals in $b$ but no largest one.

We let $\eta$ be the sup of the unstable ordinals in $b$. Let $\itS$ choose 
\begin{center}
$[0,\theta)_S = (b-\eta)\cup[0,\eta]_S$,
\end{center}
and define the corresponding objects as in case 1(B). We declare
 $\theta$ stable, and again $(\dagger)_\theta$ is immediate.

\noindent \textbf{Case 3.} There are arbitrarily large unstable ordinals in $b$. 
In this case, $b$ is a disjoint union of pairs $\{\gamma,\gamma+1\}$ such that
 $\gamma$ is unstable and $\gamma+1$ is stable. We set
\begin{center}
$[0,\theta)_S=\{\xi\in b \ | \ \xi \textrm{ is unstable}\},$
\end{center}
and let $\M^\itS_\theta$ be the direct limit of the $\M^\itS_\xi$'s for $\xi\in b$ unstable.
 There is no dropping in model or degree along $[0,\theta)_S$. We define maps
  $i^\itS_{\xi,\theta}, \pi_\theta$ as usual. If $(\M^\itS_\theta, \Lambda_\theta)$ is not a pair of 
the form $(\M_\tau^\U, \Sigma_\tau^\U)$, then
 we declare $\theta$ stable and $(\dagger)_\theta$ is immediate. 
 
     Suppose that $(\M_\theta^\itS, \Lambda_\theta)$ is a pair of $\U$. We declare
$\theta$ unstable. Note that by clauses (4) and (5) of $(\dagger)$, there
is a $\xi <_S \theta$ such that for all $\gamma$ with $\xi <_S \gamma <_S \theta$,
$i^\itS_{\xi,\gamma}(\la \alpha_\xi,\beta_\xi\ra) = \la \alpha_\gamma, \beta_\gamma \ra$.
So we can set
\[
\alpha_\theta = \text{ common value of $i_{\gamma,\theta}^\itS(\alpha_\gamma)$, for $\gamma <_S 
\theta$ sufficiently large.}
\]
By clause (5), we can set
\[
\M_{\theta+1}^\itS= \text{ common value of $i_{\gamma,\theta}^\itS(\M_{\gamma+1}^\itS)$, for $\gamma <_S 
\theta$ sufficiently large.}
\]
We also let
\[
\sigma_\theta= \text{ common value of $i_{\gamma,\theta}^\itS(\sigma_\gamma)$, for $\gamma <_S 
\theta$ sufficiently large.}
\]
Here
\[
i_{\gamma,\theta}^\itS(\sigma_\gamma) = \text{ upward extension of }
\bigcup_{\eta < \rho} i_{\gamma,\theta}^\itS(\sigma_\gamma^\eta),
\]
where $\rho = \rho_k(\M_{\gamma+1}^\itS)$ for $k = \textrm{deg}(\M_{\gamma+1}^\itS)$,
and the $\sigma_\gamma^\eta$ are the terms in the natural decomposition
of $\sigma_\gamma$. By (6) of $(\dagger)$ and Remark \ref{rmk:case2},
$i_{\gamma,\theta}^\itS$ is continuous at $\rho_k(\M_{\gamma+1})$ for
$\gamma <_S \theta$ sufficiently large, so $\sigma_\theta$ is defined on
all of $\M_{\theta+1}^\itS$. It is easy then to see that
$\Phi_\theta = \la (\M_\theta^\itS, \Lambda_\theta), (\M_{\theta+1}^\itS, \Lambda_{\theta+1}),
 \sigma_\theta, \alpha_\theta \ra$
is a problematic tuple. 

If (I) holds, then we stop the construction of $\itS = \itS_{\nu,l}$ and move on
to $\itS_{\nu,l+1}$. If (II) holds, we stop the construction of $\itS$ and do not
move on. If neither holds, we let $E_{\theta+1}^\itS$ be the extender on
the $\M_{\theta+1}^\itS$ sequence that represents its first disagreement
with $M_{\nu,l}$, and set
\[
\lambda_{\theta+1} = \hat{\lambda}(E_{\theta+1}^\itS),
\]
and
\[
\lambda_\theta = \inf(\lambda_{\theta+1}, \alpha_\theta).
\]
It then is routine to verify $(\dagger)_{\theta+1}$.

This finishes our construction of $\itS=\itS_{\nu,l}$ and $\T$. Note that every extender used 
in $\itS$ is taken from a stable node, and every stable node except the last model of $\itS$ 
contributes exactly one extender to $\itS$. The last model of $\itS$ is stable.

\medskip
\noindent
{\em Claim 6.} The construction of $\itS_{\nu,l}$ stops for one of the reasons (I) and (II).

\medskip
\noindent
{\em Proof sketch.} This comes from the proof of Theorem
\ref{starpsigmathm} and the analysis of the type 2 case in
subsection \ref{almostsoundcase}, together
with the method for adapting such results to the
comparison of phalanxes with background constructions
used in \cite{normalization_comparison}.
$\hfill    \square$

\medskip
\noindent
{\em Claim 7.}
For some $(\nu,l)\leq (\eta_0,k_0)$, the construction of $\itS_{\nu,l}$ stops for reason (II).

\medskip
\noindent
{\em Proof.}
If not, then the construction of $\itS=\itS_{\eta_0,k_0}$ must reach
 some stable $\theta$ such that $(M_{\eta_0,k_0},\Omega_{\eta_0,k_0})
\unlhd (\M^\itS_\theta, \Lambda_\theta)$, and either
$(M_{\eta_0,k_0},\Omega_{\eta_0,k_0})
\lhd (\M^\itS_\theta, \Lambda_\theta)$ or the branch of $\itS$ leading to $\theta$ has
a drop. Let
\[
Q = \pi_\theta(M_{\eta_0,k_0});
\]
then either $(Q,(\Sigma_\theta^\itT)_Q) \lhd (M_\theta^\itT,\Sigma_\theta^\itT)$
or the branch $[0,\theta]_T$ has a drop.

Suppose first that $M_{\eta_0,k_0}$ is a nondropping iterate of $M$ and let
 $j:M \rightarrow M_{\eta_0,k_0}$ be the iteration map given
by $\U_{\eta_0,k_0}$. Letting $\T=\T_{\eta_0,k_0}$, we have $\pi_\theta:\M^\itS_\theta\rightarrow 
\M^\T_{\theta}$ from the copying 
construction.  Note that
\[ \pi_\theta\circ j \colon (M, \Sigma) \to (Q,(\Sigma_\theta^\T)_Q)
\]
is an elementary map, because
\[
\Sigma = \Omega_{\eta_0,k_0}^j = ((\Lambda_\theta)_{M_{\eta_0,k_0}})^j =
 ((\Sigma_\theta^\T)_Q)^{\pi_\theta \circ j}.
\]
Thus $\pi_\theta \circ j$ maps $(M,\Sigma)$ into an 
initial segment of $(\M_\theta^\T,\Sigma_\theta^\T)$ which is either
proper or reached along a branch that drops. This is contrary
to the Dodd-Jensen Theorem.   

Suppose next that $j: M \rightarrow N = \textrm{Ult}(M_{\eta_0,k_0},F)$ is the iteration map from $\itU_{\eta_0,k_0}$.
Let $R = \Ult(Q,\pi_\theta(F))$. We have the diagram

\begin{center}
\begin{tikzpicture}
    \matrix (m) [matrix of math nodes, row sep=3em,
    column sep=3.0em, text height=1.5ex, text depth=0.25ex]
{  M& Q & R\\
 & M_{\eta_0,k_0} & N\\
     & & M \\};
     \path[->,font=\scriptsize]
(m-1-1) edge node[above]{$i_{0,\theta}^\itT$}(m-1-2)
(m-1-2) edge node [above]{$\pi_\theta(F)$} (m-1-3)
(m-2-2) edge node [above]{$F$} (m-2-3)
(m-2-2) edge node [right]{$\pi_\theta$}(m-1-2)
(m-2-3) edge node [right]{$\sigma$}(m-1-3)
(m-3-3) edge node[right]{$j$}(m-2-3);
\end{tikzpicture}
\end{center}
Here we have drawn the case $Q=\M_\theta^\itT$ for simplicity.
$\sigma$ is the copy map.

We need to replace $M$ by $M^-$ in calibrating elementarity,
because $i_{\pi_\theta(F)} \circ i_{0,\theta}^\itT$ is not
elementary as a map on $M$, because of the
drop at the end. But $i_{\pi_\theta(F)} \circ i_{0,\theta}^\itT$ is
an iteration map associated to a +1-iteration tree on $M^-$,
and $\sigma \circ j$ is +1-elementary as a map on $M^-$.
Thus we can apply Dodd-Jensen in the category of +1-iteration maps,
and we have a contradiction as above.\footnote{Dodd-Jensen applies because
we can copy +1 trees on using nearly +1-elementary copy maps. See
\S 3. The more general statement of this fact
belongs to Jensen's $\Sigma^*$ theory.}

$\hfill    \square$

\medskip

By Claim 7, we may fix  $(\nu,l)\leq (\eta_0,k_0)$ such that the construction of $\itS_{\nu,l}$ 
terminates for reason (II). Let
$\itS = \itS_{\nu,l}$ and $\lh(\itS) =
\theta+1$. Thus $\theta$ is stable,
$[\text{rt}(\theta),\theta]_S$ does not drop in model or
degree (so $l=k_0$), and either
\begin{enumerate}[(a)]
\item $(\M^\itS_\theta,\Lambda_\theta) = (M_{\nu,l},\Omega_{\nu,l})$, or\footnote{  The comparison
arguments of \S 3 and \cite{normalization_comparison} show
that if $\la \nu,l\ra$ is
least such that (II) holds, and
$(\M^\itS_\theta, \Lambda_\theta) \unlhd (M_{\nu,l}, \Omega_{\nu,l})$, 
then 
$(\M^\itS_\theta, \Lambda_\theta) = (M_{\nu,l}, \Omega_{\nu,l})$,}
\item $(\M^\itS_\theta,\Lambda_\theta)$ has type 2, and $(M_{\nu,l},\Omega_{\nu,l})$ is its
type 1 core. 
\end{enumerate}
Let $\U = \U_{\nu,l}$, and $\gamma+1 = \lh(\itU)$. The result
of comparison via $\itU$ is that
either
\begin{itemize}
\item[(a')] $(M_{\nu,l},\Omega_{\nu,l})
\unlhd (\M_\gamma^\itU, \Sigma_\gamma^\itU)$, or
\item[(b')]$(\M_\gamma^\itU,\Sigma_\gamma^\itU)$ has type 2, and $(M_{\nu,l},\Omega_{\nu,l})$ is its
type 1 core. 
\end{itemize}

In the usual $(M,H,\alpha)$ vs. $M$ comparisons, one shows that the phalanx side
does not terminate on a branch above $M$. The next claim adapts that argument
to our current situation, in which the phalanx has been lifted along various
branches of $\itS$.

\medskip
\noindent
{\em Claim 8.} 
For some unstable $\xi$, $\text{rt}(\theta) = \xi+1$. 

\medskip
\noindent
{\em Proof.}
If not, then $0\leq_S \theta$ and the branch $[0,\theta]_S$ does not drop
in model or degree.
We take cases on how $\itS$ and $\itU$ end. Let
$i=i_{0,\theta}^\itS$ and $i^* = i_{0,\theta}^\itT$.
Let $\lh(\itU) = \gamma+1$ and $j=i_{0,\gamma}^\itU$.

\medskip
\noindent
{\em Case 1.} $\M_\theta^\itS = M_{\nu,l}$ and $M_{\nu,l} \unlhd \M_\gamma^\itU$.

\medskip
\noindent
{\em Proof.}
We claim that  $(\M_\theta^\itS,\Lambda_\theta) = (\M_\gamma^\U, \Sigma_\gamma^\U)$. For
otherwise $i$ maps
$(M,\Sigma)$ to a proper initial segment of a $\Sigma$-iterate of $(M,\Sigma)$,
 contrary to the Dodd-Jensen Theorem.
(Note here that $\Sigma$ is the pullback of $\Lambda_\theta$ under
$i$, by Claim 4.) For the same reason, $[0,\gamma]_U$ does not drop,
and thus $(\nu,l) = (\eta_0,k_0)$.
The relevant diagram is now

\begin{center}
\begin{tikzpicture}
    \matrix (m) [matrix of math nodes, row sep=3em,
    column sep=3.0em, text height=1.5ex, text depth=0.25ex]
{ \M_\theta^\itT& M_{\nu,l} & M_{\nu,l}\\
M & M & M\\ };
     \path[->,font=\scriptsize]
(m-1-2) edge node[above]{$\pi_\theta$}(m-1-1)
(m-1-3) edge node [above]{$\text{id}$} (m-1-2)
(m-2-1) edge node [left]{$i^*$} (m-1-1)
(m-2-3) edge
             node [left] {$j$} (m-1-3)
(m-2-2) edge node [left]{$i$}(m-1-2);
\end{tikzpicture}
\end{center}
Using Dodd-Jensen we get that
\begin{center}
$i=j$.
\end{center}
To see this, note that both are elementary maps from
$(M,\Sigma)$ to $(\M_\theta^\itS,\Lambda_\theta) = (\M_\gamma^\U, \Sigma_\gamma^\U)
= (M_{\nu,l},\Omega_{\nu,l})$.
Since $j$ is an iteration map, for all $\xi$
\[
j(\xi) \le i(\xi).
\]
Since $i^*$ is also an iteration map, for all $\xi$
\[
i^*(\xi) =\pi_{\theta}\circ i(\xi) \le
\pi_\theta \circ j(\xi).
\]
 Multiplying by $\pi_\theta^{-1}$, we get
that $i(\xi) \le j(\xi)$ for all $\xi$.
 So $i = j$.

Since we can recover branch extenders from branch embeddings, we get then that
\begin{center}
$e^\itS_\theta = e^\U_\gamma$.\footnote{$e^\itS_\theta$ is the sequence of extenders used along the 
branch $[0,\theta]_S$ and similarly for $e^\U_\gamma$.}
\end{center}
Let $\eta\leq_S \theta$ be least such that $\eta$ is stable. Then $e^\itS_\eta = e^\itS_\theta \restriction\delta=
e^\U_\gamma\restriction\delta$ for some $\delta$. But there is $\tau$ such that $e^\U_\tau = e^\U_\gamma\restriction\delta$.
 Thus $\M^\itS_\eta=\M^\U_\tau$. We have also
\[
\Lambda_\eta = \Lambda_\theta^{i_{\eta,\theta}^\itS} = (\Sigma_\gamma^\U)^{i_{\tau,\gamma}^\U} = \Sigma_\tau^\U,
\]
by pullback consistency, since $i_{\eta,\theta}^\itS = i_{\tau,\gamma}^\U$.

If $\eta$ is a limit ordinal, then by the rules at limit stages of $\itS$ above, we declare $\eta$ unstable.
 This contradicts our assumption. If $\itS$-pred$(\eta)=\mu$, then $\mu$ is unstable by our minimality assumption 
on $\eta$; but then we declare $\eta$ unstable by our rules at successor stages. 
Again, we reach a contradiction. This finishes Case 1.
$\hfill    \square$

\bigskip
\noindent
{\em Case 2.} $\M_\theta^\itS$ has type 2, and $M_{\nu,l} \unlhd \M_\gamma^\itU$.

\medskip
\noindent
{\em Proof.} Let $(\M_\theta^\itS,\Lambda_\theta) = \Ult((M_{\nu,l},\Omega_{\nu,l}), F)$, where $F$ is the order
zero measure on $\rh_l(\M^\itS_\theta)$.

We claim that $\M_\gamma^\itU = M_{\nu,l}$ and $[0,\gamma]_U$ does not drop. For if
$M_{\nu,l} \lhd \M_\gamma^\itU$ or
$[0,\gamma]_U$ drops, then
$\itU^\frown \la F \ra$ is a +1-iteration tree on $(M^-,\Sigma)$ whose last model is
$(\M_\theta^\itS,\Lambda_\theta)$ and whose main branch has a large drop, while
$i$ is a +1-elementary map from $(M^-,\Sigma)$ to $(\M_\theta^\itS,\Lambda_\theta)$.
This contradicts Dodd-Jensen, in the form of Lemma \ref{plusonedoddjensen}.
The relevant diagram is now

\begin{center}
\begin{tikzpicture}
    \matrix (m) [matrix of math nodes, row sep=3em,
    column sep=3.0em, text height=1.5ex, text depth=0.25ex]
{ \M_\theta^\itT& \M_\theta^\itS & M_{\nu,l}\\
M & M & M\\ };
     \path[->,font=\scriptsize]
(m-1-2) edge node[above]{$\pi_\theta$}(m-1-1)
(m-1-3) edge node [above]{\text{$i_F$}} (m-1-2)
(m-2-1) edge node [left]{$i^*$} (m-1-1)
(m-2-2) edge node[left]{$i$}(m-1-2)
(m-2-3) edge
             node [left] {$j$} (m-1-3);
\end{tikzpicture}
\end{center}

From the claim, we have that $i_F\circ j$ is defined (and total on $M$). Dodd-Jensen yields $i = i_F \circ j$. For $i_F \circ j$ is a +1-iteration map
on $(M^-,\Sigma)$,
so $i_F\circ j(\eta) \le i(\eta)$ for all $\eta$. But $i^*$ is a
+1-iteration map on $(M^-,
\Sigma)$, so $i^*(\eta) = \pi_\theta\circ i(\eta) \le
\pi_\theta \circ i_F \circ j(\eta)$, so $i(\eta) \le
i_F \circ j(\eta)$. Thus $i= i_F \circ j$.

On the other hand, the generators of the extender of $i$ are contained
in $\sup i``\rho_l(M) = \rho_l(M_{\nu,l})$, while $i_F \circ j$
has the generator $\crit(F) > \rho_l(M_{\nu,l})$. So
$i \neq i_F \circ j$, contradiction.
$\hfill    \square$

\bigskip
\noindent
{\em Case 3.} $\M_\theta^\itS = M_{\nu,l}$ and $\M_\gamma^\itU$ has type 2.

\medskip
\noindent

{\em Proof.} Let $(\M_\gamma^\itU,\Sigma_\gamma^\itU) = \Ult((M_{\nu,l},\Omega_{\nu,l}), F)$, where $F$ is the order
zero measure on $\rh_l(\M^\itU_\gamma)$. The relevant diagram is now
\begin{center}
\begin{tikzpicture}
    \matrix (m) [matrix of math nodes, row sep=3em,
    column sep=3.0em, text height=1.5ex, text depth=0.25ex]
{ & Q & \\
\M_\theta^\itT& M_{\nu,l} & \M_\gamma^\itU\\
M & M & M\\};
     \path[->,font=\scriptsize]
(m-2-2) edge node[above]{$\pi_\theta$}(m-2-1)
(m-2-2) edge node [above]{\text{$i_F$}} (m-2-3)
(m-3-1) edge node [left]{$i^*$} (m-2-1)
(m-3-2) edge node[left]{$i$}(m-2-2)
(m-3-3) edge
             node [left] {$j$} (m-2-3)
(m-2-1) edge node[left]{$\pi_\theta(F)$}(m-1-2)
(m-2-3) edge node[right]{$\sigma$}(m-1-2);
\end{tikzpicture}
\end{center}
Here $Q = \Ult(\M_\theta^\itT,\pi_\theta(F))$ and
$\sigma \colon \Ult(M_{\nu,l},F) \to \Ult(\M_\theta^\itT,\pi_\theta(F))$
is the copy map.

We shall use Dodd-Jensen to show that 
$i \within \rho_l(M) = j \within \rho_l(M)$.

First, $j$ is a +1-iteration map
on $(M^-,\Sigma)$, and
$i_F \circ i \colon (M^-,\Sigma) \to (\Ult(M_{\nu,l},F),\Sigma_\gamma^\itU)$ is
+1-elementary, so $j(\eta) \le i_F \circ i(\eta)$ for all $\eta$. But
$\crit(F) > \rho_l(M_{\nu,l})$, so
\[
\forall \eta <\rho_l(M) (j(\eta) \le i(\eta)).
\]

On the other hand,
$i_{\pi_\theta(F)} \circ i^*$ is a +1-iteration map from
$(M^-,\Sigma)$ to $(Q,\Omega)$, where 
$\Omega =
\Sigma_{\itT^\frown \la \pi_\theta(F) \ra}$.
Here $\Omega$ is
a level $l$ strategy, so $(Q,\Omega)$ is a type 2 pair
generated by $(\M_\theta^\itT,\Sigma_\theta^\itT)$.
 $\Omega$ is a +1-strategy for $Q^-$.
 Since
 \begin{align*}
     \Omega_{\nu,l}&= \Omega^{i_{\pi_{\theta}(F)} \circ \pi_\theta}\\
        &= \Omega^{\sigma \circ i_F}\\
        \intertext{ we have }
\Sigma_\gamma^\itU &= \Omega^\sigma.
\end{align*}
So $\sigma \circ j$ is elementary in the category of mouse pairs.
By Dodd-Jensen, 
\[
i_{\pi_\theta(F)} \circ i^*(\eta) = i_{\pi_\theta(F)} \circ \pi_\theta \circ i(\eta) \le \sigma \circ i_F \circ i (\eta)=\sigma\circ j(\eta)
\]
for all $\eta$. But $i_{\pi_\theta(F)}$ 
and $i_F$ are the identity on $\rho_l(M_{\nu,l})$, and $\sigma$ agrees with
$\pi_\theta$ on $\rho_l(M_{\nu,l})$. Thus
\[
\forall \eta < \rho_l(M)(i(\eta) \le j(\eta)).
\]

Thus $i \within \rho_l(M) = j \within \rho_l(M)$. But the generators of
$i$ and $j$ are all below
$\rho_l(M_{\nu,l}) = \sup i``\rho_l(M)
= \sup j ``\rho_l(M)$. It follows that
$\M_\theta^\itS = \M_\gamma^\itU$,
contrary to our case hypothesis.
$\hfill     \square$.

\bigskip
\noindent
{\em Case 4.} $\M_\theta^\itS$ and $\M_\gamma^\itU$ are both of type 2.

\medskip
\noindent
{\em Proof.} We shall show that $i=j$. This then leads to the same contradiction
we arrived at in case 1.

Let $F$ and $G$ be the order zero
measures on the $M_{\nu,l}$ sequence such that
$\M_\theta^\itS = \Ult(M_{\nu,l},F)$ and 
$\M_\gamma^\itU = \Ult(M_{\nu,l},G)$ respectively.
Let $F_0 = i_F(F)$ and $G_0 = i_G(G)$, and 

\begin{align*}
P &= \Ult(\M_\theta^\itS,F_0) = \Ult(M_{\nu,l},F^+),\\
Q &= \Ult(\M_\gamma^\itU, G_0) = \Ult(M_{\nu,l},G^+),\\
R &= \Ult(P, i_{F^+}(G^+)) = \Ult(Q,i_{G^+}(F^+)),\\
\intertext{ and }
\Omega &= (\Omega_{\nu,l})_{\la F^+\ra,\la i_{F^+}(G^+)\ra} = (\Omega_{\nu,l})_{\la G^+\ra, \la i_{G^+}(F^+)\ra}.
\end{align*}
$\Omega_{\nu,l}$ quasi-normalizes well when considered as a +1-strategy for $M_{\nu,l}^-$,
and $\la F^+, i_{F^+}(G^+)\ra$ is a stack of $\lambda$-separated trees,
so the identity on the last displayed line is justified. $\Omega$ is a level $l$
strategy for $R$, that is, a +1-strategy for $R^-$.
The following diagram describes our situation:
\begin{center}
\begin{tikzpicture}
    \matrix (m) [matrix of math nodes, row sep=3em,
    column sep=3.0em, text height=1.5ex, text depth=0.25ex]
{ & (R^*,\Omega^*) & (R,\Omega) & \\
P^* & P & & Q\\
\M_\theta^\itT& \M_\theta^\itS & M_{\nu,l} & \M_\gamma^\itU\\
M & M & & M\\};
     \path[->,font=\scriptsize]
(m-1-3) edge node[above]{$\sigma$}(m-1-2)
(m-2-1) edge node[left]{$i_1^*$}(m-1-2)
(m-2-2) edge node[above]{$\psi$}(m-2-1)
(m-2-2) edge node[left]{$i_1$}(m-1-3)
(m-2-4) edge node[right]{$j_1$}(m-1-3)
(m-3-1) edge node[left]{$i_0^*$}(m-2-1)
(m-3-2) edge node[left]{$i_0$}(m-2-2)
(m-3-3) edge node[above]{$F^+$}(m-2-2)
(m-3-3) edge node[above]{$G^+$}(m-2-4)
(m-3-3) edge node[below]{$F$}(m-3-2)
(m-3-3) edge node[below]{$G$}(m-3-4)
(m-3-2) edge node[above]{$\pi_\theta$}(m-3-1)
(m-3-4) edge node[right]{$j_0$}(m-2-4)
(m-4-1) edge node[left]{$i^*$}(m-3-1)
(m-4-2) edge node[right]{$i$}(m-3-2)
(m-4-4) edge node [right]{$j$}(m-3-4);
\end{tikzpicture}
\end{center}
Here $i_0, i_1, j_0$, and $j_1$ are the ultrapower maps.
$i^*_0$ and $i_1^*$ come from copying the ultrapowers giving rise
to $i_0$ and $i_1$. The copy maps are $\psi$ and $\sigma$.
We claim that $i_1 \circ i_0 \circ i = j_1 \circ j_0 \circ j$.
For $j_1 \circ j_0 \circ j(\eta) \le i_1 \circ i_0 \circ i(\eta)$ for all
$\eta$, because $j_1 \circ j_0 \circ j$ is an iteration map
and $\Sigma = \Omega^{i_1\circ i_0 \circ i}$, so that
$i_1\circ i_0 \circ i$ is +1-elementary as a map from $(M^-,\Sigma)$
to $(R,\Omega)$. On the other hand,
\begin{align*}
i_1^* \circ i_0^* \circ i^*(\eta) &= \sigma \circ i_1\circ i_0 \circ i (\eta)\\
    & \le \sigma \circ j_1 \circ j_0\circ j(\eta)\\
\end{align*}
because $i_1^* \circ i_0^* \circ i^*$ is a +1-iteration map on $(M^-,\Sigma)$
and $\sigma \circ j_1 \circ j_0\circ j$ is nearly +1-elementary on $(M^-,\Sigma)$.

Thus $i_1 \circ i_0 \circ i = j_1 \circ j_0 \circ j$. Let
\begin{align*}
\rho &= \rho_l(M_{\nu,l});\\
\intertext{ then the generators of $i$ and $j$ are contained in $\rho$,
while the generators of $i_0,i_1,j_0,$ and $j_1$ are all strictly above $\rho$.
So }
       E_i \restriction \rho &= E_{i_1\circ i_0 \circ i} \restriction \rho\\
                             &= E_{j_1 \circ j_0 \circ j} \restriction \rho \\
                            &= E_j \restriction \rho.
\end{align*}
Thus $i=j$ and $\M_\theta^\itS = \M_\gamma^\itU$. The stability of $\theta$
then leads to a contradiction, as in case 1.
$\hfill     \square$

This completes the proof of Claim 8.
$\hfill     \square$

Let $\xi$ be as in Claim 8, and let $\tau$ be such that $(\M^\itS_\xi,\Lambda_\xi)=(\M^\U_\tau,\Sigma_\tau^\U)$.
We have
 $e^\itS_\xi = e^\U_\tau$ by the
proof in claim 8. 

\medskip
\noindent
{\em Claim 9.} $\tau < \gamma$, and $\alpha_\xi \le \lh(E_\tau^\itU) <
\alpha_\xi^{+,\M_\tau^\itU}$.

\medskip
\noindent
{\em Proof.} Suppose $\tau = \gamma$. Let $B$ code $\M_{\xi+1}^\itS$ as a subset of $\alpha_\xi$, namely
for $Q = \M_{\xi+1}^\itS$,
\[
B = \text{Th}_1^{\hat{Q}^l}(\alpha_\xi \cup p_1(\hat{Q}^l)).
\]
Then $B \in \M_\xi^\itS$ because $\Phi_\xi$ is problematic, so
$B \in \M_\tau^\itU$. But $B \notin \M_{\xi+1}^\itS$, so
$B \notin \M_\theta^\itS$, so $B \notin M_{\nu,l}$. Suppose $\theta > \xi+1$, then $\M^\itS_\theta = M_{\nu,l}$ or $M_{\nu,l}$ is a type 1 core of $\M^\itS_\theta$; in either case, $M_{\nu,l}$ is not $l+1$-sound. This means either $M_{\nu,l} = \M^\U_\tau$ or $M_{\nu,l}$ is a type 1 core of $\M^\U_\tau$. In both cases, $\M_\theta^\itS, M_{\nu,l},$ and $\M_\gamma^\itU$ agree to their common value for $\rho_l > \alpha_\xi$, so $B\in M^\itS_\theta$. Contradiction. 

So $\theta = \xi+1$ and therefore, $(\M^\itS_{\xi+1},\Lambda_{\xi+1})=(M_{\nu,l},\Omega_{\nu,l})\lhd (\M^\itU_\tau,\Sigma_\tau)$ (the possibility that $(\M^\itS_{\xi+1},\Lambda_{\xi+1})$ is a type 1 core of $(\M^\itU_\tau,\Sigma_\tau)$ cannot occur because $M^\itS_{\xi+1}\in \M^\itU_\tau$). This contradicts the problematicity of $\Phi_\xi$. So we must have $\tau < \gamma$.


Note that $(\M_\xi^\itS,\Lambda_\xi)$, 
$(\M_{\xi+1}^\itS,\Lambda_{\xi+1})$, and
$(\M_\tau^\U,\Sigma_\tau^\U)$ all agree with $M_{\nu,l}$ below
$\alpha_\xi$. (Possibly not at $\alpha_\xi$.) This
is because otherwise $\lambda_\xi < \alpha_\xi$,
and $\xi+1$ is a dead node in $\itS$. It follows that
$\alpha_\xi \le \lh(E_\tau^\itU)$.

If $\alpha_\xi < \lh(E_\tau^\itU)$, then 
$\M_{\tau+1}^\itU$ agrees with $\M_\gamma^\itU$ up to their
common value $\eta$ for $\alpha_\xi^+$. Suppose for contradiction that $\lh(E^\itU_\tau) > \alpha_\xi^{+,\M^\itU_\tau}$. So $\eta = \alpha_\xi^{+,\M^\itU_\tau}$. Suppose $M_{\nu,l}$ is $\M^\itU_\gamma$ or its type 1 core, then note that $B \in
\M_\tau^\itU - \M_{\gamma}^\itU$, so $\eta <
\alpha_\xi^{+,\M_\tau^\itU}$. Contradiction.  Now suppose $M_{\nu,l}\lhd \M^\itU_\gamma$. Then by the way our comparison works, $o(M_{\nu,l})\geq \lh(E^\itU_\tau) \geq \alpha_\xi^{++,\M^\itU_\gamma}$. But $\rho(M_{\nu,l})=\rho(\M^\itS_{\xi+1})=\rho(\M^\itS_\theta) \leq \alpha_\xi$ and $M_{\nu,l}$ is sound. Contradiction.

Thus
$\lh(E_\tau^\itU) < \alpha_\xi^{+,\M_\tau^\itU}$ as desired.
$ \hfill    \square$

Now let
\[
\rho = \rho(\M_{\xi+1}^\itS) = \rho_{l+1}(\M_{\xi+1}^\itS).
\]
Thus $\rho = \rho(\M_\theta^\itS)= \rho(M_{\nu,l})$
as well.

\medskip
\noindent
{\em Claim 10.} Either $\rho = \alpha_\xi$, or $\rho^{+,\M_{\xi+1}^\itS} = \alpha_\xi$.
Moreover, $\lh(E_\beta^\itT) \le \rho$ for all $\beta < \tau$.

\medskip
\noindent
{\em Proof.} $\rho \le \alpha_\xi$ because $\M_{\xi+1}^\itS$ is $\alpha_\xi$-sound,
so we are done unless $\rho < \alpha_\xi$.

Suppose $\rho < \alpha_\xi$.
Since $\rho$ is a cardinal in
$\M_{\xi+1}^\itS$ and $\sigma_\xi \within \alpha_\xi = \text{ id}$, $\rho$
is a cardinal in $\M_{\xi}^\itS$. But
$|\alpha_\xi| \le \rho$ in $\M_\xi^\itS$, so 
$|\alpha_\xi| = \rho$ in $\M_\xi^\itS$. Since
$\alpha_\xi$ is a cardinal in $\M_{\xi+1}^\itS$,
$\alpha_\xi = \rho^{+,\M_{\xi+1}^\itS}$.
$\hfill    \square$

The following elementary fact will help:

\begin{proposition}\label{projectaprop} Let $\itW$ be a $\lambda$-separated
tree, let $\mu+1 \le_W \eta$, and
let $\M_\eta^\itW|\lh(E_\mu^\itW)
\unlhd N \unlhd \M_\eta^\itW$; then
\begin{itemize}
    \item[(a)] $\lh(E_\mu^\itW) \leq \rho^-(N)$, and
    \item[(b)]
$\rho(N)$ is not in the open interval
$(\crit(E_\mu^\itW), \lh(E_\mu^\itW))$.
\end{itemize}
\end{proposition}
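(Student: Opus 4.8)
\textbf{Plan for the proof of Proposition \ref{projectaprop}.} The statement is a standard fact about normal ($\lambda$-separated) iteration trees, and the plan is to prove both clauses by induction on the length of the branch from $\mu+1$ to $\eta$, reducing everything to the one-step case. First I would record the basic tree-geometry facts we will use: since $\itW$ is $\lambda$-separated, all extenders used are of plus type, the lengths $\lh(E_\zeta^\itW)$ are strictly increasing, and $T\tpred(\mu+1)$ is the least $\beta$ with $\crit(E_\mu^\itW) < \vep(E_\beta^\itW) = \lh(E_\beta^\itW)$; consequently $\crit(E_\mu^\itW) < \lh(E_\mu^\itW)$ and, for the model $\M_{\mu+1}^\itW$, the extender $E_\mu^\itW$ has critical point at most $\crit(E_\mu^\itW)$ and is applied to (the longest initial segment of) $\M_{T\tpred(\mu+1)}^\itW$. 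The key agreement fact is that $\M_{\mu+1}^\itW$ and $\M_\eta^\itW$ agree on their initial segments of length $\le \lh(E_\mu^\itW)$, so it suffices to prove the claims with $\eta$ replaced by $\mu+1$, i.e.\ for $\M_{\mu+1}^\itW|\lh(E_\mu^\itW) \unlhd N \unlhd \M_{\mu+1}^\itW$.

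For the base case, let $E = E_\mu^\itW$, $\kappa = \crit(E)$, $\nu = \lh(E)$, let $P \unlhd \M_{T\tpred(\mu+1)}^\itW$ be the model to which $E$ is applied, so $\M_{\mu+1}^\itW = \Ult(P,E)$ with canonical embedding $i = i_E^P$, and consider $N$ with $\M_{\mu+1}^\itW|\nu \unlhd N \unlhd \M_{\mu+1}^\itW$. For (a): if $N \lhd \M_{\mu+1}^\itW$ then $\rho^-(N) \ge o(N) \ge \nu$, and if $N = \M_{\mu+1}^\itW$ one uses that $i$ is continuous/cofinal enough at the relevant projectum together with the fact that $\nu = \lh(E)$ is (the image of) a cardinal — more precisely, all the ``new'' subsets of ordinals below $\nu$ that $\Ult(P,E)$ computes already live in $\M_{\mu+1}^\itW|\nu$, so no $\Sigma_{\degr(N)}$ subset of an ordinal $< \nu$ can fail to be in $N|\nu = \M_{\mu+1}^\itW|\nu$; hence $\rho^-(N) \geq \nu$. (This is the usual ``$\nu$ is a cardinal of the ultrapower, and the ultrapower's new fine-structural information appears only above $\nu$'' argument; the plus-type hypothesis ensures $\hat\lambda(E) = \lambda_E < \nu$ is not a generator issue.) For (b): $\rho(N)$ is computed as a $\Sigma_{\degr(N)+1}$-projectum; if it lay strictly between $\kappa$ and $\nu$, then since $\kappa$ is inaccessible in $P$ (hence $\M_{\mu+1}^\itW$ thinks no new bounded subsets of $\nu$ are added beyond those coming from $P$ below $\kappa$ and from the seed structure), one gets a contradiction with $\nu$ being a cardinal in $\M_{\mu+1}^\itW$: a $\Sigma_{\degr(N)+1}$ surjection of $\rho(N) \in (\kappa,\nu)$ onto $\rho(N)^{+,N}$ would, since the relevant bounded subsets are already in $\M_{\mu+1}^\itW|\nu$, collapse a cardinal. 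So $\rho(N) \le \kappa$ or $\rho(N) \ge \nu$.

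For the inductive step, suppose $\mu+1 <_W \zeta' <_W \eta$ is the successor of $\mu+1$ on the branch reached by applying $E_{\zeta}^\itW$ for some $\zeta$ with $T\tpred(\zeta+1) \le_W \mu+1$; since lengths increase, $\lh(E_\zeta^\itW) > \lh(E_\mu^\itW) = \nu$, and $\crit(E_\zeta^\itW) \ge \nu > \kappa$ (because $E_\mu^\itW$'s generators, including $\hat\lambda(E_\mu^\itW)$, are never moved later on this branch, by the footnote on the length-increasing condition). So the branch embedding from $\M_{\mu+1}^\itW$ onward has critical point $\ge \nu$; it is therefore continuous at $\nu$ and fixes everything $\le \nu$, and it maps projecta to projecta. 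Applying the induction hypothesis to the tail from $\zeta'$ to $\eta$ and pulling back along this embedding (which has critical point $\ge \nu > $ everything in sight relevant to clause (a), and which cannot move $\rho(N)$ into the forbidden interval $(\kappa,\nu)$ since its critical point is $\ge \nu$), we obtain (a) and (b) for $N \unlhd \M_\eta^\itW$. The main obstacle I anticipate is handling the case $N = \M_\eta^\itW$ itself (or $N$ equal to the current last model after a drop) in clause (a) cleanly: one must be careful about exactly how continuous the relevant (fine-structural, possibly $\degr+1$-level) ultrapower embedding is at the projectum and verify that $\rho^-(N)$ — which is $\rho_{\degr(N)-1}(N)$, not $\rho(N)$ — genuinely stays $\ge \lh(E_\mu^\itW)$; this is where the pfs bookkeeping about which projecta are preserved and the fact that $\lh(E_\mu^\itW)$ is a cardinal in the relevant models does the real work, and it is essentially the content of the solidity/universality calculations of \cite[Section 4.10]{normalization_comparison}, which I would cite rather than redo.
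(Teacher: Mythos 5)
Your overall plan — induct along the branch $[\mu+1,\eta]_W$, with the one-ultrapower situation as the base case and a cardinality argument for proper initial segments — is the same shape as the paper's proof, but two of your steps do not work as written. First, the opening reduction is false: agreement of $\M_{\mu+1}^\itW$ and $\M_\eta^\itW$ below $\lh(E_\mu^\itW)$ says nothing about the structures $N$ in the proposition, since those $N$ extend past the region of agreement (e.g.\ $N=\M_\eta^\itW$ itself), so it does not ``suffice to prove the claims with $\eta$ replaced by $\mu+1$.'' More seriously, your base-case argument for clause (b) when $N$ is the whole model $\M_{\mu+1}^\itW$ is a non-argument: if $\rho(\M_{\mu+1}^\itW)$ lay in $(\crit(E_\mu^\itW),\lh(E_\mu^\itW))$, the witnessing new $\Sigma_{\degr+1}$ subset of $\rho$ (or surjection) is \emph{not an element of} $\M_{\mu+1}^\itW$, so it cannot contradict $\lh(E_\mu^\itW)$ being a cardinal there; the cardinality argument only works for proper initial segments, where the offending set does lie in the model. (Likewise ``$\rho^-(N)\ge o(N)$'' is a slip — projecta do not exceed the height; the correct reason is again the cardinality of $\lh(E_\mu^\itW)$.) What is actually needed for the top model is to compute its projecta through the ultrapower: letting $M^*=\M_{\mu+1}^{*,\itW}$, either $\rho(M^*)\le\crit(E_\mu^\itW)$, in which case $\rho(\M_{\mu+1}^\itW)=\rho(M^*)$, or $\crit(E_\mu^\itW)<\rho(M^*)$, in which case $\lh(E_\mu^\itW)<\sup i^*_{\mu+1}{}``\rho(M^*)=\rho(\M_{\mu+1}^\itW)$ — and it is exactly here, not in your clause-(a) hand-wave about ``no new subsets below $\nu$,'' that the plus-type hypothesis earns its keep (without it $\rho(\M_{\mu+1}^\itW)=\lh(E_\mu^\itW)$ is possible); clause (a) comes out of the same computation since $\crit(E_\mu^\itW)<\rho^-(M^*)$.

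Second, your inductive step only treats the top model: pulling back along the branch embedding (whose critical point is indeed $\ge\lh(E_\mu^\itW)$ in a $\lambda$-separated tree) says nothing about the proper initial segments $N\lhd\M_\eta^\itW$ with $\lh(E_\mu^\itW)\le o(N)<o(\M_\eta^\itW)$, and it silently assumes the branch does not drop. The paper's induction handles both at once: at a successor $\beta+1$ with $W\tpred(\beta+1)=\alpha$, one applies the induction hypothesis at $\alpha$ to $N=\M_{\beta+1}^{*,\itW}$ (which sits above $\M_\alpha^\itW|\lh(E_\mu^\itW)$ whether or not there is a drop), pushes the conclusion through the single ultrapower by $E_\beta^\itW$ as in the base case, and then gets the statement for $N\unlhd\M_{\beta+1}^\itW$ with $\lh(E_\beta^\itW)\le o(N)$ from ``$\lh(E_\beta^\itW)$ is a cardinal in $\M_{\beta+1}^\itW$,'' and for $N$ with $\lh(E_\mu^\itW)\le o(N)<\lh(E_\beta^\itW)$ by coherence back to $\M_\alpha^\itW$ and the induction hypothesis. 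If you reorganize your induction this way, you also do not need to invoke the solidity/universality machinery of \cite[Section 4.10]{normalization_comparison}; the whole proposition is an elementary projectum computation plus the cardinality and coherence observations above.
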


\begin{proof} Let $E_\alpha = E_\alpha^\itW$ and
$M_\alpha = \M_\alpha^\itW$. If
$\rho(M_{\mu+1}^*) \le \crit(E_\mu)$ then
$\rho(M_{\mu+1}) = \rho(M_{\mu+1}^*)$.
If $\crit(E_\mu) < \rho(M_{\mu+1}^*)$, then
\[
\lh(E_\mu) < \sup i_{\mu+1}^* `` \rho(M_{\mu+1}^*) = \rho(M_{\mu+1}).
\]
We use here that $E_\mu$ has plus type; otherwise
$\lh(E_\mu) = \rho(M_{\mu+1})$ is possible.\footnote{But even in the
$\lambda$-separated case, it is possible that $\rho(M_\eta) =
\lh(E_\mu)$ for $\mu+1 <_W \eta$.} Note also that
$\lh(E_\mu) < \rho^-(M_{\mu+1})$ in both cases. So the proposition holds when $\eta = \mu+1$ and $N= \M_{\mu+1}$.
If $\M_{\mu+1}|\lh(E_\mu^\itW)
\unlhd N \lhd \M_\eta^\itW$, then
$\lh(E_\mu) \le \rho(N)$ because
$\lh(E_\mu)$ is a cardinal in
$M_{\mu+1}$ and $\lh(E_\mu) \le\rho^-(N)$. Thus the proposition holds
when $\eta = \mu+1$.

We now proceed by induction on $\eta$.
Let $\mu+1 <_W \beta+1$ and $W\tpred(\beta+1) = \alpha$.
By induction, (a) and (b) hold
at $\eta=\alpha$. We claim that
they hold at $\eta = \beta+1$. 
For since $M_\alpha|\lh(E_\mu) \unlhd M_{\beta+1}^* \unlhd M_\alpha$, we have 
$\lh(E_\mu) < \rho^-(M_{\beta+1}^*)$ and
$\rho(M_{\beta+1}^*)$ is not
in the open interval
$(\crit(E_\mu),\lh(E_\mu))$.\footnote{But
$\rho(M_{\beta+1}^*) = \lh(E_\nu)$ is possible, which is why
the proposition allows $\rho(M_\eta^\itW) = \lh(E_\mu)$.}
But then, taking the $E_\beta$ ultrapower, we see
that
$\lh(E_\mu) \le \lh(E_\beta)
\le \rho^-(M_{\beta+1})$ and
$\rho(M_{\beta+1})$ is not in the open interval
$(\crit(E_\mu),\lh(E_\mu))$. The
same is true for $N \unlhd M_{\beta+1}$ such that $\lh(E_\beta) \le o(N)$
because $\lh(E_\beta)$ is a cardinal
in $M_{\beta+1}$, and hence for
$N \unlhd M_{\beta+1}$ such that
$\lh(E_\mu) \le o(N)$ by coherence.

The induction hypotheses (a) and (b) clearly pass
through limits. 
\end{proof}

\bigskip
\noindent
{\em Claim 11.} $E_{\alpha_\xi}^{\M_\xi^\itS} \neq \emptyset.$

\medskip
\noindent
{\em Proof.} Suppose $E_{\alpha_\xi}^{\M_\xi^\itS} = \emptyset$.
 Thus 
\[
\rho \le \alpha_\xi \le \hat{\lambda}(E_\tau^\itU) < \lh(E_\tau^\itU).
\]
Let $F= E_\mu^\itU$ be the first extender
used in $[0,\gamma]_U$ such that $\lh(F) \ge \alpha_\xi$.
(Thus $\lh(F) > \alpha_\xi$.) We must have
$\crit(F) < \hat{\lambda}(E_\tau^\itU)$, since otherwise
$U\tpred(\mu+1) \ge \tau+1$, so some extender used
in $[0,U\tpred(\mu+1)]_U$ has length $\ge \alpha_\xi$. 

We claim that
$\rho \le \crit(F)$. First notice that
there is an $N \unlhd \M_\gamma^\itU$
such that $\lh(F) \le o(N)$ and
$\rho(N) = \rho$; namely,
$N= \M_\gamma^\itU$ if $\M_\gamma^\itU$
has type 2 with $M_{\nu,l}$ as its
type 1 core, and $N=M_{\nu,l}$ otherwise.
So by Proposition \ref{projectaprop},
$\rho$ is not in the open interval
$(\crit(F),\lh(F))$. But $\rho < \lh(F)$ because $\alpha_\xi < \lh(F)$,
so
\[
\rho \le \crit(F).
\]

By Claim 10, $\lh(E_\beta^\itT) \le \rho$ for all $\beta < \tau$, so we get
\[
U\tpred(\mu+1) = \tau.
\]

By Claim 9, $\alpha_\xi^{+,\M_\gamma^\itU} = \alpha_\xi^{+,\M_{\tau+1}^\itU}
< \alpha_\xi^{+,\M_\tau^\itU}$. It follows that $\M_{\mu+1}^{*,\itU} \lhd
\M_\tau^\itU|\alpha_\xi^{+,\M_\tau^\itU}$, so $\itU$ drops at $\mu+1$.
This implies that $\M_\gamma^\itU$ has type 1 and is unsound, so
$M_{\nu,l} \unlhd \M_\gamma^\itU$.
If $M_{\nu,l} \lhd \M_\gamma^\itU$, then $\lh(F) \le \rho(M_{\nu,l}) = \rho$
by \ref{projectaprop}. Thus
\[
\M_\gamma^\itU = M_{\nu,l}.
\]
Moreover $\rho = \rho(M_{\nu,l})$, so there can be no further dropping
along $[\mu+1,\gamma]_\itU$, and thus
\begin{align*}
\rho &=\rho_{l+1}(\M_{\mu+1}^{*,\itU}),\\
\M_{\mu+1}^{*,\itU} &= \mfc_{l+1}(M_{\nu,l}),\\
\intertext{ and }
 i_0 & =_{\text{df}} i_{\mu+1,\gamma} \circ i^{*,\itU}_{\mu+1}
\end{align*}
is the anticore map.\footnote{At this
point we know that $\M_\gamma^\itU = M_{\nu,l}$ is not $\alpha_\xi$-sound,
so $\xi+1 <_S \theta.$}

Taking the type 1 core of a type 2 premouse of degree $l$
commutes with taking its standard $l+1$ core, so
\begin{align*}
C(\M_{\xi+1}^\itS) &= C(\mfc_{l+1}(\M_\theta^\itS))\\
            &=    \mfc_{l+1}(C(\M_\theta^\itS))\\
             &= \mfc_{l+1}(M_{\nu,l})\\
              &= \M_{\mu+1}^{*,\itU}.
\end{align*}
 So $C(\M_{\xi+1}^\itS) \lhd \M_\tau^\itU = \M_{\xi}^\itS$,
which means that $\Phi_\xi^-$ is not problematic. To see
that $\Phi_\xi$ is not problematic, consider the diagram

\begin{center}
\begin{tikzpicture}[node distance=2cm, auto]
 \node (A) {$M^\itS_\theta$};
\node (B) [right of=A] {$M_{\nu,l}$};
\draw[->] (B) to node {$i_D$} (A);
  \node (C) [right of=B, node distance = 1cm] {$=$};
 \node (E) [right of=C, node distance=1cm]{$\M^\U_\gamma$};
 \node (L) [below of=A]{$\M^\itS_{\xi+1}$};
\node (F)[right of=L]{$C(\M^\itS_{\xi+1})$};
\node (G)[right of=F, node distance=1cm]{$=$};
\node (H)[right of=G, node distance=1cm]{$\M^{*,\U}_{\eta+1}$};
\draw[->] (F) to node {$i_{\bar{D}}$} (L);
\draw[->] (F) to node {$i_0$} (B);
\draw[->] (L) to node {$i^\itS_{\xi+1,\theta}$} (A);
\draw[->] (H) to node {$i_0$} (E);
\end{tikzpicture}
\end{center}

Here $\bar{D} = D(\M_{\xi+1}^\itS)$ if $\M_{\xi+1}^\itS$ has type 2,
and $\bar{D}$ is principal otherwise. $D=i_{\xi+1,\theta}^\itS(\bar{D})$.
Note $\M_\theta^\itS$ has type 2 iff $\M_{\xi+1}^\itS$ has type 2,
because $\M_{\xi+1}^\itS$ is stable and $\rho(\M_{\xi+1}^\itS)
\le \crit(i_{\xi+1,\theta}^\itS)$. We now calculate
\begin{align*}
(\Lambda_\xi)_{C(\M_{\xi+1}^\itS)} &=
(\Sigma_\tau^\itU)_{\M_{\mu+1}^{*,\itU}} \\  &= \Omega_{\nu,l}^{i_0}\\
 &= \Lambda_\theta^{i_D \circ i_0}\\
&=\Lambda_\theta^{i^\itS_{\xi+1,\theta} \circ i_{\bar{D}}}\\
 &= \Lambda_{\xi+1}^{i_{\bar{D}}}.\\
\end{align*}
Line 1 holds because $(\M_\xi^\itS,\Lambda_\xi)
= (\M_\tau^\itU,\Sigma_\tau^\itU)$. 
Line 3 holds by pullback consistency for
$\Sigma$ and the fact that $\Sigma_\tau^\itU = \Omega_{\nu,l}$. Line 3 holds because $\Lambda_\theta^{i_D} =\Omega_{\nu,l}$. The last line holds by Claim 4. 
Thus $\Phi_\xi$ is not problematic, contradiction.
$ \hfill     \square$

The final claim completes our proof by contradiction.

\bigskip
\noindent
{\em Claim 12.} $E_{\alpha_\xi}^{\M_\xi^\itS} = \emptyset.$

\medskip
\noindent
{\em Proof.} Otherwise $E_{\alpha_\xi}^{\M_\xi^\itS} = (E_\tau^\U)^-$, so
$\lh((E_\tau^\U)^-) = \alpha_\xi$. By Claim 10,
\[
\hat{\lambda}(E_\tau^\itU) \le \rho.
\]

      Let $F= E_\mu^\itU$ be the first extender
used in $[0,\gamma]_U$ such that $\lh(F) \ge \alpha_\xi$. We
claim that $\mu = \tau$. For if not, 
\[
\alpha_\xi < \lh(F),
\]
and  $F$ is incompatible with $E_\tau^\itU$,
so that
\[
\crit(F) < \hat{\lambda}(E_\tau^\U) \le \rho \le \alpha_\xi.
\]
But this means that there
is an $N \unlhd \M_\gamma^\itU$ such that $\lh(F) \le o(N)$ and
$\rho(N)$ is in the open interval
$(\crit(F),\lh(F))$, contrary to Proposition \ref{projectaprop}.\footnote{Namely,
$N= \M_{\nu,l}$ if $M_{\nu,l}
\lhd \M^\itU_\gamma$, and $N=\M_\gamma^\itU$ otherwise.}

Thus $F=E_\tau^\itU$ and $\tau+1 \le_U \gamma$. But
\[
\rho(\M_{\tau+1}) \notin (\crit(F),\lh(F)],
\]
and 
\[
\rho^-(\M_{\tau+1}^\itU) > \lh(F).
\]
(Here we use that $F$ has plus type
to rule out $\rho(\M_{\tau+1}^\itU)=
\lh(F)$. This implies that whenever
$N \unlhd M_{\tau+1}^\itU$,
$\lh(F) \le o(N)$, and $\rho(N) = \lh(F)$, then $N \in M_{\tau+1}^\itU$.
That in turn implies that $\tau+1 \neq \gamma$.\footnote{Otherwise $N= M_{\nu,l}$ or
$N = \M_\gamma^\itU$ is a counterexample.}

Now let $G = E_\eta^\itU$, where $\eta+1 \le_U \gamma$ and
$U\tpred(\eta+1)=\tau+1$. $\crit(G) > \hat{\lambda}(F)$,
so $\crit(G) > \alpha_\xi \ge \rho$.
Since $\rho = \rho(N)$ where either $N = \M_\gamma^\itU$ or $N=M_{\nu,l} \unlhd \M_\gamma^\itU$,
we get $\rho = \rho(\M_{\tau+1}^\itU)$,
from \ref{projectaprop}. Moreover,
$\itU$ must drop at $\eta+1$, with
$\rho(\M_{\eta+1}^{*,\itU}) = \rho$, and have no further drops.
It follows that $\M_\gamma^\itU$ has type 1 and is not
$l+1$-sound, 
$\M_{\gamma}^\itU = M_{\nu,l}$, and
\begin{align*}
\M_{\eta+1}^{*,\itU} &= \mfc_{l+1}(M_{\nu,l})\\
                     & = \mfc_{l+1}(C(\M_\theta^\itS))\\
                      &= C(\M_{\xi+1}^\itS).
\end{align*}
Moreover the anticore map from $\M_{\eta+1}^{*,\itU}$ to
$M_{\nu,l}$ is
\[
i_0 = i_{\eta+1,\gamma}^\itU \circ i_{\eta+1}^{*,\itU}.
\]

There is enough agreement between $\M_{\tau+1}^\itU$
and $\Ult_0(\M_\tau^\itU,F)$ that 
$\M_{\eta+1}^{*,\itU} \lhd \Ult(\M_\tau^\itU,F)$. This implies
that $\Phi_\xi^-$ is not problematic, because the ``ultrapower away"
conclusion holds. To see that $\Phi_\xi$ is not problematic, 
consider first the diagram

\begin{center}
\begin{tikzpicture}[node distance=2cm, auto]
 \node (A) {$M^\itS_\theta$};
\node (B) [right of=A] {$M_{\nu,l}$};
\draw[->] (B) to node {$i_D$} (A);
  \node (C) [right of=B, node distance = 1cm] {$=$};
 \node (E) [right of=C, node distance=1cm]{$\M^\U_\gamma$};
 \node (L) [below of=A]{$\M^\itS_{\xi+1}$};
\node (F)[right of=L]{$C(\M^\itS_{\xi+1})$};
\node (G)[right of=F, node distance=1cm]{$=$};
\node (H)[right of=G, node distance=1cm]{$\M^{*,\U}_{\eta+1}$};
\draw[->] (F) to node {$i_{\bar{D}}$} (L);
\draw[->] (F) to node {$i_0$} (B);
\draw[->] (L) to node {$i^\itS_{\xi+1,\theta}$} (A);
\draw[->] (H) to node {$i_0$} (E);
\end{tikzpicture}
\end{center}

(Again, $\bar{D} = D(\M_{\xi+1}^\itS)$ if $\M_{\xi+1}^\itS$ has type 2 and
$\bar{D}$ is principal otherwise, and $D=i_{\xi+1,\theta}^\itS(\bar{D})$.)
Calculating as above, we get
\begin{align*}
(\Sigma_{\tau+1}^\itU)_{\M_{\eta+1}^{*,\itU}} &= \Omega_{\nu,l}^{i_0}\\
                       &= \Lambda_\theta^{i_D \circ i_0}\\
                   &=\Lambda_\theta^{i_{\xi+1,\theta}^\itS \circ i_{\bar{D}}}\\
                   &= \Lambda_{\xi+1}^{i_{\bar{D}}}.
\end{align*}

Since $\Lambda_\xi = \Sigma_\tau^\itU$,
what we must see is that
\begin{equation}\label{eqn:ten}
((\Sigma_{\tau}^\itU)_{\la F\ra})_{\M_{\eta+1}^{*,\itU}}
                   = (\Sigma_{\tau+1}^\itU)_{\M_{\eta+1}^{*,\itU}}.
\end{equation}
That is, the tail of $\Sigma$ after the length 2 stack $\la \itU \within \tau+1, \la F \ra \ra$
agrees with the tail of $\Sigma$ after the length 1 stack $ \itU \within \tau+2 $,
as far as trees based on $\M_{\eta+1}^{*,\itU}$ goes. This follows from the fact that
$\Sigma$ normalizes well. For let $\itW = W(\itU \within \tau+1, F)$ and consider the
embedding normalization diagram
\begin{center}
\begin{tikzpicture}
    \matrix (m) [matrix of math nodes, row sep=3em,
    column sep=3.0em, text height=1.5ex, text depth=0.25ex]
{ M & \M_\tau^\itU & \Ult(\M_\tau^\itU, F)\\
& & \M_\beta^\itW\\};
     \path[->,font=\scriptsize]
(m-1-1) edge node[above]{$i_{0,\tau}^\itU$}(m-1-2)
(m-1-2) edge
             node [above] {$i_F$} (m-1-3)
(m-1-1) edge node[below]{$i_{0,\beta}^\itW$}(m-2-3)
(m-1-3) edge node[right]{$\sigma$}(m-2-3);
\end{tikzpicture}
\end{center}
$E_\tau^\itW = F = E_\tau^\itU$,
so $\itW \within \tau+2 = \itU \within \tau+2$.
Let 
\[
\alpha = \alpha_\xi^{+, \Ult(\M_\tau^\itU,F)}.
\]
It is not hard to see that $\alpha < \lh(E_{\tau+1}^\itW)$, so since
$\Sigma$ is strategy coherent,
\[
(\Sigma_{\tau+1}^\itU)_{\M_{\eta+1}^{*,\itU}} = (\Sigma_{\beta}^\itW)_{\M_{\eta+1}^{*,\itU}}.
\]
Because $\Sigma$ normalizes well,
\[
(\Sigma_{\tau}^\itU)_{\la F\ra} = (\Sigma_{\beta}^\itW)^\sigma.
\]
But $\sigma \within \alpha = \text{ id}$ by the elementary properties
of embedding normalization, so
\[
((\Sigma_{\tau}^\itU)_{\la F\ra})_{\M_{\eta+1}^{*,\itU}} = (\Sigma_{\beta}^\itW)_{\M_{\eta+1}^{*,\itU}}.
\]
Putting these equalities together, we get
equation \ref{eqn:ten}. 

Thus $\Phi_\xi$ is not problematic, a contradiction.
$\hfill     \square$

Claims 11 and 12 are the contradiction that finishes our proof of the
Condensation Theorem, \ref{thm:cond_lem}.
\end{proof}

We can drop the hypothesis that $\crit(\pi) < \rho_{\textrm{deg}(H)}(H)$ from Theorem
\ref{thm:cond_lem}, at the cost of omitting its conclusions
concerning condensation of the external strategies. This will be useful
in the proof of square and full normalization. 

\begin{theorem}\label{condensationtheorem} Assume $\adp$, and let
$(M,\Lambda)$ be a mouse pair with scope $\hc$. Let $H$ be a
sound premouse, $\pi \colon H \to M$ be nearly elementary,
and suppose that
\begin{itemize}
\item[(1)] $\rho(H) \le \crit(\pi)$, and
\item[(2)] $H \in M$.
\end{itemize}
Then either
\begin{itemize}
\item[(a)] $C(H) \lhd M$, or
\item[(b)] $C(H) \lhd \Ult(M,E_\alpha^M)$, where $\alpha = \crit(\pi)$.
\end{itemize}
\end{theorem}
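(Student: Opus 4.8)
\textbf{Proof plan for Theorem \ref{condensationtheorem}.} The strategy is to reduce to Theorem \ref{thm:cond_lem} by handling separately the extra case that is now allowed, namely $\rho_{\textrm{deg}(H)}(H) \le \crit(\pi)$. Write $k = \textrm{deg}(H) = \textrm{deg}(M)$ and $\alpha = \crit(\pi)$. If $\alpha < \rho_k(H)$, then the hypotheses of \ref{thm:cond_lem} are met (note $H$ sound implies $H$ $\alpha$-sound for any $\alpha \ge \rho(H)$, and we may take $(M,\Lambda)$'s scope and the strategy $\Psi = \Lambda^\pi$; that $H$ is a premouse of the same kind as $M$ follows from near elementarity together with $H \in M$ in the $E$-active and $B$-active cases, as in \ref{preservelbrhodpairs1} and \ref{preservation}), so conclusion (a) or (b) of \ref{thm:cond_lem} holds for $(H_0,\Psi_0)$, the type 1 core; forgetting the strategy, this gives $C(H) \lhd M$ or $C(H) \lhd \Ult_0(M,E_\alpha^M)$, as desired. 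So from now on I assume $\rho_k(H) \le \alpha$.

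In that case one ramifies the embedding through the projecta of $H$. Since $\rho(H) \le \alpha$ and $H$ is sound, for each $j$ with $\rho(H) \le \rho_j(H) \le \alpha$ the reduct gives a nearly elementary (in fact $\Sigma_1$-elementary on the appropriate reduct) map, and we can find $j$ least such that $\rho_j(H) \le \alpha < \rho_{j-1}(H)$ (taking $\rho_{-1}(H) = o(H)$). Consider $\bar H = (H, j-1)$, i.e. $H$ with its soundness degree lowered to $j-1$; since $H$ is sound, $\bar H = \mathfrak{C}_{j-1+1}(\bar H)$ appropriately, $\bar H$ is sound of degree $j-1$, and $\alpha < \rho_{j-1}(\bar H) = \rho_{\textrm{deg}(\bar H)}(\bar H)$. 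The point is that $\pi$ restricts/lifts to a nearly elementary map $\bar\pi \colon \bar H \to \bar M$ where $\bar M = (M, j-1)$ (this uses that $\pi$ preserves the parameters $p_i$ and projecta $\rho_i$ for $i \le j-1$, which is part of near elementarity once one checks it does not disturb the lower reducts). Now $\bar\pi$ meets the hypotheses of \ref{thm:cond_lem}: $\bar H \in M$ gives $\bar H \in \bar M$ (same underlying set), $\rho(\bar H) \le \alpha < \rho_{\textrm{deg}(\bar H)}(\bar H)$, $\bar H$ is $\alpha$-sound, and $\bar H$ is of the same kind as $\bar M$. Applying \ref{thm:cond_lem} to $\bar\pi$ yields that the type 1 core of $\bar H$ is an initial segment of $\bar M$ or of $\Ult_0(\bar M, E_\alpha^{\bar M})$. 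Translating back — the type 1 core of $\bar H$ has the same underlying premouse as $C(H)$ once one accounts for the degree shift, and initial segments of $\bar M$ are initial segments of $M$ — gives conclusion (a) or (b) of the present theorem for $C(H)$.

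The main obstacle is the degree-bookkeeping in the reduction: verifying that lowering the soundness degree of $H$ and $M$ really does produce a pair $(\bar H, \bar M)$ with a nearly elementary $\bar\pi$ between them to which \ref{thm:cond_lem} applies, and that the type 1 core and the ``$\lhd$'' relation are insensitive (in the relevant way) to this shift. Concretely one must check: (i) $H$ sound implies $\bar H$ solid and sound of degree $j-1$ (immediate from the definitions of $n$-soundness for $n \le \textrm{deg}(H)$); (ii) near elementarity of $\pi$ descends to near elementarity of $\bar\pi$ between the lowered structures — this is where one uses that $\pi \restriction \hat H^i$ is $\Sigma_0$ and cardinal-preserving for each relevant $i$, and preserves $\hat\eta_i$, which is built into the pfs notion of near elementarity; and (iii) $C(\bar H)$, viewed as a bare premouse, is the same as $C(H)$, so that $C(\bar H) \lhd \bar M$ is literally $C(H) \lhd M$. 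None of these is deep, but stating them carefully is the substance of the argument; the serious mathematics has already been done in \ref{thm:cond_lem}. One small point to note: in this regime $\rho(H) \le \alpha$ may be strict or equal, and correspondingly (b) may or may not be vacuous, exactly as remarked after the statement of \ref{thm:cond_lem}; we do not need to separate these subcases since \ref{thm:cond_lem} already delivers the disjunction.
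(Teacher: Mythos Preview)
Your proposal is correct and follows essentially the same approach as the paper: lower the degree of both $H$ and $M$ to the largest $n\le\deg(H)$ with $\alpha<\rho_n(H)$, apply Theorem~\ref{thm:cond_lem} at that degree, and observe that the conclusion translates back since $C(H)=H$ and initial-segment relations are insensitive to the degree label. The paper does this uniformly (choosing $n$ directly rather than splitting into the cases $\alpha<\rho_k(H)$ versus $\rho_k(H)\le\alpha$), and records the needed strategy on the lowered pair as $\Sigma_N^\pi$, but otherwise the arguments coincide.
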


\begin{proof} Let $\alpha = \crit(\pi)$, $n$ be largest such that $\alpha < \rho_n(H)$, and
$n \le \textrm{deg}(H)$. Let $G$ and $N$ be the same as $H$ and $M$, except
that $\textrm{deg}(G) = n = \textrm{deg}(N)$. Let $\Psi = \Sigma_N^\pi$.
The hypotheses of \ref{thm:cond_lem} hold of
$(G,\Psi)$, $(N,\Sigma_N)$, and $\pi$. (We have $H \in M$ by
\ref{alphacorecase}, hence $G \in N$, hence $G$ is not the
$\alpha$-core of $N$.) Hence one of the conclusions of
\ref{thm:cond_lem} holds of them.

    If it is conclusion (a), then $C(G) \lhd N$, which easily implies
    $C(H) \lhd M$. If it is (b), then $C(G) \lhd Ult_0(N, \dot{E}^M_\alpha)$
    yields $C(H) \lhd Ult_0(M, \dot{E}^M_\alpha)$. 
\end{proof}

\bibliographystyle{plain}
\bibliography{Rmicebib}
\end{document}